\newcommand{\comment}[1]{}
\newcommand {\Z}{{\bf Z}}
\newcommand {\Q}{{\bf Q}}
\newcommand {\C}{{\bf C}}
\renewcommand {\P}{{\bf P}}
\newcommand {\from}{{\colon}}
\newcommand{\into}{{\hookrightarrow}}
\newcommand{\onto}{\twoheadrightarrow}
\newcommand{\st}[1]{\mathscr{#1}}      %stack
\newcommand{\sh}[1]{{\mathscr #1}}        %sheaf (hom, ext etc)
\newcommand{\orb}[1]{{\mathcal #1}}     %orbifold
\newcommand{\cat}[1]{\mathbf{#1}}       %category
\renewcommand{\sp}[1]{#1}               % (coarse) space
\renewcommand {\o}[1]{\overline{#1}}    
\newcommand{\ideal}[1]{\langle #1 \rangle}     % ideal
\newcommand{\dual}[1]{#1^{\vee}}
\newcommand{\isom}{\stackrel\sim\longrightarrow}
\DeclareMathOperator{\Crimp}{Crimp}
\DeclareMathOperator{\Mor}{Mor}
\DeclareMathOperator{\Def}{Def}
\DeclareMathOperator{\Gl}{Gl}
\DeclareMathOperator{\supp}{supp}
\DeclareMathOperator{\spec}{Spec}
\DeclareMathOperator{\proj}{Proj}
\DeclareMathOperator{\mult}{mult}
\DeclareMathOperator{\br}{br}
\DeclareMathOperator{\Sym}{Sym}
\DeclareMathOperator{\id}{id}
\DeclareMathOperator{\tr}{tr}
\DeclareMathOperator{\length}{length}
\DeclareMathOperator{\G}{{\mathbf G}}
\DeclareMathOperator{\A}{{\mathbf A}}
\DeclareMathOperator{\Bl}{Bl}
\DeclareMathOperator{\Ext}{Ext}
\DeclareMathOperator{\cok}{cok}
\DeclareMathOperator{\Isom}{Isom}
\DeclareMathOperator{\Aut}{Aut}
\DeclareMathOperator{\F}{\mathbf F}
\DeclareMathOperator{\Exc}{Exc}
\DeclareMathOperator{\Pic}{Pic}
\DeclareMathOperator{\ch}{ch}
\DeclareMathOperator{\td}{td}
\DeclareMathOperator{\Hom}{Hom}
\DeclareMathOperator{\Hilb}{Hilb}
\DeclareMathOperator{\im}{im}
\newcommand{\f}[1]{\llbracket #1 \rrbracket}
\newcommand{\compl}[1]{\widehat{#1}}
\newcommand{\tw}[1]{\widetilde{#1}}
\newcommand{\sub}{{\rm sub}}
\newcommand{\Br}{{\rm Br}}
\newcommand{\s}{\mathbf S}
\tikzset{math/.style = {execute at begin node=$, execute at end node=$}}
\tikzset{map/.style = {font=\scriptsize}}
\tikzset{equal/.style = {double distance=.15em}}
\newcommandx{\cartsquare}[3][3=.35]{
    \draw ($#1!#3!#2$) rectangle ($#2!#3!#1$);
}
\numberwithin{equation}{section}
\newcounter{thmalph}
\newtheorem{introtheorem}[thmalph]{Theorem}
\title{Modular compactifications of the space of marked trigonal curves} 
\author{Anand Deopurkar}
\email{anandrd@math.harvard.edu}
\address{Harvard University\\
  One Oxford Street, Cambridge, MA, 02139}
\keywords{Trigonal curves, Hurwitz spaces, Hassett--Keel program}
\begin{document}
\begin{abstract}
  We construct a sequence of modular compactifications of the space of marked trigonal curves by allowing the branch points to coincide to a given extent. Beginning with the standard admissible cover compactification, the sequence first proceeds through contractions of the boundary divisors and then through flips of the so-called Maroni strata, culminating in a Fano model for even genera and a Fano fibration for odd genera. While the sequence of divisorial contractions arises from a more general construction, the sequence of flips uses the particular geometry of triple covers. We explicitly describe the Mori chamber decomposition given by this sequence of flips.
\end{abstract}

\maketitle
\section{Introduction}\label{sec:intro}
Moduli spaces of geometrically interesting objects are usually not compact. They need to be compactified by adding to them a carefully chosen class of degenerate objects. Often, this can be done in several ways, leading to birationally equivalent models exhibiting highly interesting instances of the Minimal Model Program (MMP). Such a program has been the topic of intense current research most importantly for the moduli space of curves but also for related moduli spaces such as the spaces of pointed curves \citep{hassett03:_modul,smyth11:_modul_i,smyth11:_modul_ii}, the Kontsevich spaces \citep{chen08:_moris_konts_m_bbb_p}, the space of hyperelliptic curves \citep{fedorchuk10:_modul_hyp}, and so on. In \citep{deopurkar12:_compac_hurwit}, we took up such a study of the Hurwitz spaces of branched covers by constructing compactifications where the branch points are allowed to coincide to various extents. In this paper, we describe a beautiful picture that emerges from this theme for the space of marked triple covers. The picture consists of two parts. The first is a sequence of contractions of the boundary divisors obtained using the results of \citep{deopurkar12:_compac_hurwit}. The second is a continuation of the first sequence by flips of the Maroni strata and uses the particular geometry of triple covers. The second sequence culminates in a model of the kind 
predicted by the MMP: a Fano variety for even genera and a Fano fibration over $\P^1$ for odd genera.

Denote by $T_{g;1}$ the moduli space of simply branched trigonal curves of genus $g$ along with a marked unramified fiber of the trigonal map. In symbols, $T_{g;1} = \{(\phi \from C \to P; \sigma)\}$, where $P$ is a copy of $\P^1$, $C$ a smooth curve of genus $g$, $\phi$ a simply branched triple cover and $\sigma \in P$ a point away from $\br\phi$. The space $T_{g;1}$ is a quasi-projective, irreducible variety of dimension $2g+2$.

How can we compactify $T_{g;1}$? A reasonable approach is to compactify the space of branch divisors and then use it to compactify the space of branched covers. In this case, the relevant space is $M_{0;b,1}$, the moduli space of $(P; \Sigma; \sigma)$, where $P$ is a copy of $\P^1$, $\Sigma \subset P$ a reduced divisor of degree $b = 2g+4$ and $\sigma \in P$ a point away from $\Sigma$. We have a branch morphism $\br \from T_{g;1} \to M_{0;b,1}$ given by
\[ \br \from (\phi \from C \to P; \sigma) \mapsto (P; \br\phi; \sigma).\]
By the main result of \citep{deopurkar12:_compac_hurwit}, every suitable compactification of $M_{0;b,1}$ yields a corresponding compactification of $T_{g;1}$. Our first sequence of compactifications of $T_{g;1}$ is obtained in this way using the compactifications of $M_{0;b,1}$ by weighted pointed stable curves of \citep{hassett03:_modul}. Let $\epsilon$ be such that $1/b < \epsilon \leq 1$ and let $\o M_{0;b,1}(\epsilon)$ be the space of $(\epsilon;1)$-stable marked curves as in \citep{hassett03:_modul}. Roughly speaking, it is the compactification of $M_{0;b,1}$ in which $\lfloor 1/\epsilon \rfloor$ of the $b$ points of $\Sigma$ are allowed to collide.
\begin{introtheorem}\label{intro:first_seq}
  Let $\o{\orb T}_{g;1}(\epsilon)$ be the moduli stack of $(\phi \from C \to P; \sigma)$, where $\phi$ is an $\epsilon$-admissible triple cover and $\sigma \in P$ a point away from $\br\phi$. Then $\o {\orb T}_{g;1}(\epsilon)$ is a smooth proper Deligne--Mumford stack with a projective coarse space containing $T_{g;1}$ as a dense open subspace and admitting an extension of the branch morphism $\br \from \o {T}_{g;1}(\epsilon) \to \o M_{0;b,1}(\epsilon)$. 
\end{introtheorem}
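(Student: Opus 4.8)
The plan is to deduce Theorem~\ref{intro:first_seq} from the general construction of \citep{deopurkar12:_compac_hurwit} by feeding it the specific branch-divisor compactification $\o M_{0;b,1}(\epsilon)$ of \citep{hassett03:_modul}. Recall that the main result of \citep{deopurkar12:_compac_hurwit} attaches, to a \emph{suitable} modular compactification of the space of branch data, a moduli stack of admissible covers that is a smooth proper Deligne--Mumford stack with projective coarse space and an extension of the branch morphism. Accordingly, the proof splits into two tasks: first, to check that $\o M_{0;b,1}(\epsilon)$ is a legitimate input for that machine, and second, to identify the resulting admissible covers with the $\epsilon$-admissible triple covers in the statement. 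The notion of $\epsilon$-admissibility is precisely the degree-$3$ specialization of the admissibility condition dictated by the Hassett weight $\epsilon$: where $\o M_{0;b,1}(\epsilon)$ permits up to $\lfloor 1/\epsilon\rfloor$ of the branch points to collide, the cover is required to degenerate in the prescribed local fashion.

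To verify suitability, I would start from Hassett's theorem that $\o M_{0;b,1}(\epsilon)$ is a smooth proper Deligne--Mumford stack with projective coarse space, carrying a universal marked curve $(\orb P, \Sigma, \sigma)$ with $\Sigma$ the universal branch divisor of degree $b = 2g+4$ and $\sigma$ a section disjoint from $\Sigma$. The hypotheses of \citep{deopurkar12:_compac_hurwit} concern the \'etale-local structure of $(\orb P,\Sigma)$: near a point where $j \le \lfloor 1/\epsilon\rfloor$ simple branch points collide on a smooth component of $\orb P$, or where $\orb P$ acquires a node as points bubble off, the pair has the expected versal deformation, and I would check that this matches the local model the construction requires. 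The marked section $\sigma$ lies away from both $\Sigma$ and the nodes, so it merely rigidifies and does not interact with the branching; the range $1/b < \epsilon \le 1$ is exactly what keeps $(\Sigma,\sigma)$ Hassett-stable while leaving the cover connected of the correct genus $g$.

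Granting suitability, \citep{deopurkar12:_compac_hurwit} produces $\o{\orb T}_{g;1}(\epsilon)$ as a smooth proper Deligne--Mumford stack together with the branch morphism $\br$, and properness comes from the stable-reduction statement packaged in that result. It remains to collect the remaining claims. The branch morphism $\br$ is quasi-finite---over a fixed branch datum there are only finitely many triple covers with that branch locus by the Riemann existence theorem---and proper, hence finite; since $\o M_{0;b,1}(\epsilon)$ has projective coarse space, so does $\o{\orb T}_{g;1}(\epsilon)$. Finiteness of automorphisms, and hence the Deligne--Mumford property, follows because an automorphism of $(\phi \from C \to P;\sigma)$ induces an automorphism of the marked branch datum $(P;\Sigma;\sigma)$, which ranges in a finite group as $\o M_{0;b,1}(\epsilon)$ is Deligne--Mumford, and for each such the lifts to $C$ form a finite set since $\phi$ is a finite cover of a connected curve. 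Finally, the open locus where $\Sigma$ is reduced and the cover is smooth and simply branched is exactly $T_{g;1}$, and its density follows from the irreducibility of $T_{g;1}$ recorded in the excerpt together with the fact that every boundary cover smooths out.

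The main obstacle, I expect, is the smoothness of $\o{\orb T}_{g;1}(\epsilon)$ along the boundary, which is precisely the step where the geometry of triple covers is unavoidable. Concretely, one must classify the degenerate triple covers lying over a configuration of $j$ colliding simple branch points and show that their deformations are unobstructed uniformly in $\epsilon$. The natural tool is the structure theorem for triple covers (Miranda, Casnati--Ekedahl), which presents $\phi$ as a rank-$2$ bundle $E$ on the base equipped with a section of $\Sym^3 E \otimes (\det E)^{-1}$; reducing the deformation problem to this data, one would verify that the relevant obstruction group $H^1$ vanishes for every local type allowed by the weight $\epsilon$. Showing that colliding simple branch points of a degree-$3$ cover produce only such unobstructed local models---and hence that $\o M_{0;b,1}(\epsilon)$ really is a suitable input---is the crux on which the whole theorem rests.
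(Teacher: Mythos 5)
Your overall strategy coincides with the paper's: the paper defines $\o{\orb T}_{g;1}(\epsilon)$ as the fiber product $\st T_{g;1} \times_{\st M_{0;b,1}} \o{\orb M}_{0;b,1}(\epsilon)$, so that smoothness, irreducibility, and density of $T_{g;1}$ are inherited from the big stack $\st T_{g;1}$ (via the companion paper's smoothness theorem for covers of degree at most three), while properness and the Deligne--Mumford property follow from properness and DM-type of $\br \from \st H^3 \to \st M$ together with properness of the Hassett space. The genuine gap is your projectivity argument. You claim $\br$ is quasi-finite ``by the Riemann existence theorem,'' hence finite, so that projectivity descends from $\o M_{0;b,1}(\epsilon)$. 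This fails for small $\epsilon$: Riemann existence controls the cover away from the branch points---equivalently the normalization $\tw C \to P$ and its monodromy, which is indeed finite data---but not the singular structure of $C$ over a non-reduced point of the branch divisor. Fixing the branch divisor fixes the discriminant, not the subalgebra $O_C \subset O_{\tw C}$, and these ``crimps'' have continuous moduli. Concretely, if a point of $\Sigma$ has multiplicity $b_i \geq 6$ (which is allowed once $\epsilon \leq 1/6$, well within the range $1/b < \epsilon \leq 1$), then \autoref{thm:crimps_triple_etale} produces a positive-dimensional family of crimps with that local branch divisor, and \autoref{thm:dim_higher_collisions} records exactly this positive-dimensionality of the fibers of $\br$. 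So $\br$ is proper but not finite, and your deduction collapses precisely where the new phenomena of this theory begin. The paper instead proves projectivity by exhibiting an ample class: by the companion paper, the Hodge class $\lambda$ is relatively \emph{anti-ample} along $\br \from \o T_{g;1}(\epsilon) \to \o M_{0;b,1}(\epsilon)$, so $-\lambda$ plus a large multiple of the pullback of an ample class from the projective Hassett coarse space is ample.

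A secondary correction: smoothness along the boundary is not an open ``crux'' requiring a new obstruction-vanishing computation via the Miranda structure theorem. It is already part of the cited machinery, and the argument is different from the one you sketch: the deformation space of a point of $\st T_{g;1}$ is, up to smooth factors, the product of the deformation spaces of the singularities of $\orb C$; since the cover is \'etale over the nodes of $\orb P$, those singularities have embedding dimension at most three, and spatial curve singularities are smoothable with smooth deformation spaces. Uniformity in $\epsilon$ is then automatic, because each $\o{\orb T}_{g;1}(\epsilon)$ is an open substack of the single smooth stack $\st T_{g;1}$, not the output of a construction rerun separately for each weight.
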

\begin{proof}
  See [\autoref{thm:first_seq}] in the main text.
\end{proof}
The reader may think of an $\epsilon$-admissible cover $\phi \from C \to P$ to be a generically \'etale cover, admissible over the nodes of $P$, in which at most $\lfloor 1/\epsilon \rfloor$ of the branch points are allowed to coincide; the precise definition of $\o{\orb T}_{g;1}(\epsilon)$ is in \autoref{sec:hd}. Clearly, we may restrict the values of $\epsilon$ to reciprocals of integers. We thus arrive at the sequence
\begin{equation}\label{eqn:Tg1_contractions}
  \o T_{g;1}(1) \dashrightarrow \cdots \dashrightarrow \o T_{g;1}(1/j)\dashrightarrow \o T_{g;1}(1/(j+1))\dashrightarrow \cdots \dashrightarrow \o T_{g;1}(1/(b-1)).
\end{equation}

What is the nature of these modifications? For $\epsilon \geq \epsilon'$, the birational map $\o T_{g;1}(\epsilon) \dashrightarrow \o T_{g;1}(\epsilon')$, lying over the divisorial contraction $\o M_{0;b,1}(\epsilon) \to \o M_{0;b,1}(\epsilon')$, is regular in codimension one, but not regular in general. It contracts the boundary divisors in $\o T_{g;1}(\epsilon)$ lying over the contracted boundary divisors of $\o M_{0;b,1}(\epsilon)$. In the final model, $(b-1)$ of the $b$ branch points are allowed to coincide.

To what extent does the sequence \eqref{eqn:Tg1_contractions} realize the MMP for $\o T_{g;1}(1)$? In this sense, it is incomplete in two ways. Firstly, the  maps $\o{\sp T}_{g;1}(1/j) \dashrightarrow \o{\sp T}_{g;1}(1/(j+1))$ are not everywhere regular. Secondly, the final model $\o T_{g;1}(1/(b-1))$ is not what is expected to be an ultimate model. It is easy to see that $T_{g;1}$ is unirational (in particular, uniruled). In this case, the MMP is expected to end with a Fano fibration. The final model of \eqref{eqn:Tg1_contractions} is not of this form.

How can we extend \eqref{eqn:Tg1_contractions} to reach a Fano fibration? The bulk of this paper is devoted to answering this question. In the final model of \eqref{eqn:Tg1_contractions}, $(b-1)$ of the $b$ branch points were allowed to coincide. We could now allow all $b$ to coincide. Note, however, that there is no compactification of $M_{0;b,1}$ that allows all $b$ points of $\Sigma$ to coincide. Therefore, such a compactification of $T_{g;1}$ (if one exists) must be constructed `by hand'; we cannot rely on the machinery of \citep{deopurkar12:_compac_hurwit}. Consider for a moment a marked cover $(\phi \from C \to \P^1; \sigma)$ with concentrated branching; say its branch divisor is concentrated at a point $p \in \P^1$. The moduli of such a cover is completely captured by the moduli of the cover around $p$, in other words, by the moduli of the `trigonal singularity.' Which trigonal singularities should  we allow in our moduli problem? Correspondingly, which triple covers must we disallow? It turns out that there is not one, but many ways of arranging this trade-off, leading to a sequence of models of $T_{g;1}$ in which covers with concentrated branching are allowed. As expected, the trade-off involves a local and a global restriction.

The global restriction is in terms of the classical Maroni invariant of a trigonal curve. Let $C$ be a reduced, connected curve of genus $g$ and $\phi \from C \to \P^1$ a triple cover. We have
\[ \phi_* O_C/O_{\P^1} \cong O_{\P^1}(-m) \oplus O_{\P^1}(-n),\]
(as $\P^1$-modules) for some positive integers $m$, $n$ with $m+n = g+2$. The \emph{Maroni invariant} $M(\phi)$ is defined to be the difference $|n-m|$. 

The local restriction is in terms of a similarly defined `$\mu$ invariant' of a trigonal singularity. Let $C$ be the germ of a reduced, connected curve singularity expressed as a triple cover $\phi \from C \to \Delta$, where $\Delta$ is the germ of a smooth curve. Let $\tw C \to C$ be the normalization, and assume that $\tw C \to \Delta$ is \'etale. We have
\[ O_{\tw C}/O_C \cong k[t]/t^m \oplus k[t]/t^n,\]
(as $\Delta$-modules) for some positive integers $m$, $n$ with $m + n = \delta$, the delta invariant of $C$. The \emph{$\mu$ invariant} $\mu(\phi)$ is defined to be the difference $|n-m|$. Note that if $C$ is a piece of a global curve of arithmetic genus $g$, then $\delta = g+2$.

The local-global trade-off is encoded in the notion of an `$l$-balanced triple cover.'
\begin{definition}\label{def:l-balanced}
  Let $C$ be a reduced, connected curve and $\phi \from C \to \P^1$ a triple cover. We say that $\phi$ is \emph{$l$-balanced} if
  \begin{compactenum}
  \item the Maroni invariant of $\phi$ is at most $l$;
  \item if $\br\phi$ is concentrated at one point, then the $\mu$ invariant of $\phi$ is greater than $l$.
  \end{compactenum}
\end{definition}
\begin{introtheorem}\label{intro:second_seq}
  Let $\o {\orb T}_{g;1}^l$ be the moduli stack of $(\phi \from C \to P; \sigma)$, where $P$ is a copy of $\P^1$, $C$ a curve of genus $g$, $\phi$  an $l$-balanced triple cover, and $\sigma \in P$ a point away from $\br\phi$. Then $\o {\orb T}_{g;1}^l$ is a smooth proper Deligne--Mumford stack admitting a projective coarse space $\o T_{g;1}^l$ birational to $T_{g;1}$.
\end{introtheorem}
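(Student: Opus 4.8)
The plan is to build $\o{\orb T}_{g;1}^l$ by hand from the structure theory of triple covers and then to verify, in turn, that it is a smooth Deligne--Mumford stack, that it is proper, and that its coarse space is projective and birational to $T_{g;1}$. By the theorem of Miranda (equivalently Casnati--Ekedahl), a triple cover $\phi \from C \to \P^1$ is the same datum as its dual Tschirnhausen bundle $E = (\phi_* O_C/O_{\P^1})^{\vee}$, a rank-two bundle on $\P^1$ with $\det E = O(g+2)$, together with a section $\eta \in H^0(\P^1, \Sym^3 E \otimes (\det E)^{-1})$ cutting out $C$ as a divisor of relative degree three in $\P(E)$; reducedness and connectedness of $C$ are open conditions on $\eta$. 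On $\P^1$ one has $E \cong O(m) \oplus O(n)$ with $m + n = g+2$, and the Maroni bound $|m - n| \le l$ restricts the splitting type to finitely many values. I would therefore first assemble the ambient stack: the cubic forms $\eta$ form a vector bundle over the smooth Artin stack of rank-two bundles of determinant $O(g+2)$---here one checks $H^1(\P^1, \Sym^3 E \otimes (\det E)^{-1}) = 0$ in the relevant range, so that $h^0$ is constant across splitting types and the strata glue into a single smooth total space---and $\o{\orb T}_{g;1}^l$ is then the locally closed substack cut out by the two conditions of \autoref{def:l-balanced} together with the marking $\sigma$.

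Smoothness is inherited from this presentation: the locus of reduced connected covers is open in the cubic forms, so the total space is smooth and $\o{\orb T}_{g;1}^l$ is an open substack of it. For the Deligne--Mumford property I would reduce to finiteness of automorphisms. An automorphism of $(\phi; \sigma)$ is an automorphism of the pair $(E, \eta)$ compatible with the marked fibre; the scalars in $\Aut E$ rescale $\eta$, while the remaining automorphisms permute the three sheets over a general point. For a cover whose branch divisor is not concentrated the stabilizer of $\eta$ is finite, and the marking $\sigma$ rigidifies the residual $\PGl_2$ acting on the target. The only covers with a positive-dimensional automorphism group are the torus-homogeneous ones with concentrated branching; these are precisely the ones excluded by the \emph{strict} inequality $\mu > l$ in condition (2), so that every object of $\o{\orb T}_{g;1}^l$ has finite automorphisms and the stack is Deligne--Mumford.

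The heart of the argument, and the step I expect to be the main obstacle, is properness, which I would establish through the valuative criterion. Given an $l$-balanced family over a punctured disc, one spreads out the bundle and the cubic form over the disc and must produce a unique $l$-balanced central fibre. The naive limit may fail balancedness in one of two opposite ways: the bundle $E$ may degenerate to a more unbalanced splitting type (the Maroni invariant jumping up, as the splitting type is upper semicontinuous), or the branching may concentrate to a point, producing a trigonal singularity with some $\mu$ invariant. These two degenerations are traded against one another by Hecke modifications of $E$ along the central fibre together with $\PGl_2$ adjustments of the target, and the content of the proof is that the inequalities ``Maroni $\le l$'' and ``$\mu > l$'' are complementary in exactly the right way: the local--global trade-off selects one and only one modification whose central fibre is $l$-balanced. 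The asymmetry between the non-strict Maroni bound and the strict $\mu$ bound is what prevents both the appearance of two competing limits (which would violate separatedness) and the absence of any limit (which would violate existence). I expect the bulk of the work to be a careful classification of these candidate limits in terms of the two invariants.

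Finally, projectivity of the coarse space would follow from the quotient presentation: realizing the stack of rank-two bundles of bounded Maroni invariant as a global quotient of a quasi-projective scheme by a reductive group, and identifying the $l$-balanced locus in the associated space of cubic forms and marked points with a GIT-semistable locus for an appropriate linearization, exhibits $\o T_{g;1}^l$ as a GIT quotient, hence projective; alternatively one descends an ample class from the parameter space. Birationality to $T_{g;1}$ is then immediate, since a general smooth simply branched cover has reduced---hence non-concentrated---branch divisor, making condition (2) vacuous, while its Maroni invariant takes the generic value $0$ or $1$, which is $\le l$ in the relevant range. Such covers therefore form a common dense open substack of $T_{g;1}$ and of $\o{\orb T}_{g;1}^l$, and the two are birational.
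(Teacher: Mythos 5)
Your construction of the ambient stack rests on a claim that is false in the stated range: that $H^1(\P^1,\Sym^3 E \otimes \det\dual{E})=0$ for every splitting type allowed by the Maroni bound, so that the cubic forms glue into a vector bundle over the stack of rank-two bundles. For $E \cong O(m)\oplus O(n)$ with $m+n=g+2$ and $m\le n$ one has $\Sym^3E\otimes\det\dual{E} \cong O(2m-n)\oplus O(m)\oplus O(n)\oplus O(2n-m)$, and $h^1(O(2m-n))=n-2m-1>0$ as soon as $n\ge 2m+2$, that is, as soon as the Maroni invariant is at least $(g+6)/3$. Since the theorem is asserted for all $l\le g$, the $l$-balanced locus contains such covers whenever $l\ge(g+6)/3$, so $h^0$ jumps, there is no vector bundle, and smoothness does not follow from your presentation. (The conclusion is still true, but the paper obtains it from deformation theory: the covers are \'etale over the nodes, so the singularities of the total space of the cover have embedding dimension at most three, and spatial curve singularities have smooth deformation spaces; see \autoref{thm:irred_and_dim}.) Your Deligne--Mumford argument likewise rests on an unproved classification: one must actually show that every $\G_m$-invariant cover with concentrated branching satisfies $M(\phi)=\mu(\phi)$, and hence is excluded by the \emph{conjunction} of conditions (1) and (2) of \autoref{def:l-balanced}, not by condition (2) alone as you state. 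The paper bypasses any such classification: separatedness makes $\Aut(\phi;\sigma)$ proper, $\Aut(\P^1)$ is affine, so the image of $\Aut(\phi;\sigma)\to\Aut(\P^1)$ is finite.

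The two substantive halves of the theorem are not actually proved in your proposal. For properness you correctly identify the mechanism (trading Maroni degeneration against concentration of branching by modifications along the central fibre), but you supply none of the content. What is needed is, for existence of limits, the computation that the transformation $[X:Y]\mapsto[tX:Y]$ changes the extension class of the Tschirnhausen bundle by $e(X,Y)\mapsto t^{m-n+1}e(tX,Y)$, which acts by strictly negative weights, so that finitely many such modifications produce a central fibre of Maroni invariant at most $l$ while the $\mu$ invariant stays greater than $l$ (\autoref{eqn:pull_push_vb} and the final step of the proof of \autoref{thm:tg3d}); and, for uniqueness, the blow-down lemma asserting that blowing down a trigonal curve of Maroni invariant $M$ produces a singularity with $\mu\le M$ (\autoref{thm:blow_down_trigonal_curves}), which together with $M\le\mu$ for covers with concentrated branching (\autoref{thm:Maroni_less_than_mu}) yields the contradiction $\mu>l\ge M\ge\mu$ for two competing limits. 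Your remark that the strict and non-strict inequalities are ``complementary in exactly the right way'' restates what must be proved; it is not an argument. Finally, for projectivity you appeal to GIT, but identifying the $l$-balanced locus with a semistable locus for some linearization is itself a substantial unproved claim---there is no evident quasi-projective parameter space with a reductive group action that accommodates all allowed splitting types at once---and it runs counter to the paper, which deliberately avoids GIT: projectivity there occupies \autoref{sec:picard} and \autoref{sec:flip_ample}, proceeding by computing the nef cone $\ideal{D_l, D_{l+2}}$ via positivity estimates for families with $\Br^2\ge 0$ and $\Br^2<0$ and then applying the Nakai--Moishezon criterion for proper algebraic spaces (\autoref{thm:flip_nef}, \autoref{thm:flip_projectivity}). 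Your birationality argument, by contrast, is correct and agrees with the paper.
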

\begin{proof}
See \autoref{thm:tg3d} and \autoref{thm:flip_projectivity} in the main text.
\end{proof}
Since both the Maroni invariant and the $\mu$ invariant lie between $0$ and $g$ and are congruent to $g$ modulo $2$, we may assume that $l$ is also of this form. We thus get a sequence of yet more compactifications
\begin{equation}\label{eqn:flips}
 \o{\sp T}_{g;1}^g \dashrightarrow
 \dots \dashrightarrow
 \o{\sp T}_{g;1}^l \dashrightarrow \o{\sp T}_{g;1}^{l-2}
 \dashrightarrow \dots
 \dashrightarrow \o{\sp T}_{g;1}^{0 \text{ or } 1}.
\end{equation}
It is immediate that for $l = g$, the space $\o T_{g;1}^l$ is nothing but $\o T_{g;1}(1/(b-1))$. As $l$ decreases, the loci of Maroni unbalanced curves are replaced one-by-one by loci of covers with concentrated branching, culminating in a space where only the most balanced covers are kept.

We must again ask the same questions as before: what is the nature of the modifications in the sequence \eqref{eqn:flips}, and does it reach the model predicted by the MMP? The answers are a part of the following theorem that offers a detailed analysis of these transformations.
\begin{introtheorem}\label{intro:flips}
  Denote by $\beta_l$ the rational map $\o T_{g;1}^l \dashrightarrow \o T_{g;1}^{l-2}$.
  \begin{compactenum}
  \item The map $\beta_g$ extends to a morphism that contracts the hyperelliptic divisor to a point. For even $g$, the map $\beta_2$ extends to a morphism that contracts the Maroni divisor to a $\P^1$. All the other $\beta_l$ are flips.
  \item For even $g$, the final space $\o T_{g;1}^0$ is Fano of Picard rank one. It is the quotient of a weighted projective space by $\s_3$. For odd $g$, the final space $\o T_{g;1}^1$ admits a morphism to $\P^1$ whose fibers are Fano of Picard rank one.
  \item For $0 < l < g$, the Picard group of $\o T_{g;1}^l$ has rank two. For $g \neq 3$, it is generated by the Hodge class $\lambda$ and the class of the boundary divisor $\delta$. The canonical divisor is given by
  \[
  K = \frac{2}{(g+2)(g-3)}\left(3(2g+3)(g-1) \lambda - (g^2-3)\delta\right).
  \]
  The nef cone of $\o T_{g;1}^l$ in the $\ideal{\lambda, \delta}$ plane is spanned by $D_l$ and $D_{l+2}$, where 
  \[ 
  \left(\frac{g-3}{2}\right)D_l = \left\{(7g+6)\lambda -g \delta\right\} + \frac{l^2}{g+2}\cdot \left\{9\lambda - \delta\right\}.
  \] 
  (\autoref{fig:nef_cones} depicts this Mori chamber decomposition.)
\end{compactenum}
\end{introtheorem}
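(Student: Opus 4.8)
The three parts are governed by a single mechanism, so the plan is to isolate it first. By \autoref{intro:second_seq} each $\o{\sp T}_{g;1}^l$ is a smooth proper stack with projective coarse space, and a short check with \autoref{def:l-balanced} shows that the only covers which are $l$-balanced but not $(l-2)$-balanced are those of Maroni invariant exactly $l$ (by the parity constraint), while the only covers that are $(l-2)$-balanced but not $l$-balanced are those with concentrated branching and $\mu=l$. Hence $\beta_l$ is an isomorphism away from the closure $M_l$ of the Maroni-$l$ locus in the source and the closure $Z_l$ of the concentrated-branching, $\mu=l$ locus in the target. I would prove this reduction first, so that the whole analysis becomes the local geometry along $M_l$ and $Z_l$.

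For part (1) I would compute the codimensions of $M_l$ and $Z_l$ and the local structure of $\beta_l$ across them. Using the Casnati--Ekedahl description, a triple cover is a Tschirnhausen bundle $E$ (rank two, $\deg E = g+2$, splitting type recording the Maroni invariant) together with a section $\eta \in H^0(\Sym^3 E \otimes \det E^{-1})$; deforming $E$ inside a fixed splitting type versus smoothing it out produces a $\G_m$-action whose weights on the normal directions to $M_l$ and $Z_l$ are governed by the cohomology $H^1(\P^1,\mathcal{H}om(O(n),O(m)))$ of the destabilizing Hom-bundle. The codimensions then jump so that $M_l$ is a divisor exactly for $l=g$ (the hyperelliptic divisor of $\o{\sp T}_{g;1}^g$) and, when $g$ is even, for $l=2$ (the Maroni divisor), giving the divisorial contractions to $Z_g$ (a point) and to $Z_2 \cong \P^1$; in the complementary range $2<l<g$ both $M_l$ and $Z_l$ have codimension at least two, and the weights above present the common local model as a weighted blow-down of $M_l$ followed by a weighted blow-up of $Z_l$, i.e. a flip.

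For part (2) I would exploit the rigidity of the balanced bundle. When $g$ is even, $E = O((g+2)/2)^{\oplus 2}$ is the unique balanced bundle, so after fixing $(\P^1,\sigma)$ the moduli problem linearizes: $\o{\sp T}_{g;1}^0$ becomes the quotient of $\P\!\left(H^0(\Sym^3 E \otimes \det E^{-1})\right)$ by the residual group $\Aut(E)\rtimes \Aut(\P^1,\sigma)$. Its unipotent and torus parts reduce the projective space to a weighted projective space, leaving a residual finite symmetry $\s_3$ permuting the three branches, and a direct computation of the weights shows the $\s_3$-quotient is Fano of Picard rank one. For odd $g$ the balanced bundle $O((g+1)/2)\oplus O((g+3)/2)$ carries a two-dimensional space of off-diagonal automorphisms; projectivizing this datum yields the morphism to $\P^1$, with each fibre the analogous weighted-projective quotient, hence Fano of Picard rank one.

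Part (3) is where I expect the main obstacle to lie. That $\Pic_\Q = \ideal{\lambda,\delta}$ has rank two for $0<l<g$ follows because the divisorial contraction $\beta_g$ brings the rank down to two, which the intervening flips preserve; that $\lambda$ and $\delta$ generate for $g\ne 3$ I would verify directly, and the formula for $K$ I would obtain by Grothendieck--Riemann--Roch applied to the universal triple cover together with the deformation theory of triple covers, expressing $K$ through the tautological classes and reducing to $\lambda$ and $\delta$. The hard part is the nef cone. Each model is a chamber in the two-dimensional space $\ideal{\lambda,\delta}$, and its walls $D_l,D_{l+2}$ are cut out by the curve classes contracted by the adjacent transformations: a test curve $C_l\subset M_l$ that $\beta_l$ contracts, and the curve in $Z_{l+2}$ created by $\beta_{l+2}$. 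I would produce an explicit rational test curve in each relevant stratum, compute $\lambda\cdot C_l$ and $\delta\cdot C_l$ from the variation of the Tschirnhausen bundle and its discriminant along it, and solve $D_l\cdot C_l = 0$. The appearance of the $l^2/(g+2)$ term in $D_l$ is precisely the signature of the $l$-dependent normal weights found in part (1), so the delicate point is to carry this intersection computation uniformly in $l$ and match the walls of the Mori decomposition exactly to the flip values.
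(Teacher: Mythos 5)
Your opening reduction (the only covers that are $l$-balanced but not $(l-2)$-balanced have Maroni invariant exactly $l$, and conversely the new covers have concentrated branching with $\mu = l$) and your codimension counts via the Tschirnhausen-bundle description do match the paper's route (\autoref{thm:dim_maroni}, \autoref{thm:dim_mu}). The genuine gap is in part (3), and it is the technical heart of the result: producing test curves $C_l$ in the exceptional strata and solving $D_l \cdot C_l = 0$ only bounds the nef cone from \emph{outside}. It shows that any ample class lies in the interior of $\ideal{D_l, D_{l+2}}$, hence $\mathrm{Nef}(\o T_{g;1}^l) \subseteq \ideal{D_l, D_{l+2}}$; it cannot show the reverse inclusion, namely that $D_l$ and $D_{l+2}$ are non-negative on \emph{every} complete curve in $\o T_{g;1}^l$. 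For the intermediate walls ($2 < l < g$) there is no morphism available whose pullback would exhibit $D_l$ as nef, and the abstract picture of ``chambers of small modifications tiling the movable cone'' presupposes that the $\beta_l$ are genuine flips over a common contraction target --- which is part of what is being proven, so that route is circular. The paper's proof of nefness (\autoref{thm:flip_nef}) splits an arbitrary one-parameter family according to the sign of $\Br^2$: for $\Br^2 \geq 0$ it uses a balancing trick together with a Moriwaki-type inequality $4c_2 - c_1^2 \geq 0$ for fiberwise-balanced bundles (\autoref{thm:generic_positivity}), and for $\Br^2 < 0$ (families of crimps with concentrated branching) it analyzes the inclusions $I_\zeta^n E \subset F \subset I_\zeta^m E$ directly (\autoref{thm:crimp_positivity}); ampleness then requires Nakai--Moishezon for algebraic spaces plus a perturbation lemma (\autoref{thm:perturbation}), since Kleiman's criterion can fail for algebraic spaces. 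None of this is in your plan, and without it neither the nef cones, nor the flip assertion of part (1) (which needs the position of $K$ relative to the walls, \autoref{thm:flip_K}), nor the Fano-ness of the odd-genus fibers is established.

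Two secondary gaps. First, in part (1) you assert the contractions from codimension counts and $\G_m$-weights on normal directions, but you give no mechanism for extending $\beta_g$ and $\beta_2$ to morphisms; knowing the codimensions of the exceptional loci does not extend a rational map. The paper does this via a pointwise-extension criterion (continuity in one-parameter families, \autoref{thm:pointwise_morphism}), which requires identifying $\Exc(\beta_g^{-1})$ as a single point (the $D_{2g+2}$-singularity cover) and, for $\beta_2$, matching the cross-ratio of a Maroni-unbalanced cover with the principal part of the limiting crimp (\autoref{thm:chi_rho_equal}, \autoref{thm:blowdown_cross_ratio}); this matching is precisely what identifies the target $\P^1$. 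Second, your rank computation in part (3) (``the divisorial contraction brings the rank down to two'') presupposes that $\o T_{g;1}^g$ has Picard rank three, which you never establish; the paper instead computes the rank directly on the common open locus, using the vanishing of $\Pic_\Q$ for the space of smooth unmarked trigonal covers together with an excision sequence for the divisors $T$ and $\delta$ (\autoref{thm:picard}). Your part (2) for even $g$ is essentially the paper's argument (quotient presentation simplified to $[(\A^{2g+3}\setminus 0)/(\s_3 \times \G_m)]$), but for odd $g$ your claim that the fibers of the cross-ratio map are ``the analogous weighted-projective quotients'' is unsupported --- the paper explicitly lacks such a description and instead deduces Picard rank one and anti-ampleness of $K_Y$ from the nef-cone computation, so this too routes through the missing positivity results.
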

\begin{proof}
See \autoref{thm:hyperelliptic}, \autoref{thm:maroni}, \autoref{thm:final_even}, \autoref{thm:final_odd}, and \autoref{thm:mori_cones} in the main text.
\end{proof}
\begin{figure}[h]
  \centering
  \begin{tikzpicture}[math,scale=1,map]
    \draw[->] (0,0) -- (0:5) node [below] {-\delta};
    \draw[->] (0,0) -- (90:5) node [left] {\lambda};
    \path[dotted]
    (0,0) edge node [below, sloped, near end] {7+\frac6g \sim \langle D_0 \rangle} (20:5)
    (0,0) edge node [above, sloped, near end] {8+\frac{4}{g+1} \sim \langle D_{g+2}\rangle } (75:5)
    (0,0) edge node [below, sloped, near end] {6 \approx \langle K \rangle} (10:5)
    ;
    \fill[black!10!white] (0,0) -- (40:5) -- (55:5) -- (0,0);
    \fill[black!3!white] (0,0) -- (55:5) -- (70:5) -- (0,0);
    \fill[black!3!white] (0,0) -- (25:5) -- (40:5) -- (0,0);
    
    \draw
    (0,0) -- (25:5) node [right] {D_{l-2}}
    (0,0) -- (40:5) node [right] {D_{l}}
    (0,0) -- (55:5) node [above] {D_{l+2}}
    (0,0) -- (70:5) node [above] {D_{l+4}};
    
    \draw 
    (48:4.25) node {\o{\sp T}_{g;1}^l}
    (62:4.25) node {\o{\sp T}_{g;1}^{l+2}}
    (32:4.25) node {\o{\sp T}_{g;1}^{l-2}};
  \end{tikzpicture}
  \caption{The Mori chamber decomposition of $\Pic_\Q$ given by the models $\o T_{g;1}^l$ and an approximate location of the ray spanned by the canonical class $K$.}
  \label{fig:nef_cones}
\end{figure}
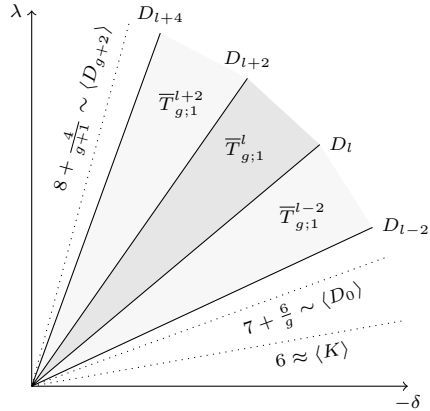

Finally, let us highlight a few key points. Firstly, the singularities that appear on the curves parametrized by our compactifications of $T_{g;1}$ are in a sense much nastier than the usually considered candidates for alternate compactifications of $M_g$ or other related moduli spaces of curves. In particular, our singularities are not even necessarily Gorenstein! Secondly, our approach is purely functorial---there is no GIT. We first construct the moduli stack, use the Keel--Mori theorem to get a coarse algebraic space and  prove projectivity by exhibiting ample line bundles. 

\subsubsection*{Organization}
In \autoref{sec:hd}, we recall the results of \citep{deopurkar12:_compac_hurwit}. They not only yield \autoref{intro:first_seq} at once, but also form the technical basis of all further constructions. In \autoref{sec:lbalanced}, we show that the stack $\o{\orb T}_{g;1}^l$ of marked $l$-balanced covers is a proper smooth Deligne--Mumford stack, yielding \autoref{intro:second_seq} except the projectivity of the coarse spaces. In \autoref{sec:prelim}, we take up the study of the coarse spaces $\o T_{g;1}^l$, starting with general results about triple covers and triple point singularities. In \autoref{sec:hyperelliptic} and \autoref{sec:maroni}, we treat the hyperelliptic and the Maroni contractions, respectively. The next major goal is to obtain ample line bundles and the Mori chamber decomposition. To that end, we study the Picard group in \autoref{sec:picard} and compute the ample cones in \autoref{sec:flip_ample}. We describe the final models in \autoref{sec:flip_final}.

So far, the marked fiber has been unramified, that is, of type $(1,1,1)$. The entire picture can be carried over to the space of trigonal curves marked with a fiber of any given ramification type: $(1,1,1)$, $(1,2)$ or $(3)$. This generalization is the content of the final section, \autoref{sec:ramified_fiber}.

\subsubsection*{Conventions}
We work over an algebraically closed field $k$ of characteristic zero. All schemes are understood to be locally Noetherian schemes over $k$. Unless specified otherwise, ``point'' means  ``$k$-point.'' An \emph{algebraic stack} or an \emph{algebraic space} is in the sense of \citep{laumon00:_champ}. 

The projectivization of a vector bundle $E$ is denoted by $\P E$; this is the space of one-dimensional \emph{quotients} of $E$. The space of one-dimensional sub-bundles of $E$ is denoted by $\P_\sub E$. A morphism $X \to Y$ is \emph{projective} if it factors as a closed embedding $X \into \P E$ followed by $\P E \to Y$ for some vector bundle $E$ on $Y$.

A \emph{curve} over a scheme $S$ is a flat, proper morphism whose geometric fibers are connected, reduced and purely one-dimensional. The source of this morphism could be a scheme, an algebraic space or a Deligne--Mumford stack; in the last case it is usually denoted by a curly letter. \emph{Genus} always means arithmetic genus. By the genus of a stacky curve, we mean the genus of its coarse space. A \emph{cover} is a representable, flat, surjective morphism. The symbol $\mu_n$ denotes the group of $n$th roots of unity; its elements are usually denoted by $\zeta$.

\section{The big Hurwitz stack and the spaces of weighted admissible covers}\label{sec:hd}
We recall the big Hurwitz stack $\st H^d$ from \citep{deopurkar12:_compac_hurwit}, beginning with the notion of pointed orbinodal curves. Let $S$ be a scheme. A \emph{pointed orbinodal curve} over $S$ is the data $(\orb P \to P \to S; \sigma_1, \dots, \sigma_n)$, where $\orb P \to S$ is a proper Deligne--Mumford stack which is an at worst nodal curve, $\orb P \to P$  the coarse space and $\sigma_1, \dots, \sigma_n: S \to P$ sections such that
\begin{itemize}
\item \'etale locally near a node, $\orb P \to P$ has the form
  \[ [\spec O_S[u,v]/(uv-t) / \mu_r] \to \spec k[x,y]/(xy-t^r),\]
  for some $t \in O_S$ with $\mu_r$ acting by $u \mapsto \zeta u, v \mapsto \zeta^{-1}v$;
\item \'etale locally near a section, $\orb P \to P$ has the form
  \[ [\spec O_S[u] / \mu_r] \to \spec O_S[x],\]
  with $\mu_r$ acting by $u \mapsto \zeta u$.
\end{itemize}

The big Hurwitz stack $\st H^d$ is defined by
\[ \st H^d(S) = \{(\phi \from \orb C \to \orb P; \orb P \to P \to S; \sigma_1, \dots, \sigma_n)\},\]
where $(\orb P \to P \to S; \sigma_1, \dots, \sigma_n)$ is a pointed orbinodal curve and $\phi$ a degree $d$ cover such that for every geometric point $s \to S$, the following are satisfied:
\begin{itemize}
\item $\phi$ is \'etale over the generic points of the components, the nodes, and the marked points of $\orb P_s$;
\item the stack structure is minimal in the following sense: the classifying map $\orb P_s \setminus \br\phi \to B\s_d$ corresponding to the \'etale cover obtained by restricting $\phi$ is representable.
\end{itemize}
\begin{remark}
What is the purpose of the orbi-structure? The orbi-structure on the nodes and on the marked points serve two different purposes. The orbi-structure on the nodes of $\orb P$ implies that the induced cover of the coarse spaces $C \to P$ is an admissible cover over the nodes of $P$ in the sense of \citep{Harris82:_Kodair_Dimen_Of_Modul_Space_Of_Curves}. Thus, the  orbi-structure on the nodes serves to impose the admissibility condition. On the other hand, an orbi-structure along $\sigma_i$ with automorphism group $\mu_r$ implies that the monodromy of the coarse space cover $C \to P$ around $\sigma_i$ is a permutation of order $r$ in $\s_d$. Thus, the orbi-structure along the marked points serves to (partially) encode the ramification type of $C \to P$ over those points.
\end{remark}

The stack $\st H^d$ should be seen as the stack of `all branched covers of curves.' It admits a branch morphism to a stack $\st M$, which should be seen as the stack of `all branching data.' The stack $\st M$ is defined by
\[ \st M(S) = \{(P \to S; \Sigma; \sigma_1, \dots, \sigma_n)\},\]
where $P \to S$ is an at worst nodal curve, $\Sigma \subset P$ an $S$-flat Cartier divisor supported in the smooth locus of $P \to S$ and $\sigma_1, \dots, \sigma_n \from S \to P$ disjoint sections lying in the smooth locus of $P \to S$ and away from $\Sigma$. The branch morphism $\br \from \st H^d \to \st M$ is simply
\[ \br \from (\phi \from \orb C \to \orb P; \orb P \to P \to S; \sigma_1, \dots, \sigma_n) \mapsto (P \to S; \br\phi; \sigma_1, \dots, \sigma_n).\]
Here the branch divisor $\br\phi$ is defined by the discriminant ideal (see \citep[\autoref*{p1:sec:Ad}]{deopurkar12:_compac_hurwit}). 
\begin{theorem}\label{thm:big_hurwitz}
  $\st H^d$ and $\st M$ are algebraic stacks, locally of finite type. The branch morphism $\br \from \st H^d \to \st M$ is proper and of Deligne--Mumford type.
\end{theorem}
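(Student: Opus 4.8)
The plan is to establish algebraicity of $\st M$ directly, to bootstrap the algebraicity of $\st H^d$ through the branch morphism, and then to prove separately that $\br$ is of Deligne--Mumford type and that it is proper. The finiteness and type statements are comparatively formal; the properness is where the orbinodal structure does its real work, so I expect that to be the main obstacle.

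First I would treat $\st M$. Let $\mathfrak M$ denote the stack of at-worst-nodal curves $P \to S$. This stack is well known to be algebraic and locally of finite type (for instance by Artin's criteria applied to the deformation theory of nodal curves, or as an open substack of the stack of all proper flat purely one-dimensional curves). Over the universal curve $\mathcal P \to \mathfrak M$, the datum of a relative Cartier divisor $\Sigma$ supported in the smooth locus is parametrized by a relative divisor (Hilbert) scheme, which is an algebraic space locally of finite type over $\mathfrak M$; the sections $\sigma_1, \dots, \sigma_n$ are parametrized by fiber powers of the smooth locus of $\mathcal P$, and the conditions that they be disjoint and away from $\Sigma$ are open. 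Hence $\st M$ is an algebraic stack, locally of finite type.

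Next I would realize $\st H^d$ as algebraic. I would start from the fact that the stack of pointed orbinodal (twisted) curves, together with the morphism to its coarse space, is algebraic and locally of finite type (Abramovich--Vistoli, Olsson). Over the universal orbinodal curve $\orb{\mathcal P}$, a degree $d$ cover $\phi$ is the same as a finite locally free $O_{\orb{\mathcal P}}$-algebra of rank $d$; such algebra structures are cut out by a closed condition inside a $\Quot$-type parameter stack, so they form an algebraic stack. The remaining requirements---that $\phi$ be \'etale over the generic points, nodes and markings, and that the classifying map $\orb P_s \setminus \br\phi \to B\s_d$ be representable (minimality)---are locally closed conditions and thus carve out a substack. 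This presents $\st H^d$ as algebraic and locally of finite type. For the type of $\br$, I would observe that away from $\br\phi$ the cover is an \'etale $\s_d$-torsor, whose relative automorphisms form a finite subgroup of $\s_d$, while over the special points the only extra automorphisms are the finite orbifold groups $\mu_r$; consequently the relative inertia of $\br$ is finite and unramified, so $\br$ is representable by Deligne--Mumford stacks, i.e.\ of Deligne--Mumford type. (One does not expect $\st H^d$ or $\st M$ themselves to be Deligne--Mumford, since a bare nodal curve may carry positive-dimensional automorphisms; only the relative statement holds.)

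Finally, for properness I would verify the valuative criterion. Using that $\st M$ is locally of finite type, I may reduce to a discrete valuation ring $R$ with fraction field $K$: given a branch datum over $\spec R$ and a cover over $\spec K$, I must extend the cover over $\spec R$, uniquely, after a finite extension of $R$. The heart is stable reduction for covers. After a finite base change taming the monodromy around the nodes and markings, I would extend the coarse cover $C_K \to P_K$ by normalizing a suitable model of $P_R$ in the function field $K(C_K)$, and then reinstate the canonical orbinodal structure at the nodes and markings prescribed by the local monodromy; this produces the twisted cover $\orb C_R \to \orb P_R$ realizing the limit. Uniqueness is forced because the \'etale-away-from-$\br\phi$ condition together with minimality of the orbi-structure leaves no choices in how to resolve the special points---this is precisely the balanced twisted admissible cover construction, and it is the step I expect to be the most delicate. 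Establishing this existence and uniqueness, rather than the finite-type and Deligne--Mumford-type assertions, is the main obstacle.
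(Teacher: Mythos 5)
Your reduction of the theorem to (i) algebraicity of $\st M$, (ii) algebraicity of $\st H^d$ together with finiteness of the relative inertia of $\br$, and (iii) the valuative criterion is sound, and (i)--(ii) agree in substance with what is actually done (the paper delegates them to the companion paper and only sketches the valuative criterion). The gap is in (iii), at exactly the step you flagged as delicate, and it is a gap of substance, not of detail. Your mechanism for producing and pinning down the limit --- normalize a model of $P$ in $K(C_K)$, then reinstate the orbinodal structure at nodes and markings, ``precisely the balanced twisted admissible cover construction'' --- is the Harris--Mumford/ACV argument, and it only controls the cover where it is \'etale or has admissible local structure: over the generic points of $P_0$, the nodes, and the markings. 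It says nothing over a point $p \in \Sigma|_0$ at which several branch points have collided. That configuration is the entire point of this theorem: $\st M$ allows $\Sigma$ to be non-reduced, the limit cover over $p$ is genuinely singular (a crimp, in general not even Gorenstein), the cover is not \'etale there, and minimality of the orbi-structure is vacuous there; so neither existence nor uniqueness of the extension at $p$ follows from ``\'etale away from $\br\phi$ plus minimality.'' The paper supplies exactly this missing step with \autoref{lem:horrocks}: since $\Sigma$ lies in the smooth locus of $P \to \Delta$, the surface $P$ is regular at $p$, so $\phi_*O_C$ extends across $p$ as a vector bundle by Horrocks' theorem and the algebra structure extends by Hartogs; the same codimension-two extension principle is what forces uniqueness at $p$.

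There is also a more basic failure of normalization as you have stated it: a $K$-point of $\st H^d$ is only required to be finite, flat, and \'etale over generic points, nodes, and markings, so $C_K$ itself may be singular over $\Sigma_\eta$ (crimped covers are legitimate objects). In that case the normalization of $P_R$ in $K(C_K)$ has generic fiber the \emph{normalization} of $C_K$, not $C_K$, so your construction does not extend the given object at all. This can be repaired: first prove separatedness, then use the refinement of the valuative criterion that lets you assume the generic fiber lies in the dense open locus of smooth covers (the paper performs exactly this reduction in the proof of \autoref{thm:tg3d}); for smooth $C_K$ the total space of the correct limit is normal, so the normalization is the right candidate. But you must then still show that this normalization is \emph{flat} over $P$ with branch divisor equal to $\Sigma$ and not larger (which also requires a base change killing the monodromy around the components of $P_0$, not just around nodes and markings), and flatness of a finite normal cover of a regular surface is the statement that reflexive sheaves on a regular surface are locally free --- which is the content of \autoref{lem:horrocks} once again. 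So the missing ingredient is not a refinement of the admissible-cover formalism; it is this lemma.
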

\begin{proof}[Sketch of the valuative criterion]
  We refer the reader to \citep[\autoref*{p1:sec:proofs}]{deopurkar12:_compac_hurwit} for the full technical details, sketching here only the proof of the valuative criterion of properness. Let $\Delta$ be the spectrum of a DVR with special point $0$, generic point $\eta$ and uniformizer $t$. Let $(P \to \Delta; \Sigma; \sigma_1, \dots, \sigma_n)$ be an object of $\st M(\Delta)$ and $(\phi_\eta \from \orb C_\eta \to \orb P_\eta; \orb P_\eta \to P_\eta)$ an object of $\st H^d(\eta)$ with $\br\phi_\eta = \Sigma_\eta$. We must extend it to an object of $\st H^d(\Delta)$, possibly after a base change.

  It is not to hard to see that after a finite base change, the finite cover $\phi_\eta$ extends to a finite \'etale cover over the generic points of the components of $P_0$. We thus have an extension of $\phi$ except along finitely many points of $\orb P_0$. If $p \in \orb P_0$ is away from $\Sigma$, then $\phi$ is \'etale in the neighborhood of $p$ and hence is equivalent to a map to the classifying space $B\s_d$. By putting the appropriate orbi-structure around $p$, we can extend this classifying map by precisely following \citep{acv:03}.

  The final obstacle is extending $\phi$ over $p \in \Sigma|_0$. Since $\Sigma \subset P$ lies in the smooth locus of $P \to \Delta$, the surface $P$ is regular at $p$. Now the finite cover $\phi$ extends by \autoref{lem:horrocks}.
\end{proof}
\begin{lemma}\label{lem:horrocks}
  Let $S$ be a smooth surface and $s \in S$ a point. Set $S^\circ = S \setminus \{s\}$ and let $\phi^\circ \from T^\circ \to S^\circ$ be a finite flat morphism. Then $\phi^\circ$ extends to a finite flat morphism $\phi \from T \to S$.
\end{lemma}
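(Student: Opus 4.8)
The plan is to reformulate the statement sheaf-theoretically and then reduce to the algebraic fact that reflexive modules over a two-dimensional regular local ring are free. Since $\phi^\circ$ is finite it is affine, so $T^\circ = \underline{\spec}_{S^\circ} \mathcal{A}^\circ$ for the coherent sheaf of $O_{S^\circ}$-algebras $\mathcal{A}^\circ = \phi^\circ_* O_{T^\circ}$. Because $\phi^\circ$ is finite (hence $\mathcal{A}^\circ$ is finitely presented) and flat over the locally Noetherian scheme $S^\circ$, the sheaf $\mathcal{A}^\circ$ is locally free of finite rank as an $O_{S^\circ}$-module. The goal is therefore to extend $\mathcal{A}^\circ$ to a locally free sheaf of algebras $\mathcal{A}$ on $S$; then $\phi = \underline{\spec}_S \mathcal{A} \to S$ is finite and flat and restricts to $\phi^\circ$ over $S^\circ$.

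First I would write $j \from S^\circ \into S$ for the inclusion and set $\mathcal{A} = j_* \mathcal{A}^\circ$. The point $s = S \setminus S^\circ$ has codimension two in the smooth surface $S$, which is in particular normal. Hence $j_*$ of the locally free (a fortiori $S_2$) sheaf $\mathcal{A}^\circ$ is again coherent and is reflexive on $S$, and it manifestly restricts to $\mathcal{A}^\circ$ over $S^\circ$. The algebra structure extends as well: the multiplication $\mathcal{A}^\circ \otimes \mathcal{A}^\circ \to \mathcal{A}^\circ$ combined with the canonical adjunction map $\mathcal{A} \otimes \mathcal{A} \to j_*(\mathcal{A}^\circ \otimes \mathcal{A}^\circ)$ produces a multiplication $\mathcal{A} \otimes \mathcal{A} \to \mathcal{A}$, and the associativity and unit axioms---being equalities of maps landing in the $S_2$-sheaf $\mathcal{A}$ that already hold over the dense open $S^\circ$---hold on all of $S$.

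The crux is to check that $\mathcal{A}$ is locally free at $s$; away from $s$ this is already known. The stalk $\mathcal{A}_s$ is a reflexive module over $R := O_{S,s}$, a regular local ring of dimension two. By Serre's criterion a reflexive module over a normal ring has depth at least two, so $\operatorname{depth} \mathcal{A}_s = 2 = \operatorname{depth} R$, and the Auslander--Buchsbaum formula forces the projective dimension of $\mathcal{A}_s$ to be zero, i.e. $\mathcal{A}_s$ is free. Thus $\mathcal{A}$ is locally free on all of $S$, and setting $T = \underline{\spec}_S \mathcal{A}$ completes the construction.

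I expect the only genuinely substantive step to be this last freeness of the reflexive stalk at $s$, which is exactly where the hypotheses $\dim S = 2$ and $S$ smooth are used in an essential way; the remainder is the standard codimension-two extension of a vector bundle across a point on a normal surface and carries no difficulty. It is worth noting where the dimension enters: the same pushforward produces a reflexive extension whenever $s$ has codimension at least two, but only in dimension two does reflexive imply locally free, so the argument is specific to surfaces.
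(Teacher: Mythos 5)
Your proof is correct and follows the same two-step strategy as the paper's: first extend the vector bundle $\phi^\circ_* O_{T^\circ}$ across the point $s$, then extend the algebra structure maps by a Hartogs-type argument (agreement on a dense open plus torsion-freeness of the extension). The only difference is that where the paper cites Horrocks' theorem for the bundle-extension step, you prove it directly---pushforward $j_*$, coherence and reflexivity in codimension two, depth $\geq 2$, and Auslander--Buchsbaum---which is exactly the standard proof of the cited result in dimension two, so the mathematical content coincides.
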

\begin{proof}
  Consider the vector bundle $\phi^\circ_* O_{T^\circ}$ on $S^\circ$. By a theorem of Horrocks \citep[Corollary~4.1.1]{horrocks64:_vector}, it extends to a vector bundle on $S$. Next, the $O_T$-algebra structure on this vector bundle is specified by a structure map from $O_T$ and a multiplication map. These  maps of vector bundles are already specified on $S^\circ$; they extend to $S$  by Hartog's theorem.
\end{proof}

Denote by $\st M_{0;b,1} \subset \st M$ the open and closed substack where the genus of the curve is $0$, the degree of the marked divisor is $b$ and the number of marked points is $1$. Let $\orb M_{0;b,1} \subset \st M_{0;b,1}$ be the open substack parametrizing $(P; \Sigma; \sigma)$ with $P$ smooth and $\Sigma$ reduced. Denote by $\st T_{g;1} \subset \st H^3 \times_{\st M} \st M_{0;b,1}$ the open and closed substack parametrizing $(\phi \from \orb C \to \orb P; \orb P \to P; \sigma)$ where $\orb C$ is connected and $\Aut_\sigma\orb P = \{1\}$; that is, the stack structure of $\orb P$ over $\sigma$ is trivial. Let $\orb T_{g;1} \subset \st T_{g;1}$ be the preimage in $\st T_{g;1}$ of $\orb M_{0;b,1}$. 
\begin{proposition}\label{thm:irred_and_dim}
  $\st M_{0;b,1}$ and $\st T_{g;1}$ are smooth and irreducible of dimension $2g+2$. 
\end{proposition}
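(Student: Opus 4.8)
The plan is to establish smoothness, irreducibility, and the dimension count separately for $\st M_{0;b,1}$ first, and then to transfer these properties to $\st T_{g;1}$ via the branch morphism, exploiting the fact that $\br \from \st H^3 \to \st M$ is proper and of Deligne--Mumford type (\autoref{thm:big_hurwitz}).

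For $\st M_{0;b,1}$, I would proceed by an explicit local description. A point of $\orb M_{0;b,1}$ is a triple $(P; \Sigma; \sigma)$ with $P \cong \P^1$, $\Sigma$ a reduced divisor of degree $b$ in the smooth locus, and $\sigma$ a marked point away from $\Sigma$. The deformation theory of such configurations is unobstructed: deforming $\Sigma$ amounts to deforming a degree-$b$ divisor on $\P^1$ (contributing $b$ parameters), deforming $\sigma$ contributes one parameter, and deforming $P$ itself contributes $\dim \Aut(\P^1) = 3$ worth of moduli that we must quotient out. Thus the expected dimension of the open locus is $b + 1 - 3 = (2g+4) + 1 - 3 = 2g+2$, matching the claim. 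To handle the full stack $\st M_{0;b,1}$ including nodal $P$ and non-reduced $\Sigma$, I would note that the boundary strata are obtained by degenerating $\P^1$ to nodal chains and by colliding points of $\Sigma$; each such degeneration is a smooth codimension-one (or higher) phenomenon because nodal curves and their Cartier divisors deform unobstructedly in a smooth surface. The irreducibility follows because every boundary point is a limit of configurations in the dense open $\orb M_{0;b,1}$, which is itself irreducible (being a quotient of an open subset of $\Sym^b \P^1 \times \P^1$ by $\PGl_2$).

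For $\st T_{g;1}$, the key observation is that it is an open and closed substack of a fiber product $\st H^3 \times_{\st M} \st M_{0;b,1}$, and the branch morphism $\br \from \st T_{g;1} \to \st M_{0;b,1}$ is proper and of Deligne--Mumford type by \autoref{thm:big_hurwitz}. Over the open locus $\orb M_{0;b,1}$ where $P$ is smooth and $\Sigma$ is reduced, the fibers of $\br$ are the classical Hurwitz spaces of simply branched triple covers with fixed branch locus; these are \emph{finite \'etale}, since a simply branched cover is rigid (unramified deformation theory) and its count is governed by monodromy into $\s_3$. Because a finite \'etale morphism is smooth of relative dimension zero, smoothness of $\orb T_{g;1}$ over this locus follows from smoothness of $\orb M_{0;b,1}$, and the dimension is preserved: $\dim \st T_{g;1} = \dim \st M_{0;b,1} = 2g+2$. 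For the dimension count to hold over all of $\st T_{g;1}$ and not merely the open locus, I would invoke that $\br$ is proper and quasi-finite (hence finite) over the whole base once the orbinodal structure is imposed, so the relative dimension stays zero.

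The main obstacle, and the step requiring genuine care, is \textbf{irreducibility of $\st T_{g;1}$}. Smoothness is essentially formal from unobstructed deformation theory together with the relative \'etaleness of $\br$, and the dimension count drops out of the same computation. But irreducibility does not follow merely from irreducibility of the base plus connectedness of a single fiber; one must know the monodromy of the Hurwitz covering acts transitively on the set of sheets, i.e.\ that the space of simply branched connected triple covers with a fixed branch divisor forms a single orbit under the braid-group action on $\s_3$-tuples. This is a classical fact (the relevant Hurwitz space is connected because the braid group acts transitively on transitive systems of transpositions generating $\s_3$), and I would cite it or give the short monodromy argument. Once the generic fiber is shown to lie in a single component, irreducibility of $\st T_{g;1}$ follows: the dense open $\orb T_{g;1}$ is irreducible as a connected finite \'etale cover of the irreducible base $\orb M_{0;b,1}$, and $\orb T_{g;1}$ is dense in $\st T_{g;1}$ by properness of $\br$.
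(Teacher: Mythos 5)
There is a genuine gap, and it sits exactly where you flag "the step requiring genuine care" — but the flawed step is not irreducibility, it is your transfer mechanism itself. You claim that $\br \from \st T_{g;1} \to \st M_{0;b,1}$ is quasi-finite (hence finite) over the whole base, so that smoothness and the dimension count propagate from $\st M_{0;b,1}$ at relative dimension zero. This is false: over a boundary point where $\Sigma$ is non-reduced, the fiber of $\br$ is a space of crimps of the normalized cover (\autoref{thm:crimps}), and these have positive dimension in general. For instance, by \autoref{thm:crimps_triple_etale} (see also the count in \autoref{thm:dim_mu}), the fiber over $(\P^1; b \cdot 0; \infty)$ contains loci of dimension on the order of $g/3$. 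So $\br$ is proper but very far from quasi-finite on the boundary. Moreover, even on loci where $\br$ is finite, finiteness over a smooth base does not imply smoothness of the source; smoothness at the boundary points — covers with non-reduced branching, possibly over nodal $P$ — is precisely the nontrivial content of the proposition and cannot be extracted from properties of $\br$ alone.

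The same defect infects your irreducibility argument: you assert that $\orb T_{g;1}$ is dense in $\st T_{g;1}$ ``by properness of $\br$,'' but properness gives nothing of the sort — a proper morphism can perfectly well have irreducible components of its source lying entirely over the boundary of the base, so a priori $\st T_{g;1}$ could have components consisting entirely of covers with concentrated branching. The paper closes both gaps with a single local argument about the curve $\orb C$ rather than the branch data: the deformation space of a point of $\st T_{g;1}$ is, up to smooth parameters, the product of the deformation spaces of the singularities of $\orb C$; since $\orb C \to \orb P$ is a triple cover \'etale over the nodes of $\orb P$, those singularities have embedding dimension at most three, and spatial curve singularities have smooth deformation spaces and are smoothable. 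Smoothness of these deformation spaces gives smoothness of $\st T_{g;1}$ at every point, and smoothability shows every point is a limit of points of $\orb T_{g;1}$, i.e., the open locus is dense. Only then do irreducibility and the dimension count $2g+2$ follow from the corresponding facts for $\orb T_{g;1}$ — facts which your finite-\'etale-cover-plus-Clebsch-monodromy argument does establish correctly, and which the paper treats as the easy part.
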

\begin{proof}
  The statement is clear for $\st M_{0;b,1}$.  For $\st T_{g;1}$, we appeal to \citep[\autoref*{p1:thm:smoothness_for_d23}]{deopurkar12:_compac_hurwit}, which implies that it is smooth and contains $\orb T_{g;1}$ as a dense open subset. In turn, it is clear that $\orb T_{g;1}$ is irreducible of dimension $2g+2$.

  For the convenience of the reader, we recall the main idea behind the proof of \citep[\autoref*{p1:thm:smoothness_for_d23}]{deopurkar12:_compac_hurwit}. The deformation space of a point $\xi = (\phi \from \orb C \to \orb P; \orb P \to P; \sigma)$ on $\st T_{g;1}$ is the product of the deformation spaces of the singularities on $\orb C$, up to smooth parameters. Since $\orb C \to \orb P$ is a triple cover \'etale over the nodes of $\orb P$, the singularities of $\orb P$ have embedding dimension at most three. Since spatial singularities are smoothable and have smooth deformation spaces, $\st T_{g;1}$ is smooth and contains $\orb T_{g;1}$ as a dense open.
\end{proof}

We now have the tools to obtain the compactifications of $T_{g;1}$ described in \autoref{intro:first_seq}. Let $\epsilon > 1/b$ be a rational number. Denote by $\o{\orb M}_{0;b,1}(\epsilon)$ the open substack of $\st M_{0;b,1}$ consisting of $(P; \Sigma; \sigma)$ that are $(\epsilon;1)$ stable in the sense of \citep{hassett03:_modul}; that is, they satisfy
\begin{compactenum}
\item $\sigma \not \in \Sigma$ and $\epsilon \cdot \mult_p(\Sigma) \leq 1$ for all $p \in P$;
\item $\omega_P(\epsilon \Sigma + \sigma)$ is ample.
\end{compactenum}
Then $\o{\orb M}_{0;b,1}(\epsilon)$ is a smooth, proper, Deligne--Mumford stack with a projective coarse space $\o M_{0;b,1}(\epsilon)$. Set 
\[
\o{\orb T}_{g;1}(\epsilon) = \st T_{g;1} \times_ {\st M_{0;b,1}} \o{\orb M}_{0;b,1}(\epsilon).
\]
\begin{theorem}\label{thm:first_seq}
  $\o {\orb T}_{g;1}(\epsilon)$ is a Deligne--Mumford stack, smooth and proper over $k$, and irreducible of dimension $2g+2$. It contains $\orb T_{g;1}$ as a dense open substack, admits a branch morphism $\br \from \o {\orb T}_{g;1}(\epsilon) \to \o {\orb M}_{0;b,1}(\epsilon)$ and a projective coarse space $\o T_{g;1}(\epsilon)$.
\end{theorem}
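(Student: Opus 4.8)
The plan is to deduce all the asserted properties from the fiber-product definition $\o{\orb T}_{g;1}(\epsilon) = \st T_{g;1} \times_{\st M_{0;b,1}} \o{\orb M}_{0;b,1}(\epsilon)$ by exploiting the good behavior of the branch morphism. The key structural input is that $\br \from \st H^3 \to \st M$ is proper and of Deligne--Mumford type by \autoref{thm:big_hurwitz}; restricting to the relevant open-and-closed substacks, the induced morphism $\st T_{g;1} \to \st M_{0;b,1}$ inherits these properties. Since $\o{\orb M}_{0;b,1}(\epsilon) \subset \st M_{0;b,1}$ is an open substack, and algebraicity, the DM property, and properness of a morphism are all stable under base change, I would first observe that the projection $\o{\orb T}_{g;1}(\epsilon) \to \o{\orb M}_{0;b,1}(\epsilon)$ is proper and of DM type. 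As $\o{\orb M}_{0;b,1}(\epsilon)$ is itself a proper DM stack (by Hassett's theorem, quoted just above the statement), composing gives that $\o{\orb T}_{g;1}(\epsilon)$ is a proper Deligne--Mumford stack over $k$.

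Next I would handle smoothness, irreducibility, and dimension. Smoothness is the most delicate point and I expect it to be the main obstacle, since it is \emph{not} formal from the fiber-product construction---the base $\o{\orb M}_{0;b,1}(\epsilon)$ and the total space $\st T_{g;1}$ are both smooth, but $\st M_{0;b,1}$ is smooth too, so one cannot immediately conclude smoothness of the fiber product without controlling the morphism. The clean way around this is to use the deformation-theoretic description recalled in the proof of \autoref{thm:irred_and_dim}: the deformation space of a point of $\st T_{g;1}$ is, up to smooth parameters, the product of the deformation spaces of the singularities of $\orb C$, all of which are spatial (embedding dimension at most three) and hence smoothable with smooth deformation spaces. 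Since $\o{\orb T}_{g;1}(\epsilon)$ is an open substack of $\st T_{g;1}$ (being the preimage of the open substack $\o{\orb M}_{0;b,1}(\epsilon) \subset \st M_{0;b,1}$), it is a union of connected components of an open substack of $\st T_{g;1}$, so smoothness is inherited directly. This also shows $\dim \o{\orb T}_{g;1}(\epsilon) = \dim \st T_{g;1} = 2g+2$.

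For irreducibility and density of $\orb T_{g;1}$, I would argue that $\orb T_{g;1} \subset \o{\orb T}_{g;1}(\epsilon)$ is open (it is the preimage of the open $\orb M_{0;b,1} \subset \o{\orb M}_{0;b,1}(\epsilon)$), that $\orb T_{g;1}$ is irreducible of dimension $2g+2$ by \autoref{thm:irred_and_dim}, and that $\o{\orb T}_{g;1}(\epsilon)$ is smooth of the same pure dimension. A smooth stack whose every point deforms into the dense open locus of smooth covers must have $\orb T_{g;1}$ dense; concretely, properness together with the valuative argument of \autoref{thm:big_hurwitz} shows every $\epsilon$-admissible cover is a flat limit of honest covers, so $\orb T_{g;1}$ meets every component, and equality of dimensions forces $\o{\orb T}_{g;1}(\epsilon)$ to be irreducible with $\orb T_{g;1}$ dense.

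Finally, for the branch morphism and projectivity of the coarse space, the branch morphism $\br \from \o{\orb T}_{g;1}(\epsilon) \to \o{\orb M}_{0;b,1}(\epsilon)$ is just the base-changed projection, so it exists tautologically and extends the branch morphism on $\orb T_{g;1}$. By the Keel--Mori theorem, the proper DM stack $\o{\orb T}_{g;1}(\epsilon)$ has a coarse space $\o T_{g;1}(\epsilon)$ which is a proper algebraic space, and the branch morphism descends to a morphism $\o T_{g;1}(\epsilon) \to \o M_{0;b,1}(\epsilon)$ of coarse spaces. Since this induced map is finite onto its image (the cover datum over a fixed branch configuration has finite fibers) and $\o M_{0;b,1}(\epsilon)$ is projective by Hassett's theorem, $\o T_{g;1}(\epsilon)$ is projective; alternatively, one exhibits an ample line bundle as the pullback of an ample bundle from $\o M_{0;b,1}(\epsilon)$ twisted by a suitable relatively ample class coming from the Hodge bundle, which is the approach the paper develops in detail later and which I would defer to.
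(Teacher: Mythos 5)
Your proposal matches the paper's proof in structure and substance for everything except projectivity: the paper's own argument is exactly that all statements other than projectivity follow from \autoref{thm:big_hurwitz}, \autoref{thm:irred_and_dim}, and the properness of $\o{\orb M}_{0;b,1}(\epsilon)$, via the observations you spell out (base change of the proper, DM-type branch morphism; the fact that $\o{\orb T}_{g;1}(\epsilon)$ is an open substack of the smooth, irreducible $\st T_{g;1}$, so smoothness, dimension, irreducibility, and density of $\orb T_{g;1}$ are inherited; and the tautological existence of $\br$). One streamlining: the limiting/deformation argument you give for irreducibility is unnecessary---since $\st T_{g;1}$ is irreducible by \autoref{thm:irred_and_dim} and $\o{\orb T}_{g;1}(\epsilon)$ is a nonempty open substack of it containing the open substack $\orb T_{g;1}$, irreducibility and density are immediate.

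The genuine error is in your primary projectivity argument: the branch morphism $\br \from \o T_{g;1}(\epsilon) \to \o M_{0;b,1}(\epsilon)$ is \emph{not} finite in general. Its fiber over a point whose branch divisor $\Sigma$ is non-reduced parametrizes crimps of the normalized cover with that branch divisor, and these have positive-dimensional moduli once enough branch points collide: by \autoref{thm:crimps_triple_etale} (cf.\ the bound $\dim \Crimp(\tw\phi_i, b_i \cdot p_i) \leq \lfloor b_i/6 \rfloor$ in \autoref{thm:dim_higher_collisions}, which is attained, e.g., in splitting type $(1,2)$ for $b_i = 6$), a point of multiplicity six in $\Sigma$ already supports a one-parameter family of crimps. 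Since crimp spaces are projective (\autoref{thm:crimps}), for $\epsilon \leq 1/6$ the fibers of $\br$ contain complete curves---this is precisely the phenomenon the paper exploits later, for instance in the proof of \autoref{thm:flip_projectivity}, where $-\lambda$ is shown to be ample on complete subvarieties lying in geometric fibers of $\br$. Finiteness does hold for large $\epsilon$ (e.g.\ for $\epsilon > 1/2$, where all branch divisors stay reduced), but it fails once $\epsilon \leq 1/6$, so the ``finite over a projective base'' argument cannot cover the whole range $1/b < \epsilon \leq 1$. Your fallback argument is the correct one and is what the paper actually does: it cites the companion paper's result that the Hodge class $\lambda$ is relatively \emph{anti}-ample along $\br$, whence $-\lambda$ plus a sufficiently large multiple of the pullback of an ample divisor from the projective Hassett space $\o M_{0;b,1}(\epsilon)$ is ample on $\o T_{g;1}(\epsilon)$.
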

\begin{proof}
  All the statement except the projectivity of $\o T_{g;1}(\epsilon)$ follow from \autoref{thm:big_hurwitz}, \autoref{thm:irred_and_dim} and the properness of $\o {\orb M}_{0;b,1}(\epsilon)$. For the projectivity, we cite \citep[\autoref*{p1:thm:projectivity}]{deopurkar12:_compac_hurwit}, which proves that the Hodge class $\lambda$ is relatively anti-ample along $\br \from \o T_{g;1}(\epsilon) \to \o M_{0;b,1}(\epsilon)$.
\end{proof}

\section{The moduli  of $l$-balanced covers }\label{sec:lbalanced}
Retain the notation introduced just before \autoref{thm:irred_and_dim}. Also recall the Maroni invariant, the $\mu$ invariant and the notion of $l$-balanced covers from \autoref{def:l-balanced}. It is easy to check that the Maroni invariant is upper semi-continuous and the $\mu$ invariant is lower semi-continuous in appropriate families. Let $\o{\orb T}_{g;1}^l \subset \st T_{g;1}$ be the open substack consisting of $(\phi \from \orb C \to \orb P; \orb P \to P; \sigma)$ where $P$ is smooth and $\phi$ an $l$-balanced triple cover. Since $P$ is smooth and the orbi-structure of $\orb P$ along $\sigma$ is trivial, the orbi-structure of $\orb P$ is in fact trivial; that is, $\orb P \to P$ is an isomorphism. Hence, $\orb C$ is also simply a schematic curve. Therefore, we write just $(\phi \from C \to P; \sigma)$ for the objects of $\o {\orb T}_{g;1}^l$.

\begin{restatable}{theorem}{thmtgd}
  \label{thm:tg3d}
  $\o{\orb T}_{g;1}^l$ is a Deligne--Mumford stack, smooth and proper over $k$, and irreducible of dimension $2g+2$.
\end{restatable}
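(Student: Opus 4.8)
**

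The plan is to establish the four properties — algebraicity as a Deligne--Mumford stack, smoothness, properness, and irreducibility of dimension $2g+2$ — by exhibiting $\o{\orb T}_{g;1}^l$ as a well-chosen open substack of an already-understood stack and then verifying that the two defining conditions of $l$-balancedness cut out a substack that is both open and closed enough to inherit these properties. Since $\o{\orb T}_{g;1}^l$ is by construction an open substack of $\st T_{g;1}$, \autoref{thm:irred_and_dim} already hands us algebraicity, smoothness, and dimension $2g+2$ for free, and the identification $\orb P \to P$ as an isomorphism (noted in the text) means we genuinely work with schematic triple covers $\phi \from C \to P$. So the two substantive things to prove are (a) that $\o{\orb T}_{g;1}^l$ is a \emph{Deligne--Mumford} stack, i.e.\ that the relevant objects have finite and reduced automorphism groups, and (b) that it is \emph{proper}. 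Irreducibility will follow once we know $\o{\orb T}_{g;1}^l$ is a dense open of the irreducible $\st T_{g;1}$ meeting the open dense $\orb T_{g;1}$, which it does since a generic simply-branched cover is balanced with distinct branch points.

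For the Deligne--Mumford property, I would argue that the automorphism group of $(\phi \from C \to P; \sigma)$ is finite and reduced. The marking $\sigma$ rigidifies $P = \P^1$ against most of its automorphisms, and an automorphism of the cover must commute with $\phi$ and fix $\sigma$; since $C$ is a reduced connected curve and $\phi$ is a degree-three cover, the automorphism group embeds into a finite group (the relative automorphisms of a triple cover together with the finite stabilizer of $\sigma$ in the relevant monodromy group). In characteristic zero, finiteness of automorphisms suffices for the Deligne--Mumford property, so this step is essentially a rigidity computation inherited from the structure already present in $\st H^3$.

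The properness is where the genuine work lies, and I expect it to be the main obstacle. The strategy is the valuative criterion: given a DVR $\Delta$ with generic point $\eta$ and a family of $l$-balanced covers over $\eta$, I must produce, after finite base change, a unique $l$-balanced limit over $0$. Here I would lean on the properness of $\st H^3 \to \st M$ (\autoref{thm:big_hurwitz}) together with the extension technology behind it — particularly \autoref{lem:horrocks}, which extends a finite flat cover across a codimension-two point of a smooth surface. The subtlety is that the limit guaranteed by the big Hurwitz stack may fail to be $l$-balanced: its Maroni invariant might jump above $l$, or its branch divisor might concentrate at a point while the $\mu$ invariant drops to at most $l$. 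The point of the two conditions in \autoref{def:l-balanced} is precisely that they are designed as a complementary pair — a local-global trade-off — so that exactly one valid limit survives. Concretely, I would use the upper semicontinuity of the Maroni invariant and the lower semicontinuity of the $\mu$ invariant (noted at the start of \autoref{sec:lbalanced}) to show that whenever the naive limit violates condition (1), one can modify the family — by an elementary transformation of the relevant rank-two bundle $O_{\P^1}(-m)\oplus O_{\P^1}(-n)$, or equivalently by sliding the concentration of branching — to land on a cover satisfying (2) instead, and that this modified limit is the \emph{unique} $l$-balanced cover in the valuative class. Separatedness then amounts to checking that two $l$-balanced extensions of the same generic family must agree, which again follows because the trade-off leaves no room for two distinct balanced limits.

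I would organize the proof so that the local analysis of triple-point singularities and the behaviour of the Maroni and $\mu$ invariants under these elementary modifications is isolated as the technical heart — this is exactly the content flagged as the ``particular geometry of triple covers'' in the introduction, and the results of \autoref{sec:prelim} on triple covers and triple-point singularities will be the tools invoked. The remaining bookkeeping (openness, dimension count, density of $\orb T_{g;1}$) is routine given \autoref{thm:irred_and_dim}.
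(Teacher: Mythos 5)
Your skeleton matches the paper's proof (openness in $\st T_{g;1}$ gives smoothness, finite type, and the dimension count; irreducibility from density; valuative criteria for separatedness and properness, with modifications by elementary transformations of the rank-two bundle when the naive limit is too unbalanced). But two of your steps have genuine gaps, and one of them would fail as stated.

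The Deligne--Mumford step is wrong as argued. The marking $\sigma$ does \emph{not} rigidify $\P^1$: the stabilizer of a point in $\Aut(\P^1)$ is the two-dimensional group of affine transformations, and for a cover with concentrated branching --- which $\o{\orb T}_{g;1}^l$ deliberately contains, by condition (2) of \autoref{def:l-balanced} --- requiring in addition that the automorphism preserve $\br\phi$ still leaves a copy of $\G_m$. So the automorphism group does not ``embed into a finite group'' by any rigidity inherited from $\st H^3$; whether that $\G_m$ lifts to automorphisms of the cover is precisely the issue. Indeed, the $\G_m$-invariant (monomial) crimps have infinite automorphism groups, and they are excluded from the moduli problem only because they satisfy $M(\phi)=\mu(\phi)$, which is incompatible with $M \leq l < \mu$. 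The paper's route is forced: prove separatedness \emph{first}, deduce that $\Aut(\phi \from C \to \P^1,\sigma)$ is proper, note that $\tau \from \Aut \to \Aut(\P^1)$ has finite kernel and affine target, and conclude the image (hence $\Aut$) is finite. In your organization separatedness is not available before the DM step, so your argument cannot be repaired locally.

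Separatedness itself is asserted, not proven: ``the trade-off leaves no room for two distinct balanced limits'' is a restatement of the claim. The actual argument resolves the birational map $\psi^P$ between the two $\P^1$-bundles into a chain of blow-ups, observes that the exceptional chain collapses to the marked point on one side, blows down the pulled-back cover along the chain, and then plays two inequalities against each other: blowing down a trigonal curve of Maroni invariant $M$ produces a singularity with $\mu \leq M$ (\autoref{thm:blow_down_trigonal_curves}, resting on \autoref{thm:blow_down_vb}), while concentrated branching forces $M \leq \mu$ (\autoref{thm:Maroni_less_than_mu}); together with $l$-balancedness of both central fibers this gives $\mu > l \geq M \geq \mu$, a contradiction. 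These lemmas are the technical heart you flag, but note they are not ``results of \autoref{sec:prelim}''; they live in \autoref{sec:lbalanced} and must be proven as part of this theorem. Finally, in the properness step you correctly name elementary transformations as the mechanism but never address termination: the paper's key computation (\autoref{eqn:pull_push_vb}) shows the transformation $[X:Y]\mapsto[tX:Y]$ acts on the extension class by $e(X,Y) \mapsto t^{m-n+1}e(tX,Y)$, i.e.\ by strictly negative powers of $t$ on every monomial, so after a finite base change finitely many iterations make the class nonzero modulo $t$ (strictly decreasing the Maroni invariant), while the singularity created over $[1:0]$ has splitting type $(m,n)$, keeping the $\mu$ invariant above $l$. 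Without this, there is no reason the modification process ever produces an $l$-balanced limit.
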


For $l \geq g$, we have an equality as substacks of $\st T_{g;1}$:
  \[ \o{\orb T}_{g;1}(1/(b-1)) = \o{\orb T}_{g;1}^l.\]
  Indeed, since the Maroni and the $\mu$ invariants lie between $0$ and $g$, both sides are essentially the open substack of $\st T_{g;1}$ parametrizing $(\phi \from C \to P; \sigma)$ where $P$ is smooth and $\br\phi$ is not concentrated at one point.

  By the Keel--Mori theorem \citep{keel97:_quotien}, we conclude the following.
\begin{corollary}
  $\o{\orb T}_{g;1}^l$ admits a coarse space $\o{\sp T}_{g;1}^l$ proper over $k$. It is an irreducible algebraic space of dimension $2g+2$ with at worst quotient singularities. In particular, it is normal and $\Q$-factorial.
\end{corollary}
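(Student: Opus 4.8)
The plan is to derive the corollary as a formal consequence of \autoref{thm:tg3d}, which provides $\o{\orb T}_{g;1}^l$ as a smooth, proper, irreducible Deligne--Mumford stack of dimension $2g+2$ over $k$. First I would apply the Keel--Mori theorem \citep{keel97:_quotien}: a separated Deligne--Mumford stack of finite type over $k$ admits a coarse moduli space, so we obtain $\pi \from \o{\orb T}_{g;1}^l \to \o{\sp T}_{g;1}^l$ with $\o{\sp T}_{g;1}^l$ a separated algebraic space, where $\pi$ is proper, quasi-finite, and a homeomorphism on underlying topological spaces. Properness of $\o{\sp T}_{g;1}^l$ over $k$ then follows since $\pi$ is proper and surjective and the stack is proper over $k$. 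Because $\pi$ is a homeomorphism of topological spaces, irreducibility descends to the coarse space, and since $\pi$ is quasi-finite and surjective the dimension is unchanged, giving an irreducible algebraic space of dimension $2g+2$.

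The only substantive point is the nature of the singularities. Here I would invoke the local structure of a smooth Deligne--Mumford stack: étale-locally around each point, $\o{\orb T}_{g;1}^l$ is a quotient stack $[U/G]$ with $U$ a smooth scheme and $G = \Aut(\xi)$ the finite automorphism group of the corresponding object $\xi$, acting on $U$. In characteristic zero every such $G$ is tame, so the formation of the coarse space commutes with the étale charts, and étale-locally $\o{\sp T}_{g;1}^l$ is the quotient $U/G$ of a smooth scheme by a finite group. By definition this is precisely the statement that $\o{\sp T}_{g;1}^l$ has at worst quotient singularities.

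Normality and $\Q$-factoriality are then local statements on $U/G = \spec B^G$ with $B$ regular. Normality is standard, as the ring of invariants of a normal (here regular) ring under a finite group is again normal. For $\Q$-factoriality I would run the usual averaging argument: given a Weil divisor $D$ on $U/G$, its pullback to the smooth scheme $U$ is Cartier, defined by a local equation $f$; the norm $\prod_{g \in G} g^* f$ is a $G$-invariant rational function, hence descends to $U/G$, and its divisor equals $|G|$ times $D$ up to the rationally injective pullback $\pi^*$. Thus a nonzero multiple of $D$ is Cartier, so $D$ is $\Q$-Cartier. I expect no real obstacle in any of this; the entire difficulty of the corollary is contained in \autoref{thm:tg3d}, and once its smoothness and properness are granted the present statement is formal. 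The one place demanding genuine care---and the only place where the standing hypothesis of characteristic zero is used---is the étale-local presentation $[U/G]$ with $U$ smooth and $G$ tame, which simultaneously underpins the quotient-singularity claim and the finite-group averaging behind $\Q$-factoriality.
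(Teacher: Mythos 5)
Your proposal is correct and follows essentially the same route as the paper, which simply cites the Keel--Mori theorem \citep{keel97:_quotien} together with \autoref{thm:tg3d} and treats the remaining assertions (quotient singularities from the \'etale-local $[U/G]$ structure of a smooth separated Deligne--Mumford stack in characteristic zero, hence normality and $\Q$-factoriality by the invariant-ring and norm-averaging arguments) as standard. You have merely made explicit the details the paper leaves implicit, and your filling-in of those details is sound.
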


We now turn to the proof of \autoref{thm:tg3d}. The main ingredient is the behavior of vector bundles under pull back and push forward along blow ups of smooth surfaces.
\begin{lemma}\label{thm:blow_down_vb}
  Let $X$ be a smooth surface, $s \in X$ a point, $\beta \from \Bl_sX \to X$ the blowup, and $E \subset \Bl_sX$ the exceptional divisor. Let $V$ be a locally free sheaf on $\Bl_sX$. Denote by $e$ the natural map
  \[ e \from \beta^*\beta_* V \to V.\]
  Then,
  \begin{compactenum}
  \item $\beta_*V$ is torsion free; that is, $\Hom(O_s, \beta_*V) = 0$.
  \item If $H^0(V|_E \otimes O_E(-1)) = 0$, then $\beta_*V$ is locally free and $e$ is injective.
  \item If $V|_E$ is globally generated, then $R^i\beta_*V = 0$ for all $i > 0$, and $e$ is surjective.
  \item More generally, if $l \geq 0$ is such that $V|_E \otimes O_E(l)$ is globally generated, then $\cok(e)$ is annihilated by $I_E^l$, where $I_E \subset O_{\Bl_s X}$ is the ideal sheaf of $E$.
  \end{compactenum}
\end{lemma}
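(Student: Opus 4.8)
The plan is to work \'etale-locally or formally on the surface $X$, reducing to the situation where $X = \spec k\f{x,y}$ and $s$ is the closed point, since all four assertions concern the behavior of coherent sheaves under $\beta$ near the exceptional fibre and may be checked after such a localization. On $\Bl_s X$ the exceptional divisor $E$ is a copy of $\P^1$ with $O_E(E) = O_E(-1)$, so $E$ has self-intersection $-1$ and the restriction $V|_E$ splits as a direct sum of line bundles $\bigoplus O_E(a_i)$. The proof of each part will be driven by these splitting degrees $a_i$ together with the formal theorem on functions / cohomology and base change.

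First I would prove (1). Torsion-freeness of $\beta_* V$ is a general fact: $\beta$ is proper and birational with $X$ normal, and $\beta_* V$ is a sheaf on a normal surface whose sections over any open agree with sections of $V$ over the preimage minus a codimension-two locus; more directly, $\Hom(O_s, \beta_* V) = \Hom(\beta^* O_s, V)$ vanishes because $\beta^* O_s$ is supported on $E$ while $V$ is locally free, hence torsion-free, on $\Bl_s X$. For (2) and (3) I would run the theorem on formal functions, or equivalently analyze the Leray spectral sequence, by filtering $V$ along $E$ using the exact sequences $0 \to V \otimes O(-(n{+}1)E) \to V \otimes O(-nE) \to V|_E \otimes O_E(n) \to 0$ (using $O(-E)|_E = O_E(1)$). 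The hypothesis in (2) that $H^0(V|_E \otimes O_E(-1)) = 0$ forces all splitting degrees $a_i \geq 0$; this controls the graded pieces and yields that $\beta_* V$ is locally free (it is reflexive on a smooth surface, hence locally free once torsion-free of the expected rank) and that the counit $e$ is injective. For (3), global generation of $V|_E$ means all $a_i \geq 0$ as well, but now I would use it to kill the higher direct images via $H^1(E, V|_E \otimes O_E(n)) = 0$ for $n \geq 0$ and Grauert/base change to conclude $R^i\beta_* V = 0$, and surjectivity of $e$ follows since the natural map is then an isomorphism in a neighborhood of $E$ away from $s$ and surjective along $E$ by Nakayama.

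The crux is part (4), which refines (3): I want to show $\cok(e)$ is annihilated by $I_E^l$ whenever $V|_E \otimes O_E(l)$ is globally generated. The strategy is to compare $V$ with its reflexive hull $\beta^*\beta_* V$, noting that $\cok(e)$ is a sheaf supported on $E$, and to measure its scheme-theoretic support. Twisting the hypothesis, global generation of $V|_E \otimes O_E(l)$ says the splitting degrees satisfy $a_i \geq -l$, and the deficiency in surjectivity of $e$ is governed precisely by the negative-degree summands of the successive restrictions $V|_E \otimes O_E(n)$ for $0 \leq n < l$. I would make this quantitative by the same filtration as above: each graded piece $V|_E \otimes O_E(n)$ is globally generated once $n \geq l$, so $H^0(\beta^* \beta_* V) \to H^0(V \otimes O/I_E^l)$ surjects onto everything except corrections living in $I_E^n$ for $n < l$, and assembling these shows $I_E^l \cdot \cok(e) = 0$.

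The step I expect to be the main obstacle is making the bookkeeping in (4) precise and uniform, namely tracking how the $l$-fold twist needed for global generation translates into the exact power $I_E^l$ annihilating the cokernel, rather than an off-by-one power. The safest route is to argue by descending induction on the order of vanishing along $E$: I would show that if a local section $v$ of $V$ lies in the image of $e$ modulo $I_E^{n+1}$ and $V|_E \otimes O_E(n)$ is globally generated, then $I_E^n v$ lifts to a genuine section of $\beta^* \beta_* V$, so that multiplying by the remaining factors of $I_E$ clears the obstruction after $l$ steps. Global generation of the graded piece is exactly what permits each lifting step, so the induction terminates and delivers the annihilation by $I_E^l$; specializing $l = 0$ recovers the surjectivity statement of (3) as the base case.
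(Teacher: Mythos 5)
Your treatment of parts (1), (3) and (4) is essentially the paper's own argument: (1) is adjunction; (3) is the theorem on formal functions applied to the filtration by the $V|_{mE}$; and (4) amounts to applying (3) to the twist $I_E^l \otimes V$, whose restriction to $E$ is $V|_E \otimes O_E(l)$ --- your descending induction is a repackaging of that single application, since it immediately gives $I_E^l V \subseteq \im(e)$ and hence $I_E^l \cdot \cok(e) = 0$. One caveat in (3): the phrase ``surjective along $E$ by Nakayama'' hides the only real step, namely that generating sections of $V|_E$ must be lifted to sections of $V$ near $E$; this is exactly where the vanishing $R^1\beta_*(I_E \otimes V) = 0$ (valid because $(I_E\otimes V)|_E = V|_E \otimes O_E(1)$ is again globally generated) gets used, and it should be said.

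Part (2), however, contains a genuine error. Writing $V|_E \cong \bigoplus_i O_E(a_i)$, the hypothesis $H^0(V|_E \otimes O_E(-1)) = 0$ is equivalent to $a_i \leq 0$ for all $i$, not $a_i \geq 0$ as you claim; the hypotheses of (2) and (3) point in \emph{opposite} directions, and your ``as well'' conflates them. The slip is not cosmetic. Under your reading, $V = O_{\Bl_sX}(-E)$ (which has $V|_E \cong O_E(1)$, so degree $\geq 0$) would satisfy the hypothesis of (2), yet $\beta_* O_{\Bl_sX}(-E)$ is the ideal sheaf of the point $s$, which is torsion-free but not locally free --- so the implication you assert is false. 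The same example shows that your justification ``$\beta_*V$ is reflexive on a smooth surface, hence locally free'' assumes precisely what must be proved: reflexivity of $\beta_*V$ can fail, and establishing it is the entire content of (2). The missing mechanism is the extension of sections across the exceptional divisor: given a section $f$ of $V$ over $\beta^{-1}(X\setminus\{s\})$, it extends to a section of $V(nE)$ for some minimal $n \geq 0$, and by minimality its restriction to $E$ is a nonzero element of $H^0(V|_E \otimes O_E(-n))$; the hypothesis in its correct form ($a_i \leq 0$) forces this group to vanish for every $n \geq 1$, so $n = 0$, whence $\beta_*V = i_*i^*\beta_*V$ is the unique locally free extension of $V|_{X\setminus\{s\}}$ and $e$ is injective because its kernel is a torsion subsheaf of a locally free sheaf. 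Without this step --- and with the inequality reversed --- your proof of (2) does not go through.
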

\begin{proof}Without loss of generality, take $X$ to be affine.

  \begin{compactenum}
  \item We have $\Hom(O_s, \beta_*V) = \Hom(\beta^*O_s, V) = 0$, since $V$ is locally free.
  \item Set $X^\circ = X \setminus \{s\} = \Bl_sX\setminus E$ and let $i \from X^\circ \into X$ be the open inclusion. We have a natural map
    \begin{equation}\label{eqn:map_in_a_vb}
      \beta_*V \to i_*i^* \beta_*V,
    \end{equation}
    which is injective by (1). The target $i_*i^* \beta_*V$ is locally free---it is the unique locally free extension to $X$ of $i^*\beta_* V = V|_{X^\circ}$ on $X^\circ$. We prove that the map \eqref{eqn:map_in_a_vb} is surjective. Equivalently, we want to prove that every element of $H^0(X^\circ, V)$ extends to an element of $H^0(\Bl_sX, V)$. Take $f \in H^0(X^\circ, V)$. Let $n \geq 0$ be the smallest integer such that $f$ extends to a section $\tw f$ in $H^0(\Bl_sX, V(nE))$. The minimality of $n$ means that the restriction of $\tw f$ to $E$ is not identically zero. If $n = 0$, we are done. If $n \geq 1$, then the hypothesis $H^0(V|_E \otimes O_E(-1)) = 0$ implies that $\tw f$ restricted to $E$ is identically zero, contradicting the minimality of $n$. Finally, since $\beta_*V$ is locally free, so is $\beta^*\beta_*V$. The map $\beta^*\beta_*V \to V$ is injective away from $E$, and hence injective.
    
  \item Let $V|_E$ be globally generated. By the theorem on formal functions, we have
    \[ \compl{(R^i\beta_*V)}_s = \varprojlim_{m} H^i(V|_{mE}).\]
    To get a handle on $V|_{mE}$, we use the exact sequence
    \[ 0 \to V|_E \otimes I_E^{m-1} \to V|_{mE} \to V|_{(m-1)E} \to 0.\]
    Since $V|_E$ is globally generated, so is $V|_E\otimes I_E^{m-1} = V|_E \otimes O_E(m-1)$, for all $m \geq 1$. It follows by induction on $m$ that $H^i(V|_{mE}) = 0$ for all $m \geq 1$ and $i > 0$. Thus $R^i\beta_*V = 0$.

    We now prove that $e$ is surjective. It is an isomorphism away from $E$. We need to prove that it is surjective along $E$. Since $V|_E$ is globally generated, it suffices to prove that $\beta_*V \to \beta_*\left(V|_E\right)$ is surjective. From the exact sequence
    \[ 0 \to I_E \otimes V \to V \to V|_E \to 0,\]
    we get the exact sequence 
    \[ \beta_*V \to \beta_* \left(V|_E\right) \to R^1\beta_*(I_E \otimes V).\]
    Since $(I_E \otimes V)|_E = V|_E \otimes O_E(1)$ is globally generated, $R^1\beta_*(I_E \otimes V)$ vanishes and we conclude that $\beta_*V \to \beta_*(V|_E)$ is surjective.

  \item Consider the diagram
    \[
    \begin{CD}
      \beta^*\beta_* (I_E^l \otimes V) @>>> I_E^l \otimes V @>>> 0 \\
      @VVV @VVV @VVV \\
      \beta^*\beta_* V @>>> V @>>> Q @>>> 0\\
    \end{CD}.
    \]
    The first row is exact by (3), as $(I_E^l \otimes V)|_E = V|_E \otimes O_E(l)$ is globally generated. It follows that the multiplication map $I_E^l \otimes V \to Q$ is zero. Since $V \to Q$ is surjective, the multiplication $I_E^l \otimes Q \to Q$ is zero as well.
  \end{compactenum}
\end{proof}
\autoref{thm:blow_down_vb} allows us to analyze the ``blowing down'' of trigonal curves. This analysis is the content of the following lemma. Roughly, it says that blowing down a trigonal curve of Maroni invariant $M$ results in a singularity of $\mu$ invariant at most $M$.

\begin{lemma}\label{thm:blow_down_trigonal_curves}
  Let $X$, $s$, $\beta \from \Bl_s X \to X$, $E$ and $X^\circ$ be as in \autoref{thm:blow_down_vb}. Let $F \subset X$ be a smooth curve passing through $s$ and $\tw F \subset \Bl_sX$ its proper transform. Let $\tw f \from \tw C \to \Bl_sX$ be a triple cover, \'etale over $\tw F$. Assume that $\tw C|_E$ is a reduced curve of genus $g$ and $\tw{\phi}\from \tw C|_E \to E$ has Maroni invariant $M$. Denote by $\phi \from C \to X$ the unique extension to $X$ of $\tw{\phi}\from \tw C|_{X^\circ} \to X^\circ$ (\autoref{lem:horrocks}). Then $\phi\from C|_F \to F$ is \'etale except over $s$, and has a singularity of $\mu$-invariant at most $M$ over $s$:
  \[ \mu(\phi|_F) \leq M(\tw\phi|_E).\]
\end{lemma}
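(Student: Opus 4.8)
The plan is to translate both invariants into the splitting data of the rank-two Tschirnhausen bundle of the cover and then read off the inequality from \autoref{thm:blow_down_vb}. Set $V = \tw f_* O_{\tw C}$, a rank-three locally free sheaf on $\Bl_s X$ (the cover being finite flat). In characteristic zero the trace retraction splits off the unit, giving a canonical decomposition $V = O_{\Bl_s X} \oplus W$ with $W$ locally free of rank two. Finite flat base change along $E \into \Bl_s X$ yields $V|_E = \tw\phi_* O_{\tw C|_E}$, so by the definition of the Maroni invariant $W|_E \cong O_{\P^1}(-m) \oplus O_{\P^1}(-n)$ with $m+n = g+2$, $m \leq n$, and $M = n-m$. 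Since $\tw f$ is \'etale over $\tw F$ and $\beta$ is an isomorphism away from $s$, the cover $\phi|_F$ is \'etale over $F \setminus \{s\}$, which is the first assertion. Moreover $\tw C|_{\tw F} \to \tw F \cong F$ is finite \'etale and agrees with $C|_F$ over $F \setminus \{s\}$, so it is the normalization of $C|_F$ and is \'etale over $F$. Hence $\mu(\phi|_F)$ is defined and equals $|n'-m'|$, where $x^{m'} \mid x^{n'}$ are the elementary divisors of the finite-length $O_{F,s}$-module obtained (on the $W$-summand) as the quotient of the pushforward of $O_{\tw C|_{\tw F}}$ by $(\phi|_F)_* O_{C|_F}$.

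Next I would identify this $\mu$-module with the cokernel of the counit $e \from \beta^*\beta_* V \to V$. Because $H^0(V|_E \otimes O_E(-1)) = 0$ (each twist $O_{\P^1}(-1)$, $O_{\P^1}(-m-1)$, $O_{\P^1}(-n-1)$ is negative, using $m \geq 1$), \autoref{thm:blow_down_vb}(2) shows $\beta_* V$ is locally free and $e$ is injective. Since $\beta_* V$ and $\phi_* O_C$ are both locally free extensions across the codimension-two point $s$ of the same bundle $V|_{X^\circ}$ (the latter by the very construction of \autoref{lem:horrocks}), they coincide. Restricting $e$ along $\tw F$ and invoking finite flat base change on both sides, the map $e|_{\tw F}$ becomes exactly the inclusion of $(\phi|_F)_* O_{C|_F}$ into the pushforward of $O_{\tw C|_{\tw F}}$; on the $W$-summand it is a map of free rank-two $O_{F,s}$-modules whose cokernel $Q|_{\tw F}$ is the $\mu$-module, where $Q = \cok e$ restricted to the $W$-part.

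It then remains to bound the two elementary divisors. For the larger one, $W|_E \otimes O_E(n) = O_{\P^1}(n-m) \oplus O_{\P^1}$ is globally generated, so by \autoref{thm:blow_down_vb}(4) the sheaf $Q$ is annihilated by $I_E^n$; restricting, $(I_E^n)|_{\tw F} = O_{\tw F}(-np)$ annihilates $Q|_{\tw F}$, whence $n' \leq n$. For the smaller one, I examine the order of vanishing of $e$ along $E$: its order-$j$ symbol lies in $H^0(E, W|_E \otimes O_E(j))^{\oplus 2} = H^0\bigl(O_{\P^1}(j-m) \oplus O_{\P^1}(j-n)\bigr)^{\oplus 2}$, which vanishes for $j < m$ since both twists are negative; hence $\im e \subseteq I_E^m W$, so $\im(e|_{\tw F}) \subseteq O_{\tw F}(-mp)\cdot W|_{\tw F}$ and therefore $m' \geq m$. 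Combining the two bounds gives
\[ \mu(\phi|_F) = n' - m' \leq n - m = M(\tw\phi|_E). \]

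The essential point, and the step most in need of care, is the bookkeeping in the second paragraph: verifying that $\beta_* V$ genuinely computes $\phi_* O_C$ (so that ``blow down plus Horrocks extension'' is literally $\beta_*$) and that the restricted counit $e|_{\tw F}$ is exactly the inclusion of $O_{C|_F}$ into its normalization. Once these identifications are in place, the two numerical bounds are clean consequences of \autoref{thm:blow_down_vb}, the inequality $\mu \leq M$ arising precisely because the less negative summand $O_{\P^1}(-m)$ forces the vanishing order $m' \geq m$ while the more negative summand $O_{\P^1}(-n)$ controls the exponent $n' \leq n$. (The degenerate case $m = n$, i.e.\ $M = 0$, is immediate: the bounds force $m' = n' = m$, so $\mu = 0$.)
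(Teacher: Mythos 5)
Your proof is correct, and most of its skeleton coincides with the paper's: both identify the blown-down cover with $\beta_*\tw f_*O_{\tw C}$ (uniqueness of the locally free extension across $s$, as algebras -- note the multiplication maps agree because they agree on $X^\circ$), both realize the $\mu$-module as the cokernel of the counit $\beta^*\beta_*(\cdot) \to (\cdot)$ restricted to $\tw F$, and both obtain the bound on the larger elementary divisor ($n' \leq n$ in your notation) from \autoref{thm:blow_down_vb}(4). The genuine difference is how the inequality is then closed. The paper proves only that one bound and finishes by conservation of length, $m'+n' = m+n = g+2$, i.e.\ the assertion that the delta invariant of the blown-down singularity equals $g+2$ (which it does not re-derive there; it rests on \autoref{thm:delta_branch} together with a computation of the local branch degree of $\phi|_F$ at $s$). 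You instead prove the complementary lower bound $m' \geq m$ directly, by an induction on the order of vanishing of the counit along $E$: the order-$j$ symbol is a map $O_E^{\oplus 2} \to O_E(j-m)\oplus O_E(j-n)$, which vanishes for $j < m$, so $\im(e_W) \subseteq I_E^m W$; restricting to $\tw F$, every entry of the matrix of $e_W|_{\tw F}$ is divisible by $x^m$, and since $x^{m'}$ is the gcd of those entries (Smith normal form over the discrete valuation ring $O_{\tw F,\tw s}$), you get $m' \geq m$ -- do spell out this last one-line justification, as ``therefore $m' \geq m$'' is a touch terse as written. What your route buys: it is self-contained, sidestepping the delta-invariant bookkeeping entirely, and it yields the sharper sandwich $m \leq m'$, $n' \leq n$ rather than only the difference inequality; in effect you have supplied the natural ``part (5)'' dual to \autoref{thm:blow_down_vb}(4). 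What the paper's route buys is brevity once $\delta = g+2$ is granted. Your trace splitting $V = O_{\Bl_sX} \oplus W$ (legitimate in characteristic zero) is cosmetic: the paper runs the identical argument on the full rank-three algebra, and nothing in your argument actually requires splitting off the unit.
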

The setup is partially described in \autoref{fig:blow_down_trigonal_curves}.
\begin{figure}[h]
  \centering
  \begin{tikzpicture}[math,map,thick,scale=.8]
    % Blowup
    \draw [dotted] (0,0) circle (4em);
    \draw (0,8em) node {\tw C};
    \path (0,7em) edge[->,thin] (0, 5em);
    \draw 
    (-.3,-.5) .. controls (.2,.5) .. (.3,1.3)
    node [below right] {\tw F}
    (-.3,.5) .. controls (.2,-.5) .. (.3,-1.3)
    node [above right] {E};
    \node (Xt) at (0,0) {} node [right] {\tw s};
    \node at (0,-2) [below] {\Bl_sX};

    \begin{scope}[xshift=15em]
      \draw [dotted] (0,0) circle (4em);
      \draw (0,8em) node {C};
      \path (0,7em) edge[->,thin] (0, 5em);
      \draw 
      (0,-1.3) -- (0,1.3)
      node [below right] {F};
      \draw[fill] (0,0) circle (0.1em) node (X) [right] {s};
      \node at (0,-2) [below] {X};
    \end{scope}

    \path[shorten >=4em, shorten <=4em,thin, ->] 
    (Xt) edge node [above] {\beta} (X);
  \end{tikzpicture}
  \caption{The setup of \autoref{thm:blow_down_trigonal_curves}.}
  \label{fig:blow_down_trigonal_curves}
\end{figure}
In this setup, we say that $C \to X$ is obtained by \emph{blowing down} $\tw C \to \Bl_sX$ along $E$.

\begin{proof}
  To simplify notation, we drop the $\tw \phi_*$ (resp. $\phi_*$) and simply write $O_{\tw C}$ (resp. $O_C$), considered as a sheaf of algebras on $\Bl_sX$ (resp. $X$).

We apply \autoref{thm:blow_down_vb} to the vector bundle $O_{\tw C}$ on $\Bl_s X$. The condition (2) in \autoref{thm:blow_down_vb} is clearly satisfied; therefore $\beta_*O_{\tw C}$ is a locally free sheaf of rank 3. Note that $\beta_*O_{\tw C}$ is naturally an $O_X$ algebra which agrees with $O_{C}$ on $X^\circ$. It follows that 
\[O_C = \beta_*O_{\tw C}.\]

Since $\tw C|_{\tw F} \to \tw F$ is \'etale,  $C|_F \to F$ is \'etale except possibly over $s$. Next, set $\tw s = \tw F \cap E$. Consider the map of $O_{\Bl_sX}$ algebras
  \[ \nu \from \beta^*\beta_*O_{\tw C} = \beta^*O_C \to O_{\tw C}.\]
  This is an isomorphism away from $E$, and hence, when restricted to $\tw F$, we have a sequence
  \begin{equation}\label{eqn:disguised_normalization_sequence}
    0 \to (\beta^*O_C)|_{\tw F} \stackrel{\nu|_F} \longrightarrow O_{\tw C}|_{\tw F} \to Q \to 0,
  \end{equation}
  where $Q$ is supported at $\tw s$.
  
  The sequence \eqref{eqn:disguised_normalization_sequence} exhibits $O_{\tw C}|_{\tw F}$ as the normalization of $\beta^*O_C|_{\tw F}$. Moreover, since $\beta \from \tw F \to F$ is an isomorphism, the algebra $\beta^*O_C|_{\tw F}$ on $\tw F$ can be identified with the algebra $O_C|_F$ on $F$ via $\beta$. Hence, the splitting type of the singularity of $C \to F$ over $s$ is simply the splitting type of the module $Q$. 

  Let $x$ be a uniformizer of $\tw F$ near $\tw s$. Suppose
  \[Q \cong k[x]/x^m \oplus k[x]/x^n,\]
  and
  \[O_{\tw C|_E}/O_E \cong O_E(-m') \oplus O_E(-n'),\]
  for some positive integers $m$, $n$, $m'$ and $n'$ with $m+n=m'+n'=g+2$.  By \autoref{thm:blow_down_vb} (4), the ideal $I_E^{\max\{m',n'\}}$ annihilates the cokernel of $\nu$. Restricting to $\tw F$, we see that $x^{\max\{m',n'\}}$ annihilates $Q$. In other words, 
  \[ \max\{m,n\} \leq \max\{m',n'\}.\]
    Since $m+n = m'+n'$, it follows that
    \[ \mu(\phi|_F) = |m-n| \leq |m'-n'| = M(\tw\phi|_E).\]
  \end{proof}

  Next, we prove a precise result about the behavior of rank two bundles under elementary transformations, especially about their splitting type. Let $R$ be a DVR with uniformizer $t$, residue field $k$, and fraction field $K$. Set $\Delta = \spec R$. Consider $P = \proj R[X,Y] = \P^1_\Delta$ with the two disjoint sections $s_0 \equiv [0:1]$ and $s_\infty \equiv [1:0]$. Denote by $F$ the central fiber of $P \to \Delta$, and by $0$ (resp. $\infty$) the point $F \cap s_0$ (resp. $F \cap s_\infty$). Consider the map 
\[ \beta \from P \setminus \{\infty\} \to P, \quad [X:Y] \mapsto [tX: Y].\]
Then $\beta$ has a resolution (see \autoref{fig:resolution_of_beta})
\[
\begin{tikzpicture}[math,node distance=5em]
  \draw 
  node (P) {\tw P}
  node (P1) [below left of=P] {P}
  node (P2) [below right of=P] {P};
  
  \path[->,map]
  (P) edge node [left] {\beta_1} (P1)
  (P) edge node [right] {\beta_2} (P2)
  (P1) edge [dashed] node [above] {\beta} (P2);
\end{tikzpicture}.
\]
Here $\beta_1 \from \tw P \to P$ is the blow up at $\infty$ and $\beta_2 \from \tw P \to P$ is the blow up at $0$. The central fiber of $\tw P \to \Delta$ is $F_1 \cup F_2$, where $F_i$ is the exceptional divisor of $\beta_i$. The $F_i$ meet transversely at a point, say $s$. \begin{figure}[h]
  \centering
  \begin{tikzpicture}[math,map,scale=.8]
    %The sides
    \foreach \i in {-5,5}{
      \begin{scope}[xshift=\i cm]
        \draw[dotted, thick] (-1,-2) rectangle (1,2);
        \draw
        (-1,1)  -- (1,1) node[right] {s_\infty}
        (-1,-1) -- (1,-1) node [right] {s_0}
        node at (0,1) [below left] {\infty}
        node at (0,-1) [below left] {0}
        node [font=] at (0,-2.5) {P}
        node (P\i) at (0,0) [right] {F};
        \draw [thick] (0,-2) --  (0,2);
      \end{scope}
    }
    
    % The middle
    \begin{scope}[yshift=2cm, thick]
      \draw[dotted]  (-1,-2) rectangle (1,2);
      \draw
      (-.2,.5) .. controls (0.2,-.8) .. (.2,-2) 
      (-.2,-.5) .. controls (0.2, .8) .. (.2,2);
      \draw
      node at (0,2) [below left] {F_1}
      node at (0,-2) [above left] {F_2}
      node (Pt) at (0,0) {}
      node (s) at (0,0) [right] {s}
      node [font=] at (0,-2.5) {\tw P};
    \end{scope}
    
    %The maps
    \path[->, shorten <=1cm, shorten >=1cm]
    (Pt) edge node [above] {\beta_1} 
    node [below, sloped] {\infty \mapsfrom F_1} (P-5) 
    (Pt) edge node [above] {\beta_2} 
    node [below, sloped] {F_2 \mapsto 0} (P5);
    \path [dashed, shorten <=2cm, shorten >=2cm, bend right=30,->]
    (P-5) edge node {\beta} (P5) ;
  \end{tikzpicture}
  \caption{Resolution of $\beta \from [X:Y] \mapsto [tX:Y]$.}
  \label{fig:resolution_of_beta}
\end{figure}
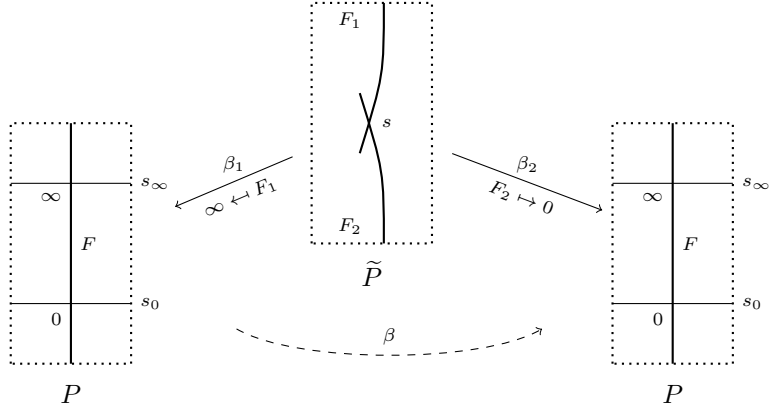

\begin{lemma}\label{eqn:pull_push_vb}
  Let $n > m$ be non-negative integers, and $O(1)$ the dual of the ideal sheaf of $s_0$. Identify
  \[ \Ext^1(O(-m), O(-n)) = R \langle X^{n-m-2}, \dots, X^iY^{n-m-2-i}, \dots , Y^{n-m-2} \rangle.\]
  Let $V$ be a vector bundle of rank two on $P$ given as an extension 
  \begin{equation}\label{eqn:ext_V}
    0 \to O(-n) \to V \to O(-m) \to 0,
  \end{equation}
  corresponding to the class $e(X,Y) \in \Ext^1(O(-m), O(-n))$. Denote by $W$ the unique vector bundle on $P$ obtained by extending $\beta^*(V)$. Assume that the class $t^{m-n+1}e(tX,Y)$, lying \emph{a priori} in $K \otimes_R \Ext^1(O(-m), O(-n))$, lies in $\Ext^1(O(-m), O(-n))$. Then $W$ can be expressed as an extension 
  \[ 0 \to O(-n) \to W \to O(-m) \to 0,\]
  with class $t^{m-n+1}e(tX,Y)$. Moreover, in this case, we have an exact sequence
  \begin{equation}\label{eqn:pull_push_split}
    0 \to (\beta_1^* W)|_{F_2} \to (\beta_2^*V)|_{F_2} \to k[u]/u^m \oplus k[u]/u^n \to 0, 
  \end{equation}
  where $u$ is a uniformizer of $F_2$ at $s$.
\end{lemma}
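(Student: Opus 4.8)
The plan is to realize $W$ as a pushforward along the blow-down $\beta_1$ and then read off both assertions from \autoref{thm:blow_down_vb}. Since $\beta = \beta_2 \circ \beta_1^{-1}$ and $\beta_1$ is an isomorphism away from $F_1$, the bundle $\beta^*V$ on $P \setminus \{\infty\}$ is the restriction of $\beta_2^*V$, and its unique locally free extension is $W = (\beta_1)_*\beta_2^*V$. First I would verify local freeness: the restriction $(\beta_2^*V)|_{F_1}$ is the pullback under the isomorphism $\beta_2|_{F_1}\from F_1 \to F$ of the negative-degree bundle $V|_F$, so $H^0((\beta_2^*V)|_{F_1} \otimes O_{F_1}(-1)) = 0$ and \autoref{thm:blow_down_vb}(2) applies, giving that $W$ is locally free of rank two.

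For the extension structure I would propagate the sequence \eqref{eqn:ext_V} through the operations $\beta_2^*$ and $(\beta_1)_*$. The sub-line-bundle $O(-n)$ and the quotient $O(-m)$ each pull back and extend to line bundles on $P$ of the same degree: using $O(1) = O(s_0)$ together with $\beta^*X = tX$ and $\beta^*Y = Y$, one sees that $\beta$ preserves fiberwise degree, so the extensions are again $O(-n)$ and $O(-m)$. This exhibits $W$ as an extension $0 \to O(-n) \to W \to O(-m) \to 0$. To compute its class I would use the \v Cech cocycle $e(X,Y)/(XY)^{n-m-1}$ representing $e$ on the standard two-chart cover: substituting $X \mapsto tX$, $Y \mapsto Y$ and renormalizing the diagonal frames back to the standard trivializations of $O(-n)$ and $O(-m)$ produces the cocycle of $t^{m-n+1}e(tX,Y)$. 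Integrality enters at exactly this renormalization step: it stays defined over $R$ (equivalently, the extended inclusion $O(-n) \into W$ is saturated and $W$ is a genuine vector bundle) precisely when $t^{m-n+1}e(tX,Y)$ has coefficients in $R$, which is the hypothesis.

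For \eqref{eqn:pull_push_split} I would restrict the natural map $e \from \beta_1^*(\beta_1)_*\beta_2^*V = \beta_1^*W \to \beta_2^*V$ of \autoref{thm:blow_down_vb} to $F_2$. This map is an isomorphism away from $F_1$, hence its restriction to $F_2$ is an isomorphism away from $s = F_1 \cap F_2$; since both restrictions are torsion-free sheaves on the smooth rational curve $F_2$ and the map is generically an isomorphism, it is injective, yielding a short exact sequence with cokernel $Q$ supported at $s$, exactly as in the normalization sequence \eqref{eqn:disguised_normalization_sequence} of \autoref{thm:blow_down_trigonal_curves}. It remains to identify $Q$. Here $(\beta_2^*V)|_{F_2}$ is trivial because $\beta_2$ contracts $F_2$, while $(\beta_1^*W)|_{F_2} \cong W|_F$ because $\beta_1|_{F_2}$ is an isomorphism onto $F$; the filtrations by $O(-n)$ and $O(-m)$ on $V$ and on $W$ are compatible with the comparison map, so in a local frame at $s$ it takes the block-triangular form $\left(\begin{smallmatrix} u^n & \delta \\ 0 & u^m \end{smallmatrix}\right)$, the diagonal entries $u^n, u^m$ coming from the single-line-bundle case (a degree count on $F_2 \cong \P^1$ shows that $\beta^*O(-k)$ contributes a jump of length $k$). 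The Smith normal form then gives $Q \cong k[u]/u^{\min(m,\val \delta)} \oplus k[u]/u^{m+n-\min(m,\val \delta)}$.

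The main obstacle is this final identification: one must show $\val \delta \geq m$, so that $Q$ splits as the balanced pair $k[u]/u^m \oplus k[u]/u^n$ rather than as a more unbalanced (even cyclic) module. This is precisely where the two parts interlock, since $\delta$ is governed by the off-diagonal extension datum computed above; the form $t^{m-n+1}e(tX,Y)$ together with its integrality is what bounds $\val \delta$ from below, equivalently what guarantees that the inclusion $O(-n) \into W$ remains saturated along $F$. I expect the bulk of the work, and the only genuinely delicate estimate, to be this valuation bound on $\delta$.
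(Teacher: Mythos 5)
Your proposal is correct and follows essentially the same route as the paper: the paper likewise identifies $W$ with ${\beta_1}_*\beta_2^*V$, obtains the extension class by renormalizing the pulled-back transition matrix (integrality being exactly what makes the renormalized matrix defined over $R$), and derives \eqref{eqn:pull_push_split} from the evaluation map $\beta_1^*W \to \beta_2^*V$ written in local frames at $s$, where it is triangular with diagonal $u^n, u^m$. The one step you defer --- the bound $\val \delta \geq m$ --- is in fact immediate from that explicit computation rather than delicate: in the paper's frames the off-diagonal entry is $t^{m-n+1}u^{n-1}e(t/u) = \sum_i e_i t^{m-n+1+i}u^{n-1-i}$, whose coefficients $e_i t^{m-n+1+i}$ lie in $R$ precisely by the integrality hypothesis and whose $u$-exponents range over $m+1, \dots, n-1$, so $\val \delta \geq m+1$ and the Smith normal form is $\mathrm{diag}(u^m, u^n)$, exactly as your argument requires.
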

We say that a class in $K \otimes_R \Ext^1(O(-m),O(-n))$ is \emph{integral} if it belongs to $\Ext^1(O(-m),O(-n))$. The class $t^{m-n}e(tX,Y)$ is integral if $e(X,Y)$ is sufficiently divisible by $t$. 

\begin{proof}
  The proof is by a possibly tedious but straightforward local computation. Write $P = \spec R[x] \cup \spec R[y]$, where $x = X/Y$ and $y = Y/X$. To ease notation, write $e(x)$ for $e(x,1)$. Note that 
  \[\beta^{-1} \spec R[x] = \spec R[x]; \quad \beta^{-1} \spec R[y] = \spec K[y].\]
  The union $\beta^{-1}\spec R[x] \cup \beta^{-1}\spec K[y]$ is $P \setminus \{\infty\}$, as expected.

  We can choose local trivializations $\langle v^1_x, v^2_x \rangle$ and $\langle v^1_y, v^2_y \rangle$ for $V$ on $\spec R[x]$ and $\spec R[y]$ respectively, such that they are related on the intersection by
  \begin{equation}\label{eqn:transition_V}
    \begin{pmatrix}
      v^1_y \\ v^2_y
    \end{pmatrix}
    =
    \begin{pmatrix}
      x^{-n} & 0 \\
      x^{-n+1}e(x) & x^{-m}
    \end{pmatrix}
    \begin{pmatrix}
      v^1_x \\ v^2_x
    \end{pmatrix}.
  \end{equation}
  The bundle $\beta^*V$ is trivialized by $\langle \beta^*v^1_x, \beta^*v^2_x\rangle$ on $\beta^{-1}\spec R[x]$ and $\langle \beta^*v^1_y, \beta^*v^2_y \rangle$ on $\beta^{-1}\spec R[y]$. The transition matrix on the intersection is simply the pullback of the matrix in \eqref{eqn:transition_V}:
  \begin{equation}\label{eqn:transitiov_bV}
    \begin{pmatrix}
      \beta^*v^1_y \\ \beta^*v^2_y
    \end{pmatrix}
    =
    \begin{pmatrix}
      t^{-n}x^{-n} & 0 \\
      t^{-n+1}x^{-n+1}e(tx) & t^{-m}x^{-m}
    \end{pmatrix}
    \begin{pmatrix}
      \beta^*v^1_x \\ \beta^*v^2_x
    \end{pmatrix}.
  \end{equation}
  Construct $W$ by gluing trivializations $\langle w^1_x, w^2_x \rangle$ on $\spec R[x]$ and $\langle w^1_y,w^2_y \rangle$ on $\spec R[y]$ by
  \begin{equation}\label{eqn:transition_W}
    \begin{pmatrix}
      w^1_y \\ w^2_y
    \end{pmatrix}
    =
    \begin{pmatrix}
      x^{-n} & 0 \\
      t^{m-n+1}x^{-n+1}e(tx) & x^{-m}
    \end{pmatrix}
    \begin{pmatrix}
      w^1_x \\ w^2_x
    \end{pmatrix}.
  \end{equation}
  Construct an explicit isomorphism $\psi \from \beta^*V \stackrel{\sim}{\to} W$ on $P\setminus\{\infty\}$, as follows:
  \begin{equation}\label{eqn:phi_V_W}
    \begin{split}
      \psi \from
      \begin{pmatrix}
        \beta^*v^1_x \\
        \beta^*v^2_x
      \end{pmatrix}
      &\mapsto
      \begin{pmatrix}
        w^1_x \\
        w^2_x
      \end{pmatrix} \quad \text{on $\beta^{-1}\spec R[x] = \spec R[x]$,}\\
      \psi \from
      \begin{pmatrix}
        \beta^*v^1_y \\
        \beta^*v^2_y
      \end{pmatrix}
      &\mapsto
      \begin{pmatrix}
        t^{-n}w^1_y \\
        t^{-m}w^2_y
      \end{pmatrix} \quad \text{on $\beta^{-1}\spec R[y] = \spec K[y]$.}    
    \end{split}
  \end{equation}
  From the transition matrices \eqref{eqn:transitiov_bV} and \eqref{eqn:transition_W}, it is easy to check that this defines a map $\psi \from \beta^*V \to W$ on $P \setminus \{\infty\}$, which is clearly an isomorphism. From \eqref{eqn:transition_W}, we see that $W$ is an extension of $O(-m)$ by $O(-n)$ corresponding to the class $t^{m-n+1}e(tX,Y)$.

  Finally, we establish the exact sequence \eqref{eqn:pull_push_split}. By \autoref{thm:blow_down_vb}, ${\beta_1}_*\beta_2^* V$ is a vector bundle which is identical to $\beta^*V$ on $P \setminus \{\infty\}$. Therefore, we must have 
  \[ W \cong {\beta_1}_*\beta_2^* V.\]
   The map $\beta_1^*W \to \beta_2^*V$ in \eqref{eqn:pull_push_split} is simply the natural map
  \begin{equation}\label{eqn:WV}
    \beta_1^* W = \beta_1^*{\beta_1}_*(\beta_2^* V) \stackrel{\rm ev}\longrightarrow \beta_2^*V.
  \end{equation}
  To obtain the cokernel, we express ${\rm ev}$ in local coordinates around $s$. Set $u = \beta_1^*y$; this is a function on a neighborhood of $s$ in $\tw P$. A basis for $\beta_2^*V$ around $s$ is given by $\langle \beta_2^*v^1_x, \beta_2^*v^2_x\rangle$. A basis for $\beta_1^*W$ around $s$ is given by $\langle \beta_1^*w^1_y, \beta_1^*w^2_y\rangle$. From the description of $\psi$ in \eqref{eqn:phi_V_W}, it follows that ${\rm ev}$ is given by
  \[ 
  {\rm ev} \from 
  \begin{pmatrix}
    \beta_1^*w^1_y \\
    \beta_1^*w^2_y
  \end{pmatrix}
  \mapsto
  \begin{pmatrix}
    u^n & 0 \\
    t^{m-n+1}u^{n-1}e(t/u) & u^m
  \end{pmatrix}
  \begin{pmatrix}
    \beta_2^*v^1_x\\
    \beta_2^*v^2_x
  \end{pmatrix}.
  \]
  Note that $t^{m-n+1}u^{n-1}e(t/u)$ lies in $R\langle u^{m+1}, \dots, u^{n-1}\rangle$. Hence, we get
  \[\cok({\rm ev}|_{F_2}) \cong k[u]/u^m \oplus k[u]/u^n.\]
  Since $u|_{F_2}$ is a uniformizer for $F_2$ around $s$, the sequence \eqref{eqn:pull_push_split} is established.
\end{proof}

Lastly, we need a simple inequality between the Maroni and the $\mu$ invariants.
\begin{lemma}\label{thm:Maroni_less_than_mu}
  Let $C$ be a curve of genus $g$ and $\phi \from C \to \P^1$ a triple cover with concentrated branching. Then
  \[ M(\phi) \leq \mu(\phi).\]
\end{lemma}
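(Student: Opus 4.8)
Write the concentrated branch point as $p \in \P^1$. First I would exploit that $\phi$ is \'etale over $\P^1 \setminus \{p\}$: there $C$ is smooth and equals its own normalization, while near $p$ the hypothesis that $\mu(\phi)$ is defined means the normalization $\tw{C}$ is \'etale over the base. Hence $\tw\phi \from \tw C \to \P^1$ is finite \'etale of degree $3$, and since $\P^1$ is simply connected in characteristic zero, $\tw C \cong \P^1 \sqcup \P^1 \sqcup \P^1$ and $\tw\phi_* O_{\tw C} = O_{\P^1}^{\oplus 3}$. The inclusion $O_C \hookrightarrow O_{\tw C}$ of algebras, pushed forward and quotiented by the common subsheaf $O_{\P^1}$, yields an exact sequence
\[
  0 \to E \to O_{\P^1}^{\oplus 2} \to Q \to 0,
\]
where $E = \phi_*O_C / O_{\P^1} \cong O_{\P^1}(-m)\oplus O_{\P^1}(-n)$ is the Maroni bundle (say $m \leq n$, so $M(\phi) = n-m$), and $Q = \tw\phi_*(O_{\tw C}/O_C)$ is a torsion sheaf at $p$ whose elementary divisors are $t^{a}, t^{b}$ with $a \leq b$ and $\mu(\phi) = b-a$. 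Note $m+n = a+b = g+2$.

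\textbf{Arithmetic reduction.} Since $m+n = a+b = g+2$, the desired inequality $M(\phi) = n-m \leq b-a = \mu(\phi)$ is equivalent, after writing $n-m = (g+2) - 2m$ and $b-a = (g+2) - 2a$, to the single inequality
\[
  m \geq a.
\]
Thus it suffices to bound below the smaller splitting exponent of $E$ by the smaller local elementary divisor exponent of the modification. I expect this to be the heart of the matter.

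\textbf{Bounding $m$ via maximal sub-line-bundles.} The plan is to show that \emph{every} line subbundle $O_{\P^1}(-c) \hookrightarrow E$ has $c \geq a$, and then apply this to the summand $O_{\P^1}(-m)$. Given such an inclusion, compose with $E \into O_{\P^1}^{\oplus 2}$ to get a nonzero map $\iota \from O_{\P^1}(-c) \to O_{\P^1}^{\oplus 2}$ whose composite to $Q$ vanishes, i.e.\ which factors through $E$. In a local basis at $p$ realizing the Smith normal form, $E_p = t^{a}O_p \oplus t^{b}O_p$, so both local components of $\iota$ are divisible by $t^{a}$; since the order of common vanishing is basis-independent, $\iota$ vanishes at $p$ to order at least $a$. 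On the other hand, the saturation $\o L \supseteq \iota\bigl(O_{\P^1}(-c)\bigr)$ is a genuine line subbundle of the trivial bundle $O_{\P^1}^{\oplus 2}$, hence has degree $\leq 0$; as $\deg \o L = -c + d$ where $d \geq a$ is the total length of common vanishing of $\iota$, we get $-c + a \leq -c + d = \deg \o L \leq 0$, that is $c \geq a$. Applying this to $O_{\P^1}(-m) \hookrightarrow E$ gives $m \geq a$, completing the proof. The only step requiring care is keeping the local (Smith-form) and global (trivialization) pictures compatible so that the common vanishing order at $p$ is correctly read off as $\geq a$; everything else is the standard fact that a saturated line subbundle of $O_{\P^1}^{\oplus 2}$ has nonpositive degree.
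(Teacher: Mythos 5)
Your proof is correct, and while it starts from the same place as the paper's — the exact sequence $0 \to F \to \tw\phi_*O_{\tw C}/O_{\P^1} \cong O_{\P^1}^{\oplus 2} \to Q \to 0$ coming from the normalization, plus the observation that since the Maroni exponents and the local exponents have the same sum $g+2$, the inequality reduces to comparing a single exponent on each side — the mechanism you use to extract that comparison is genuinely different. The paper bounds the \emph{larger} Maroni exponent above by the larger singularity exponent: it twists the sequence by $O_{\P^1}(n-1)$, notes that $H^0(O_{\P^1}(n-1)^{\oplus 2}) \onto H^0(Q)$ is surjective, and concludes $H^1(F(n-1)) = 0$, which forces $n' \leq n$. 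You instead bound the \emph{smaller} Maroni exponent below by the smaller singularity exponent, with no cohomology: any nonzero map $O_{\P^1}(-c) \to O_{\P^1}^{\oplus 2}$ factoring through $F$ has image contained in $t^a \cdot O_p^{\oplus 2}$ at the branch point (Smith normal form), so its saturation is a line subbundle of the trivial bundle of degree $-c + d$ with $d \geq a$, and such a subbundle has degree $\leq 0$, giving $c \geq a$. The two bounds are equivalent given the fixed sums, and both arguments are sound; yours is more elementary and in fact proves the slightly stronger statement that \emph{every} sub-line-bundle of $F$ has degree $\leq -a$, while the paper's yields an $H^1$-vanishing statement of a type it reuses elsewhere. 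One small point: you justify $\tw C \cong \P^1 \sqcup \P^1 \sqcup \P^1$ partly by saying that "$\mu(\phi)$ being defined" forces the normalization to be \'etale; it is cleaner to note that this is automatic — the cover is \'etale over $\A^1$, which is simply connected in characteristic zero, and Riemann--Hurwitz (or properness of the trivial cover) then forces each component of $\tw C$ to map isomorphically to $\P^1$ — a fact the paper also uses silently.
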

\begin{proof}
  Let $\br(\phi) = b \cdot p$ for some $p \in \P^1$, where $b = 2g+4$. Let the splitting type of the singularity over $p$ be $(m,n)$, where $n \geq m$ and $n+m=g+2$. Set $F = \phi_* O_C/O_{\P^1}$. Then $F \cong O_{\P^1}(-m') \oplus O_{\P^1}(-n')$ for some $n' \geq m'$ with $m' + n' = g+2$. 

  Let $\tw C \to C$ be the normalization. We have the sequence
  \begin{equation}\label{eqn:FQ}
    0 \to F \to \tw\phi_* O_{\tw C}/O_{\P^1} \to k[x]/x^m \oplus k[x]/x^n \to 0.
  \end{equation}
  Since $\tw C \cong \P^1 \sqcup \P^1 \sqcup \P^1$, the middle term above is simply $O_{\P^1}^{\oplus 2}$. It is easy to see that we have a surjection 
\[ H^0(O_{\P^1}^{\oplus 2}(n-1)) \onto H^0(k[x]/x^m \oplus k[x]/x^n).\]
Using the sequence on cohomology induced by \eqref{eqn:FQ} twisted by $O_{\P^1}(n-1)$, we conclude that $H^1(F(n-1)) = 0$, or equivalently, that $n' \leq n$. It follows that 
 \[ M(\phi) = n' - m' = 2n' - (g+2) \leq 2n - (g+2) = n -m = \mu(\phi).\]
\end{proof}

We now have the tools to prove \autoref{thm:tg3d}.
\begin{proof}[Proof of \autoref{thm:tg3d}]
  Without loss of generality, assume that $l$ satisfies $0 \leq l \leq g$ and $l \equiv g \pmod 2$. We divide the proof into steps.
  
  \begin{asparadesc}
  \item[That $\o {\orb T}_{g;1}^l$ is smooth, of finite type, and irreducible of dimension $2g+2$:] 
    $\o {\orb T}_{g;1}^l$ is an open substack of $\st T_{g;1}$. Since $\st T_{g;1}$ is smooth and irreducible of dimension $2g+2$, so is $\o{\orb T}_{g;1}^l$.
    
    To see that it is of finite type, denote by $\st M_{0;b,1}^s \subset \st M_{0;b,1}$ the open substack parametrizing $(P;\Sigma;\sigma)$ with $P$ smooth. It is easy to see that $\st M_{0;b,1}^s$ is of finite type over $k$. By the definition of $\o{\orb T}_{g;1}^l$, the open immersion $\o{\orb T}_{g;1}^l \into \st T_{g;1}$ factors as 
      \[\o{\orb T}_{g;1}^l \into \st M_{0;b,1}^s \times_{\st M_{0;b,1}} \st T_{g;1} \subset \st T_{g;1}.\]
      Since $\st T_{g;1} \to \st M_{0;b,1}$ is of finite type, we conclude that
      $\st M_{0;b,1}^s \times_{\st M_{0;b,1}} \st T_{g;1}$ and hence $\o{\orb T}_{g;1}^l$ is of finite type over $k$.
  
    \item[\textbf{That $\o{\orb T}_{g;1}^l$ is separated}:]
      We use the valuative criterion. Let $\Delta = \spec R$ be a the spectrum of a DVR, with special point $0$, generic point $\eta$ and residue field $k$. Consider two morphisms $\Delta \to \o{\orb T}_{g;1}^l$ corresponding to $(P_i \to \Delta; \sigma_i; \phi_i \from C_i \to P_i)$ for $i=1,2$. Let $\psi_\eta$ be an isomorphism of this data over $\eta$, namely isomorphisms $\psi^P_\eta \from {P_1}|_\eta \to {P_2}|_\eta$ and $\psi^C_\eta \from {C_1}|_\eta \to {C_2}|_\eta$ over $\eta$ that commute with $\phi_i$ and  $\sigma_i$. We must show that $\psi_\eta$ extends to an isomorphism over all of $\Delta$. Suppose that $\psi^P_\eta$ extends to a morphism $\psi^P \from P_1 \to P_2$ over $\Delta$. Then $\psi^P$ must be an isomorphism, because the $P_i \to \Delta$ are $\P^1$ bundles and $\psi^P_\eta$ is an isomorphism. It also follows that $\psi^P$ must be an isomorphism of marked curves
     \[\psi^P \from (P_1; \br\phi_1; \sigma_1) \isom (P_1; \br\phi_2; \sigma_2).\]
     By the separatedness of $\st T_{g;1} \to \st M_{0;b,1}$, we conclude that we have an extension $\psi^C \from C_1 \to C_2$ over $\psi^P$. Therefore, it suffices to show that $\psi_\eta^P$ extends. Denote by $\psi^P$ the maximal extension of $\psi^P_\eta$. Since $P_i \to \Delta$ are $\P^1$ bundles, the rational map $\psi^P$ has a resolution of the form
      \begin{equation}\label{eqn:psi_resolution}
        \begin{tikzpicture}[math, node distance=1.5cm]
          \node (P1t) {\tw {P}};
          \node (P1) [below left of=P1t] {P_1};
          \node (P2) [below right of=P1t] {P_2};
          \path[map,->]
          (P1t) edge (P1) 
          (P1t) edge (P2);         
          \path[map,dashed,->]
          (P1) edge node [above] {\psi^P} (P2);
        \end{tikzpicture},
      \end{equation}
      where $\tw P$ is smooth and its (scheme theoretic) central fiber is a chain of smooth rational curves $E_0 \cup \dots \cup E_n$; the map $\tw P \to P_1$ blows down $E_n, \dots, E_1$ successively to a point $p_1 \in P_1|_0$; and the map $\tw P \to P_2$ blows down $E_0, \dots, E_{n-1}$ successively to a point $p_2 \in P_2|_0$ (see \autoref{fig:psi_resolution}). If $n = 0$, then $\psi^P$ is already a morphism, and we are done. Otherwise, we look for a contradiction.
\begin{figure}[ht]
  \centering
  \begin{tikzpicture}[math,map,thick,scale=1]
    \draw [dotted] (-1.2,-2.3) rectangle (1.2,2.3);
    \draw 
    (-.2,2.0) node [below left] {E_n}
    -- (.2,1.0)
    (.2,1.4) -- (-.2,.4)
    (-.2,.8) -- (.2,-.2)
    (.2,.2) --  (-.2,-.8)
    (-.2,-.4) -- (.2,-1.4)
    (.2,-1.0) -- (-.2,-2.0)
    node [right] {E_0}
    (0,0) node (Pt){}
    (0,-2.5) node [below] {\tw P}
    ;
    
    \begin{scope}[xshift=15em]
      \draw [dotted] (-1,-1.5) rectangle (1,1.5);
      \draw 
      (0,-1.3) -- (0,1.3)
      (0,0) node (P2) [right] {}
      (0,-2) node [below] {P_2};
      \fill (0,.7) circle (.05) node [right] {p_2};
    \end{scope}

    \begin{scope}[xshift=-15em]
      \draw [dotted] (-1,-1.5) rectangle (1,1.5);
      \draw 
      (0,-1.3) -- (0,1.3)
      (0,0) node (P1) [right] {}
      (0,-2) node [below] {P_1};
      \fill (0,-.7) circle (.05) node [right] {p_1};
    \end{scope}

    \path[shorten >=3em, shorten <=4em,thin, ->] 
    (Pt) edge  
    node [above] {E_n \isom P_2|_0} 
    node [below] {E_0, \dots, E_{n-1} \mapsto p_2} 
    (P2)
    
    (Pt) edge  
    node [below] {E_0 \isom P_1|_0} 
    node [above] {E_n, \dots, E_1 \mapsto p_1} 
    (P1);
  \end{tikzpicture}
  \caption{The resolution of $\psi^P \from P_1 \to P_2$}
  \label{fig:psi_resolution}
\end{figure}

Since $\psi^P_\eta$ takes $\sigma_1(\eta)$ to $\sigma_2(\eta)$, either $p_1 = \sigma_1(0)$ or $p_2 = \sigma_2(0)$. By switching 1 and 2 if necessary, say $p_1 = \sigma_1(0)$. Let $\tw C \to \tw P$ be the pullback of $C_1 \to P_1$. Since $C_1 \to P_1$ is \'etale over $\sigma_1(0)$, the cover $\tw C \to \tw P$ is \'etale over $E_1, \dots, E_n$. Then $C_2 \to P_2$ is obtained by blowing down $\tw C \to \tw P$ successively along $E_0, \dots, E_{n-1}$. Thus, $C_2|_0 \to P_2|_0$ is has concentrated branching at $p_2$; let $\mu$ be its $\mu$ invariant. On the other hand, $\tw C|_{E_0} \to E_0$ is isomorphic to $C_1|_0 \to P_1|_0$; let $M$ be its Maroni invariant. Since both $(C_i \to P_i)_0$  are $l$-balanced, we have
 \[ \mu >  l \geq M.\]
 However, by repeated application of \autoref{thm:blow_down_trigonal_curves} and \autoref{thm:Maroni_less_than_mu}, we get
 \[ \mu \leq M.\]
 We have reached a contradiction.

\item[\textbf{That $\o{\orb T}_{g;1}^l$ is Deligne--Mumford:}]
  Since we are in characteristic zero, it suffices to prove that a $k$-point $(\phi \from C \stackrel{\phi}\to \P^1; \sigma)$ of $\o{\orb T}_{g;1}^l$ has finitely many automorphisms. We have a morphism of algebraic groups
\[ \tau \from \Aut(\phi \from C \to \P^1, \sigma) \to \Aut(\P^1).\]
The kernel of $\tau$ consists of automorphisms of $\phi$ over the identity of $\P^1$. Such an automorphism is determined by its action on a generic fiber of $\phi$. Hence $\ker\tau$ is finite. Since $\o{\orb T}_{g;1}^l$ is separated, $\Aut(\phi \from C \to \P^1, \sigma)$ is proper. On the other hand, $\Aut(\P^1)$ is affine. It follows that $\im \tau$ is finite. We conclude that $\Aut(\phi \from C \to \P^1, \sigma)$ is finite.

\item[\textbf{That $\o{\orb T}_{g;1}^l$ is proper}:]
  Let $\Delta = \spec R$ be as in the proof of separatedness.  Denote by $\o\eta$ a geometric generic point. Let $(\phi \from C_\eta \stackrel{\phi}\to P_\eta; \sigma)$ be an object of $\o{\orb T}_{g;1}^l$ over $\eta$. We need to show that, possibly after a finite base change, it extends to an object of $\o{\orb T}_{g;1}^l$ over $\Delta$. Without loss of generality, we may assume that the object over $\eta$ lies in a dense open substack of $\o{\orb T}_{g;1}^l$. Therefore, we may take $\phi \from C_{\o\eta} \to P_{\o \eta}$ to not have concentrated branching. Extend $(P; \br\phi; \sigma)$ to an object $(P; \Sigma; \sigma)$ of $\st M_{0;b,1}(\Delta)$. Since $\st T_{g;1}^l \to \st M_{0;b,1}$ is proper, we get an extension $(C \to P; \sigma)$ of $(C \to P; \sigma)_\eta$ over $(P; \Sigma; \sigma)$, possibly after a finite base change. Assume that $C|_0 \to P|_0$ satisfies the second condition of \autoref{def:l-balanced}. This can be achieved, for instance, by having $\Sigma|_0$ not supported at a point.

If the Maroni invariant of $C|_0 \to P|_0$ is at most $l$, we are done. Otherwise, we must modify $C \to P$ along the central fiber to make it more balanced. Fix an isomorphism of $P \to \Delta$ with $\proj R[X,Y]\to \Delta$ such that the section $\sigma \from \Delta \to P$ is the zero section $[0:1]$. Set $V = \phi_*O_C/O_{P}$; it is a vector bundle of rank 2 on $P$. Let
 \[
   V|_{P_0} \cong O_{\P^1}(-m) \oplus O_{\P^1}(-n),
   \]
 where $m < n$ are positive integers with $m+n=g+2$ and $n-m > l$. Then we can express $V$ as an extension
 \begin{equation}\label{eqn:V_ext}
   0 \to O_{P} (-n) \to V \to O_{P}(-m) \to 0.
 \end{equation}
 Denote the extension class by
\[ e(X,Y) \in \Ext^1(O_{P}(-m), O_{P}(-n)) = R \langle X^{n-m-2},\dots,Y^{n-m-2}\rangle.\]
Since $C_{\o\eta} \to P_{\o\eta}$ is $l$-balanced but $n-m > l$, the class $e(X,Y)$ is nonzero. However, as the restriction of \eqref{eqn:V_ext} to $P_0$ is split, $t$ divides $e(X,Y)$. By passing to a finite cover $\tw\Delta \to \Delta$, ensure that a sufficiently high power of $t$ divides $e(X,Y)$, so that $t^{m-n+1}e(tX,Y)$ is integral. Consider the rational map $\beta \from P \dashrightarrow P$, sending $[X:Y]$ to $[tX:Y]$. Then $\beta$ is defined away from $[1:0]$ on the central fiber. Let $\phi' \from C' \to P$ be the unique extension of $\beta^* C \to P$ (\autoref{lem:horrocks}). Then $C' \to P$ is isomorphic to $C \to P$ on the generic fiber, whereas the central fiber $C'|_0 \to P|_0$ is unramified except at $[1:0]$. The section $\sigma = [0:1]$ of $P \to \Delta$ serves as the required marking.

The cover $C' \to P$ may be thought of in terms of the resolution of $\beta$ (as in \autoref{fig:resolution_of_beta})
\[
\begin{tikzpicture}[math,node distance=1.5cm]
  \draw 
  node (P) {\tw P}
  node (P1) [below left of=P] {P}
  node (P2) [below right of=P] {P};
  
  \path[->,map]
  (P) edge node [left] {\beta_1} (P1)
  (P) edge node [right] {\beta_2} (P2)
  (P1) edge [dashed] node [above] {\beta} (P2);
\end{tikzpicture}.
\]
Here $\beta_1$ is the blowup at $[1:0]$ and $\beta_2$ at $[0:1]$ on the central fiber, with exceptional divisors $F_1$ and $F_2$ respectively. Set $\tw C = \beta_2^*C$. Then $C' \to P$ is the blowdown of $\tw C \to \tw P$ along $\beta_1$. Set $\tw V = {\tw \phi}_*O_{\tw C}/O_{\tw P}$ and $V' = \phi'_*O_{C'}/O_P$. Then $\tw V = \beta_2^*V$, and $V' = {\beta_1}_* \tw V$.

\begin{claim}\mbox{}
  
  \begin{compactenum}
    \item $V'$ is an extension of $O_P(-m)$ by $O_P(-n)$ given by $e'(X,Y) = t^{m-n+1}e(tX,Y)$. 
    \item The splitting type of the singularity of $C'|_0 \to P|_0$ over $[1:0]$ is $(m,n)$.
  \end{compactenum}
\end{claim}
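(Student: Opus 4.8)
The plan is to read off both parts of the claim from \autoref{eqn:pull_push_vb}, whose hypotheses and notation we have arranged to match exactly. Recall that we have set $\tw V = \beta_2^* V$ and $V' = {\beta_1}_* \tw V = {\beta_1}_*\beta_2^* V$. Having passed to a finite cover of $\Delta$ so that a sufficiently high power of $t$ divides $e(X,Y)$, the class $t^{m-n+1}e(tX,Y)$ is integral, and so \autoref{eqn:pull_push_vb} applies verbatim with $W = {\beta_1}_*\beta_2^* V = V'$. Part (1) is then immediate: the lemma identifies $V' = W$ as an extension of $O_P(-m)$ by $O_P(-n)$ with class $t^{m-n+1}e(tX,Y) = e'(X,Y)$, using the same identification of $\Ext^1(O_P(-m),O_P(-n))$ and the same section $\sigma = [0:1]$ defining $O(1)$.

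For part (2), the plan is to recognize the exact sequence \eqref{eqn:pull_push_split} as the normalization sequence computing the splitting type of the singularity of $C'|_0 \to P|_0$ over $[1:0]$, exactly in the style of the proof of \autoref{thm:blow_down_trigonal_curves}. The relevant geometry is that $\beta_1$ contracts the exceptional divisor $F_1$ to $\infty = [1:0]$, while $F_2$, the proper transform of $P|_0$, maps isomorphically onto $P|_0$ and carries $s = F_1 \cap F_2$ to $\infty$; thus the singularity of $C'|_0$ over $\infty$ corresponds to the point $s \in F_2$. Moreover, since $C \to P$ is \'etale over the marked point $\sigma = [0:1] = \beta_2(F_2)$, the cover $\tw C = \beta_2^* C \to \tw P$ is \'etale over all of $F_2$; hence $\tw C|_{F_2}$ is a smooth (in fact split) triple cover of $F_2 \cong \P^1$, and the blow-down map realizes it as the normalization of $C'|_0$ (an isomorphism away from $\infty$, where $\beta_1$ is an isomorphism near the generic point of $F_2$).

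With this dictionary in place, the argument of \autoref{thm:blow_down_trigonal_curves} shows that the splitting type of the singularity is the splitting type of the cokernel of the natural evaluation map $(\beta_1^* O_{C'})|_{F_2} \to O_{\tw C}|_{F_2}$. Peeling off the common trivial summand $O_{\tw P}$ (the unit of the algebra, on which the map is the identity) reduces this to the map $(\beta_1^* V')|_{F_2} \to \tw V|_{F_2}$, which is precisely the evaluation map \eqref{eqn:WV} whose cokernel is computed in \eqref{eqn:pull_push_split}. Therefore the cokernel is $k[u]/u^m \oplus k[u]/u^n$, so the singularity has splitting type $(m,n)$, establishing part (2).

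The step I expect to require the most care is the second one: verifying that \eqref{eqn:pull_push_split} genuinely is the normalization sequence of $C'|_0$ at $\infty$, that is, that $\tw C|_{F_2} \to C'|_0$ is the normalization and that the evaluation map \eqref{eqn:WV}, after restriction to $F_2$ and reduction modulo the trivial summand, agrees with the normalization inclusion. This hinges on the \'etaleness of $\tw C$ over $F_2$ and on the fact that $\beta_1$ is an isomorphism near the generic point of $F_2$, so that $\tw C|_{F_2}$ and $C'|_0$ differ only at the singular point over $\infty$. Everything else is a direct transcription of the two cited lemmas.
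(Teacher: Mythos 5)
Your proposal is correct and takes essentially the same approach as the paper: part (1) is read off directly from \autoref{eqn:pull_push_vb} via the identification $V' = {\beta_1}_*\beta_2^*V = W$, and part (2) proceeds, exactly as in the paper, by recognizing $\beta_1^*O_{C'}|_{F_2} \to O_{\tw C}|_{F_2}$ as the normalization of $C'|_0 \to P|_0$ under the isomorphism $\beta_1 \from F_2 \isom P|_0$, factoring out the $O_{\tw P}|_{F_2}$ part, and reading the cokernel off the last exact sequence \eqref{eqn:pull_push_split} of that lemma. The points you flag as delicate (\'etaleness of $\tw C$ over $F_2$ and the identification of the evaluation map with the normalization inclusion) are precisely the ones the paper handles with its commutative diagram, and your justifications of them are sound.
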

\begin{proof}
The first claim is directly from \autoref{eqn:pull_push_vb}. For the second, consider the natural map 
  \[ \beta_1^* O_{C'} = \beta_1^* {\beta_1}_* O_{\tw C} \to O_{\tw C}.\]
  Its restriction $\beta_1^*O_{C'}|_{F_2} \to O_{\tw C}|_{F_2}$ expresses $O_{\tw C}|_{F_2}$ as the normalization of $\beta_1^*O_{C'}|_{F_2}$. On the other hand, $\beta_1$ gives an isomorphism between $\beta_1^*O_{C'}|_{F_2}$ and $O_{C'}|_{P_0}$. Hence, the splitting type of the singularity of $C'|_0$ over $[1:0]$ is the splitting type of the cokernel of $\beta_1^*O_{C'}|_{F_2} \to O_{\tw C}|_{F_2}$.  We can factor out the $O_{\tw P}|_{F_2}$ summands, by considering the diagram
    \[
    \begin{tikzpicture}
      \matrix (m) [matrix of math nodes, column sep=4em, row sep=2em]{
        & \beta_1^*O_P|_{F_2} & O_{\tw P}|_{F_2}\\
        0 & \beta_1^* O_{C'}|_{F_2} & O_{\tw C}|_{F_2} & Q & 0 \\
        0 & \beta_1^* V'|_{F_2} & \tw V|_{F_2} = \beta_2^*V|_{F_2} & Q & 0\\
    };
    \draw [equal, -] 
    (m-1-2) -- (m-1-3)
    (m-2-4) -- (m-3-4);
    \path [->]
    \foreach \j in {2,3}{
      (m-\j-1) edge (m-\j-2)
      (m-\j-2) edge (m-\j-3)
      (m-\j-3) edge (m-\j-4)
      (m-\j-4) edge (m-\j-5)
    }
    \foreach \i in {2,3}{
      (m-1-\i) edge (m-2-\i)
      (m-2-\i) edge (m-3-\i)
    };
  \end{tikzpicture}
  \]
  Thus, the second claim follows from the last exact sequence in \autoref{eqn:pull_push_vb}.
\end{proof}
Returning to the main proof, we consider the operation $e(X,Y) \mapsto t^{m-n+1}e(tX,Y)$, which explicitly looks as follows
  \[ X^iY^{n-m-2-i} \mapsto t^{m-n+1+i}X^iY^{n-m-2-i}, \text{ for $i = 0, \dots, n-m-2$.}\]
  This operation acts by purely ``negative weights'' $t^{m-n+1+i}$. It follows that after a base change $\tw\Delta \stackrel{t^N \mapsto t}\longrightarrow \Delta$ for a sufficiently divisible $N$ and a sequence of transformations $[X:Y] \mapsto [tX:Y]$ as above, we can arrange:
\begin{compactenum}[(I)]
\item The extension class $e'(X,Y) \in \Ext^1(O_P(-n), O_P(-m))$ of $V'$ is nonzero modulo $t$.
\item The splitting type of the singularity of $C'|_0 \to P|_0$ over $[1:0]$ is $(m,n)$.
\end{compactenum}

By (I), the new central fiber $\phi'_0 \from C'|_0 \to P|_0$ is more balanced than the original $C|_0 \to P|_0$. Since $l < n-m$, the new central fiber also has $\mu$ invariant greater than $l$. If $M(\phi'_0) \leq l$, then the new central fiber is $l$-balanced, and we are done. Otherwise, we repeat the entire procedure. After finitely many such iterations, we arrive at a central fiber of Maroni invariant at most $l$ and $\mu$ invariant greater than $l$. The proof of properness is thus complete.
\end{asparadesc}
\end{proof}

%%% Local Variables: 
%%% mode: latex
%%% TeX-master: "part2"
%%% End: 

\section{The geometry of $\o T_{g;1}^l$: Preliminaries}\label{sec:prelim}

We collect results about triple covers and triple point singularities needed to analyze $\o T_{g;1}^l$.

\subsection{The structure of triple covers}\label{sec:triple_structure}
Let $Y$ be a scheme and $\phi \from X \to Y$ a triple cover. Assume that the fibers of $\phi$ are Gorenstein. Define $E$ by
\begin{equation}\label{eqn:std_sequence}
 0 \to O_Y \to \phi_*O_X \to \dual E \to 0.
\end{equation}
Then $E$ is a vector bundle of rank two on $Y$. By dualizing, we get a morphism $E \to \phi_* \omega_\phi$, where $\omega_\phi$ is the dualizing line bundle of $\phi$. Equivalently, we have a map $\phi^*E \to \omega_\phi.$ By explicit verification on the geometric fibers of $\phi$, it is easy to check that this map is surjective and gives an embedding $X \into \P E$ over $Y$. The ideal sheaf of $X$ in $\P E$ is canonically isomorphic to $O_{\P E}(-3) \otimes \pi^* \det E$.

Using the embedding above, one can deduce an explicit structure theorem for triple covers. This result is originally due to Miranda
\citep{miranda85:_tripl_cover_algeb_geomet}. Our exposition is based on the letters of Deligne \citep{deligne00:_letter_gan_gross_savin, deligne06:_letter_edixh} in response to the work of Gan, Gross and Savin \citep{gan02:_fourier_g}. Roughly speaking, it says that the data of a triple cover $X \to Y$ is equivalent to the data of a vector bundle $E$ of rank two on $Y$ along with a section of $\Sym^3(E) \otimes \det \dual E$. We begin by making this precise.

Let $\st B$ be the stack over $\cat{Schemes}$ given by $\st B(S) = \{(E, p)\}$, where $E$ is a vector bundle of rank two on $S$ and $p$ a global section of $\Sym^3(E) \otimes \det\dual E$. Then $\st B$ is an irreducible stack, smooth and of finite type.

Let $\st A_3$ be the stack over $\cat{Schemes}$ given by $\st A_3(S) = \{(\phi \from T \to S)\}$, where $\phi$ is a triple cover. This is an algebraic stack of finite type over $k$ (see \citep[\autoref*{p1:sec:Ad}]{deopurkar12:_compac_hurwit}). We define a morphism $\st B \to \st A_3$. Let $(\orb E, p)$ be the universal pair over $\st B$. Set $\P = \P\orb E$ and let $\pi \from \P \to \st B$ be the projection. The section $p$ gives $i \from O_{\P}(-3) \otimes \pi^* \det \orb E \to O_{\P}$. Let $W \subset \st B$ be the locus over which this map is not identically zero. It is easy to see that the complement of $W \subset \st B$ has codimension four. In particular, $i$ is injective because $\st B$ is smooth and $i$ is generically injective. Define $\orb Q$ as the quotient
\[ 0 \to O_{\P}(-3)\otimes \pi^*\det \orb E \to O_{\P} \to \orb Q \to 0.\]
Applying $\pi_*$, we get
\[ 0 \to O_{\st B} \to \pi_* \orb Q \to \dual{\orb E} \to 0.\]
Hence, $\pi_* \orb Q$ is an $O_{\st B}$ algebra which is locally free of rank three. We thus get a morphism
\begin{equation}\label{eqn:structure}
  f \from \st B \to \st A_3.
\end{equation}
\begin{theorem}\label{thm:structure}(\citep[Theorem~3.6]{miranda85:_tripl_cover_algeb_geomet}, \citep{deligne00:_letter_gan_gross_savin})
  With notation as above, the morphism $\st B \to \st A_3$ in \eqref{eqn:structure} is an isomorphism.
\end{theorem}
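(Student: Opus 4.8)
The plan is to show that $f \from \st B \to \st A_3$ is an isomorphism by constructing an inverse, using the embedding-into-$\P E$ construction that was sketched just before the theorem statement. The key point is that the construction sending a triple cover $\phi \from X \to Y$ to the pair $(E, p)$ is already essentially laid out: from the exact sequence \eqref{eqn:std_sequence} one recovers $E$ as the rank-two bundle $\dual{(\phi_*O_X/O_Y)}$, and the embedding $X \into \P E$ presents $X$ as the vanishing of a section $i \from O_{\P E}(-3)\otimes \pi^*\det E \to O_{\P E}$, which is exactly the datum of a section $p \in H^0(\Sym^3 E \otimes \det \dual E)$ once one identifies $\pi_* \left(O_{\P E}(3) \otimes \pi^*\det\dual E\right) = \Sym^3 E \otimes \det\dual E$. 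So first I would make this assignment $g \from \st A_3 \to \st B$ precise as a morphism of stacks, functorial in $S$, and then check that $f$ and $g$ are mutually inverse.

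For the composite $g \circ f$, I would start with a pair $(E,p)$, form $\P E$, cut out $X$ by the section $p$, and verify via the defining sequences $0 \to O_{\P}(-3)\otimes \pi^*\det E \to O_{\P} \to \orb Q \to 0$ and its pushforward $0 \to O_{\st B} \to \pi_*\orb Q \to \dual E \to 0$ that the rank-two bundle recovered from $\phi_*O_X = \pi_*\orb Q$ is canonically $E$ again, and that the recovered section equals $p$. This is essentially a bookkeeping exercise tracking the identifications $\pi_* O_{\P E}(k) = \Sym^k E$ and the projection formula, both valid since $\pi$ is a $\P^1$-bundle. For the composite $f \circ g$, starting from a triple cover $\phi \from X \to Y$ with Gorenstein fibers, I would invoke the preceding paragraph's verification that $\phi^* E \to \omega_\phi$ is surjective and realizes $X$ as a closed subscheme of $\P E$ with ideal sheaf $O_{\P E}(-3)\otimes\pi^*\det E$; this exhibits $X$ as the zero scheme of the section $p$ produced by $g$, so $f(g(\phi)) \cong \phi$ canonically. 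Both checks are local on the base, so I may assume $E$ is trivial and reduce to the affine computation over $\proj$ of a trivial $\P^1$-bundle.

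The genuinely delicate point, and the one I would flag as the main obstacle, is the \emph{Gorenstein} hypothesis and the claim that the map $\phi^*E \to \omega_\phi$ is surjective (equivalently that the resulting section $p$ lands in the open locus $W$ where $i$ is nonzero, fiberwise). One must verify on geometric fibers that every Gorenstein triple point — the possibilities being $(1,1,1)$, a node-type $(1,2)$, and a cusp-type $(3)$, together with their degenerations — does embed in $\P^1$ with the asserted ideal sheaf, so that the section $p$ restricts to a nonzero cubic form on each fiber; this is exactly where the fibers being Gorenstein (hence the dualizing sheaf being a line bundle) is used, and where the correspondence could fail without it. Since the excerpt already asserts ``by explicit verification on the geometric fibers of $\phi$, it is easy to check that this map is surjective,'' I would lean on that fiberwise analysis and then promote the isomorphism $X \cong f(g(\phi))$ from fibers to the total space by flatness and the local criterion, noting both stacks are of finite type and that $f$ is visibly a monomorphism, so it suffices to produce the two-sided inverse on points with their scheme structure. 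The remaining claims — that $\st B$ is smooth, irreducible, of finite type, and that the complement of $W$ has codimension four — are used only to ensure $i$ is injective (not merely generically so) and play no role in the bijection itself.
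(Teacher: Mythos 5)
There is a genuine gap, and it is exactly at the point you set aside. The stack $\st A_3$ parametrizes \emph{all} triple covers, not only those with Gorenstein fibers: the fiber $\spec k[x,y]/m^2 \to \spec k$ is a perfectly good point of $\st A_3$, and under the correspondence it must come from a pair $(E,p)$ in which $p$ vanishes identically on a fiber of $\P E$ (e.g.\ $(E,0)$). Your proposed inverse $g$ is built entirely from the embedding $X \into \P E$ and the surjection $\phi^*E \to \omega_\phi$, which exist only when the fibers are Gorenstein; for a non-Gorenstein cover $\omega_\phi$ is not even a line bundle and there is no such embedding. So what you construct is a morphism $g \from U \to \st B$ on the Gorenstein locus $U \subset \st A_3$, together with the verification that $f$ and $g$ are mutually inverse over $U$ (respectively over the locus $W \subset \st B$ where $p$ is fiberwise nonzero). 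That is strictly weaker than the theorem, and no amount of "flatness and the local criterion" on a family with Gorenstein fibers will produce the inverse at the non-Gorenstein points, since such families simply miss them.

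This is precisely why the paper's proof (following Deligne) leans on the two facts you declare to "play no role in the bijection itself": Poonen's result that $\st A_3$ is \emph{smooth} (via the Hilbert scheme of points on a smooth surface) and that the non-Gorenstein locus has codimension four. Because $\st A_3$ is smooth and $\st A_3 \setminus U$ has codimension $\geq 2$, the section of $\Sym^3E \otimes \det\dual E$ defined over $U$ by your construction extends (Hartogs) to a section over all of $\st A_3$; this extension \emph{is} the definition of $g$ on non-Gorenstein covers, and there is no more direct one. The same extension argument is then used twice more: the isomorphism $f \circ g \cong \id$ is constructed over $U$ and extended across $\st A_3 \setminus U$, and the isomorphism $g \circ f \cong \id$ is constructed over $W$ and extended across $\st B \setminus W$ (using smoothness of $\st B$ and that its complement has codimension four) — note that over $\st B \setminus W$ the cover $f(E,p)$ is non-Gorenstein, so your bookkeeping identification of the "recovered section" with $p$ is also unavailable there without this step. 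In short: your fiberwise analysis of Gorenstein triple points is the easy half of the argument; the smoothness-plus-codimension extension mechanism is the actual content of the proof, and omitting it leaves the theorem unproved on a nonempty (codimension-four) locus on each side.
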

We follow Deligne's proof, which is based on the following observation.
\begin{proposition}\label{sec:smooth}\citep[Proposition~5.1]{poonen08}
  The stack $\st A_3$ is smooth. The complement of the Gorenstein locus $U$ has codimension four.
\end{proposition}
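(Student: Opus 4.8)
The plan is to prove smoothness by deformation theory and to pin down the non-Gorenstein locus by a direct classification of the rank-three fibers. Present $\st A_3$ as a quotient stack $[\underline{\mathrm{Alg}}/\Gl_3]$, where $\underline{\mathrm{Alg}}$ is the affine scheme of commutative, associative, unital multiplication tensors on a fixed free rank-three module; since $\Gl_3$ is smooth, it suffices to show that $\underline{\mathrm{Alg}}$ is smooth, equivalently that $\st A_3$ satisfies the infinitesimal lifting criterion. Concretely, given a square-zero extension $R \onto R_0$ with kernel $I$ (so $I^2 = 0$) and a finite flat rank-three $R_0$-algebra $A_0$, one must produce a finite flat rank-three $R$-algebra lifting it. The underlying module lifts locally at once, so the entire content is that the obstruction to lifting the multiplication, namely the associator class in $\Ext^2_{A_0}(\mathbf L_{A_0/R_0}, A_0 \otimes_{R_0} I)$, vanishes.

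First I would split the argument according to whether the fibers are Gorenstein. A length-three Gorenstein local algebra has embedding dimension at most one: if the embedding dimension were two then the length would force $\mathfrak m^2 = 0$, making the socle all of $\mathfrak m$ and contradicting the Gorenstein hypothesis. Hence every Gorenstein rank-three fiber is a product of hypersurface algebras $k[x]/x^e$, so $\phi$ is a local complete intersection over the Gorenstein locus $U$. Then $\mathbf L_{A_0/R_0}$ is perfect of Tor-amplitude $[-1,0]$, the groups $\Ext^{\geq 2}(\mathbf L_{A_0/R_0}, -)$ vanish, deformations are unobstructed, and $\st A_3$ is smooth along $U$. This already localizes the problem: the only possible failure of smoothness is along the complement of $U$.

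It remains to analyze that complement. A length-three local algebra of embedding dimension two must satisfy $\mathfrak m^2 = 0$, forcing $A_0 \cong k \oplus W$ with $W$ two-dimensional and $W \cdot W = 0$; its socle is all of $W$, so this ``fat point'' is the unique non-Gorenstein rank-three algebra, and $\Aut(A_0) = \Gl(W) \cong \Gl_2$. Thus the non-Gorenstein locus is the single closed substack $B\Gl_2$. I would finish smoothness by computing the versal deformation of this algebra explicitly: in characteristic zero the trace form splits off the unit, so a multiplication becomes a pair of symmetric maps $\Sym^2 W \to k$ and $\Sym^2 W \to W$, and the trace condition together with associativity identifies the first-order deformation space with the four-dimensional space of cubic forms on $W$ (carrying its natural $\Gl(W)$-action). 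A direct check that these deformations are unobstructed shows the versal neighborhood is $[\A^4/\Gl_2]$, smooth of dimension zero, completing the proof of smoothness. Since $\st A_3$ is then smooth of dimension zero (the generic étale algebra being rigid with finite automorphisms) while $B\Gl_2$ has dimension $-4$, the non-Gorenstein locus has codimension $4$.

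The hard part is this last step. Unlike the Gorenstein case, the fat point is not a local complete intersection, so its $\Ext^2$ need not vanish and one cannot invoke formal smoothness abstractly; one must instead verify that the \emph{particular} associator obstruction classes vanish by an explicit computation. This is precisely where the low rank and the characteristic-zero hypothesis are essential, entering through the trace splitting and the symmetric structure of the multiplication on $W$. Everything else---the quotient presentation, the lci argument over $U$, and the classification of the fibers---is routine by comparison.
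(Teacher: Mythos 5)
You take a genuinely different route from the paper, and most of your steps are correct. The paper's proof is very short: every length-three scheme has embedding dimension at most two, so there is a smooth \emph{surjective} morphism $\Hilb_3\A^2 \to \st A_3$; smoothness of $\st A_3$ then follows from the smoothness of Hilbert schemes of points on smooth surfaces, and the codimension claim follows because the non-Gorenstein subschemes of $\A^2$ are exactly the fat points $\spec k[x,y]/m^2$, a two-dimensional family inside the six-dimensional $\Hilb_3\A^2$, and smooth morphisms preserve codimension. Your deformation-theoretic substitute---the quotient presentation by $\Gl_3$, the observation that Gorenstein length-three fibers are complete intersections so that the cotangent complex is perfect in $[-1,0]$ and $\Ext^{2}$ vanishes, the identification of the unique non-Gorenstein fiber $k \oplus W$ with $W\cdot W = 0$ and $\Aut = \Gl(W)$, and the count $\dim T^1 = 4$ at that point---is all sound, and the final dimension bookkeeping giving codimension four is right.

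The genuine gap sits exactly where you yourself place ``the hard part'': the sentence ``a direct check that these deformations are unobstructed shows the versal neighborhood is $[\A^4/\Gl_2]$'' asserts, rather than proves, what is in your approach the entire content of the proposition. Since the fat point is not a local complete intersection, no formal principle applies, and the potential obstruction is real: a Lichtenbaum--Schlessinger computation with the resolution $0 \to R^2 \to R^3 \to m^2 \to 0$, $R = k[x,y]$, shows that $T^2$ of the fat point is \emph{one-dimensional}, not zero. So a priori the versal deformation space is a hypersurface inside the four-dimensional $T^1$ and could be singular at the origin; smoothness is equivalent to the identical vanishing of that single obstruction equation, and this is precisely what must be verified. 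To close the gap you would need to exhibit the versal family explicitly: write down, for each binary cubic form, the multiplication law on $k \oplus W$ (Miranda's structure constants), check flatness, and check that the Kodaira--Spencer map onto $T^1$ is an isomorphism at the origin. Note that within this paper you cannot shortcut this by invoking \autoref{thm:structure}: the paper's proof of that theorem \emph{uses} the present proposition (smoothness and the codimension bound are what allow the section $p$ to extend across the non-Gorenstein locus), so that route is circular; the family must be constructed by hand. The paper's Hilbert-scheme cover is exactly a device for avoiding this computation, which is why its proof is so much shorter.
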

\begin{proof}
  We have a smooth and surjective morphism $\Hilb_3\A^2 \to \st A_3$, where $\Hilb_3\A^2$ is the Hilbert scheme of length three subschemes of $\A^2$. Since the Hilbert scheme of points on a smooth surface is smooth, we conclude that $\st A_3$ is smooth.

  The only non-Gorenstein subschemes of $\A^2_k$ of length three are $\spec k[x,y]/m^2$, where $m \subset k[x,y]$ is a maximal ideal. The locus of such has dimension two in the six dimensional space $\Hilb_3\A^2$. It follows that the complement of $U \subset \st A^3$ has codimension four.
\end{proof}
\begin{proof}[Proof of \autoref{thm:structure}]
  We construct an inverse $g \from \st A_3 \to \st B$. Let $\phi \from \st X \to \st A_3$ be the universal triple cover. Define $E$ by
  \[ 0 \to O_{\st A} \to \phi_*O_{\st X} \to \dual E \to 0.\]
  Then $E$ is a vector bundle of rank two on $\st A_3$.  Over the Gorenstein locus $U$, we have an embedding $\sh X \into \P = \P E$ giving the sequence
  \[ 0 \to O_{\P}(-3) \otimes \det E \to O_{\P} \to O_{\sh X} \to 0.\]
The map $O_\P(-3) \otimes \det E \to O_\P$ gives a section of $\Sym^3E \otimes \det\dual E$ over $U$. Since the complement of $U \subset \st A_3$ has codimension at least two and $\st A_3$ is smooth, this section extends to a section $p$ of $\Sym^3 E \otimes \det \dual E$ over all of $\st A_3$. The pair $(E,p)$ gives a morphism $g \from \st A_3 \to \st B$.
  
We must prove that $f \from \st B \to \st A_3$ and $g \from \st A_3 \to \st B$ are inverses.  Consider the composite $f \circ g \from \st A_3 \to \st A_3$. It corresponds to a triple cover of $\st A_3$. To check that $f \circ g$ is equivalent to the identity, we must check that this triple cover is isomorphic to the universal triple cover. By construction, such an isomorphism exists over the Gorenstein locus $U$. Since the complement of $U$ has codimension higher than two and $\st A_3$ is smooth, the isomorphism extends.

For the other direction, consider the composite $g \circ f \from \st B \to \st B$. It corresponds to a pair $(\orb E', p')$ on $\st B$, where $\orb E'$ is a vector bundle of rank two and $p'$ a section of $\Sym^3(\orb E') \otimes \dual{\det \orb E'}$. To check that $g \circ f$ is equivalent to the identity, we must check that this pair is isomorphic to the universal pair $(\orb E, p)$. By construction, such an isomorphism exists over $W$. Since the complement of $W$ has codimension higher than two and $\st B$ is smooth, the isomorphism extends.
\end{proof}

\subsection{Spaces of triple point singularities}\label{sec:triple_singularities}
An essential tool in our analysis of $\o T_{g;1}^l$ is an understanding of the stratification of the space of triple point singularities given by the $\mu$ invariant. We begin by recalling and generalizing the definition of the $\mu$ invariant.

Let $\Delta$ be the spectrum of a DVR and $\phi \from C \to \Delta$ a triple cover. Let $\tw C \to C$ be the normalization. Recall the case where $\tw C \to \Delta$ is \'etale. In this case, we look at the $\Delta$-module
\[ Q = O_{\tw C}/O_C = (O_{\tw C}/O_\Delta) / (O_C/O_\Delta),\]
which must be isomorphic to $k[t]/t^m \oplus k[t]/t^n$ for some $m$, $n$. We set $\mu(\phi) = |n-m|$.

If $\tw C \to \Delta$ is not \'etale, then consider a finite base change $\Delta' \to \Delta$, where $\Delta'$ is the spectrum of a DVR. Set $C' = C \times_\Delta \Delta'$ and $\phi' = \phi \times_{\Delta}\Delta'$. Let $\tw C' \to C'$ be the normalization. Assume that $\Delta' \to \Delta$ is such that $\tw C' \to \Delta'$ is \'etale. Define the $\mu$ invariant of $\phi$ as
\[ \mu(\phi) = \mu(\phi') / \deg(\Delta' \to \Delta).\]
It is easy to see that we can choose $\Delta' \to \Delta$ so that $\tw C' \to \Delta'$ is \'etale and this choice does not affect $\mu(\phi)$. 

Having defined the $\mu$ invariant for all triple point singularities, we analyze the loci of singularities of a fixed $\mu$ invariant. We treat `the space of triple point singularities' using the formalism of crimps of finite covers introduced in \citep[\autoref*{p1:sec:crimps}]{deopurkar12:_compac_hurwit}, which we first recall. Let $P$ be a smooth curve and $\tw\phi \from \tw C \to P$ a generically \'etale finite cover. In applications, $\tw C$ is typically smooth. A \emph{crimp} of $\tw\phi$ is the data $(C, \nu, \phi)$ forming
\[(\tw C \stackrel\nu\to C \stackrel\phi\to P),\]
where $\phi$ is a finite cover and $\nu$ an isomorphism generically over $P$. A convenient way to think of a crimp is as an $O_P$-subalgebra of $\tw\phi_* O_{\tw C}$, generically isomorphic to $\tw \phi_* O_{\tw C}$. 
\begin{proposition}\citep[\autoref*{p1:thm:crimps_projective}]{deopurkar12:_compac_hurwit}\label{thm:crimps}
  Let $\Sigma \subset P$ be a divisor. There exists a projective scheme $\Crimp(\tw\phi, \Sigma)$ parametrizing crimps $(\tw C \stackrel\nu\to C \stackrel\phi\to P)$ with $\br\phi = \Sigma$.
\end{proposition}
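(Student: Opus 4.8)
The plan is to view a crimp as a coherent $O_P$-subsheaf of $\tw\phi_* O_{\tw C}$ and to cut out the crimps inside a suitable Quot scheme by closed conditions. Recall that a crimp $(\tw C \stackrel\nu\to C \stackrel\phi\to P)$ is the same datum as an $O_P$-subalgebra $\mathcal A \subseteq \tw\phi_* O_{\tw C}$ agreeing with $\tw\phi_* O_{\tw C}$ at the generic point, via $C = \spec_P \mathcal A$ and $\nu^\# \from \mathcal A \into \tw\phi_*O_{\tw C}$. The first task is \textbf{boundedness}: I claim the colength of $\mathcal A$ is completely determined by $\Sigma$. Indeed, the quotient $\tw\phi_*O_{\tw C}/\mathcal A$ is a torsion sheaf supported where $\nu$ fails to be an isomorphism, and the conductor--discriminant comparison for the tower $\tw C \to C \to P$ gives $\deg\br\phi = \deg\br\tw\phi + 2\,\length(\tw\phi_*O_{\tw C}/\mathcal A)$. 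Since $\tw\phi$, and hence $\br\tw\phi$, is fixed and $\br\phi=\Sigma$ is prescribed, the length $\length(\tw\phi_*O_{\tw C}/\mathcal A)$ is pinned down, and its support lies in $\supp\Sigma\cup\supp\br\tw\phi$. Thus every candidate quotient has one and the same Hilbert polynomial.

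Second, I would invoke Grothendieck's Quot scheme. The quotients of $\tw\phi_*O_{\tw C}$ with this fixed Hilbert polynomial are parametrized by a scheme $\Quot$ that is projective over $k$ because $P$ is projective, and it carries a universal subsheaf $\mathcal A$ whose universal quotient $\tw\phi_*O_{\tw C}/\mathcal A$ is flat over the base. The crimp conditions are then imposed as closed conditions: \emph{(i)} $\mathcal A$ contains the unit section $O_P\to\tw\phi_*O_{\tw C}$; and \emph{(ii)} $\mathcal A$ is multiplicatively closed, i.e. the composite
\[
\mathcal A\otimes_{O_P}\mathcal A \to \tw\phi_*O_{\tw C}\otimes_{O_P}\tw\phi_*O_{\tw C} \to \tw\phi_*O_{\tw C} \to \tw\phi_*O_{\tw C}/\mathcal A
\]
vanishes. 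Each is the vanishing of a morphism of coherent sheaves whose target is flat over the base, hence is represented by a closed subscheme; their intersection $Z\subseteq\Quot$ carries a universal sheaf of $O_P$-algebras, and thus a universal family of covers $\phi\from C\to P$.

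Third, I would impose $\br\phi=\Sigma$. The relative discriminant of the universal cover over $Z$ is a section of a line bundle on $P\times Z$ whose divisor of zeros computes $\br\phi$; requiring this section to vanish along $\Sigma$ is a closed condition, and because $\deg\br\phi=\deg\Sigma$ is already forced by the boundedness step, vanishing along $\Sigma$ upgrades to the equality $\br\phi=\Sigma$. This cuts out a closed subscheme $\Crimp(\tw\phi,\Sigma)\subseteq Z\subseteq\Quot$, which is projective as a closed subscheme of a projective scheme and which represents the functor of crimps with branch divisor $\Sigma$.

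I expect the \textbf{main obstacle} to be the boundedness step, namely the clean assertion that fixing $\Sigma$ determines the colength of every crimp. This rests on the conductor--discriminant comparison and, crucially, on the discriminant behaving well in families, so that $\br\phi=\Sigma$ is genuinely a \emph{closed} condition rather than merely a constructible one; this is exactly what upgrades quasi-projectivity to projectivity. A secondary subtlety is verifying that the subalgebra conditions \emph{(i)} and \emph{(ii)} are closed, which uses the flatness of the universal quotient over $\Quot$ to ensure the relevant vanishing loci are honestly closed.
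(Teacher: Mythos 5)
Your proposal is correct and takes essentially the same route as the cited proof in \citep{deopurkar12:_compac_hurwit}: crimps are viewed as $O_P$-subalgebras of $\tw\phi_*O_{\tw C}$ whose colength is pinned down by the conductor--discriminant relation, and $\Crimp(\tw\phi,\Sigma)$ is cut out of the corresponding Quot scheme by the closed conditions of containing the unit, being multiplicatively closed, and having branch divisor $\Sigma$. The only cosmetic difference is that you impose $\br\phi=\Sigma$ by restricting the discriminant section to $\Sigma$ (using the degree count to upgrade containment to equality), whereas the source phrases this as taking the fiber of the induced morphism to the space of degree-$\deg\Sigma$ divisors over the point $[\Sigma]$; these are the same closed condition, and your remark that the quotients are supported on $\supp\Sigma\cup\supp\br\tw\phi$ is exactly what keeps the construction proper when $P$ is not itself projective (e.g.\ when $P$ is a disk, as in the paper's applications).
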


Take $P = \Delta$ and $\Sigma = b \cdot 0$. Then $\Crimp(\tw\phi, \Sigma)$ depends only on a formal neighborhood of $0$ in $\Delta$ (\citep[\autoref*{p1:thm:crimp_local}]{deopurkar12:_compac_hurwit}), and hence we may take $\Delta = \spec R$ with $R = k\f{t}$ without loss of generality.
\begin{proposition}\label{thm:delta_branch}
  Let $\tw C \to C \to \Delta$ be a crimp with branch divisor $b \cdot 0$ and $\tw C \to \Delta$ \'etale. Let $\delta = \length(O_{\tw C}/ O_C)$ be the $\delta$-invariant of $C$. Then $2\delta = b$.
\end{proposition}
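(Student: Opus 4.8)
The plan is to reduce the statement to a comparison of discriminants of the algebras $O_C$ and $O_{\tw C}$ over $O_\Delta = R$, and then to extract $b$ and $\delta$ from the two terms in the standard index formula for the trace form. Recall first that $\br\phi$ is defined as the vanishing locus of the discriminant ideal of $\phi_*O_C$ as an $R$-algebra. Since $R$ is a DVR with uniformizer $t$, this ideal is $(\disc(O_C/R)) = (t^b)$, so that $b = \val(\disc(O_C/R))$. Writing $K$ for the fraction field of $R$, set $L = O_C \otimes_R K = O_{\tw C}\otimes_R K$; the two agree because $\nu$ is a generic isomorphism. As $\tw C \to \Delta$ is \'etale, $L$ is an \'etale $K$-algebra of rank three, so the trace pairing $\langle x, y\rangle = \tr_{L/K}(xy)$ is nondegenerate. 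Both $O_C$ and $O_{\tw C}$ are full $R$-lattices in $L$, and $\disc(O_C/R)$ (resp. $\disc(O_{\tw C}/R)$) is the determinant of the Gram matrix of this pairing on an $R$-basis, well defined up to squares of units.

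The key step is the index formula: for full lattices $M \subseteq N$ in $L$, one has
\[
\val(\disc M) = 2\,\length(N/M) + \val(\disc N).
\]
I would prove this by choosing $R$-bases of $M$ and $N$ related by a transition matrix $A \in M_3(R)$ with $\det A \neq 0$. Then the Gram matrix of $M$ equals $A^{\mathrm t}\,(\text{Gram of }N)\,A$, so that $\disc M = (\det A)^2 \disc N$, while the structure theorem for finitely generated modules over a PID gives $\length(N/M) = \val(\det A)$. Combining these two identities yields the displayed formula.

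Finally, apply this with $M = O_C$ and $N = O_{\tw C}$. Because $\tw C \to \Delta$ is \'etale (equivalently, unramified and flat in characteristic zero), its discriminant $\disc(O_{\tw C}/R)$ is a unit, so $\val(\disc(O_{\tw C}/R)) = 0$. Together with $\length(O_{\tw C}/O_C) = \delta$, the index formula gives $b = \val(\disc(O_C/R)) = 2\delta$, as claimed.

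The only point demanding care is the compatibility of the normalization of the branch divisor with the discriminant ideal---that is, that the multiplicity $b$ of $\br\phi$ is exactly $\val(\disc(O_C/R))$, with no extra factor---which I would check directly against the definition of the branch morphism recalled from the companion paper. Everything else is formal linear algebra over the DVR, so I expect this bookkeeping of conventions, rather than any genuine difficulty, to be the main thing to pin down.
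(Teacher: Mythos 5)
Your proof is correct: the identification $b = \val(\disc(O_C/R))$ agrees with the paper's convention that $\br\phi$ is cut out by the discriminant ideal, and the lattice index formula $\disc M = (\det A)^2 \disc N$ (via Smith normal form, giving $\length(N/M) = \val(\det A)$) together with $\val(\disc(O_{\tw C}/R)) = 0$ for the \'etale cover yields $b = 2\delta$. The paper itself gives no inline argument---it defers to the companion paper's ``detailed argument in a more general case''---and that more general argument is the same discriminant comparison (extended to allow a ramified normalization, where the discriminant of $O_{\tw C}$ contributes a nonzero different term), so your route is essentially the one the paper invokes.
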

\begin{proof}
  This is a simple verification. See \citep[\autoref*{p1:thm:delta_inv}]{deopurkar12:_compac_hurwit} for a detailed argument in a more general case.
\end{proof}

We analyze $\Crimp(\tw\phi, \Sigma)$ where $\tw\phi$ has degree three and $\tw C$ is smooth. Although tedious, this analysis is fairly straightforward. Observe that there are three possibilities for $\tw\phi$:
\begin{enumerate}
\item \'etale: $\tw C = \Delta \sqcup \Delta \sqcup \Delta \to \Delta$,
\item totally ramified: $\tw C = \spec R[x]/(x^3-t) \to \Delta$.
\item simply ramified: $\tw C = \Delta \sqcup \spec R[x]/(x^2-t) \to \Delta$,
\end{enumerate}
We are interested mainly in the stratification of  $\Crimp(\tw\phi, \Sigma)$ given by the $\mu$ invariant. Denote by $\Crimp(\tw\phi, \Sigma, l)$ be the locally closed subset (with the reduced scheme structure) of $\Crimp(\tw\phi, \Sigma)$ parametrizing crimps with $\mu$ invariant $l$. 

\subsubsection{$\Crimp(\tw\phi, \Sigma, l)$ for $\tw\phi$ \'etale}\label{sec:crimps_triple_etale}

Fix $\tw\phi\from \tw C = \spec \tw S \to \Delta$, a triple cover with $\tw\phi$ \'etale. Fix an isomorphism of $R$-algebras
\[\tw S \cong R \oplus R \oplus R.\] 
The space $\Crimp(\tw\phi, \Sigma, l)$ may be thought of as the parameter space of suitable $R$-subalgebras $S$ of $\tw S$. The next result characterizes the $R$-submodules of $S$ that are also $R$-subalgebras. We often identify $R$ with its diagonal embedding in $S$.
\begin{proposition}\label{thm:module_to_algebra}
  Let $S \subset \tw S$ be an $R$-module such that 
  \begin{equation}\label{eqn:etale_quotient_split}
    R \subset S \text{ and } \tw S/S \cong k[t]/t^m \oplus k[t]/t^n,
  \end{equation}
  with $m \leq n$. Then
  \begin{compactenum}
  \item\label{eqn:contains_tn} $S$ contains $t^n\tw S$.
  \item\label{eqn:one_generator} The quotient $S/\ideal{R, t^n\tw S}$ is an  $R$-submodule of $\tw S/\ideal{R, t^n\tw S}$ generated by the image of one element $t^mf$, for some $f \in \tw S$ nonzero modulo $\ideal{R,t}$.
  \item\label{eqn:subalgebra_condition} $S$ is an $R$-subalgebra if and only if $t^{2m}f^2 \in S$.
  \end{compactenum}
  In particular, if $2m \geq n$, then every $R$-submodule $S$ of $\tw S$ satisfying \eqref{eqn:etale_quotient_split} is an $R$-subalgebra.
\end{proposition}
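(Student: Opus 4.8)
The plan is to transfer the whole question from $\tw S$ to the rank-two free quotient $\tw S/R$ (where $R$ denotes the diagonal copy), on which the structure theorem for finitely generated modules over the DVR $R = k\f{t}$ is available. Part \eqref{eqn:contains_tn} is then immediate: the module $\tw S/S \cong k[t]/t^m \oplus k[t]/t^n$ is annihilated by $t^n$ (as $m \leq n$), so $t^n\tw S \subseteq S$.

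For \eqref{eqn:one_generator}, I first note that $\tw S/R$ is free of rank two, exhibited by the projection $\tw S = R^3 \to R^2$ killing the diagonal. Since $R \subseteq S$, the image $S/R$ is a submodule of $\tw S/R \cong R^2$ with quotient $(\tw S/R)/(S/R) = \tw S/S \cong k[t]/t^m \oplus k[t]/t^n$. By the elementary divisor theorem (Smith normal form), there is a basis $\langle u_1, u_2\rangle$ of $\tw S/R$ with
\[ S/R = t^m R\, u_1 \oplus t^n R\, u_2. \]
Passing to the further quotient by $\ideal{R, t^n\tw S}$, whose image in $\tw S/R$ is exactly $t^n(\tw S/R) = t^nR u_1 \oplus t^n R u_2$, the $u_2$-summand dies and the $u_1$-summand becomes cyclic, generated by the image of $t^m u_1$. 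Lifting $u_1$ to an element $f \in \tw S$, I obtain the asserted generator $t^m f$; because $u_1$ is part of a basis it is unimodular, hence $f$ is nonzero modulo $\ideal{R, t}$. This argument also pins down $S$ exactly, namely $S = \ideal{R, t^n\tw S, t^m f}$, which is the form I will feed into the last part.

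For \eqref{eqn:subalgebra_condition}, using $S = \ideal{R, t^n\tw S, t^m f}$ I simply test multiplicativity on generators. All mixed products land in $S$ automatically: $R\cdot R$, $R\cdot t^n\tw S$, and $R\cdot t^m f$ are visibly inside $S$; and since $\tw S$ is a ring and $f \in \tw S$, the products $t^n\tw S \cdot t^n\tw S \subseteq t^{2n}\tw S$ and $t^n\tw S \cdot t^m f \subseteq t^{m+n}\tw S$ lie in $t^n\tw S \subseteq S$, because $2n, m+n \geq n$. The single product not automatically in $S$ is $(t^m f)^2 = t^{2m}f^2$. Hence $S$ is closed under multiplication precisely when $t^{2m}f^2 \in S$. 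Finally, if $2m \geq n$ then $t^{2m}f^2 \in t^{2m}\tw S \subseteq t^n\tw S \subseteq S$, so the condition holds unconditionally and every such submodule is automatically an $R$-subalgebra.

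The only genuinely delicate point is \eqref{eqn:one_generator}: I must ensure that the elementary-divisor basis of $\tw S/R$ can be chosen so that the invariant factors are exactly $t^m$ and $t^n$ (not merely that the quotient has the correct isomorphism type up to reordering), and that the chosen lift $f$ is unimodular so that the clause ``$f$ nonzero modulo $\ideal{R,t}$'' genuinely holds. Once the exact description $S = \ideal{R, t^n\tw S, t^m f}$ is secured, everything else is routine bookkeeping.
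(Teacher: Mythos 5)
Your proof is correct and follows essentially the same route as the paper: the paper likewise invokes the structure theory of modules over the DVR $R$ to write $S$ as the $R$-span of $1$, $t^mf$ and $t^ng$ with $1,f,g$ independent modulo $t$ (equivalently your $S = \ideal{R, t^n\tw S, t^mf}$), and then reduces closure under multiplication to the single product $t^{2m}f^2$. Your Smith-normal-form computation on $\tw S/R \cong R^{\oplus 2}$ is simply a more detailed justification of the generation statement that the paper asserts in one line, with the uniqueness of invariant factors disposing of the "delicate point" you flag.
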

\begin{proof}
  From \eqref{eqn:etale_quotient_split}, it follows that $S$ is generated as an $R$-module by $1$, $t^mf$ and $t^ng$ for some $f$ and $g$ in $\tw S$ such that $1$, $f$ and $g$ are linearly independent modulo $t$. The first two assertions follow from this observation. For the third, see that $S$ is closed under multiplication if and only if the pairwise products of the generators lie in $S$. By \eqref{eqn:contains_tn}, this is automatic for all products except $t^{2m}f^2$. Finally, if $2m \geq n$ then the condition \eqref{eqn:subalgebra_condition} is vacuous by \eqref{eqn:contains_tn}.
\end{proof}

We can now describe $\Crimp(\tw\phi, \Sigma, l)$.
\begin{proposition}\label{thm:crimps_triple_etale}
  Retain the setup introduced at the beginning of \autoref{sec:crimps_triple_etale}. Let $m$, $n$ be such that
  \[ n + m = b/2 \text{ and } n-m = l.\]
  First, $\Crimp(\tw\phi, \Sigma, l)$ is non-empty only if $m$ and $n$ are non-negative integers. If this numerical condition is satisfied, then we have the following two cases:
  \begin{enumerate}
  \item If $2m \geq n$, then $\Crimp(\tw\phi, \Sigma, l)$ is irreducible of dimension $l$. Its points correspond to $R$-subalgebras of $\tw S$ generated as an $R$-module by $1$, $t^n\tw S$ and $t^mf$ for some $f \in \tw S$ nonzero modulo $\ideal{R,t}$.
  \item If $n > 2m$, then $\Crimp(\tw\phi, \Sigma, l)$ is a disjoint union of three irreducible components of dimension $m$, conjugate under the $\Aut(\tw\phi) = \s_3$ action. Its points correspond to $R$-subalgebras of $\tw S$ generated as an $R$-module by $1$, $t^n\tw S$ and $t^mf$, where $f$ has the form
    \[ f = (1,h,-h) \text{ or } (h,1,-h) \text{ or } (h,-h,1),\]
    with $h \equiv 0 \pmod {t^{n-2m}}$.
  \end{enumerate}
\end{proposition}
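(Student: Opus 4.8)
The plan is to parametrize a crimp by a single generator and then cut out the locus of genuine subalgebras by one explicit congruence. By \autoref{thm:module_to_algebra}, any $R$-submodule $S \subset \tw S$ with $R \subset S$ and $\tw S/S \cong k[t]/t^m \oplus k[t]/t^n$ (for $m \le n$) is generated by $1$, $t^n\tw S$, and a single element $t^m f$ with $f \in \tw S$ nonzero modulo $\ideal{R,t}$; moreover \autoref{thm:delta_branch} forces $\delta = m + n = b/2$ while $\mu = n-m = l$, giving the stated numerical constraint. First I would record which modifications of $f$ leave $S$ unchanged: adding an element of $R$, scaling by a unit of $R$, and adding an element of $t^{n-m}\tw S$. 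Passing to $\tw S/R \cong R^2$ and reducing modulo $t^{n-m}$, the datum of $S$ becomes the class $\o f \in (k[t]/t^l)^2$ taken up to multiplication by $(k[t]/t^l)^*$, with $l = n-m$; here primitivity of $\o f$ (that is, $f \not\equiv 0 \bmod \ideal{R,t}$) is exactly the condition that $t^m\o f$ generate a cyclic module of length $l$, as required for the correct quotient type. Hence the space of candidate modules, before the algebra condition is imposed, is the projective line over the Artinian ring $k[t]/t^l$, which as a reduced $k$-variety is covered by two charts $\o f = (1,h)$ and $\o f = (g,1)$, each isomorphic to $\A^l$ and glued along their common dense open; this is irreducible of dimension $l$.

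Next I would impose the subalgebra condition. By \autoref{thm:module_to_algebra}, $S$ is a subalgebra if and only if $t^{2m}f^2 \in S$, which in $\tw S/R$ reads $t^m \o{f^2} \in R\,\o f \pmod{t^{n-m}}$. Writing $f = (a,b,c)$ and using that $\o f$ is primitive, this is equivalent to the vanishing of the determinant $\o{f^2}\wedge\o f$ modulo $t^{n-2m}$, and a direct computation identifies that determinant with the Vandermonde expression $(a-b)(b-c)(c-a)$ up to sign. Thus the algebra locus inside the projective line $\P^1(k[t]/t^l)$ is cut out by
\[ (a-b)(b-c)(c-a) \equiv 0 \pmod{t^{n-2m}}. \]
When $2m \geq n$ the exponent $n-2m$ is non-positive, the condition is vacuous, and so $\Crimp(\tw\phi,\Sigma,l)$ is all of $\P^1(k[t]/t^l)$: irreducible of dimension $l$, with points described exactly as in case (1). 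This settles case (1).

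For case (2), where $n > 2m$, I would analyze the congruence via its reduction modulo $t$. Since $f$ is primitive, the three differences $a-b$, $b-c$, $c-a$ sum to zero and are not all $\equiv 0 \bmod t$; if $\o f \bmod t$ avoids the three points $e_1, e_2, e_3$ of $\P^1_k$ (the projectivized images of the standard idempotents of $\tw S$), then all three differences are units, their product is a unit, and the congruence fails. Hence the algebra locus lies over $\{e_1,e_2,e_3\}$, three \emph{distinct} points of $\P^1_k$ permuted by $\Aut(\tw\phi) = \s_3$, which produces three disjoint pieces. Over $e_1$ (where $b \equiv c \bmod t$), I would use the $R$-translation and unit-scaling equivalences to bring $f$ to the canonical form $(1,h,-h)$; the congruence then reduces to $h \equiv 0 \pmod{t^{n-2m}}$, and since $h$ matters only modulo $t^{n-m}$ this piece is the affine space $\A^m$ in the $m$ free coefficients of $h$, hence irreducible of dimension $m$. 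The remaining two pieces are its $\s_3$-conjugates, with canonical forms $(h,1,-h)$ and $(h,-h,1)$, which establishes case (2).

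The main obstacle I anticipate is bookkeeping rather than conceptual: tracking the three equivalences on $f$ carefully enough that the canonical forms $(1,h,-h)$ and their conjugates are genuinely unique, so that the dimension counts are exact and the set-theoretic correspondence with $\Crimp$ is a bijection, and then matching this naive parameter variety with the reduced scheme structure on $\Crimp(\tw\phi,\Sigma,l)$ furnished by \autoref{thm:crimps}. The single computation requiring genuine care is the identification of the obstruction determinant $\o{f^2}\wedge\o f$ with the Vandermonde $(a-b)(b-c)(c-a)$; after that, everything follows from linear algebra over $k[t]/t^l$.
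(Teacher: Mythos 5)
Your proposal is correct and follows essentially the same route as the paper's proof: the same reduction via \autoref{thm:module_to_algebra} and \autoref{thm:delta_branch} to primitive classes $\o f \in F/t^{n-m}F$ taken up to units of $R/t^{n-m}R$, the same identification of the $2m \geq n$ case with the space of $(n-m-1)$-jets of $\P^1$ (your projective line over $k[t]/t^{l}$), and the same canonical forms $(1,h,-h)$, $(h,1,-h)$, $(h,-h,1)$ with $h \equiv 0 \pmod{t^{n-2m}}$ in the $n > 2m$ case. Your one refinement---writing the subalgebra condition as the Vandermonde congruence $(a-b)(b-c)(c-a) \equiv 0 \pmod{t^{n-2m}}$ and localizing over $e_1, e_2, e_3$ \emph{before} choosing canonical forms---is sound (the determinant identity checks out) and actually tidies a point the paper leaves implicit, since for a representative $(1,h,-h)$ with $h$ a unit the condition reads $h^2 \equiv 1 \pmod{t^{n-2m}}$ rather than $h \equiv 0$, a case your ordering of steps cleanly avoids.
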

\begin{proof}
  Let $\tw C \to C = \spec S \to \Delta$ be a crimp with $\mu$ invariant $l$ and branch divisor $\Sigma$. By \autoref{thm:delta_branch}, we have
  \[ \tw S/S \cong k[t]/t^m \oplus k[t]/t^n.\]
  In particular, $m$ and $n$ must be integers.

 The space $\Crimp(\tw\phi, \Sigma, l)$ may be identified with the space of $R$-modules $S$ satisfying
  \begin{equation}\label{eqn:sandwich}
    R \subset S \subset \tw S \text{ and } \tw S / S \cong k[t]/t^m \oplus k[t]/t^n,
  \end{equation}
  satisfying the additional condition that $S$ be closed under multiplication. Set $F = \tw S/R$. By \autoref{thm:module_to_algebra}~\eqref{eqn:one_generator}, the space of $S$ as in \eqref{eqn:sandwich} is simply the space of submodules of the $(R/t^{n-m}R)$-module $t^mF/t^nF$ generated by one element $t^mf$, where $f$ is nonzero modulo $t$. To specify such a submodule, it suffices to specify the image $\overline f$ of $f$ in $F/t^{n-m}F$, such that it is nonzero modulo $t$. Two such $\overline f$ define the same submodule if and only if they are related by multiplication by a unit of $R/t^{n-m}R$. 

In the case $2m \geq n$, the condition of being closed under multiplication is superfluous. Thus, $\Crimp(\tw\phi, \Sigma, l)$ may be identified with the quotient 
\[ (F/t^{n-m}F)^* / (R/t^{n-m}R)^* \cong ((k[t]/t^{n-m})^{\oplus 2})^* / (k[t]/t^{n-m})^*,\]
where the superscript $*$ denotes elements nonzero modulo $t$. This quotient is simply the jet-scheme of order $(n-m-1)$ jets of $\P^1 = \P_\sub(F/tF)$. In particular, it is irreducible of dimension $(n-m) = l$. 

In the case $n > 2m$, we must check when $S$ is closed under multiplication. It is not too hard to check that after multiplying by a unit of $R/t^{n-m}R$, the element $\overline f$  in $F/t^{n-m}F$ can be represented as the image in $F/t^{n-m}F$ of 
  \begin{equation}\label{eqn:three_choices}
    (1,h,-h) \text{ or } (h,1,-h), \text{ or } (h,-h,1), \text{ for some $h \in R/t^{n-m}R$.}
  \end{equation}
  It is easy to check that $t^{2m}f^2$ lies in the $R$-submodule $S$ of $\tw S$ generated by $1$, $t^n\tw S$ and $t^mf$ if and only if the element $t^{2m}\overline f^2 \in t^mF/t^nF$ lies in $R\ideal{t^m\o f}$, or equivalently, $h \equiv 0 \pmod {t^{n-2m}}$. Hence, the choice of $\overline f$ that gives an $R$-subalgebra $S$ is equivalent to the choice of $h$ from $t^{n-2m}R/t^{n-m}R \cong k^m$. Also, see that different choices of $h$ give different $S$. Hence, $\Crimp(\tw\phi, \Sigma, l)$ is the disjoint union of three irreducible components of dimension $m$ corresponding to the three possibilities in \eqref{eqn:three_choices}. Since the group $\s_3$ acts by permuting the three entries, these three components are conjugate. 
\end{proof}

\subsubsection{$\Crimp(\tw\phi, \Sigma,l)$ for $\tw\phi$ totally ramified}\label{sec:crimps_triple_total}
Fix $\tw\phi\from \tw C = \spec \tw S \to \Delta$, a triple cover with $\tw C$ smooth and $\tw\phi$ totally ramified. Let $\Delta' \to \Delta$ be the triple cover given by $R \to R' = R[s]/(s^3-t)$. Let $\tw S'$ be the normalization of $\tw S \otimes_R R'$ and set $\tw C' = \spec \tw S'$. Fix an isomorphism of $R$ algebras
\[\tw S \cong R[x]/(x^3-t),\]
and an isomorphism of $R'$ algebras
\[\tw S' \cong R' \oplus R' \oplus R',\]
such that the normalization map $\tw S \otimes_R R' \to \tw S'$ is given by 
\[ x \mapsto (s, \zeta s, \zeta^2s),\]
where $\zeta$ is a third root of unity. Identify $R$ with its image in $\tw S$ and $R'$ with its image in $\tw S'$. 

\begin{proposition}\label{thm:triple_tensor_up}
  Let $M \subset \tw S$ be an $R$-submodule of rank three. 
  \begin{compactenum}
  \item  $M$ is spanned by three elements $f_i(x) \in \tw S$, for $i = 1, 2, 3$, having $x$-valuations $v_i$ that are distinct modulo 3. 
  \item Set $M' = M \otimes_R R'$ and identify it with its image in $\tw S'$. Then
    \[\tw S'/M' \cong k[s]/s^{v_1} \oplus k[s]/s^{v_2} \oplus k[s]/s^{v_3}.\]
  \item $M$ contains $R$ and is closed under the multiplication map induced from $\tw S$ if and only if $M'$ contains $R'$ and is closed under the multiplication map induced from $\tw S'$.
\end{compactenum}
\end{proposition}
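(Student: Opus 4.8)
The plan is to treat $\tw S = R[x]/(x^3-t) \cong k\f{x}$ as a discrete valuation ring with uniformizer $x$, and to exploit the $\Z/3$-grading it carries over $R$ by the residue of the $x$-valuation modulo $3$ (multiplication by $t = x^3$ preserves this residue). Everything is organized around this grading, together with the observation that substituting $x \mapsto \zeta^{j-1}s$ is valuation-preserving.

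For (1), note first that since $R$ is a DVR and $M \subset \tw S$ is torsion-free of rank three, $M$ is free of rank three. For each residue $r \in \{0,1,2\}$ set $v_r = \min\{v_x(f) : 0 \neq f \in M,\ v_x(f) \equiv r \pmod 3\}$ whenever this set is nonempty, and choose $f_r \in M$ attaining it. I claim these generate $M$: given $0 \neq h \in M$ with $v_x(h) = v_r + 3j$, subtracting the appropriate scalar multiple of $t^j f_r$ strictly raises the valuation while staying in $M$, and iterating produces an $x$-adically convergent series whose limit is $h$. Because the generators have valuations in distinct residue classes they are $\mathrm{Frac}(R)$-linearly independent, so exactly three residues must occur (forced by $\rk M = 3$), yielding $f_1, f_2, f_3$ with $x$-valuations $v_1, v_2, v_3$ distinct modulo $3$.

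For (2), with respect to the embedding $\tw S \otimes_R R' \hookrightarrow \tw S' = (R')^3$, $f \mapsto (f(s), f(\zeta s), f(\zeta^2 s))$, the module $M'$ is the $R'$-span of the rows of $A = \big(f_i(\zeta^{j-1}s)\big)_{i,j}$. Since substituting $x \mapsto \zeta^{j-1}s$ only multiplies the leading coefficient $c_i$ of $f_i$ by $\zeta^{(j-1)v_i} \neq 0$, every entry of row $i$ has $s$-valuation exactly $v_i$; factoring $s^{v_i}$ out of row $i$ writes $A = \mathrm{diag}(s^{v_1}, s^{v_2}, s^{v_3}) \cdot B$ with $B \equiv \big(c_i\, \zeta^{(j-1)v_i}\big) \pmod s$. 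The reduction of $B$ is a Vandermonde matrix in the three \emph{distinct} cube roots of unity $\zeta^{v_1}, \zeta^{v_2}, \zeta^{v_3}$, hence invertible over $k$, so $B \in \Gl_3(R')$ as $R'$ is local. Thus the elementary divisors of $A$ are $s^{v_1}, s^{v_2}, s^{v_3}$ and $\tw S'/M' \cong \bigoplus_i R'/s^{v_i} \cong \bigoplus_i k[s]/s^{v_i}$.

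For (3), both equivalences come from Galois descent along the (ramified, Kummer) extension $R'/R$, with group $G = \mu_3 = \langle \gamma \rangle$, $\gamma \from s \mapsto \zeta s$, acting on $\tw S \otimes_R R' = \bigoplus_{i=0}^2 \tw S \cdot s^i$ by $s^i \mapsto \zeta^i s^i$ and extending to the normalization $\tw S'$. By construction $M' = M \otimes_R R'$ is $G$-stable with $(M')^G = M$, while $(R')^G = R$ and $(\tw S')^G = \tw S$. The forward direction is flat base change: if $R \subseteq M$ and $M$ is multiplicatively closed, then $R' = R\otimes_R R' \subseteq M'$ and $M'$ is a subalgebra of $\tw S \otimes_R R' \subseteq \tw S'$. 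For the converse, take invariants: $R = (R')^G \subseteq (M')^G = M$, and for $a,b \in M \subseteq (\tw S')^G$ the product $ab$ is $G$-invariant and lies in $M'$, hence in $(M')^G = M$. The two steps I expect to require real care are the closedness claim underlying (1) — one must invoke that the finitely generated submodule $Rf_0 + Rf_1 + Rf_2$ is closed in $\tw S$ over the complete local ring $R$ (Krull/Artin--Rees), so the iterated reduction's limit genuinely lands in $M$ — and, in (3), verifying that the $\mu_3$-action on the Kummer extension lifts correctly to the normalization $\tw S'$ with the invariants as stated, after which both equivalences follow formally.
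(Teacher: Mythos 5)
Your proposal is correct, and its computational core for part (2) is exactly the paper's argument: realize $M'$ as the row span of $A = \bigl(f_i(\zeta^{j-1}s)\bigr)_{i,j}$, factor $A = \mathrm{diag}(s^{v_1},s^{v_2},s^{v_3})\cdot B$, and use that $B$ reduces mod $s$ to a scaled Vandermonde matrix in the distinct roots of unity $\zeta^{v_1},\zeta^{v_2},\zeta^{v_3}$, hence lies in $\Gl_3(R')$, to read off $\tw S'/M' \cong \oplus_i\, k[s]/s^{v_i}$. The genuine differences are in (1) and (3). For (1), the paper takes an arbitrary basis whose valuations are merely \emph{distinct} and asserts that $R'$-independence of its image forces the leading vectors $(g_i(0), \zeta^{v_i}g_i(0), \zeta^{2v_i}g_i(0))$ to be $k$-independent, whence the $v_i$ are distinct mod $3$; read literally this inference fails for a badly chosen basis (for $M = R\ideal{1,\, x,\, x^4+x^5}$ the displayed generators form a basis with valuations $0,1,4$, and the second and third leading vectors are both $(1,\zeta,\zeta^2)$, even though $M$ does admit the good basis $\ideal{1, x, x^5}$). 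Your construction --- elements of minimal valuation in each residue class mod $3$, with generation proved by the $x$-adically convergent reduction process and Artin--Rees/completeness (or, more directly, by regrouping the series by residue class so that each coefficient converges in $R = k\f{t}$) --- produces precisely the minimal basis for which the Vandermonde step is valid, so your (1) is more watertight than the paper's and is what (2) actually requires. For (3), the paper argues by faithfully flat descent: containment of $R$ and closure under multiplication are the vanishing of the $R$-module maps $R \to \tw S/M$ and $M \otimes_R M \to \tw S/M$, which can be tested after the faithfully flat extension $R \to R'$, using injectivity of $\tw S \otimes_R R' \to \tw S'$ to compare with the conditions inside $\tw S'$. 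You use $\mu_3$-Galois descent instead; this works, at the cost of the verification you flag, which does hold: the action $s \mapsto \zeta s$ extends to $\tw S' \cong R' \oplus R' \oplus R'$ as the cyclic permutation of the three factors composed with coordinatewise $s \mapsto \zeta s$, and then $(R')^{\mu_3} = R$, $(\tw S')^{\mu_3} = \tw S$ and $(M')^{\mu_3} = M$, after which your invariant-taking argument goes through. The flat-descent phrasing avoids this extension-of-the-action check entirely, which is what makes the paper's version of (3) shorter.
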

\begin{proof}
  Take an $R$-basis $\ideal{f_i}$ of $M$ with $f_i(x) = x^{v_i}g_i(x)$, where $g_i(0) \neq 0$ and the $v_i$ are distinct. Then $M' \subset \tw S'$ is spanned by the elements 
  \begin{equation*}\label{eqn:zeta_basis}
    s^{v_i}(g_i(s), \zeta^{v_i}g_i(\zeta s), \zeta^{2v_i}g_i(\zeta^2s)).
\end{equation*}
Since the $v_i$ are distinct and the three elements above are $R'$-linearly independent, the three vectors $(g_i(0), \zeta^{v_i}g_i(0), \zeta^{2v_i}g_i(0))$ must be $k$-linearly independent. It follows that the $v_i$ are distinct modulo $3$ and
\[ \tw S'/M' \cong k[s]/s^{v_1} \oplus k[s]/s^{v_2} \oplus k[s]/s^{v_3}.\]

For the last statement, see that $M$ contains $R$ if and only if the map $M \to \tw S/R$ is zero; $M$ is closed under multiplication if and only if the map $M \otimes_R M \to \tw S/M$ is zero. Both conditions can be checked after the extension $R \to R'$.
\end{proof}

Using \autoref{thm:triple_tensor_up} and our analysis of $R'$-subalgebras of the \'etale extension $R' \to \tw S'$ from \autoref{sec:crimps_triple_etale}, we get a description of $\Crimp(\tw\phi, \Sigma, l)$.
\begin{proposition}\label{thm:crimps_triple_total}
  Retain the setup introduced at the beginning of \autoref{sec:crimps_triple_total}. Let $m$, $n$ be such that 
  \[ n+m = 3b/2 \text{ and } n-m = 3l.\]
  First, $\Crimp(\tw\phi, \Sigma, l)$ is non-empty only if $m$ and $n$ are non-negative integers distinct and nonzero modulo $3$ and $2m \geq n$. If these numerical conditions are satisfied, then $\Crimp(\tw\phi, \Sigma, l)$ is irreducible of dimension $\lfloor l \rfloor$. Its points correspond to $R$-subalgebras of $\tw S$ generated as an $R$-module by $1$, $x^n\tw S$ and $x^mf \in \tw S$, with $f$ of the form
  \[ f = 1 + \sum_{\substack{0 < i < n-m \\ i \equiv m \pmod 3}}a_ix^i,\]
  for some $a_i \in k$. 
\end{proposition}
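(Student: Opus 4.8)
The plan is to translate the problem into the étale setting of \autoref{sec:crimps_triple_etale} through the base change $R \to R'$, using \autoref{thm:triple_tensor_up} as a dictionary, and then to single out the subalgebras that descend to $R$. Concretely, a point of $\Crimp(\tw\phi, \Sigma, l)$ is an $R$-subalgebra $S \subset \tw{S}$ containing $R$ with $\br\phi = \Sigma$; by \autoref{thm:triple_tensor_up}(3) such $S$ correspond bijectively to the $R'$-subalgebras $S' = S \otimes_R R' \subset \tw{S}'$ that are invariant under the Galois group $\mu_3$ of $R'/R$ (acting by $s \mapsto \zeta s$, hence on $\tw{S}' = R' \oplus R' \oplus R'$ by a cyclic permutation of the factors composed with $s \mapsto \zeta s$). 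I would first record the numerics. Since $\Delta' \to \Delta$ has degree three, the branch divisor pulls back to $3b \cdot 0$ and $\mu(\phi') = 3\mu(\phi)$, so $S'$ is an étale crimp of splitting type $(m,n)$ with $n + m = 3b/2$ and $n - m = 3l$; in particular $m$, $n$ must be non-negative integers. As $1 \in S$ contributes a generator of valuation $0$, \autoref{thm:triple_tensor_up}(1) shows that $\{0, m, n\}$ are distinct modulo $3$, forcing $m, n \not\equiv 0$ and $m \not\equiv n \pmod 3$.

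Next I would pin down the subalgebra condition directly in $\tw{S}$. By \autoref{thm:triple_tensor_up}(1)--(2), $S$ is spanned over $R$ by $1$, an element of $x$-valuation $m$, and $x^n\tw{S}$ (the inclusion $x^n \tw{S} \subset S$ following, as in the étale case, because $m \le n$), so $S = \ideal{R, x^m f, x^n \tw{S}}$ for some unit $f \in \tw{S}$ normalized to leading coefficient $1$. Just as in \autoref{thm:module_to_algebra}(3), the only nontrivial closure condition is $x^{2m} f^2 \in S$. Here the mod-$3$ structure of the valuations does the work: every element of $\ideal{R \cdot 1, x^m f}$ has leading $x$-valuation congruent to $0$ or $m$ modulo $3$, while $x^{2m} f^2$ has valuation exactly $2m$, which (since $m \not\equiv 0$) is congruent to neither. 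Hence if $2m < n$ the element $x^{2m}f^2$ cannot be written modulo $x^n \tw{S}$ as an $R$-combination of $1$ and $x^m f$, so $S$ is not a subalgebra and $\Crimp(\tw\phi, \Sigma, l)$ is empty; this proves the necessity of $2m \ge n$. Conversely, when $2m \ge n$ the valuation of $x^{2m}f^2$ is at least $n$, so $x^{2m} f^2 \in x^n \tw{S} \subset S$ and every such $S$ is automatically an $R$-subalgebra.

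It remains, in the case $2m \ge n$, to find a normal form for $f$ and to count. The submodule $S$ depends only on the class of $x^m f$ in $\tw{S}/\ideal{R \cdot 1, x^n \tw{S}}$ up to scaling by a unit of $R$ of the form $1 + x^3 R$. Reducing modulo $R \cdot 1$ kills the monomials of $x^m f$ of valuation $\equiv 0 \pmod 3$, and scaling by $1 + x^3 R$ successively kills the monomials of valuation $\equiv m \pmod 3$ beyond the leading $x^m$; what survives are exactly the monomials $x^{m+i}$ with $i \equiv m \pmod 3$ and $0 < i < n - m$. This yields the asserted normal form $f = 1 + \sum_{\substack{0 < i < n-m \\ i \equiv m \,(\mathrm{mod}\,3)}} a_i x^i$, with the $a_i \in k$ free and (as in the étale case) distinct tuples giving distinct subalgebras. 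A direct count of the admissible exponents, using $m \not\equiv n \pmod 3$, gives exactly $\lfloor (n-m)/3 \rfloor = \lfloor l \rfloor$ of them, so $\Crimp(\tw\phi, \Sigma, l) \cong \A^{\lfloor l \rfloor}$, which is irreducible of dimension $\lfloor l \rfloor$.

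The main obstacle is the bookkeeping in the middle step: one must verify that the only closure condition is $x^{2m}f^2 \in S$ and then exploit the arithmetic of valuations modulo $3$ both to exclude the range $2m < n$ and to extract the precise normal form. The $\mu_3$-invariance is exactly what forbids the three-component phenomenon of \autoref{thm:crimps_triple_etale}(2): descent forces the missing valuation class $\equiv 2m \pmod 3$ to be the obstruction to closure, so no invariant subalgebra survives unless $2m \ge n$. Checking that distinct $(a_i)$ give distinct $S$ and that the exponent count is exactly $\lfloor l\rfloor$ is routine, and is where the floor function enters.
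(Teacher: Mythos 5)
Your proposal is correct and follows the paper's own proof essentially step for step: base change along $R \to R'$ to get the integrality of $m,n$ and the distinct-nonzero-mod-$3$ condition via \autoref{thm:triple_tensor_up}, the valuation-mod-$3$ obstruction showing $x^{2m}f^2$ cannot be an $R$-combination of $1$ and $x^mf$ unless $2m \geq n$, and the same normal form for $f$ (killing exponents $\equiv 0$ by $R\cdot 1$ and exponents $\equiv m$ by unit scaling) yielding dimension $\lfloor l\rfloor$. The one soft spot is your justification of $x^n\tw{S} \subset S$ ``as in the étale case, because $m \le n$'': the argument behind \autoref{thm:module_to_algebra}(1) uses that $1, f, g$ generate $\tw{S}$ by Nakayama, which fails here once $m \geq 3$ (as $x^mf \in t\tw{S}$), so the inclusion instead needs a leading-term argument (the initial exponents of $S$ contain every integer $\geq n$ and $S$ is $x$-adically complete)---though the paper's own proof leaves this same point implicit.
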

\begin{proof}
  Let $\tw C \to C = \spec S \to \Delta$ be a crimp with branch divisor given by $\ideal{t^b}$ and $\mu$ invariant $l$. Then $\tw C' \to C\times_\Delta \Delta' \to \Delta'$ is a crimp with branch divisor given by $\ideal{s^{3b}}$ and $\mu$ invariant $3l$. Set $S' = S \otimes_R R'$. Then
  \[ \tw S'/S' \cong k[s]/s^m \oplus k[s]/s^n.\]
  In particular, $m$ and $n$ must be integers. From \autoref{thm:triple_tensor_up}, $0$, $m$ and $n$ are distinct mod $3$. 

  $\Crimp(\tw\phi, \Sigma, l)$ may be identified with the space of $R$-modules $S$ satisfying
  \begin{equation}\label{eqn:total_quotient_split}
    R \subset S \subset \tw S \text{ and } \tw S'/S' \cong k[s]/s^m \oplus k[s]/s^n,
  \end{equation}
  (where $S' = S \otimes_R R'$) with the additional restriction that $S$ be closed under multiplication. Let $S$ be an $R$-submodule of $\tw S$ satisfying \eqref{eqn:total_quotient_split}. From \autoref{thm:triple_tensor_up}, $S$ is generated by $1$, $x^mf$ and $x^ng$ where $f$ and $g$ are nonzero modulo $x$. Then $S$ is determined by the image of $f$ in $\tw S/x^{n-m}\tw S$. For $S$ to be closed under multiplication, the image of $x^{2m}f^2$ in $\tw S/x^n \tw S$ must be an $R$-linear combination of $1$ and $x^mf$. Since all elements of $S$ lying in $R$ have $x$-valuation divisible by three, this is impossible unless $x^{2m}f^2 \equiv 0$ in $\tw S/x^n \tw S$; that is $2m \geq n$. 

  For $2m \geq n$, every $f \in \tw S/x^{n-m}\tw S$ nonzero modulo $\ideal{R,x}$ yields an $S$ satisfying \eqref{eqn:total_quotient_split} closed under multiplication. Two choices $f_1$ and $f_2$ determine the same $S$ if and only if they are related by 
  \[ x^mf_1 = ax^mf_2 + b,\]
  for $a, b \in R$ with $a$ invertible. It is not hard to check that $f$ can be chosen uniquely of the form 
  \[ f = 1 + \sum_{\substack{0 < i < n-m \\ i \equiv m \pmod 3}}  a_ix^i,\]
  for some $a_i \in k$. Therefore,  $\Crimp(\tw\phi, \Sigma, l)$ is  irreducible of dimension $\lfloor (n-m)/3 \rfloor = \lfloor l \rfloor$.
\end{proof}

\subsubsection{$\Crimp(\tw\phi, \Sigma,l)$ for $\tw\phi$ simply ramified}\label{sec:crimps_triple_simple}
Fix $\tw\phi\from \tw C = \spec \tw S \to \Delta$, a triple cover with $\tw C$ smooth and $\tw\phi$ simply ramified. Let $\Delta' \to \Delta$ be the double cover given by $R \to R' = R[s]/(s^2-t)$. Let $\tw S'$ be the normalization of $\tw S \otimes_R R'$ and set $\tw C' = \spec \tw S'$. Set $\tw S_1 = R[x]/(x^2-t)$ and fix an isomorphism of $R$-algebras
\[\tw S \cong \tw S_1 \oplus R,\]
and an isomorphism of $R'$-algebras
\[ \tw S' \cong R' \oplus R' \oplus R',\]
with the normalization map $\tw S \otimes_R R' \to \tw S'$ given by 
\[ (x,0) \mapsto (s, -s, 0) \quad (0,r) \mapsto (0,0,r).\]
Identify $R$ with its image in $\tw S$ and $R'$ with its image in $\tw S'$.

\begin{proposition}\label{thm:double_tensor_up}
  Let $M \subset \tw S$ be an $R$-submodule of rank three containing $R$.
  \begin{compactenum}
  \item  $M$ is spanned by three elements: $1$, $(f_1(x), 0)$ and $(f_2(x),0)$, with the $f_i(x)$ having $x$-valuations $v_i$ that are distinct modulo $2$.
  \item Set $M' = M \otimes_R R'$ and identify it with its image in $\tw S'$. Then
    \[\tw S'/M' \cong k[s]/s^{v_1} \oplus k[s]/s^{v_2}.\]
  \item $M$ is closed under the multiplication map induced from $\tw S$ if and only if $M'$ is closed under the multiplication map induced from $\tw S'$.
\end{compactenum}
\end{proposition}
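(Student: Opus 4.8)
The plan is to mirror the proof of \autoref{thm:triple_tensor_up}, but first to exploit the hypothesis $R \subseteq M$ in order to reduce the rank-three problem to a rank-two problem inside the single smooth branch $\tw S_1 \cong k\f{x}$.

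For (1), I would use the projection $\pi \from \tw S = \tw S_1 \oplus R \to R$ onto the second factor. Since the identity $1 = (1,1) \in M$ maps to a generator of $R$, the restriction $\pi|_M$ is surjective onto the free module $R$, so the sequence $0 \to N \to M \to R \to 0$ with $N = M \cap (\tw S_1 \oplus 0)$ splits and yields $M = R\cdot 1 \oplus N$ with $N$ of rank two. Now $N$ is a full-rank $R$-submodule of the DVR $\tw S_1 \cong k\f{x}$, on which $R = k\f{t} = k\f{x^2}$ acts through $t = x^2$. I would then choose $f_1, f_2 \in N$ of minimal $x$-valuation in the two residue classes modulo $2$. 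Both classes must occur: if $N$ met only one parity, then repeatedly subtracting $R$-multiples of a minimal-valuation element and invoking $t$-adic completeness would force $N$ to be cyclic, contradicting its rank two. The same reduction-and-completeness argument shows that $\{f_1, f_2\}$ generates $N$, producing the asserted basis $1, (f_1(x),0), (f_2(x),0)$ with $v_1 \not\equiv v_2 \pmod 2$.

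For (2), write $f_i(x) = x^{v_i}u_i(x)$ with $u_i(0) \neq 0$. Under the normalization, the images of the three generators in $\tw S' \cong R'^{\oplus 3}$ are $(1,1,1)$, $(f_1(s), f_1(-s), 0)$, and $(f_2(s), f_2(-s), 0)$. I would quotient by $R'\cdot(1,1,1)$ via the map $(a,b,c) \mapsto (a-c,\, b-c)$, whose kernel is exactly $R'(1,1,1)$; this identifies $\tw S'/M'$ with the cokernel over the DVR $R' = k\f{s}$ of the $2\times 2$ matrix with rows $(f_i(s), f_i(-s))$. Its elementary divisors are read off from the valuation of the gcd of the entries, namely $\min(v_1,v_2)$, together with that of the determinant $f_1(s)f_2(-s) - f_1(-s)f_2(s)$. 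The crux is that this determinant has valuation exactly $v_1 + v_2$: its leading coefficient is $u_1(0)u_2(0)\bigl((-1)^{v_2} - (-1)^{v_1}\bigr)$, which is nonzero precisely because $v_1$ and $v_2$ have opposite parity and $k$ has characteristic zero. Hence the cokernel is $k[s]/s^{v_1} \oplus k[s]/s^{v_2}$. This non-cancellation step is where the parity assertion of (1) is actually used, and it is the main thing to get right, exactly as the Vandermonde nondegeneracy of the roots $s, \zeta s, \zeta^2 s$ is used in \autoref{thm:triple_tensor_up}.

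For (3), I would note that, since $R \subseteq M$ (and hence $R' \subseteq M'$ automatically), closedness of $M$ under multiplication is the vanishing of the $R$-linear map $M \otimes_R M \to \tw S/M$ induced by the multiplication of $\tw S$. As $R \to R'$ is finite free, hence faithfully flat, this map vanishes if and only if its base change $M' \otimes_{R'} M' \to (\tw S \otimes_R R')/M'$ does, where I regard $M' = M \otimes_R R' \subseteq \tw S \otimes_R R'$. Finally, the normalization $\nu \from \tw S \otimes_R R' \to \tw S'$ is an injective ring homomorphism whose image contains $M'$, so products in $\tw S'$ of elements of $M'$ coincide with products computed in $\tw S \otimes_R R'$; thus $M'$ is closed under the multiplication of $\tw S'$ if and only if it is closed under that of $\tw S \otimes_R R'$, completing the chain of equivalences. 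I expect parts (1) and (3) to be routine once set up this way, with the determinant valuation in (2) being the one genuinely delicate point.
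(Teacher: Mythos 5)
Your proof is correct and takes essentially the same route as the paper: there, the statement is proved by identifying submodules of $\tw S$ containing $R$ with submodules of $\tw S_1$ (your splitting $M = R\cdot 1 \oplus N$ with $N = M \cap (\tw S_1 \oplus 0)$ is exactly this identification) and then running the argument of \autoref{thm:triple_tensor_up} with $\tw S_1$ in place of $\tw S$, including the same base change to $R'$ and the same faithfully-flat descent for closure under multiplication. Your explicit Smith-normal-form/determinant computation in (2), whose leading coefficient $u_1(0)u_2(0)\bigl((-1)^{v_2}-(-1)^{v_1}\bigr)$ is precisely the determinant of the matrix of leading vectors, is just an unwound version of the Vandermonde-type nondegeneracy invoked there, so the only difference is that you spell out details the paper delegates to the analogy.
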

\begin{proof}
  Identify $\tw S/R$ with $\tw S_1$. Then there is an equivalence between submodules of $\tw S$ containing $R$ and submodules of $\tw S_1$. With this modification, the proof is almost identical to the proof of \autoref{thm:triple_tensor_up}, with $\tw S_1$ playing the role of $\tw S$.
\end{proof}

Using \autoref{thm:double_tensor_up} and our analysis of $R'$ subalgebras of the \'etale extension $R' \to S'$ from \autoref{sec:crimps_triple_etale}, we get a description of $\Crimp(\tw\phi, \Sigma, l)$.
\begin{proposition}\label{thm:crimps_triple_simple}
  Retain the setup introduced at the beginning of \autoref{sec:crimps_triple_simple}. Let $m$, $n$ be such that
  \[ n + m = b \text{ and } n - m = 2l.\]
  First, $\Crimp(\tw\phi, l)$ is non-empty only if $n$ and $m$ are non-negative integers distinct modulo $2$. If these numerical conditions are satisfied, then we have the following two cases:
  \begin{compactenum}
  \item    If $2m \geq n$, then $\Crimp(\tw\phi, \Sigma, l)$ is  irreducible of dimension $\lfloor l\rfloor$. Its points correspond to $R$-subalgebras of $\tw S$ generated as an $R$-module by $1$, $x^n\tw S$ and $(x^mf, 0)$ for $f \in \tw S_1$ of the form
    \[ f = 1 + \sum_{\substack{0 < i < n-m\\ i \text{ odd}}} a_i x^i,\]
  \item If $n > 2m$, then we have two further cases:
    \begin{compactenum}
    \item If $m$ is odd, then $\Crimp(\tw\phi, \Sigma, l)$ is empty.
    \item If $m$ is even, then $\Crimp(\tw\phi, \Sigma, l)$ is irreducible of dimension $m/2$. Its points correspond to $R$-subalgebras of $\tw S$ generated as an $R$-module by $1$, $x^n\tw S$ and $(x^mf, 0)$ for $f \in \tw S_1$ of the form
      \[ f = 1 + \sum_{\substack{n-2m \leq i < n-m\\ i \text{ odd}}} a_i x^i,\]
      for some $a_i \in k$.
    \end{compactenum}
  \end{compactenum}
\end{proposition}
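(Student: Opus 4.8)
The plan is to mirror the proofs of \autoref{thm:crimps_triple_etale} and \autoref{thm:crimps_triple_total}: pass to the \'etale double cover $\Delta' \to \Delta$, translate the problem into one about $R'$-submodules of $\tw S'$ via \autoref{thm:double_tensor_up}, and then read off the answer from the \'etale description of \autoref{sec:crimps_triple_etale}, keeping careful track of which configurations actually descend to $R$. First I would take a crimp $\tw C \to C = \spec S \to \Delta$ of branch divisor $b \cdot 0$ and $\mu$ invariant $l$, and base change along $R \to R'$. Writing $S' = S \otimes_R R'$, the normalization $\tw S'$ is \'etale over $\Delta'$ and the induced crimp has $\mu$ invariant $2l$, so $\tw S'/S' \cong k[s]/s^m \oplus k[s]/s^n$ with $m+n=b$ and $n-m=2l$, forcing $m,n$ to be non-negative integers. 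Since $S$ contains $R$, \autoref{thm:double_tensor_up} presents $S$ by generators $1, (f_1(x),0), (f_2(x),0)$ whose $x$-valuations are $\{m,n\}$ and are distinct modulo $2$; this is the numerical obstruction that $m,n$ be distinct mod $2$ (equivalently $n-m$ is odd, so $l \in \tfrac12 + \Z$).

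Next I would reduce the parametrization to a single unit $f \in \tw S_1$. Identifying $\tw S/R$ with $\tw S_1$ as in the proof of \autoref{thm:double_tensor_up}, the submodules $S \subset \tw S$ containing $R$ with the prescribed quotient are exactly those generated as an $R$-module by $1$, $x^n\tw S$ and $(x^m f, 0)$ for a unit $f \in \tw S_1$. A short calculation shows $f$ is determined modulo $x^{n-m}$ and modulo multiplication by even units of $R = k\f{x^2}$, and I would use this freedom to normalize $f = 1 + \sum_{0<i<n-m,\ i \text{ odd}} a_i x^i$ by killing all even-degree coefficients. By \autoref{thm:double_tensor_up}, such an $S$ is an $R$-subalgebra iff $S' = S\otimes_R R'$ is; working directly in $\tw S = \tw S_1 \oplus R$ (where every element of $S$ of vanishing second coordinate is an $R$-combination of $(x^m f,0)$ and $x^n\tw S$), this is equivalent to $(x^{2m}f^2,0) \in S$, i.e.\ to the existence of an even series $r \in R$ with $r \equiv x^m f \pmod{x^{n-m}}$.

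The hard part, and the source of the dichotomy, is the parity analysis of this congruence. When $2m \geq n$ we have $\val(x^m f) = m \geq n-m$, so $r=0$ works and closure is vacuous; the normalized $f$ then ranges freely over $1 + \sum_{0<i<n-m,\ i \text{ odd}} a_i x^i$, giving an irreducible space of dimension $\lfloor l\rfloor = (n-m-1)/2$. When $n > 2m$ the congruence forces every odd-degree term of $x^m f$ in degrees below $n-m$ to vanish, since the even series $r$ can only absorb even-degree terms. If $m$ is odd, the leading term $x^m$ of $x^m f$ is itself of odd degree $m < n-m$ and cannot be matched, so no subalgebra exists and $\Crimp(\tw\phi, \Sigma, l)$ is empty. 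If $m$ is even, the leading term is matchable while the terms $x^{m+i}$ ($i$ odd) have odd degree $m+i$ and must vanish for $i < n-2m$; the surviving free coefficients are the $a_i$ with $i$ odd and $n-2m \leq i < n-m$, of which there are exactly $m/2$, giving an irreducible space of dimension $m/2$.

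I expect the genuinely delicate points to be purely bookkeeping: verifying that the normalization of $f$ (scaling by an even unit, reduction mod $x^{n-m}$) really exhausts the equivalence and leaves precisely the odd-degree coefficients free, and then intersecting this with the closure constraint to isolate the surviving $a_i$ in each case. As in \autoref{thm:crimps_triple_etale}, I would finish by checking that distinct normalized $f$ yield distinct subalgebras, so that the coefficients $a_i$ genuinely furnish coordinates on an irreducible affine space of the claimed dimension.
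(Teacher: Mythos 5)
Your proposal is correct and follows essentially the same route as the paper's proof: base change along $R \to R'$ to force integrality and the mod-$2$ distinctness of $m,n$ via \autoref{thm:double_tensor_up}, reduction to a single normalized unit $f \in \tw S_1$, and the observation that closure under multiplication amounts to $x^m f$ being congruent to an element of $R$ (an even series) modulo $x^{n-m}$, which is vacuous for $2m \geq n$ and otherwise forces $m$ even and kills the odd coefficients $a_i$ with $i < n-2m$. The dimension counts $\lfloor l \rfloor$ and $m/2$ and the uniqueness of the normalized $f$ match the paper's argument exactly.
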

\begin{proof}
  Let $\tw C \to C = \spec S \to \Delta$ be a crimp with branch divisor given by $\ideal{t^b}$ and $\mu$ invariant $l$. Then $\tw C' \to C\times_\Delta \Delta' \to \Delta'$ is a crimp with branch divisor given by $\ideal{s^{2b}}$ and $\mu$ invariant $2l$. Set $S' = S \otimes_R R'$. Then
  \[ \tw S'/S' \cong k[s]/s^m \oplus k[s]/s^n.\]
  In particular, $m$ and $n$ must be integers. From \autoref{thm:triple_tensor_up}, $m$ and $n$ are distinct modulo $2$. 
  
  $\Crimp(\tw\phi, \Sigma, l)$ may be identified with the space of $R$-modules $S$ satisfying
  \begin{equation}\label{eqn:simple_quotient_split}
    R \subset S \subset \tw S \text{ and } \tw S'/S' \cong k[s]/s^m \oplus k[s]/s^n,
  \end{equation}
  (where $S' = S \otimes_R R'$) with the additional condition that $S$ be closed under multiplication. Let $S$ be an $R$-submodule of $\tw S$ satisfying \eqref{eqn:simple_quotient_split}. See that $S$ is determined by the image of $f$ in $\tw S_1/x^{n-m} \tw S_1$. For $S$ to be closed under multiplication $x^{2m}f^2 \in \tw S_1/x^n\tw S_1$ must be an $R$-multiple of $x^mf$ in $S_1/x^n\tw S_1$.

  In the case $2m \geq n$, any $f \in \tw S_1/x^{n-m}\tw S_1$ nonzero modulo $x$ yields an $S$ satisfying \eqref{eqn:simple_quotient_split} closed under multiplication. Two $f_1$ and $f_2$ give the same $S$ if and only if they are related by 
  \[ f_1 = a f_2,\]
  for some unit $a \in R$. It is not hard to check that $f$ can be chosen uniquely of the form
  \[ f = 1 + \sum_{\substack{0 < i < n-m \\ i \text{ odd}}} a_i x^i,\]
  for some $a_i \in k$. Thus,  $\Crimp(\tw\phi, \Sigma, l)$ is irreducible of dimension $\lfloor (n-m)/2 \rfloor = \lfloor l \rfloor$.

  In the case $n > 2m$, the condition for being closed under multiplication is non-vacuous. For this to hold, $x^{2m}f^2 \in \tw S_1/x^n\tw S_1$ must be an $R$-multiple of $x^mf$. Since elements of $R$ have even $x$-valuation, we conclude that $m$ must be even. In this case, $x^{2m}f^{2} \equiv x^mfg \pmod {x^n}$ for some $g \in R$ implies that the image of $f$  in $\tw S_1/x^{n-2m}\tw S_1$ is contained in the image of $R$. Thus the unique choice of $f$ as above must have the form
  \[ f = 1 + \sum_{\substack{n-2m \leq i < n-m \\ i \text{ odd}}} a_i x^i.\]
  Thus, $\Crimp(\tw\phi, \Sigma, l)$ is irreducible of dimension $m/2$.
\end{proof}

\subsection{Dimension counts}\label{sec:dimensions}
We use the results from \autoref{sec:triple_structure} and \autoref{sec:triple_singularities} to count the dimension of important loci in $\st T_{g;1}$. Let $0 \leq l \leq g$ be an integer with $l \equiv g \pmod 2$. Denote by $\st T_{g;1}(l) \subset \st T_{g;1}$ the locally closed locus consisting of $(P; \sigma; \phi \from C \to P)$ where $P \cong \P^1$ and $\phi$ has Maroni invariant $l$. Let $m \leq n$ be such that
\[ n+m = g+2 \text{ and } n-m = l.\]
\begin{proposition}\label{thm:dim_maroni}
  Let $0 \leq l \leq g$ be an integer with $l \equiv g \pmod 2$. 
  Then $\st T_{g;1}(l)$ is irreducible of dimension given by
  \[ \dim \st T_{g;1}(l) = 
  \begin{cases}
    2g + 2 & \text{ if $l = 0$,}\\
    2g + 3 - l & \text{ if $0 < l \leq (g+2)/3$,}\\
    (3g + l)/2 + 1 & \text{ if $(g+2)/3 < l$.}
  \end{cases}
  \]
  In particular, $\st T_{g;1}(l)$ has codimension one in the following two cases: $l = g$ and $l = 2$ (for even $g$). For $2 < l < g$, it has codimension at least two.
\end{proposition}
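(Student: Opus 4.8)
The plan is to use the structure theorem for triple covers (\autoref{thm:structure}) to present $\st T_{g;1}(l)$ as a quotient stack, and then to read off the dimension from a cohomology computation. Fix $P = \P^1$, let $m \leq n$ be the integers with $m+n = g+2$ and $n-m = l$, and set $E = O_{\P^1}(m)\oplus O_{\P^1}(n)$, so $\det\dual E = O_{\P^1}(-g-2)$. By \autoref{thm:structure}, a triple cover $\phi\from C\to \P^1$ with Tschirnhausen bundle $\phi_*O_C/O_{\P^1}\cong \dual E$ — equivalently, with Maroni invariant exactly $l=n-m$ — is the same datum as a section $p$ of $\Sym^3 E\otimes\det\dual E \cong \pi_*\bigl(O_{\P E}(3)\otimes\pi^*\det\dual E\bigr)$, where $\pi\from \P E\to\P^1$; two sections give isomorphic covers precisely when they differ by an element of $\Aut(E)$. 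For any such reduced cover the genus is automatically $g$ (since $\chi(O_C)=\chi(O\oplus\dual E)=1-g$) and $C$ is automatically connected (as $h^0(\dual E)=0$, using $m\geq 1$, i.e.\ $l\leq g$); moreover $C$ is reduced if and only if $\phi$ is generically \'etale, i.e.\ $\disc(p)\neq 0$, an open condition on $\mathcal V := H^0(\Sym^3 E\otimes\det\dual E)$. This open locus $\mathcal V^\circ$ is nonempty for every $l$ in range: a general member of the linear system $|O_{\P E}(3)\otimes\pi^*\det\dual E|$ on the Hirzebruch surface $\P E\cong\mathbb{F}_l$ meets a general fiber of $\pi$ in three distinct points, hence is generically \'etale over $\P^1$ and (being finite flat over a smooth curve, thus Cohen--Macaulay and generically reduced) reduced, with Maroni invariant exactly $l$ by construction.

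Now let $\tw G$ be the group of pairs $(\gamma,\psi)$ with $\gamma\in\Aut(\P^1)$ and $\psi\from\gamma^*E\isom E$. Since $\gamma^*O_{\P^1}(d)\cong O_{\P^1}(d)$, the projection $\tw G\to\Aut(\P^1)=\PGl_2$ is surjective with kernel $\Aut(E)$, so $\dim\tw G = \dim\Aut(E)+3$. Incorporating the marked point, $\st T_{g;1}(l)$ is the quotient stack $[\,U/\tw G\,]$, where $U\subset \mathcal V^\circ\times\P^1$ is the open locus of pairs $(p,\sigma)$ with $\sigma$ away from $\br\phi$; hence
\[
\dim \st T_{g;1}(l) = \dim U - \dim\tw G = \bigl(h^0(\Sym^3 E\otimes\det\dual E)+1\bigr) - \bigl(\dim\Aut(E)+3\bigr).
\]
Decomposing into line bundles gives $\Sym^3 E\otimes\det\dual E\cong O(2m-n)\oplus O(m)\oplus O(n)\oplus O(2n-m)$, whose first summand has degree $2m-n = m-l$; this is nonnegative exactly when $2m\geq n$, i.e.\ $l\leq (g+2)/3$, which is precisely the case division in the statement. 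Using $\chi=\deg+\rk$ on $\P^1$ one finds $h^0 = 2g+8$ when $l\leq(g+2)/3$ (all summands have nonnegative degree, so $h^1=0$) and $h^0 = 3n+3 = (3g+3l)/2+6$ when $l>(g+2)/3$. Likewise $\dim\Aut(E)=\dim\Hom(E,E) = 2 + h^0(O(l)) + h^0(O(-l))$ equals $l+3$ for $l>0$ and $4$ for $l=0$. Substituting produces the three cases of the dimension formula, the value $l=0$ absorbing the jump in $\dim\Aut(E)$.

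Irreducibility is then immediate: $U$ is open in the irreducible variety $\mathcal V\times\P^1$, hence irreducible, and a quotient stack of an irreducible space is irreducible. The codimension assertions follow by arithmetic from $\codim\st T_{g;1}(l) = (2g+2)-\dim\st T_{g;1}(l)$: for $l=g$ (which satisfies $l>(g+2)/3$ once $g>1$) and for $l=2$ the codimension is $1$, while for $2<l<g$ one checks $\codim = l-1\geq 2$ in the range $l\leq(g+2)/3$ and $\codim = (g+2-l)/2\geq 2$ (using $l\leq g-2$) in the range $l>(g+2)/3$. The only genuinely delicate points are the nonemptiness of $\mathcal V^\circ$ in the unbalanced range $l>(g+2)/3$, where the covers are forced to be singular and the Bertini argument above is needed, and the verification that the presentation $\st T_{g;1}(l)\cong[\,U/\tw G\,]$ captures all isomorphisms; once these are in place the dimension count is mechanical, and its case division by the sign of $2m-n$ matches the dichotomy $2m\geq n$ versus $n>2m$ appearing in the crimp computations of \autoref{sec:triple_singularities}.
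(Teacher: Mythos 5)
Your proof is correct and follows essentially the same route as the paper: both use the structure theorem (\autoref{thm:structure}) to parametrize covers of Maroni invariant $l$ by sections of $\Sym^3 E \otimes \det \dual E$ together with a marked point away from the branch locus, and both compute the dimension as $\dim V + 1 - (\dim \Aut E + 3)$, with the identical case division according to the sign of $2m-n$. The additional details you supply---nonemptiness of the generically \'etale locus in the unbalanced range, reducedness and connectedness of the resulting covers, and the quotient-stack presentation in place of the paper's general-fiber dimension count---are elaborations of steps the paper leaves implicit rather than a different argument.
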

\begin{proof}
  Let $E = O_{\P^1}(m) \oplus O_{\P^1}(n)$ and set 
  \begin{align*}
    V &= H^0(\Sym^3(E) \otimes \det \dual E)\\ 
    &= H^0\left(O_{\P^1}(2m-n) \oplus O_{\P^1}(m) \oplus O_{\P^1}(n) \oplus O_{\P^1}(2n-m)\right).
  \end{align*}
  Using $n + m = g+2$ and $n - m = l \geq 0$, we get
  \begin{equation}\label{eqn:dim_upstairs}
  \dim V = 
  \begin{cases}
    2(g+2)+4 & \text{ if $2m \geq n$, i.e. $l \leq (g+2)/3$} \\
    3(g+l)/2 + 6 & \text{ if $2m < n$, i.e. $l > (g+2)/3$}
  \end{cases}.
\end{equation}
Using \autoref{thm:structure}, a point $v \in V$ gives a triple cover $\phi_v \from C \to \P^1$ with ${\phi_v}_*O_C/O_{\P^1} = \dual E$. Let $U \subset V \times \P^1$ be the open subset
  \[ U = \{(v, p) \mid p \not \in \br(\phi_v)\}.\]
  Then we have a surjective morphism $U \to \st T_{g;1}(l)$. Hence $\st T_{g;1}(l)$ is irreducible. The dimension of a general fiber of $U \to \st T_{g;1}(l)$ is simply
  \[\dim\Aut E + \dim \Aut (\P^1) = \dim \Aut E + 3.\]
  Hence
  \begin{equation}\label{eqn:dim}
    \dim \st T_{g;1}(l) = \dim U - \dim \Aut E -3 = \dim V - \dim \Aut E -2.
  \end{equation}
  Observe that
  \begin{equation}\label{eqn:dim_fiber}
    \dim \Aut E = \dim\Hom(E, E)
   = \begin{cases}
     4 & \text{ if $l = 0$, i.e. $m = n$}\\
     l+3 & \text{ if $l > 0$, i.e. $m < n$}
   \end{cases}.
 \end{equation}
 By combining \eqref{eqn:dim_upstairs}, \eqref{eqn:dim} and \eqref{eqn:dim_fiber}, we get the desired dimension count.
\end{proof}

Denote by $\st T_{g;1}^\bullet(l) \subset \st T_{g;1}$ the locally closed locus consisting of $(P; \sigma; \phi \from C \to P)$ where $P \cong \P^1$ and $\phi$ has concentrated branching with $\mu$ invariant $l$.
\begin{proposition}\label{thm:dim_mu}
  Let $0 \leq l \leq g$ and $l \equiv g \pmod 2$. Then $\st T_{g;1}^\bullet(l)$ is irreducible of dimension given by
  \[ \dim \st T_{g;1}^\bullet(l) =
  \begin{cases}
    l-1 & \text{ if $l \leq (g+2)/3$}\\
    (g-l)/2 & \text{ if $l > (g+2)/3$}
  \end{cases}.
  \]
  In particular, $\st T_{g;1}^\bullet(l) \subset \st T_{g;1}$ has codimension at least two.
\end{proposition}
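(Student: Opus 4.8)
The plan is to follow the pattern of \autoref{thm:dim_maroni}, but to replace the structure-theorem parametrization by the crimp analysis of \autoref{sec:triple_singularities}. The crucial preliminary observation is that concentrated branching rigidifies the normalization. Suppose $(\P^1; \sigma; \phi \from C \to \P^1)$ lies in $\st T_{g;1}^\bullet(l)$, with $\br\phi = b \cdot p$. Then $\phi$ is \'etale over $\A^1 = \P^1 \setminus \{p\}$, so $C|_{\A^1} \to \A^1$ is a finite \'etale cover of a simply connected base and is therefore trivial. The normalization $\tw C \to C$ is an isomorphism over $\A^1$, and each of the three sheets closes up to a copy of $\P^1$ mapping isomorphically onto the base; hence $\tw C = \P^1 \sqcup \P^1 \sqcup \P^1$ and $\tw\phi \from \tw C \to \P^1$ is the trivial \'etale triple cover. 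A connected double or triple cover of $\P^1$ \'etale away from the single point $p$ is excluded by Riemann--Hurwitz, so the totally and simply ramified cases of \autoref{sec:crimps_triple_total} and \autoref{sec:crimps_triple_simple} cannot arise here; only the \'etale crimps of \autoref{sec:crimps_triple_etale} are relevant.

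Given this, I would realize $\st T_{g;1}^\bullet(l)$ as a quotient. Fixing the trivial cover $\tw\phi$, a point of $\st T_{g;1}^\bullet(l)$ is the datum of the branch point $p$, the marking $\sigma \neq p$, and an $O_{\P^1}$-subalgebra $S \subset \tw\phi_* O_{\tw C}$ defining a crimp with $\br = b \cdot p$ and $\mu$-invariant $l$; two data agree in $\st T_{g;1}^\bullet(l)$ precisely when related by the action of $\PGl_2 = \Aut(\P^1)$ together with $\s_3 = \Aut(\tw\phi)$ permuting the sheets. By \autoref{thm:crimps} and the locality of crimps, for fixed $p$ these subalgebras form $\Crimp(\tw\phi, b\cdot p, l)$, whose dimension is computed by \autoref{thm:crimps_triple_etale}: writing $n + m = g+2$ and $n - m = l$, it is $l$ when $2m \geq n$ (that is, $l \leq (g+2)/3$) and $m = (g+2-l)/2$ when $n > 2m$. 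Using $\PGl_2$ to normalize $p = 0$ and $\sigma = \infty$ accounts for these two parameters and leaves the torus $\G_m$ fixing $\{0, \infty\}$. Granting that this residual $\G_m$ acts on the crimp data with one-dimensional generic orbits, I obtain
\[ \dim \st T_{g;1}^\bullet(l) = \dim \Crimp(\tw\phi, b\cdot p, l) - 1, \]
which is $l - 1$ in the first range and $(g+2-l)/2 - 1 = (g-l)/2$ in the second, as claimed.

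For irreducibility, note that when $n > 2m$ the three components of $\Crimp(\tw\phi, b\cdot p, l)$ are $\s_3$-conjugate and descend to a single irreducible locus, while in the other range $\Crimp$ is already irreducible; either way $\st T_{g;1}^\bullet(l)$ is irreducible. The codimension bound is then immediate: since $\dim \st T_{g;1} = 2g+2$, the codimension equals $2g + 3 - l \geq g + 3$ in the first range and $(3g + 4 + l)/2$ in the second, both at least two.

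The step that requires genuine care---and is the main obstacle---is the passage from $\dim \Crimp$ to $\dim \st T_{g;1}^\bullet(l)$, that is, justifying the net contribution $-1$ from the automorphisms. Concretely, one must show that the residual torus $\G_m$ (scaling the uniformizer $t$ at $p$) acts with finite stabilizer on a generic crimp. For the generic subalgebra the defining jet/parameter data of \autoref{thm:crimps_triple_etale} involve monomials of distinct $t$-weights, so the scaling isotropy is finite; the crimps with extra symmetry must be checked to form a proper closed subset so as not to perturb the generic orbit dimension. Establishing this finiteness---equivalently, that $\PGl_2$ acts with generically finite stabilizers on the total parameter space---is the only point beyond routine dimension bookkeeping.
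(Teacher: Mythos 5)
Your reduction is the same as the paper's: every cover with concentrated branching has normalization the trivial \'etale cover $\P^1 \sqcup \P^1 \sqcup \P^1 \to \P^1$, so the whole locus consists of crimps of this one fixed cover, the dimension of the crimp locus is read off from \autoref{thm:crimps_triple_etale}, and the answer is that dimension minus one. The genuine problem is the step you flag as the ``main obstacle,'' and it is not the step you think it is. The claim that the residual $\G_m$ (scaling the coordinate $t$ at the branch point) acts with generically finite stabilizers on the crimp data is \emph{false} precisely when $l \leq 1$: for $l = 0$ the unique crimp $S = R + t^m \tw S$ is $\G_m$-invariant, and for $l = 1$ the crimp is determined by the line $\ideal{f_0} \subset \tw F|_0$ alone, on which $t \mapsto \lambda t$ acts trivially. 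So the finiteness you propose to establish cannot be established in that range, and an orbit-space count there would give $l$ rather than $l-1$. The resolution is that the proposition is a statement about \emph{stack} dimension, for which no stabilizer hypothesis is needed: the dimension of a quotient (or of a fiber over a residual gerbe) drops by $\dim \G_m = 1$ unconditionally, finite stabilizers or not. Indeed, for $l = 0$ the asserted dimension $l - 1 = -1$ is negative, which already tells you that orbit-space dimension cannot be the intended reading; the points of $\st T_{g;1}^\bullet(l)$ with $l \leq 1$ simply have $\G_m$ in their automorphism groups.

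This is exactly how the paper organizes the bookkeeping. It introduces the closed substack $\bullet \subset \st M_{0;b,1}$ of configurations $(P; \Sigma; \sigma)$ with $\Sigma$ supported at one point; this has a single $k$-point $(\P^1; b \cdot 0; \infty)$ with automorphism group $\G_m$, whence $\dim \bullet = -1$. The fiber of $\st T_{g;1}^\bullet(l)$ over the atlas $p \to \bullet$ has the same dimension as the $\mu$-invariant-$l$ locus in $\Crimp(\tw C \to \P^1; b\cdot 0)$, because the natural map from the crimp scheme to $p \times_{\st M_{0;b,1}} \st T_{g;1}$ is \emph{finite} (cited from the companion paper); this finiteness is what replaces your ad hoc identifications by $\s_3$ and unit rescaling. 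Adding $-1$ to the fiber dimension gives the formula in all cases at once. So to repair your write-up, delete the finite-stabilizer program and replace it with this stack-dimension argument. The rest of what you wrote---triviality of the normalization via Riemann--Hurwitz and simple connectedness of $\A^1$, the irrelevance of the ramified crimp cases, the two ranges of $l$, irreducibility via $\s_3$-conjugacy of the three components when $n > 2m$, and the codimension estimates---is correct and agrees with the paper.
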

\begin{proof}
  Note that for a point $(\phi \from C \to \P^1; \sigma)$ of $\st T^\bullet_{g;1}(l)$, the normalization $\tw C$ is isomorphic to $\P^1 \sqcup \P^1 \sqcup \P^1$. Thus, all the curves parametrized by $\st T_{g;1}^\bullet(l)$ are crimps of the fixed cover
  \[\phi \from \tw C = \P^1 \sqcup \P^1 \sqcup \P^1 \to \P^1.\]
  Let $\bullet \subset \st M_{0;b,1}$ be the closed substack consisting of $(P; \Sigma; \sigma)$ where $\Sigma$ is supported at a point. Then $\bullet$ has only one $k$-point, namely $p \to \bullet$ given by $(\P^1; b \cdot 0; \infty)$. Since $\Aut_p \st M_{0;b,1} = \G_m$, we have 
  \begin{equation}\label{eqn:dim_bullet}
    \dim \bullet = -1.
  \end{equation}
Furthermore, $\st T_{g;1}^\bullet(l)$ is contained in $\bullet \times_{\st M_{0;b,1}} \st T_{g;1}$. 

  Set $\tw C = \P^1 \sqcup \P^1 \sqcup \P^1$ and consider the space $\Crimp(\tw C \to \P^1; b \cdot 0)$. Recall that this is the moduli space of $(\tw C \to C \to \P^1)$, where the branch locus of $C \to \P^1$ is $b \cdot 0$. We have a natural morphism 
  \[\Crimp(\tw C \to \P^1; b \cdot 0) \to p \times_{\st M_{0;b,1}} \st T_{g;1},\]
  given by
  \[ (\tw C \to C \to \P^1) \mapsto (C \to \P^1; \infty).\]
  By \citep[\autoref*{p1:thm:crimp_fiber}]{deopurkar12:_compac_hurwit}, this morphism is finite. Hence, the locus $p \times_{\st M_{0;b,1}} \st T_{g;1}(l)$ has the same dimension as the locus in $\Crimp(\tw C \to \P^1; b \cdot 0)$ of crimps with $\mu$ invariant $l$. From the explicit description of this locus in \autoref{thm:crimps_triple_etale}, we get
  \begin{equation}\label{eqn:dim_crimps}
    \dim (p \times_{\st M_{0;b,1}}\st T^\bullet_{g;1}(l)) =
    \begin{cases}
      l &\text{ if $2m \leq n$, i.e. $l \leq (g+2)/3$,}\\
      (g+2-l)/2  &\text{ if $2m > n$, i.e. $l > (g+2)/3$}.
    \end{cases}
  \end{equation}
  By combining \eqref{eqn:dim_bullet} and \eqref{eqn:dim_crimps}, we get the desired dimension count. 
\end{proof}

Let $b = b_1 + \dots + b_n$ be a partition of $b$ with $b_i \geq 1$ and $n \geq 2$. Denote by $\st M_{0;b,1}(\{b_i\}) \subset \st M_{0;b,1}$ the locally closed locus consisting of $k$-points $(P; \Sigma; \sigma)$ where $P \cong \P^1$ and $\Sigma$ has the form $\Sigma = \sum_i b_i p_i$ for $n$ distinct points $p_1, \dots, p_n \in \P^1$. Set 
\[\st T_{g;1}(\{b_i\}) = \st M_{0;b,1}(\{b_i\}) \times_{\st M_{0;b,1}} \st T_{g;1}.\]
  \begin{proposition}\label{thm:dim_higher_collisions}
    With the above notation, we have
    \[ \dim \st T_{g;1}(\{b_i\}) \leq n - 2 + \sum_i \lfloor b_i/6 \rfloor.\]
    In particular, $\st T_{g;1}(\{b_i\})$ has codimension at least two if $n \leq b-2$.
  \end{proposition}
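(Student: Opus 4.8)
The plan is to fiber $\st T_{g;1}(\{b_i\})$ over the base $\st M_{0;b,1}(\{b_i\})$ and bound the dimensions of base and fiber separately. The base parametrizes a configuration of $n$ distinct points $p_i$ together with the marking $\sigma$ on $P \cong \P^1$, taken up to $\PGl_2$; since $n \geq 2$ there are at least three distinct marked points, so the $\PGl_2$-action has finite stabilizers and $\dim \st M_{0;b,1}(\{b_i\}) = (n+1) - 3 = n-2$. It therefore suffices to show that every fiber of $\st T_{g;1}(\{b_i\}) \to \st M_{0;b,1}(\{b_i\})$ has dimension at most $\sum_i \lfloor b_i/6 \rfloor$, since then $\dim \st T_{g;1}(\{b_i\}) \leq (n-2) + \sum_i \lfloor b_i/6 \rfloor$.

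The fiber over $(P; \Sigma; \sigma)$ is the space of connected genus $g$ triple covers $\phi \from C \to P$ with $\br\phi = \Sigma = \sum_i b_i p_i$. Such a $\phi$ is \'etale over $P \setminus \{p_1,\dots,p_n\}$, so its normalization $\tw\phi \from \tw C \to P$ is a smooth triple cover ramified only over the $p_i$, and $C$ is recovered from $\tw C$ by a crimp. For fixed branch points the smooth covers $\tw C \to P$ form a discrete (Hurwitz) set, contributing nothing to the dimension; and because $\Sigma$ is supported at the distinct points $p_i$, the locality of crimps (\autoref{thm:crimps} and \citep[\autoref*{p1:thm:crimp_local}]{deopurkar12:_compac_hurwit}) identifies the crimps with branch divisor $\Sigma$ with the product $\prod_i \Crimp(\tw\phi_i, b_i \cdot p_i)$ of local crimp spaces. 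Hence the fiber dimension is $\sum_i \dim \Crimp(\tw\phi_i, b_i \cdot p_i)$, and it remains to prove the local bound $\dim \Crimp(\tw\phi_i, b_i \cdot p_i) \leq \lfloor b_i/6 \rfloor$.

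This bound is read off from the classification of \autoref{sec:triple_singularities}. Writing $b = b_i$, the local normalization $\tw\phi_i$ is \'etale, totally ramified, or simply ramified. In the \'etale case, \autoref{thm:crimps_triple_etale} gives strata of dimension $l$ for $2m \geq n$ (with $m+n = b/2$, $n-m = l$, which forces $l \leq b/6$) and of dimension $m < b/6$ for $n > 2m$, so the maximum is $\lfloor b/6 \rfloor$. In the totally ramified case, \autoref{thm:crimps_triple_total} produces strata only when $2m \geq n$, of dimension $\lfloor l \rfloor$ with $n+m = 3b/2$ and $n-m = 3l$, and $2m \geq n$ again forces $l \leq b/6$. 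In the simply ramified case, \autoref{thm:crimps_triple_simple} gives dimension $\lfloor l \rfloor$ with $l \leq b/6$ when $2m \geq n$, and dimension $m/2 < b/6$ when $n > 2m$. In every case the dimension is at most $\lfloor b/6 \rfloor$, and summing over $i$ gives the asserted inequality.

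For the codimension statement, recall $b = 2g+4$, so $\dim \st T_{g;1} = 2g+2 = b-2$ by \autoref{thm:irred_and_dim}; codimension at least two means $\dim \st T_{g;1}(\{b_i\}) \leq b-4$, i.e. $\sum_i \lfloor b_i/6 \rfloor \leq b - n - 2$. Since $\lfloor b_i/6 \rfloor \leq b_i - 1$ for every $b_i \geq 1$, the sum $\sum_i\big((b_i-1) - \lfloor b_i/6\rfloor\big)$ has non-negative terms, each at least $1$ when $b_i \geq 2$ and at least $2$ when $b_i \geq 3$. The hypothesis $n \leq b-2$ gives $\sum_i(b_i-1) = b-n \geq 2$, so either some $b_i \geq 3$ or at least two of the $b_i$ are $\geq 2$; in both cases the sum is $\geq 2$, which is exactly the required inequality. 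The conceptual heart of the argument is the reduction of the fiber to a product of local crimp spaces, and I expect verifying that the global crimp space genuinely factors through the disjoint support of $\Sigma$ while the discrete normalization data contributes no dimension to be the step demanding the most care; the ensuing arithmetic is routine.
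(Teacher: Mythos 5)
Your proof follows the paper's argument essentially verbatim: fiber $\st T_{g;1}(\{b_i\})$ over $\st M_{0;b,1}(\{b_i\})$ (dimension $n-2$), identify the fibers of $\br$ with products of local crimp spaces $\prod_i \Crimp(\tw\phi_i, b_i\cdot p_i)$ via the locality and finiteness results from the companion paper, and bound each local factor by $\lfloor b_i/6\rfloor$ using the classification in \autoref{thm:crimps_triple_etale}, \autoref{thm:crimps_triple_total} and \autoref{thm:crimps_triple_simple}. The explicit case analysis and the arithmetic deduction of the codimension-two statement, which the paper leaves implicit, are both correct.
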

\begin{proof}
  First of all, see that $\dim \st M_{0;b,1}(\{b_i\}) = n - 2$. Next, we compute the dimensions of the fibers of $\br \from \st T_{g;1}(\{b_i\}) \to \st M_{0;b,1}(\{b_i\})$. Let $p \from \spec k \to \st T_{g;1}(\{b_i\})$ be a point, given by $(\P^1; \sigma; \phi \from C \to \P^1)$ with $\Sigma = \sum_i b_i p_i$. By \citep[\autoref*{p1:thm:crimp_fiber}]{deopurkar12:_compac_hurwit} and \citep[\autoref*{p1:thm:crimp_local}]{deopurkar12:_compac_hurwit}, the dimension of the fiber of $\br$ containing $p$ is simply the dimension of $\prod_i \Crimp(\tw\phi_i, b_i \cdot p_i)$, where $\tw\phi$ is the cover of the disk $\Delta_i$ around $p_i$ obtained by normalizing $C$. From the descriptions of $\Crimp(\tw\phi_i, b_i \cdot p_i)$ in \autoref{thm:crimps_triple_etale}, \autoref{thm:crimps_triple_total} and \autoref{thm:crimps_triple_simple}, we see that
    \[ \dim(\Crimp(\tw\phi_i, b_i \cdot p_i)) \leq \lfloor b_i/6 \rfloor.\]
    The result follows.
\end{proof}

A part of \autoref{intro:flips} follows immediately from the dimension counts. Recall that for a rational map $\beta \from X \dashrightarrow Y$, the exceptional locus $\Exc(\beta) \subset X$ is the closed subset where $\beta$ is not an isomorphism.
\begin{proposition}\label{thm:flips}
  For $2 < l < g$, the rational map $\beta_l \from \o{\sp T}_{g;1}^l \dashrightarrow \o{\sp T}_{g;1}^{l-2}$ is an isomorphism away from codimension two.
\end{proposition}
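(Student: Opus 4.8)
The plan is to realize both $\o{\orb T}_{g;1}^l$ and $\o{\orb T}_{g;1}^{l-2}$ as open substacks of the ambient smooth stack $\st T_{g;1}$ and to compare them through their common open substack. I would set $W = \o{\orb T}_{g;1}^l \cap \o{\orb T}_{g;1}^{l-2}$, the open substack of $\st T_{g;1}$ parametrizing $(\phi \from C \to P; \sigma)$ with $P$ smooth that are simultaneously $l$-balanced and $(l-2)$-balanced. Since $\beta_l$ is by construction induced by the identity of $\st T_{g;1}$ on this locus, its restriction to the coarse space of $W$ is an isomorphism onto the corresponding open subspace of $\o{\sp T}_{g;1}^{l-2}$. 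It therefore suffices to show that the two complements $\o{\orb T}_{g;1}^l \setminus W$ and $\o{\orb T}_{g;1}^{l-2} \setminus W$ each have codimension at least two, since these contain $\Exc(\beta_l)$ and $\Exc(\beta_l^{-1})$ respectively, and codimension is unchanged upon passing to coarse spaces.

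Next I would identify the two complements explicitly by unwinding \autoref{def:l-balanced}, using that both $M(\phi)$ and $\mu(\phi)$, as well as $l$, are congruent to $g$ modulo $2$. A cover in $\o{\orb T}_{g;1}^l$ fails to be $(l-2)$-balanced precisely when its Maroni invariant exceeds $l-2$; because $M \leq l$ and $M \equiv l \pmod 2$, this forces $M = l$. The alternative of concentrated branching with $\mu \leq l-2$ cannot occur, since membership in $\o{\orb T}_{g;1}^l$ already forces $\mu > l$ whenever the branching is concentrated. Hence $\o{\orb T}_{g;1}^l \setminus W$ is contained in the Maroni locus $\st T_{g;1}(l)$. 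Symmetrically, a cover in $\o{\orb T}_{g;1}^{l-2}$ fails to be $l$-balanced precisely when it has concentrated branching with $\mu \leq l$; combined with the $(l-2)$-balanced condition $\mu > l-2$ and the parity constraint, this forces $\mu = l$, so $\o{\orb T}_{g;1}^{l-2} \setminus W$ is contained in the concentrated-branching locus $\st T_{g;1}^\bullet(l)$.

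The conclusion then follows immediately from the dimension counts already established. By \autoref{thm:dim_maroni}, for $2 < l < g$ the locus $\st T_{g;1}(l)$ has codimension at least two in $\st T_{g;1}$ (the codimension-one cases being exactly $l = g$ and $l = 2$ for even $g$), so $\o{\orb T}_{g;1}^l \setminus W$ has codimension at least two. By \autoref{thm:dim_mu}, $\st T_{g;1}^\bullet(l)$ always has codimension at least two, so $\o{\orb T}_{g;1}^{l-2} \setminus W$ does as well. Thus $\beta_l$ restricts to an isomorphism on $W$ whose complements on both sides are of codimension at least two.

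I do not expect a genuine obstacle: the substantive content is precisely the two dimension computations, which are already in hand. The only point requiring care is the bookkeeping of the previous paragraph—correctly pinning down the two complements and checking, via \autoref{thm:Maroni_less_than_mu} and the parity of the invariants, that no other stratum intrudes. In particular one should note that a Maroni-$l$ cover with concentrated branching still lies inside $\st T_{g;1}(l)$ (which is defined purely by $M = l$ on the smooth-$P$ locus), so it is correctly accounted for and does not produce an unexpected component of the exceptional locus. Verifying that the identification of $W$ as a common open substack of $\st T_{g;1}$ is compatible with the definition of $\beta_l$ at the level of coarse spaces is routine, since open substacks descend to open subspaces of the coarse moduli space.
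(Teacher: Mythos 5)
Your proposal is correct and follows essentially the same route as the paper: identify $\Exc(\beta_l)$ inside the Maroni-invariant-$l$ stratum and $\Exc(\beta_l^{-1})$ inside the concentrated-branching $\mu=l$ stratum, then apply the dimension counts of \autoref{thm:dim_maroni} and \autoref{thm:dim_mu}. The only difference is presentational: the paper asserts these two containments in a single line each, whereas you derive them from \autoref{def:l-balanced} via the parity bookkeeping and spell out the passage from the common open substack $W$ to the coarse spaces---a useful elaboration, but not a different argument.
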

\begin{proof}
  $\Exc(\beta_l)$ is an open subset of the locus of covers of Maroni invariant $l$. By \autoref{thm:dim_maroni}, $\Exc(\beta_l) \subset \o{\sp T}_{g;1}^l$ has codimension at least two.
  
   $\Exc(\beta^{-1}_l)$ is an open subset of the locus of covers with concentrated branching and $\mu$ invariant $l$. By \autoref{thm:dim_mu}, $\Exc(\beta^{-1}_l) \subset \o{\sp T}_{g;1}^{l-2}$ has codimension at least two.
\end{proof}

\section{The Hyperelliptic contraction}\label{sec:hyperelliptic}
Let $g \geq 2$. In this section, we prove that $\beta_g \from \o{\sp T}_{g;1}^g \dashrightarrow \o{\sp T}_{g;1}^{g-2}$ is a divisorial contraction morphism. The idea is to analyze the exceptional loci $\Exc(\beta_g)$ and $\Exc(\beta_g^{-1})$; the result follows seamlessly from this analysis.

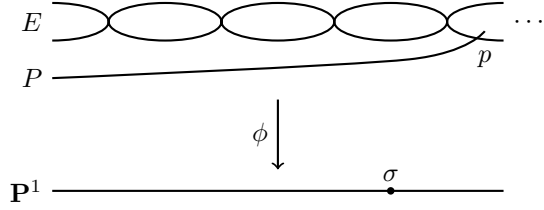
\begin{figure}[ht]
  \centering
  \begin{tikzpicture}[math, thick,scale=.25, baseline=-2.5em]
    % The hyperelliptic curve
    \draw 
    (-6,0) ellipse (3 and 1)
    (0,0) ellipse (3 and 1)
    (6,0) ellipse (3 and 1)
    (12,1) arc (90:270:3 and 1) (12,0) node [right] {\cdots}
    +(-1,-2) node {p}
    (-12,-1) arc (-90:90:3 and 1) (-12,0)
    node [left] {E}
    ;
    
    % The tail
    \draw[smooth] plot 
    coordinates {(11,-.5) (7,-2) (-12,-3)}
    node [left] {P}
    ;
    
    % The base
    \draw (12,-9) -- (-12,-9)
    node [left] {\P^1}
    ;
    \fill (6,-9) circle (.2) node[above] {\sigma};
    
    \path[->, shorten >= .1em, shorten <= .1em]
    (0,-4) edge node [left] {\phi} (0,-8);
  \end{tikzpicture}
  \caption{A generic point of the hyperelliptic divisor.}
  \label{fig:hyperelliptic}
\end{figure}

Set $H = \Exc(\beta_g) \subset \o{\sp T}_{g;1}^{g}$; this is the locus of curves with Maroni invariant $g$. By \autoref{thm:dim_maroni}, $H$ is irreducible of dimension $2g+1$. In other words, it is an irreducible divisor. We call $H$ the \emph{hyperelliptic divisor}. The terminology is justified by the following observation.

\begin{proposition}
  A (geometric) generic point of $H$ corresponds to $(\P^1; \sigma; \phi \from C \to \P^1)$ of the following form (see \autoref{fig:hyperelliptic}):
\begin{itemize}
\item $C = E \cup P$ with $P \cong \P^1$ and $E \cap P = \{p\}$,
\item $E$ is a smooth hyperelliptic curve of genus $g$ and $p \in E$ is a non-Weierstrass point,
\item $\phi \from E \to \P^1$ has degree two and $\phi \from P \to \P^1$ has degree one,
\item $\sigma \not \in \br\phi$.
\end{itemize}
\end{proposition}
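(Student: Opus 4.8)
The plan is to make the hyperelliptic divisor explicit through the structure theorem for triple covers (\autoref{thm:structure}) and to observe that the defining cubic is \emph{forced} to be reducible. Since $H = \Exc(\beta_g)$ is exactly the Maroni-invariant-$g$ locus, \autoref{thm:dim_maroni} dominates it by the parameter space $U \subset V \times \P^1$ with $V = H^0(\Sym^3 E \otimes \det\dual E)$, where the condition $M(\phi) = g$ together with $m+n = g+2$ pins down the splitting type: $\dual E = \phi_*O_C/O_{\P^1} \cong O_{\P^1}(-1)\oplus O_{\P^1}(-(g+1))$, i.e.\ $E = O_{\P^1}(1)\oplus O_{\P^1}(g+1)$. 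It therefore suffices to describe the cover $\phi_v \from C \to \P^1$ attached to a general $v \in V$.

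First I would expand $\Sym^3 E \otimes \det\dual E$ in a local basis $e_1, e_2$ of $E$ of degrees $1$ and $g+1$, obtaining $O_{\P^1}(1-g) \oplus O_{\P^1}(1) \oplus O_{\P^1}(g+1) \oplus O_{\P^1}(2g+1)$ corresponding to the monomials $e_1^3, e_1^2e_2, e_1e_2^2, e_2^3$. The key observation is that, since $g \geq 2$, the summand $O_{\P^1}(1-g)$ has no global sections, so the coefficient of $e_1^3$ vanishes identically; thus every such $v$ has the form $p_1\,e_1^2e_2 + p_2\,e_1e_2^2 + p_3\,e_2^3$ with $p_1 \in H^0(O_{\P^1}(1))$, $p_2 \in H^0(O_{\P^1}(g+1))$, $p_3 \in H^0(O_{\P^1}(2g+1))$. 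Viewing $v$ as the cubic form cutting out $C \subset \P E$ in the fibre coordinates $Z_i = e_i$ on $O_{\P E}(1)$, it is divisible by $Z_2$. Hence $C = P \cup Q$, where $P = \{Z_2 = 0\}$ is a section of $\P E \to \P^1$, so $P \cong \P^1$ with $\phi|_P$ of degree one, and $Q = \{p_1 Z_1^2 + p_2 Z_1 Z_2 + p_3 Z_2^2 = 0\}$ is a relative conic, so $\phi|_Q$ has degree two.

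It remains to identify $E := Q$ as the hyperelliptic curve and to control the gluing. As a double cover of $\P^1$, $E$ is branched along the zero locus of the discriminant $p_2^2 - 4p_1p_3 \in H^0(O_{\P^1}(2g+2))$; for general $v$ this is a reduced degree-$(2g+2)$ divisor, so $E$ is smooth of genus $g$. The intersection $E \cap P$ lies over the zeros of $p_1 \in H^0(O_{\P^1}(1))$, a single reduced point for general $v$; call the resulting point $p$. Over it the discriminant equals $p_2^2 \neq 0$ generically, so $E \to \P^1$ is unramified there and $p$ is a non-Weierstrass point of $E$. Since $C$ has arithmetic genus $g$ by definition and $p_a(E \cup_p P) = g + 0 + (\#\{E\cap P\}) - 1$, the two components must meet in a single node, consistent with the above. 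Finally, $\sigma \not\in \br\phi$ holds by the very definition of $\st T_{g;1}$. The content of the argument is the automatic vanishing of the $e_1^3$-coefficient; the only points requiring care—and what I would flag as the main (though minor) obstacle—are the $\P E$-conventions, so that one divides by the correct coordinate, and the verification that the three genericity conditions (reduced branch divisor of $Q$, single zero of $p_1$, and $p_2(p) \neq 0$) are simultaneously open and nonempty on $V$, each being a standard open condition established by exhibiting one section.
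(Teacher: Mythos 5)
Your proposal is correct, but it takes a genuinely different route from the paper. The paper argues in the reverse direction: starting from a cover of the stated form $C = E \cup P$, it computes $h^0(\phi_*O_C \otimes O_{\P^1}(1)) = h^0(\phi^*O_{\P^1}(1)) = 3$, which forces $\phi_*O_C \cong O_{\P^1} \oplus O_{\P^1}(-1) \oplus O_{\P^1}(-g-1)$, i.e.\ Maroni invariant $g$; it then observes that such covers form a locus of dimension $2g+1$ inside $H$, which is irreducible of the same dimension by \autoref{thm:dim_maroni}, hence this locus is dense and the generic point of $H$ has the stated form. You instead argue forward from an arbitrary Maroni-invariant-$g$ cover via the structure theorem (\autoref{thm:structure}): with $E = O_{\P^1}(1)\oplus O_{\P^1}(g+1)$, the coefficient of $e_1^3$ lies in $H^0(O_{\P^1}(1-g)) = 0$ (this is where $g \geq 2$ enters), so the defining cubic is divisible by $Z_2$ and \emph{every} curve parametrized by $H$ --- not just the generic one --- splits as a section $P$ plus a relative conic $Q$; genericity is needed only to make the discriminant $p_2^2 - 4p_1p_3$ reduced (so $Q$ is smooth hyperelliptic of genus $g$), the zero of $p_1$ nontrivial (so $P\cap Q$ is a single point), and $p_2$ nonvanishing there (so that point is non-Weierstrass). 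Your approach buys a stronger structural statement with explicit equations, at the cost of the $\P E$-conventions and the genericity bookkeeping you flag; the paper's approach buys brevity, since exhibiting the family and matching dimensions sidesteps any need to show that no other degenerations occur generically. Two points worth tightening in your write-up: the node count is cleaner scheme-theoretically ($P \cap Q$ is cut out on $P \cong \P^1$ by a twist of $p_1$, a length-one scheme, so transversality is automatic rather than inferred from $p_a$), and the nonemptiness of your three open conditions should be witnessed by a single $v$ satisfying all of them simultaneously, which is routine (e.g.\ $p_1 = X$ with $p_2, p_3$ general) but is the step that makes ``generic'' legitimate.
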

\begin{proof}
  Let $(\P^1;\sigma; \phi \from C = E \cup P \to \P^1)$ be such a cover. Then we have 
  \begin{align*}
    h^0(\phi_* O_C \otimes O_{\P^1}(1)) = h^0(\phi^*O_{\P^1}(1)) = 3,
  \end{align*}
  which implies that $\phi_* O_C \cong O_{\P^1} \oplus O_{\P^1}(-1) \oplus O_{\P^1}(-g-1)$. Hence $\phi$ has Maroni invariant $g$. See that the covers $(\P^1; \sigma; \phi \from C = E \cup P \to \P^1)$ as above form a locus of dimension $2g+1$. Since this locus lies in $H$, which is irreducible of the same dimension, we conclude that this locus is dense in $H$.
\end{proof}

\begin{theorem}\label{thm:hyperelliptic}
  The birational map $\beta_g \from \o{\sp T}_{g;1}^g \dashrightarrow \o{\sp T}_{g;1}^{g-2}$ extends to a morphism. The extension contracts the hyperelliptic divisor $H$ to a point. The point $\beta_g(H)$ corresponds to the cover $(\P^1; \sigma; \phi \from C \to \P^1)$, where $\phi$ has concentrated branching and $C$ has a singularity of type $D_{2g+2}$.
\end{theorem}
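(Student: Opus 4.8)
The plan is to analyze the two exceptional loci and then glue the picture together with a normality/properness argument. On the source, $\Exc(\beta_g) = H$ is the hyperelliptic divisor, whose generic point is the cover $C = E \cup P$ described above, with $\phi_*O_C/O_{\P^1} \cong O_{\P^1}(-1)\oplus O_{\P^1}(-(g+1))$ and Maroni invariant $g$, so $(m,n) = (1,g+1)$. On the target, the locus that $\beta_g^{-1}$ must contract is the locus of covers with concentrated branching and $\mu$ invariant $g$, which by \autoref{thm:dim_mu} is zero-dimensional. Thus morally $\beta_g$ should carry the divisor $H$ to a single point, and the real content is to pin down that point together with its singularity, and then to promote $\beta_g$ from a rational map to a morphism.

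First I would compute the image of the generic point $\eta_H$ of $H$. Choose a one-parameter family over a DVR whose generic fiber is $(g-2)$-balanced and whose special fiber is the hyperelliptic cover $E\cup P$; in suitable coordinates $V = \phi_*O_C/O_P$ is an extension with $V|_0 \cong O(-1)\oplus O(-(g+1))$. The special fiber has Maroni invariant $g > g-2$, so to locate the limit inside $\o{\sp T}_{g;1}^{g-2}$ I run exactly the balancing step from the proof of properness of \autoref{thm:tg3d}: apply the elementary transformation $[X:Y]\mapsto[tX:Y]$. By \autoref{eqn:pull_push_vb} the new central fiber acquires concentrated branching at $[1:0]$ of splitting type $(1,g+1)$, hence $\mu$ invariant $g$, while its Maroni invariant strictly drops. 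So the limit lies in the zero-dimensional locus of concentrated, $\mu = g$ covers.

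Next I would show that this locus meets $\o{\sp T}_{g;1}^{g-2}$ in a single point and identify its singularity. Using the explicit crimp description of \autoref{thm:crimps_triple_etale} with $m=1$, $n=g+1$ (so $n > 2m$), the concentrated $\mu = g$ covers form, up to the $\G_m$ and $\s_3$ symmetries, a one-parameter family; the split member has Maroni invariant $g$ and is therefore \emph{not} $(g-2)$-balanced, while the generic member has Maroni invariant $g-2$ and is the only one surviving in $\o{\sp T}_{g;1}^{g-2}$. Writing down the corresponding $R$-subalgebra of $R^{\oplus 3}$ and its local equation, one checks that the singularity is the plane triple point $D_{2g+2}$, that is, three smooth branches two of which are tangent to order $g$. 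This local computation is the main obstacle: the formal architecture is routine, but identifying the precise singularity type — and confirming that the Maroni invariant of the limit is exactly $g-2$, so that the limit genuinely lives in the next model — requires the careful bookkeeping of \autoref{thm:crimps_triple_etale} together with the inequality of \autoref{thm:Maroni_less_than_mu}.

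Finally, to upgrade $\beta_g$ to a morphism I would use that $\o{\sp T}_{g;1}^g$ is normal and $\o{\sp T}_{g;1}^{g-2}$ is proper: the rational map is defined away from a closed locus of codimension at least two, hence at $\eta_H$, and the computation above shows it sends $\eta_H$ to the single point $p_0$ carrying the $D_{2g+2}$ cover. Taking the closure $\Gamma$ of the graph, the first projection $\Gamma \to \o{\sp T}_{g;1}^g$ is proper and birational; since the second projection is constant equal to $p_0$ on the (irreducible) preimage of $H$, the map $\Gamma \to \o{\sp T}_{g;1}^g$ has singleton fibers, so Zariski's main theorem makes it an isomorphism. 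Therefore $\beta_g$ is a morphism contracting $H$ to $p_0$, as claimed.
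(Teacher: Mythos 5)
Your proposal is correct and follows essentially the same route as the paper: the crimp classification of \autoref{thm:crimps_triple_etale} (with $(m,n)=(1,g+1)$) pins down $\Exc(\beta_g^{-1})$ as a single point carrying the $D_{2g+2}$ cover, and the upgrade of $\beta_g$ to a morphism is the graph-closure/Zariski's-main-theorem argument that the paper packages as \autoref{thm:pointwise_morphism}, using normality of $\o{\sp T}_{g;1}^g$ and properness of $\o{\sp T}_{g;1}^{g-2}$. The only real difference is that you verify continuity in one-parameter families by rerunning the elementary-transformation computation from the properness proof of \autoref{thm:tg3d}, whereas this step is automatic once $\Exc(\beta_g^{-1})$ is known to be a single point: by uniqueness of limits, a family degenerating to a point of $H$ cannot have its $\o{\sp T}_{g;1}^{g-2}$-limit in the common open locus, so the limit must be that point.
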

We first prove a lemma (essentially \citep[Lemma~4.2]{smyth11:_modul_ii}) that gives a simple criterion to check whether an extension defined on $k$-points is in fact a morphism. 
\begin{lemma}\label{thm:pointwise_morphism}
  Let $X$ and $Y$ be algebraic spaces over $k$ with $X$ normal and $Y$ proper. Let $U \subset X$ be a dense open set and $\phi \from U \to Y$ a morphism. Let $\phi' \from X(k) \to Y(k)$ be a function that agrees with the one given by $\phi$ on $U(k)$. Assume that $\phi'$ is ``continuous in one-parameter families'' in the following sense: for every $\Delta$ which is the spectrum of a DVR with residue field $k$, and every morphism $\gamma \from \Delta \to X$ which sends $\Delta^\circ$ to $U$, we have
  \[\phi'(\gamma(0)) = (\phi\circ\gamma) (0),\]
  where the right hand side is the image of $0$ under the unique extension to $\Delta$ of $\phi \circ \gamma \from \Delta^\circ \to Y$. Then $\phi \from U \to Y$ extends to a morphism $\phi \from X \to Y$ that induces $\phi'$ on $k$-points. 
\end{lemma}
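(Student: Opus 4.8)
The plan is to realize the extension through the closure of the graph. Write $\Gamma \subset X \times Y$ for the closure of the graph of $\phi \from U \to Y$, taken with its reduced structure, and let $p \from \Gamma \to X$ and $q \from \Gamma \to Y$ be the two projections. Treating connected components separately we may assume $X$ is integral (a normal locally Noetherian algebraic space has integral components), so $\Gamma$ is integral with $k(\Gamma) = k(X)$, and $p$ is birational and surjective. Since $Y$ is proper, $p$ is proper. If I can show that $p$ is an isomorphism, then $\phi := q \circ p^{-1} \from X \to Y$ is the desired extension, and its value at a $k$-point $x$ is the second coordinate of the unique point of $p^{-1}(x)$, which the argument below identifies with $\phi'(x)$.

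The heart of the matter is a trait computation pinning down the fibers of $p$ over $k$-points. Fix $x \in X(k)$ and let $(x,y) \in \Gamma(k)$ be any point of $p^{-1}(x)$ (as $k$ is algebraically closed and $\Gamma$ is of finite type, the closed points of the fiber are $k$-points). Because $\Gamma$ is integral and $p^{-1}(U)$ is a dense open, I can choose a DVR $\Delta$ with residue field $k$ and a morphism $\tw{\gamma} \from \Delta \to \Gamma$ sending the closed point $0$ to $(x,y)$ and the generic point $\eta$ into $p^{-1}(U)$. Set $\gamma = p \circ \tw{\gamma} \from \Delta \to X$; then $\gamma(0) = x$ and $\gamma(\eta) \in U$. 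Over $U$ one has $q = \phi \circ p$, so $q \circ \tw{\gamma} \from \Delta \to Y$ restricts over $\eta$ to $\phi \circ \gamma|_\eta$ and therefore \emph{is} the unique extension of $\phi \circ \gamma|_\eta$ to $\Delta$ furnished by the separatedness and properness of $Y$. Its value at $0$ is $q(x,y) = y$. On the other hand, the continuity hypothesis on $\phi'$ gives $(\phi \circ \gamma)(0) = \phi'(\gamma(0)) = \phi'(x)$. Comparing the two, $y = \phi'(x)$. Hence every closed point of $p^{-1}(x)$ equals $(x, \phi'(x))$, so $p^{-1}(x)$ is a single point.

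With every fiber of $p$ over a closed point a single point, $p$ is quasi-finite; being also proper, it is finite. Finally, $p$ is finite and birational onto the normal $X$: the sheaf $p_* O_\Gamma$ is a finite $O_X$-algebra, integral over $O_X$ and contained in the function field $k(X) = k(\Gamma)$, so by normality $p_* O_\Gamma = O_X$ and $p$ is an isomorphism. Therefore $\phi = q \circ p^{-1} \from X \to Y$ is a morphism extending $\phi|_U$, and by the fiber computation it induces $x \mapsto \phi'(x)$ on $k$-points. All the tools invoked here---graph closure, properness of $p$, ``proper plus quasi-finite implies finite,'' and Zariski's Main Theorem in the form ``finite birational onto a normal base is an isomorphism''---hold for algebraic spaces, so the argument applies verbatim; alternatively one reduces to the scheme case through an \'etale chart of $X$.

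I expect the one genuinely delicate point to be the construction of the trait $\tw{\gamma}$ through the prescribed closed point of $\Gamma$ with generic point landing in $p^{-1}(U)$ and residue field $k$ at $0$. This is the standard fact that a closed point and a dense open of a variety over an algebraically closed field are joined by a trait with rational special point (cut down to a curve through $(x,y)$ meeting $p^{-1}(U)$ and normalize at a $k$-rational point lying over $(x,y)$, which exists since $k$ is algebraically closed and we are in characteristic zero), but it must be set up \'etale-locally for the algebraic space $\Gamma$. Everything else in the proof is formal.
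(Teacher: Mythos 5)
Your proposal is correct and follows essentially the same route as the paper's proof: take the closure of the graph in $X \times Y$, use a trait through a given point of the fiber with generic point in the graph to show (via the continuity hypothesis) that every fiber of the first projection over a $k$-point is the single point $(x,\phi'(x))$, and then conclude by Zariski's main theorem that the projection to the normal space $X$ is an isomorphism. The only difference is bookkeeping: the paper invokes ZMT directly for a proper dominant map to a normal target that is injective on $k$-points, while you spell out the intermediate steps (quasi-finite plus proper implies finite, finite birational onto normal implies isomorphism), which amounts to the same argument.
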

\begin{proof}
  Let $\o{\Phi} \subset X \times Y$ be the closure of the graph $\Phi$ of $\phi \from U \to Y$. Denote by $\pi_1 \from \o\Phi \to X$ and $\pi_2 \from \o\Phi \to Y$ the two projections. Then $\pi_1 \from \o\Phi \to X$ is proper and an isomorphism over $U$. We prove that it is an isomorphism. Then the extension of $\phi$ is obtained by composing $\pi_1^{-1} \from X \to \o\Phi$ with the projection $\pi_2 \from \o\Phi \to Y$.

  Since $X$ is normal and $\pi_1 \from \o\Phi \to X$ is proper and dominant, by Zariski's main theorem, it suffices to prove that $\pi_1$ is an injection on $k$-points.  Let $p_1, p_2 \in \o\Phi(k)$ be two points with $\pi_1(p_i) = x \in X(k)$. Since $\Phi$ is dense in $\o\Phi$, we can choose maps $\gamma_i \from \Delta \to \o\Phi$ which send $\Delta^\circ$ to $\Phi$ and $0$ to $p_i$. Since $\phi'$ is continuous in one-parameter families, we get
  \[ \pi_2(p_i) = \pi_2 \circ \gamma_i (0) = (\phi \circ \pi_1 \circ \gamma_i) (0) = \phi'(x).\]
  Since $\pi_1(p_i) = x$, we get $p_1 = p_2 = (x,\phi'(x))$.
\end{proof}

\begin{proof}[Proof of \autoref{thm:hyperelliptic}]
  Consider the exceptional locus $\Exc(\beta_{g}^{-1})$. Let $p \from \spec k \to \Exc(\beta^{-1}_g)$ be a point, given by a cover $(\P^1; \sigma; \phi \from C \to \P^1)$. Then $M(\phi) \leq g-1$ and $\phi$ has concentrated branching with $\mu(\phi) = g$. Without loss of generality, we may take $\sigma = \infty$ and $\br\phi = b\cdot 0$. The normalization $\tw C$ of $C$ is the disjoint union $\P^1 \sqcup \P^1 \sqcup \P^1$. Let $\spec k[x] \subset \P^1$ be the standard neighborhood of $0$. From \autoref{thm:crimps_triple_etale}, we see that, up to permuting the three components of $\tw C$ over $\P^1$, the subalgebra $O_C \subset O_{\tw C}$ is generated locally around $0$ as an $O_{\P^1}$ module by
  \[ 1, \quad x^{g+1}O_{\tw C}, \text{ and } (x, ax^g, -ax^g),\]
  for some $a \in k$. Observe that if $a = 0$, then $M(\phi) = g$, which is not allowed; hence $a \neq 0$. However, two covers given by nonzero $a, a' \in k$ are isomorphic via the morphism induced by the scaling 
  \[(\P^1, \infty, 0) \to (\P^1, \infty, 0), \quad x \mapsto \sqrt[g]{a/a'} x.\]
  We conclude that $\Exc(\beta_g^{-1})$ consists of a single point. Taking $ a = 1$, we see that the map $C \to \P^1$ is given locally by
  \[ k[x] \to k[x,y]/(y^2-x^{2g})(y-x).\]
  In particular, the singularity of $C$ is a $D_{2g+2}$ singularity.
  
  Next, consider the pointwise extension $\beta_g' \from \o{\sp T}_{g;1}^g(k) \to \o{\sp T}_{g;1}^{g-2}(k)$ which agrees with the one induced by $\beta_g$ on the complement of $H = \Exc(\beta_g)$ and sends all the points of $H$ to the unique point of $\Exc(\beta_g^{-1})$. It is clearly continuous in one-parameter families in the sense of \autoref{thm:pointwise_morphism}. Since $\o{\sp T}^g_{g;1}$ is normal and $\o{\sp T}_{g;1}^{g-2}$ proper, we conclude that $\beta_g$ extends to a morphism that contracts $H$ to a point.
\end{proof}

\section{The Maroni contraction}\label{sec:maroni}
Let $g \geq 4$ be even. In this section, we prove that $\beta_2 \from \o{\sp T}_{g;1}^2 \dashrightarrow \o{\sp T}_{g;1}^0$ is a divisorial contraction morphism. The idea is the same as in the case of the hyperelliptic contraction; we first define the extension on $k$-points and then argue that it is a morphism by checking continuity on one-parameter families. The details are a bit more involved as $\Exc(\beta_2^{-1})$ is not merely a point. The pointwise extension is obtained by relating the so-called cross-ratio of a marked unbalanced cover on one side and the so-called principal part of an unbalanced crimp on the other side. We begin by defining these two quantities.

For use throughout this section, set $V = k^{\oplus 3}/k$, where $k$ is diagonally embedded and $\P = \P_\sub V/\s_3$, where $\s_3$ acts on $V$ by permuting the three coordinates. The two dimensional vector space $V$ is to be thought of as the space of functions on $\{1, 2, 3\} \times \spec k$ modulo constant functions.

\subsection{The cross-ratio of a marked unbalanced cover}\label{sec:cross_ratio}
Consider a point $p \from \spec k \to \st T_{g;1}$ given by $(\P^1; \sigma; \phi \from C \to \P^1)$. Set $F = \phi_*O_C/O_{\P^1}$ and assume that
\[ F \cong O_{\P^1}(-m) \oplus O_{\P^1}(-n) \text{ with } 0 < m < n.\]
Define the \emph{cross-ratio of $\phi$ over $\sigma$} as a point of $\P_\sub(F|_\sigma)$ as the line given by 
\[k \cong H^0(F\otimes O_{\P^1}(m)) \into F|_\sigma \otimes O_{\P^1}(m) \cong F|_\sigma.\]
Since the isomorphisms on both sides are canonical up to the choice of a scalar, this line is well defined. An identification $C|_\sigma \isom \{1,2,3\}$ induces an identification $F|_\sigma \isom V$ and lets us treat the cross-ratio as a point of $\P_\sub V$. Let $\chi(p)$ be the image of the cross-ratio in $\P = \P_\sub V/\s_3$. Then $\chi(p)$ is independent of the identification $C|_\sigma \isom \{1,2,3\}$.

The name ``cross-ratio'' comes from the following geometric realization of $\chi(p)$. For simplicity, assume that $C$ is Gorenstein. We have the canonical embedding $C \into \F_M$, where $M = n-m$ and $\F_M = \P\dual F$ is a Hirzebruch surface. Let $\tau \from \P^1 \to \F_M$ be the unique section of negative self-intersection and $P \cong \P^1$  the fiber of $\F_M \to \P^1$ over $\sigma$. On $P$, we have four marked points, namely the three distinct points of $C|_\sigma$ and the point $\tau(\sigma)$. The element $\chi(p)$ is simply the moduli of $(P, C|_\sigma, \tau(\sigma))$. Even if $C$ is not Gorenstein, it is Gorenstein over an open set $U \subset \P^1$ containing $\sigma$. The geometric description of $\chi(p)$ goes through if we consider the restricted embedding $C|_U \into \F_M|_U$.

\subsection{The principal part of an unbalanced crimp}\label{sec:principal_part}
Analogous to the cross ratio, there is a $\P$-valued invariant of a cover with concentrated branching. This is a local invariant, so we consider $\tw\phi \from \tw C \to \Delta$, where $\Delta$ is the disk $\spec k\f{t}$ and $\tw\phi$ an \'etale triple cover. Let $\tw C \to C \stackrel\phi\to \Delta$ be a crimp; set $\tw F = O_{\tw C}/O_\Delta$ and $F = O_C/O_\Delta$ and $Q = O_{\tw C}/O_C = \tw F / F$. Assume that
\[ Q \cong k[t]/t^m \oplus k[t]/t^n \text{ with } 0 < m < n.\]
Then the map $i \from F \to \tw F$ is divisible by $t^m$ and the rank of the induced map
\[ t^{-m}i \from F|_0 \to \tw F|_0\]
is one. Define the \emph{principal part} of the crimp to be the point of $\P_\sub(\tw F|_0)$ given by the image of $t^{-m}i$. 

More explicitly, from \autoref{thm:crimps_triple_etale}, we know that $O_C$ is generated as an $O_\Delta$ module by  $1$, $t^mf$ and $t^nO_{\tw C}$, where $\overline f \in \tw F$ is nonzero modulo $t$. The principal part is simply the line $\langle f(0) \rangle \subset \tw F|_0$. Thus, it partially encodes the moduli of a crimp.

Finally, consider a point $p \from \spec k \to \st T_{g;1}$ given by $(\P^1; \sigma; \phi \from C \to \P^1)$, where $\phi$ has concentrated branching at $0$ with an unbalanced crimp, as above. Identifying $\tw C|_\sigma \isom \{1,2,3\}$ lets us treat the principal part as a point of $\P_\sub V$. Let $\rho(p)$ be the image of the principal part in $\P = \P_\sub V /\s_3$. Then $\rho(p)$ is independent of the identification $\tw C|_\sigma \isom \{1,2,3\}$.

The invariants $\chi(p)$ and $\rho(p)$ are equal in a particular case.
\begin{proposition}\label{thm:chi_rho_equal}
  Let $p = (\P^1; \sigma; \phi \from C \to \P^1)$ be such that $\phi$ has concentrated branching and $M(\phi) = \mu(\phi) > 0$. Then the cross-ratio equals the principal part: \[\chi(p) = \rho(p).\]
\end{proposition}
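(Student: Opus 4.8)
The plan is to show that both $\chi(p)$ and $\rho(p)$ equal the single line $\langle \bar f(0)\rangle \in \P_\sub V$ cut out by the local generator of $O_C$, after which the statement follows by passing to the quotient by $\s_3$. First I would set the stage using the hypothesis. Since $\phi$ has concentrated branching, the normalization is $\tw C = \P^1 \sqcup \P^1 \sqcup \P^1$, so $\tw F := \tw\phi_* O_{\tw C}/O_{\P^1} \cong O_{\P^1}\otimes V$ is the trivial bundle with fibre $V$. Because $\phi$ is \'etale over $\sigma$, we have $F = \tw F$ near $\sigma$, and the global trivialization of $\tw F$ identifies $F|_\sigma \cong V \cong \tw F|_0$ compatibly with the labelling of the three sheets; thus $\chi(p)$ and $\rho(p)$ genuinely live in the same $\P_\sub V$ before the $\s_3$ quotient. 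The role of the hypothesis $M(\phi) = \mu(\phi)$ is precisely to force the global splitting type of $F$ and the local splitting type of the singularity to coincide: combined with \autoref{thm:Maroni_less_than_mu} and the relations $m+n = g+2$ on both sides, the equality $M = \mu$ gives global $m$ equal to local $m$ and global $n$ equal to local $n$. This matching is what lets the two a priori unrelated constructions refer to the same integer $m$.

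Next I would unwind the two invariants explicitly. By \autoref{thm:crimps_triple_etale}, near the branch point $0$ the algebra $O_C \subset O_{\tw C}$ is generated over $O_{\P^1,0}$ by $1$, $x^m f$ and $x^n O_{\tw C}$ with $\bar f(0)\neq 0$, and by definition $\rho(p) = \langle \bar f(0)\rangle$. For the cross-ratio I would use the exact sequence $0 \to F \to \tw F \to Q \to 0$ with $Q = O_{\tw C}/O_C$ supported at $0$ and isomorphic to $k[t]/t^m \oplus k[t]/t^n$. Twisting by $O_{\P^1}(m)$ leaves $Q$ unchanged and gives an inclusion $H^0(F(m)) \hookrightarrow H^0(\tw F(m)) = V \otimes \Sym^m$, while the splitting $F(m) = O_{\P^1} \oplus O_{\P^1}(m-n)$ shows $H^0(F(m)) = k$. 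The cross-ratio $\chi(p)$ is the value at $\sigma$ of the generator $s$ of this line.

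The crux is then to pin down $s$ as an element of $V \otimes \Sym^m$. Locally near $0$, every generator of $F(m)$ has $x$-valuation at least $m$, so writing $s = \sum_{i=0}^m X^iY^{m-i}v_i$ forces $v_0 = \dots = v_{m-1} = 0$, whence $s = X^m v_m$ for a constant $v_m \in V$. The membership $s \in H^0(F(m))$ then reads $x^m v_m \in \langle x^m\bar f,\, x^n e'\rangle$ in the local frame of $\tw F$ at $0$, where $e'$ completes $\bar f$ to a frame; dividing by $x^m$ and reducing modulo $x$ (using $n>m$) yields $v_m \in \langle \bar f(0)\rangle$. Since $H^0(F(m))$ is nonzero, $v_m$ spans $\langle \bar f(0)\rangle$, and evaluating $s = X^m v_m$ at $\sigma$ gives $\chi(p) = \langle v_m\rangle = \langle \bar f(0)\rangle = \rho(p)$ in $\P_\sub V$; the $\s_3$ quotient gives the claim in $\P$. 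I expect the main obstacle to be bookkeeping: keeping the global trivialization of $\tw F$, the local crimp frame, and the two sheet-labellings consistent, so that the equality of lines is literal rather than merely up to an unspecified ambiguity. The one genuinely load-bearing step is the leading-term computation $v_m \in \langle \bar f(0)\rangle$, and it is exactly here that the matched value of $m$ coming from $M=\mu$ is indispensable.
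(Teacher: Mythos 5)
Your proof is correct and takes essentially the same route as the paper: both arguments identify $\chi(p)$ and $\rho(p)$ as the values at $\sigma$ and at $0$ of the (unique up to scale) global section of $F \otimes O_{\P^1}(m)$ viewed inside the trivialized bundle $\tw F$ coming from the normalization, with the triviality of $\tw F$ forcing the two values to span the same line of $V$. Your explicit computation that the section equals $X^m v_m$ with $v_m \in \langle \bar f(0)\rangle$ is a coordinate rendition of the paper's one-line observation that a global section of the trivial bundle is constant, plus a welcome verification (glossed in the paper) that its value at $0$ really computes the principal part.
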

\begin{proof}
  Let $\tw C = \P^1 \sqcup \P^1 \sqcup \P^1 \to C$ be the normalization. Set $F = \phi_* O_C/O_{\P^1}$ and $\tw F = \tw\phi_* O_{\tw C}/O_{\P^1}$ as usual. Let the splitting type of $F$ and the splitting type of the singularity be $(m,n)$ with $m < n$.  Suppose $\supp\br(\phi) = \{0\}$ and let $O_{\P^1}(-1) \to O_{\P^1}$ be the ideal sheaf of $\{0\}$. The inclusion $i \from F \to \tw F$ factors through
  \[ i' \from F \otimes O_{\P^1}(m) \to \tw F \cong O_{\P^1}^{\oplus 2}.\]
  Clearly, it is an isomorphism away from $0$. Let $f$ be a nonzero global section of $F \otimes O_{\P^1}(m)$. Now, $\chi(p)$ is defined by the image of $f$ in $F \otimes O_{\P^1}(m)|_\sigma = \tw F|_\sigma$ and $\rho(p)$ by the image of $f$ in $\tw F|_0$. Since $\tw F$ is trivial, the two are equal.
\end{proof}

We now relate the cross-ratio and the principal part in the context of the blowing down of a trigonal curve. Let $X$ be a smooth surface, $s \in X$ a point, $\beta \from \Bl_sX \to X$ the blowup and $E$ the exceptional divisor. Let $P \subset X$ be a smooth curve through $s$, $\tw P$ its proper transform and $\tw s = E \cap \tw P$. Let $\tw C \to \Bl_sX$ be a triple cover, \'etale over $\tw P$, and set $\tw F = O_{\tw C}/O_{\Bl_s X}$. Assume that
\[\tw F|_E \cong O_E(-m) \oplus O_E(-n), \text{ with $0 < m < n$}.\]
Let $C \to X$ be the cover obtained by blowing down $\tw C \to \Bl_sX$ along $E$. Then $C|_P \to P$ is a crimp over $s$ with normalization $\tw C|_{\tw P}$. Assume that the singularity of $C|_P \to P$ over $s$ also has the splitting type $(m,n)$. 

\begin{proposition}\label{thm:blowdown_cross_ratio}
  In the above setup, the cross-ratio of $(E; \tw s; \tw C|_E \to E)$ and the 
  principal part of $(\tw C|_{\tw P} \to C|_P \to P)$ are equal.
\end{proposition}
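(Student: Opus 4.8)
The plan is to compute both invariants from a single morphism on $\Bl_s X$, namely the counit $e \from \beta^* F \to \tw F$ of \autoref{thm:blow_down_vb} applied to $V = \tw F$, where $F := \beta_* \tw F = O_C/O_X$. Since $H^0(\tw F|_E \otimes O_E(-1)) = H^0(O_E(-m-1)\oplus O_E(-n-1)) = 0$, \autoref{thm:blow_down_vb}(2) shows that $F$ is locally free of rank two and that $e$ is injective and an isomorphism away from $E$. The starting observation is that the two invariants in the statement are lines in one and the same two-dimensional space $\tw F|_{\tw s}$: indeed $\tw F|_E \otimes k(\tw s) = \tw F|_{\tw s} = \tw F|_{\tw P}\otimes k(\tw s)$, and since $\beta \from \tw P \to P$ is an isomorphism carrying $\tw s$ to $s$, the restriction $e|_{\tw P}$ is exactly the normalization inclusion $O_{C|_P}/O_P \into O_{\tw C|_{\tw P}}/O_{\tw P}$ whose principal part must be computed. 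Thus I will show that both the cross-ratio and the principal part equal $\P_\sub$ of the image of a single rank-one linear map extracted from $e$ at $\tw s$.

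First I record the key divisibility: $e$ vanishes to order exactly $m$ along $E$. For the lower bound, suppose $e(\beta^* F) \subseteq \tw F(-kE)$ for some $0 \le k < m$, giving $e_k \from \beta^* F \to \tw F(-kE)$. Restricting to $E$ and using $O_{\Bl_s X}(-E)|_E \cong O_E(1)$ produces a map $O_E^{\oplus 2} = \beta^* F|_E \to \tw F(-kE)|_E \cong O_E(k-m)\oplus O_E(k-n)$; as $k < m$ both summands have negative degree, so this map is zero and $e_k$ vanishes on $E$, i.e.\ $e(\beta^* F)\subseteq \tw F(-(k+1)E)$. By induction $e$ factors as $e = \theta^m\, e_m$ with $e_m \from \beta^* F \to \tw F(-mE)$, where $\theta$ is the canonical section of $O_{\Bl_s X}(E)$ cutting out $E$. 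That the order of vanishing is not larger will fall out of the computation on $\tw P$ below.

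Next I read off the cross-ratio from $e_m|_E \from O_E^{\oplus 2} \to \tw F(-mE)|_E \cong O_E \oplus O_E(m-n)$. Because $\Hom(O_E, O_E(m-n)) = 0$, the image of $e_m|_E$ lands in the distinguished summand $O_E$, which is the unique sub-line-bundle of non-negative degree and is precisely the line spanned by the generator of $H^0(\tw F(-mE)|_E) = H^0(\tw F|_E \otimes O_E(m))$ used to define the cross-ratio. Hence, provided $e_m|_E$ is nonzero at $\tw s$, its image there is the cross-ratio line, so $\chi = \P_\sub(\im e_m(\tw s))$ inside $\P_\sub(\tw F(-mE)|_{\tw s})$, which is canonically $\P_\sub(\tw F|_{\tw s})$ after the one-dimensional twist by $O_{\Bl_s X}(mE)|_{\tw s}$.

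Finally I compute the principal part and close the loop. Restricting $e = \theta^m e_m$ to $\tw P$ and using that $\theta|_{\tw P}$ vanishes to order one at $\tw s$ (since $E$ meets $\tw P$ transversally, so $O_{\Bl_s X}(E)|_{\tw P} \cong O_{\tw P}(\tw s)$), I get $e|_{\tw P} = t^m\, e_m|_{\tw P}$ with $t$ a uniformizer of $P$ at $s$. As $\cok(e|_{\tw P})$ is the singularity module $k[t]/t^m \oplus k[t]/t^n$ with $m<n$, the map $e|_{\tw P}$ has elementary divisors $(t^m,t^n)$, so $e_m|_{\tw P}$ has rank one at $\tw s$ and its image is by definition the principal part. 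This simultaneously forces $e_m(\tw s)\neq 0$ (hence the vanishing order of $e$ along $E$ is exactly $m$, validating the cross-ratio computation) and shows $\rho = \P_\sub(\im e_m(\tw s))$. Since $\chi$ and $\rho$ are the projectivizations of the image of the same linear map $e_m(\tw s)$, they coincide. I expect the only delicate points to be the bookkeeping of the twist by $O_{\Bl_s X}(mE)$---checking that the canonical identifications in the two definitions agree up to scalar, which is automatic after projectivizing---and the transversality input matching $\theta^m|_{\tw P}$ with $t^m$; the structural heart is the order-$m$ vanishing of $e$ along $E$, which is forced by the negativity of $\tw F|_E$.
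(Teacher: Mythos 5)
Your proposal is correct and is essentially the paper's own argument: the paper likewise works with $F = \beta_*\tw F$, uses the negativity of $\tw F|_E$ to divide by the equation of $E$ exactly $m$ times, and identifies both invariants with the span at $\tw s$ of the resulting object. The only difference is packaging: the paper divides a single well-chosen section $f$ of $F$ (the valuation-$m$ generator of the crimp, which makes the principal-part identification immediate), whereas you divide the whole counit $e \from \beta^*F \to \tw F$ and then recover the two identifications by the Hom-vanishing argument on $E$ and the elementary-divisor computation on $\tw P$.
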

  \begin{proof}
    By \autoref{thm:blow_down_trigonal_curves}, we have $O_C = \beta_* O_{\tw C}$. Denote by $\nu$ the map 
    \[ \nu \from \beta^*O_C|_{\tw P} \to O_{\tw C}|_{\tw P}.\]
    This map expresses $O_{\tw C}|_{\tw P}$ as the normalization of $\beta^*O_C|_{\tw P} = O_C|_P$. Set $F = O_C/O_X$. As the singularity of $C|_P \to P$ over $s$ has splitting type $(m,n)$, there is a nonzero section $f$ of $F$ defined around $s$ such that $\nu(\beta^*f|_{\tw P})$ has valuation $m$ with respect to a uniformizer of $\tw P$ at $\widetilde s$. By shrinking $X$ if necessary, assume that $f$ is defined on all of $X$. As $F = \beta_*\tw F$, we can interpret $f$ as a section of $\tw F$ on $\Bl_sX$. Since $\tw F|_E \cong O_E(-m) \oplus O_E(-n)$, the section $f$ must in fact be the image of a section $f'$ under the inclusion
    \[ \tw F \otimes I_E^m \to \tw F.\]
    Furthermore, since the section $f|_{\tw P}$ has valuation $m$ at $\tw s$, the section $f'|_{\tw P}$ has valuation zero at $\tw s$. Said differently, the restriction of $f'$ to $\tw s$ is nonzero. We see that the cross-ratio of $(E; \tw s; \tw C|_E \to E)$ and the principal part of $(\tw C|_{\tw P} \to C|_P \to P)$ are defined by the line spanned by the image of $f'$ in $\tw F|_{\tw s}$.
  \end{proof}

\subsection{The Maroni contraction}
We are now ready to tackle $\beta_2 \from \o{\sp T}_{g;1}^2 \dashrightarrow \o{\sp T}_{g;1}^0$. The exceptional locus $\Exc(\beta_2)$ is the locus in $\o{\sp T}_{g;1}^2$ consisting of covers with Maroni invariant two. By \autoref{thm:dim_maroni}, this locus is an irreducible divisor. We call it the \emph{Maroni divisor}. 

\begin{theorem}\label{thm:maroni}
  Let $g \geq 4$ be even. The birational map $\beta_2 \from \o{\sp T}_{g;1}^2 \dashrightarrow \o{\sp T}_{g;1}^{0}$ extends to a morphism. The extension contracts the Maroni divisor to $\P \cong \P^1$.
\end{theorem}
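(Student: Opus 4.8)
The plan is to mimic the proof of \autoref{thm:hyperelliptic}: define the extension on $k$-points and then verify that it is continuous in one-parameter families, invoking \autoref{thm:pointwise_morphism}. The new feature is that $\Exc(\beta_2^{-1})$ is a curve rather than a point, and the cross-ratio of \autoref{sec:cross_ratio} together with the principal part of \autoref{sec:principal_part} are exactly the invariants needed to parametrize it and to describe the contraction.

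First I would identify $\Exc(\beta_2^{-1}) \subset \o{\sp T}_{g;1}^0$ with $\P$. A point of this locus is a cover $(\P^1; \sigma; \phi \from C \to \P^1)$ with concentrated branching and $\mu(\phi) = 2$; its normalization is $\P^1 \sqcup \P^1 \sqcup \P^1$, so it is a crimp of the standard \'etale cover with $n-m = 2$, $m = g/2$, $n = g/2+1$. Since $g \geq 4$ gives $2m \geq n$, \autoref{thm:crimps_triple_etale} places us in case~(1): such a crimp is generated by $1$, $x^n\tw S$ and $x^m f$, and after accounting for the $\G_m$-rescaling of the marked disk and the $\s_3$-symmetry it is determined up to isomorphism by its principal part $\rho(\phi) \in \P$. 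By \autoref{thm:dim_mu} the locus is one-dimensional, so $\rho$ gives an identification $\Exc(\beta_2^{-1}) \cong \P \cong \P^1$. I then define the pointwise extension $\beta_2' \from \o{\sp T}_{g;1}^2(k) \to \o{\sp T}_{g;1}^0(k)$: off the Maroni divisor $\Exc(\beta_2)$ it agrees with $\beta_2$, and a point $p$ of the Maroni divisor, for which $F = \phi_*O_C/O_{\P^1} \cong O(-m)\oplus O(-n)$ with $0<m<n$ and $n-m=2$, is sent to the unique point of $\Exc(\beta_2^{-1})$ whose principal part equals the cross-ratio $\chi(p) \in \P$ (well defined since $m = g/2 > 0$).

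The heart of the argument is continuity in one-parameter families. Given $\gamma \from \Delta \to \o{\sp T}_{g;1}^2$ with $\gamma(\Delta^\circ)$ in the open locus where $\beta_2$ is an isomorphism (balanced covers with non-concentrated branching) and $\gamma(0)$ on the Maroni divisor, I must show $(\beta_2 \circ \gamma)(0) = \beta_2'(\gamma(0))$. Writing $P \to \Delta$ as $\proj R[X,Y]$ with $\sigma = [0:1]$ and presenting $V = \phi_*O_C/O_P$ as an extension of $O(-m)$ by $O(-n)$ with class $e \in \Ext^1 = R\langle X^0\rangle$, the $\o{\sp T}_{g;1}^0$-limit is computed exactly by the rebalancing procedure from the proof of \autoref{thm:tg3d}: after a finite base change, a sequence of elementary transformations $[X:Y]\mapsto[tX:Y]$ makes the central bundle balanced and concentrates the branching, producing a limit with concentrated branching which, being a $0$-balanced cover not lying in the common isomorphism locus, has $\mu = 2$, i.e. a point of $\Exc(\beta_2^{-1})$. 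It remains to match invariants. Realizing one transformation through its resolution $\tw P = F_1 \cup F_2$ with $F_2 \cong P_0$, \autoref{thm:blowdown_cross_ratio} identifies the cross-ratio of the unbalanced fiber living on $F_2$ with the principal part of the newly created singularity; when several transformations are needed, each intermediate fiber satisfies $M = \mu = 2$, so \autoref{thm:chi_rho_equal} shows its cross-ratio equals its principal part and the equality propagates step by step. Hence the limit has principal part $\chi(\gamma(0))$, which is precisely $\beta_2'(\gamma(0))$.

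With continuity established, \autoref{thm:pointwise_morphism} (applicable since $\o{\sp T}_{g;1}^2$ is normal and $\o{\sp T}_{g;1}^0$ is proper) shows $\beta_2$ extends to a morphism inducing $\beta_2'$. As the cross-ratio is non-constant, hence surjective, onto $\P \cong \P^1$, while the fibers of $\beta_2|_{\Exc(\beta_2)}$ are positive-dimensional (the Maroni divisor is $(2g+1)$-dimensional with one-dimensional image), the morphism contracts the Maroni divisor onto $\Exc(\beta_2^{-1}) \cong \P \cong \P^1$. I expect the \emph{main obstacle} to be the family computation of the third paragraph: controlling the $\o{\sp T}_{g;1}^0$-limit through the possibly iterated rebalancing and verifying that the cross-ratio is faithfully transported to the principal part. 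Everything else is bookkeeping with the crimp descriptions and the local invariants already in hand.
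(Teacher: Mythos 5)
Your proposal is correct and takes essentially the same route as the paper's own proof: identify $\Exc(\beta_2^{-1})$ with $\P$ via the principal part (using \autoref{thm:crimps_triple_etale} together with the $\G_m$-rescaling), define the pointwise extension by matching the principal part to the cross-ratio $\chi(p)$, verify continuity in one-parameter families by running the rebalancing procedure from the proof of \autoref{thm:tg3d} and transporting the invariant via \autoref{thm:blowdown_cross_ratio} and \autoref{thm:chi_rho_equal}, and conclude with \autoref{thm:pointwise_morphism}. Only two cosmetic slips: with $m+n=g+2$ and $n-m=2$ one has $n=g/2+2$ (not $g/2+1$), and in the paper's labeling of the resolution the unbalanced fiber sits on $F_1$ (the proper transform of $P_0$) rather than $F_2$.
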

\begin{proof}
  Set $g = 2h$. Consider the exceptional locus $\Exc(\beta_2^{-1}) \subset \o{\sp T}_{g;1}^0$. Let $p \from \spec k \to \Exc(\beta_2^{-1})$ be a point, given by $(\P^1; \sigma; \phi \from C \to \P^1)$. Then $M(\phi) = 0$ and $\phi$ has concentrated branching with $\mu(\phi) = 2$. Without loss of generality, we may take $\sigma = \infty$ and $\br\phi = b \cdot 0$. The normalization $\tw C$ of $C$ is the disjoint union $\P^1 \sqcup \P^1 \sqcup \P^1$. Let $\spec k[x] \subset \P^1$ be the standard neighborhood of $0$. From \autoref{thm:crimps_triple_etale}, the subalgebra $O_C \subset O_{\tw C}$ is generated locally around $0$ as an $O_{\P^1}$ module by
  \[ 1,\quad x^{h+1}O_{\tw C} \text{ and } x^{h-1}f,\]
  for some $f \in O_{\tw C}$ whose image in $\tw F = O_{\tw C}/O_{\P^1}$ is nonzero modulo $x$. Clearly $O_C$ is determined by $\o f \in \tw F/x^2\tw F$ and $\o f$ only matters up to multiplication by a unit in $k[x]/x^2$. Let $\o f = f_1 + x f_2$, where $f_i \in \tw F|_0$ with $f_1 \neq 0$. By multiplying by units of $k[x]/x^2$, we see that $O_C$ is determined by a line $\ideal{f_1} \subset \tw F|_0$ and an element $\o f_2$ in the one-dimensional $k$-vector space $\tw F|_0/\ideal{f_1}$. However, if $\o f_2 = 0$, then $M(\phi) = 2$, which is not allowed. On the other hand, two covers given by $f_1 + xf_2$ and $f_1 + axf_2$, for $a \in k^*$, are isomorphic via the map induced by the scaling
  \[ (\P^1, \infty, 0) \to (\P^1, \infty, 0), \quad x \mapsto ax.\]
  The upshot is that $p \in \Exc(\beta_2^{-1})$ is determined by the line $\langle f_1 \rangle \subset \tw F|_0$, or equivalently by the principal part $\rho(p) \in \P$.

  We now define an extension $\beta_2'$ of $\beta_2 \from \o{\sp T}_{g;1}^2 \dashrightarrow \o{\sp T}_{g;1}^0$ on $k$-points. Let $p \in \Exc(\beta_2)$ be a point corresponding to $(\P^1; \sigma; \phi \from C \to \P^1)$ with $M(\phi) = 2$. Let $\beta'_2(p)$ be the unique point of $\Exc(\beta_2^{-1})$ whose principal part equals the cross-ratio $\chi(p)$ as points of $\P$.

  By \autoref{thm:pointwise_morphism}, it suffices to check that $\beta'_2$ is continuous in one-parameter families. Let $\Delta \to \o{\sp T}_{g;1}^2$ be a map sending $\Delta^\circ$ to the complement of $\Exc(\beta_2)$ and $0$ to a point $p \in \Exc(\beta_2)$. By replacing $\Delta$ by a finite cover, assume that the map lifts to $\Delta \to \o{\orb T}_{g;1}^2$ and is given by the family $(P; \sigma; \phi \from C \to P)$ over $\Delta$. From the proof of the valuative criterion for $\o{\orb T}_{g;1}^l$ (\autoref{thm:tg3d}), we know the procedure to modify $(P; \sigma; \phi \from C \to P)$ so that the central fiber lies in $\o{\orb T}_{g;1}^0$. After a sufficiently large base change, it involves blowing up $\sigma(0)$ and blowing down the proper transform of $P|_0$, until the central fiber has Maroni invariant $0$. Throughout this process, the central fiber continues to have $\mu$ invariant $2$. By repeated applications of \autoref{thm:chi_rho_equal} and \autoref{thm:blowdown_cross_ratio}, we conclude that the principal part of the resulting limit equals the cross-ratio of the original central fiber. It follows that $\beta_2'$ is continuous in one-parameter families.
 \end{proof}

 \section{The Picard group}\label{sec:picard}
 In this section, we compute the rational Picard groups and the canonical divisors of $\o{\sp T}_{g;1}^l$ for $0 < l < g$. By \autoref{thm:flips}, in this range, all the models $\o{\sp T}_{g;1}^l$ are isomorphic to one another away from codimension two. Hence their Picard groups are identical. Denoting by $\Pic_\Q$ the rational Picard group $\Pic \otimes \Q$, we have the equality
 \[ \Pic_\Q (\o{\orb T}_{g;1}^l) = \Pic_\Q (\o{\sp T}_{g;1}^l).\]
Moreover, it is clear that the locus of points with nontrivial automorphisms has codimension at least two in $\o{\orb T}_{g;1}^l$. As a result, the coarse space morphism $\o{\orb T}_{g;1}^l \to \o{\sp T}_{g;1}^l$ is an isomorphism in codimension one. Hence, we may transfer divisors from one to the other without worrying about multiplication factors.

 We begin by defining several classes in $\Pic_\Q(\o{\orb T}_{g;1}^l)$. Let $(\orb P; \sigma; \phi \from \orb C \to \orb P)$ be the universal family over $\o{\orb T}_{g;1}^l$. Denote by $\pi_{\orb P} \from \orb P \to \o{\orb T}_{g;1}^l$ and $\pi_{\orb C} \from \orb C \to \o{\orb T}_{g;1}^l$ the projections. When no confusion is likely, we denote both projections by $\pi$. Let $\Sigma \subset \orb P$ be the branch divisor $\Sigma = \br\phi$.

We define a number of elements of $\Pic_\Q(\o T_{g;1}^l)$.
\begin{compactenum}
  \item[$\lambda$: ] Observe that $R^1{\pi_C}_*O_{\orb C}$ is a vector bundle of rank $g$. We define $\lambda$ as the determinant of its dual
    \[ \lambda = \det \dual{(R^1{\pi}_*O_{\orb C})}.\]
  \item[$\delta$: ] The locus of points in $\o{\orb T}_{g;1}^l$ over which $\orb C$ is singular is a divisor. We define $\delta$ to be its class.
  \item[$T$: ] The locus of points in $\o{\orb T}_{g;1}^l$ over which $\phi$ has a point of triple ramification is a divisor. We define $T$ to be its class.
  \item[$\Br^2$: ] Define $\Br^2$ as the pushforward
    \[ \Br^2 = \pi_*(\Sigma^2).\]
  \item[$c_1^2$: ] Define $c_1^2$ as the pushforward
    \[ c_1^2 = \pi_*\left(c_1(\phi_*O_{\orb C})^2[\orb P]\right).\]
  \item[$c_2$: ] Define $c_2$ as the pushforward
    \[ c_2 = \pi_*\left(c_2(\phi_*O_{\orb C})[\orb P]\right).\]
  \item[$\sigma^2$: ] Define $\sigma^2$ as the pushforward
    \[ \sigma^2 = \pi_* \left(\sigma(\o{\orb T}_{g;1}^l)^2\right).\]
  \item[$K$: ] Define $K$ to be the canonical divisor
    \[ K = K_{\o{\orb T}_{g;1}^l} = K_{\o{\sp T}_{g;1}^l}.\]
  \end{compactenum}

  \begin{proposition}\label{thm:picard}
    Let $0 < l < g$ and $l \equiv g \pmod 2$. Then 
    \[\Pic_\Q(\o{\orb T}_{g;1}^l) = \Q\langle c_1^2, c_2 \rangle \cong \Q^{\oplus 2}.\]
  \end{proposition}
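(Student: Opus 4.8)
The plan is to compute $\Pic_\Q(\o{\orb T}_{g;1}^l)$ by presenting a dense open substack as a quotient stack via the structure theorem for triple covers (\autoref{thm:structure}) and reading off the rational Picard group from the character lattice of the structure group. By \autoref{thm:flips} all models with $0 < l < g$ agree away from codimension two, so I am free to work with whichever model is most convenient and to discard any locus of codimension $\geq 2$; by \autoref{thm:dim_mu} and \autoref{thm:dim_higher_collisions} the loci of concentrated branching and of higher collisions are such loci. Thus it suffices to understand the generic Maroni stratum together with the codimension-one strata, and the target is to show that the only classes surviving are spanned by $c_1^2$ and $c_2$.

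First I would build the quotient presentation. Fix the balanced bundle $E_0$ on $\P^1$ of degree $g+2$ (so $E_0 = O(d)^{\oplus 2}$ for even $g = 2d-2$ and $E_0 = O(d)\oplus O(d+1)$ for odd $g$), and put $V = H^0(\Sym^3 E_0 \otimes \det\dual{E_0})$. By \autoref{thm:structure} a point of $V$ is exactly a triple cover of $\P^1$ with $\phi_*O_C/O_{\P^1}\cong\dual{E_0}$, and the group $G = \Aut(\P^1, \sigma) \ltimes \Aut(E_0)$ acts linearly on $V$, with $[V^\circ/G]$ isomorphic to the open substack of $\o{\orb T}_{g;1}^l$ of covers of smallest Maroni invariant, where $V^\circ \subset V$ is the locus of $p$ giving a reduced connected cover with $\sigma \notin \br\phi$. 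A dimension check ($\dim V - \dim G = 2g+2$) confirms this. The essential combinatorial input is that the \emph{only} codimension-one locus deleted in passing from $V$ to $V^\circ$ is the irreducible discriminant divisor $D_\sigma = \{p : \sigma \in \br\phi\}$: all other conditions (nonreduced or disconnected $C$, and any further degeneration of the cover within the balanced chart) cut out loci of codimension $\geq 2$ by the dimension counts of \autoref{sec:dimensions}.

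Next I would compute. For a linear representation one has $\Pic^G(V) = X^\ast(G)$, and deleting loci of codimension $\geq 2$ from the smooth $V$ does not change it, so $\Pic_\Q$ of the generic stratum is $X^\ast(G)_\Q$ modulo the class of $D_\sigma$. Passing to reductive quotients, $\Aut(\P^1,\sigma) \cong \G_m \ltimes \G_a$ contributes one $\G_m$ and $\Aut(E_0)$ contributes its maximal torus, giving $X^\ast(G)_\Q \cong \Q^2$ for even $g$ and $\Q^3$ for odd $g$; in each case I would compute the character $[D_\sigma]$ explicitly as the discriminant weight and check it is nonzero. Finally I would reinstate the codimension-one Maroni stratum: for even $g$ the Maroni divisor (Maroni invariant $2$, codimension one by \autoref{thm:dim_maroni}) lies outside the balanced chart, so $\Pic_\Q(\o{\orb T}_{g;1}^l)$ surjects onto the Picard group of the generic stratum with kernel spanned by its class; for odd $g$ the complement of the generic stratum already has codimension $\geq 2$. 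Either way the rank is at most two. To match generators I would express the tautological classes $c_1^2 = \pi_*c_1(E)^2$ and $c_2 = \pi_*c_2(E)$ in terms of the generating characters, by computing the Chern classes of the universal $E$ over $[V/G]$ and pushing forward along the universal $\P^1$-bundle, and verify that they span the rank-two quotient; independence can be confirmed by exhibiting two test families of triple covers on which $(c_1^2, c_2)$ take linearly independent values.

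The main obstacle is the equivariant bookkeeping in the middle two steps: pinning down $G$ and its linearization precisely, proving that $D_\sigma$ is genuinely the unique codimension-one deletion (so that exactly one relation is imposed inside the balanced chart), computing the weight $[D_\sigma]$ and the Maroni-divisor class, and then showing these combine to leave a rank-two group with $c_1^2, c_2$ as a basis. Establishing the lower bound—that the rank does not collapse below two—is the most delicate point and is where the explicit test families are needed.
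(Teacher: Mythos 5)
Your proposal is correct in outline, but it takes a genuinely different route from the paper. The paper's proof is a short excision argument: after discarding the codimension-two loci (\autoref{thm:dim_higher_collisions}), it removes the two boundary divisors $T$ and $\delta$ to reach the locus $U_{g;1}$ of smooth, simply branched marked covers, and then quotes the known vanishing $\Pic_\Q(U_g)=0$ for the space of smooth unmarked trigonal curves \citep{Stankova-Frenkel00:_Modul_Of_Trigon_Curves,Bolognesi09:_Stack_Of_Trigon_Curves}, observing that $U_{g;1}$ is an open substack of a conic bundle over $U_g$; this yields the surjection $\Q\langle T,\delta\rangle \to \Pic_\Q(V_{g;1})$, hence rank at most two, and independence of $c_1^2, c_2$ is checked on test curves exactly as in your last step. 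You instead excise down to the balanced chart, present it as $[V^\circ/G]$ via the structure theorem (\autoref{thm:structure}), and read off $\Pic_\Q$ from $X^*(G)_\Q$ modulo the class of the discriminant-at-$\sigma$ divisor, reinstating the Maroni divisor for even $g$. This is the same technique the paper itself uses for the final model (\autoref{sec:final_even}; cf. \autoref{thm:final_even_map} and \autoref{thm:simplify_action}), and your key claims do hold: for even $g$ one gets $G\cong \Aut(\P^1,\sigma)\times\Gl_2$ (pullback acts trivially on constant matrices), so $X^*(G)_\Q\cong\Q^2$, while for odd $g$ the automorphisms of $O(d)\oplus O(d+1)$ give $X^*(G)_\Q\cong\Q^3$; the class of $D_\sigma$ is nonzero since its equation is a semiinvariant with nontrivial character; reduced disconnected covers cannot occur in the balanced chart (their pushforward has splitting type $(0,g+2)$), and non-reduced covers occur in codimension at least two. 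The trade-off: the paper's route is shorter but imports a nontrivial external result, whereas yours is self-contained and produces explicit weights and generators (essentially re-deriving that vanishing), at the cost of the rigidification and equivariant bookkeeping you flag---real work, but no harder than \autoref{thm:final_even}. Two refinements: your $V^\circ$ must also exclude covers with concentrated branching and $\mu\leq l$ (harmless, as this locus has codimension at least two by \autoref{thm:dim_mu}); and for odd $g$ your computation pins the rank at exactly two, so the test families are needed only to verify that $c_1^2$ and $c_2$, rather than some other pair, form a basis---which is also the one point the paper leaves to test curves.
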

    The reason we prefer $c_1^2$ and $c_2$ as a basis is that their cohomological nature allows us to compute intersections with curves very easily.
  \begin{proof}
    We may throw away loci of codimension at least two. Consider the open subset $V_{g;1}$ (resp. $U_{g;1}$) of $\o{\orb T}^l_{g;1}$ consisting of $(P; \sigma; \phi \from C \to P)$ where $\br(\phi)$ has at least $(b-1)$ (resp. $b$) points in its support. By \autoref{thm:dim_higher_collisions}, the complement of $V_{g;1}$ has codimension at least two. Hence 
    \[ \Pic_\Q(\o{\orb T}_{g;1}^l) = \Pic_\Q(V_{g;1}).\]
    On the other hand, it is easy to see that the complement of $U_{g;1}$ in $V_{g;1}$ consists of two irreducible divisors, namely $T$ (the divisor where $\phi$ has a triple ramification point) and $\delta$ (the divisor where $C$ is singular). We thus have an exact sequence
    \begin{equation}\label{eq:pic_seq}
 \Q\langle T, \delta \rangle \to \Pic_\Q(V_{g;1}) \to \Pic_\Q(U_{g;1}) \to 0.
\end{equation}
We claim that $\Pic_\Q (U_{g;1}) = 0$. Indeed, consider the moduli space $U_g$ of unmarked trigonal curves, namely
    \[ U_g = \{(P; \phi \from C \to P)\},\]
    where $P \cong \P^1$, $C$ is a smooth and connected curve of genus $g$ and $\phi$ is simply branched. From \citep[Proposition~12.1]{Stankova-Frenkel00:_Modul_Of_Trigon_Curves} or \citep[Theorem~1.1]{Bolognesi09:_Stack_Of_Trigon_Curves}, we know that $\Pic_\Q(U_g) = 0$. Let $\phi \from \orb C \to \orb P$ be the universal object over $U_g$. Then $\orb P \to U_{g}$ is a conic bundle and $U_{g;1} \subset \orb P$ is the complement of $\br(\phi)$. We conclude that $\Pic_\Q (U_{g;1}) = 0$. From the sequence \eqref{eq:pic_seq}, we conclude that the dimension of $\Pic_\Q(\o{\orb T}_{g;1}^l)$ is at most two. It is easy to verify (for example, by intersecting with test curves) that $c_1^2$ and $c_2$ are linearly independent.
  \end{proof}
  
  We now express all the divisors described above in terms of $c_1^2$ and $c_2$. We are particularly interested in the expressions for $\lambda$, $\delta$ and $K$. As in the proof of \autoref{thm:picard}, we may throw away loci of codimension at least two. Accordingly, let $W_{0;b,1} \subset \st M_{0;b,1}$ be the open locus consisting of points $(P; \sigma; \Sigma)$, where $P \cong \P^1$ and $\supp\Sigma$ has at least $(b-1)$ points. Then $W_{0;b,1} \subset \st M_{0;b,1}$ has complement of codimension at least two. Likewise, let $V_{g;1}$ be the preimage of $W_{0;b,1}$ in $\o{\orb T}^l_{g;1}$. Then $V_{g;1} \subset \o{\orb T}_{g;1}^l$ also has complement of codimension at least two.

\begin{proposition}\label{thm:pullback_disc}
  Denote by $D \subset W_{0;b,1}$ the divisor consisting of points where $\Sigma$ is not reduced. Then
  \[ \br^* D = 3 T + \delta.\]
\end{proposition}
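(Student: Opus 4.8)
The plan is to reduce the identity to a local computation on a formal disk, using the structure theorem for triple covers, and then to read off the two multiplicities from explicit transversal one-parameter families.

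First I would set up the local picture. Recall that $\br$ sends $(\P^1; \sigma; \phi \from C \to P)$ to $(P; \Sigma; \sigma)$ with $\Sigma = \br\phi = V(\disc)$, and that everything is computed on the open set $V_{g;1}$, discarding loci of codimension at least two. Over a point of $W_{0;b,1}$, the divisor $\Sigma$ fails to be reduced precisely when it acquires a single point of multiplicity two, all higher collisions being excluded by \autoref{thm:dim_higher_collisions}. Near such a point $C$ is Gorenstein (we are away from codimension two), so \autoref{thm:structure} lets me write the cover over $\spec k\f{x}$ as $k\f{x}[y]/(y^3 + a(x)y + b(x))$, and then $\Sigma$ is cut out locally by $\sigma(x) = -4a(x)^3 - 27b(x)^2$. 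A point of multiplicity two of $\Sigma$ of generic type is then of exactly one of two kinds: the totally ramified point $y^3 = x$, where $C$ is smooth and $\phi$ has a triple ramification point, or a node, where two of the three sheets cross. This yields the set-theoretic identity $\br^{-1}(D)\cap V_{g;1} = T \cup \delta$, and it remains to compute the multiplicity of $\br^*D$ along each.

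For the coefficient of $T$ I would restrict to a transversal arc through a general point of $T$, modelling the triple ramification point by $a = ct$ (constant in $x$, with $c \neq 0$) and $b = -x$ in a family parametrized by $t$; at $t=0$ this is $y^3 = x$, while for $t \neq 0$ there is no triple ramification near $x=0$. Since $T$ is cut out, to first order, by the value of $a$ at the would-be triple point, this arc meets $T$ transversally and $t$ is a uniformizer. The local equation of $D$ is the discriminant $\disc_x \Sigma$, and here $\sigma(x) = -27x^2 - 4c^3t^3$, whose $x$-discriminant is proportional to $t^3$. Hence $\br^*D$ vanishes to order three and the coefficient of $T$ is $3$. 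For the coefficient of $\delta$ I would take the standard smoothing of a node: near a general point of $\delta$ the node involves two sheets, the third being \'etale and contributing only a unit to the discriminant, so the computation reduces to the subcover $y^2 = x^2 + t$, which is nodal at $t=0$ and smooth for $t \neq 0$ and whose arc is transversal to $\delta$. Now $\sigma(x) = 4(x^2+t)$ has $x$-discriminant proportional to $t$, so $\br^*D$ vanishes to order one and the coefficient of $\delta$ is $1$. Combining the two computations gives $\br^*D = 3T + \delta$.

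The hard part will be the transversality bookkeeping in the $T$ computation: the two colliding branch points approach each other like $t^{3/2}$ rather than $t^{1/2}$, so I must verify both that the chosen arc is genuinely transversal to $T$ (and not tangent), and that this half-integer behavior is exactly what produces order three in the pulled-back discriminant. I would also need to confirm that $T$ and $\delta$ are the only components of $\br^{-1}(D)$ meeting $V_{g;1}$ and that both are generically Gorenstein, so that the discriminant-of-cubic computation is legitimate. The raw discriminant evaluations themselves are routine.
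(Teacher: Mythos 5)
Your proposal is correct and is essentially the paper's own (first) proof: the paper likewise establishes $\supp\br^*D = \supp T \cup \supp\delta$ set-theoretically and then computes the two multiplicities via explicit local versal deformations, getting the pullback of the equation of $D$ equal to $\ideal{a^3}$ for the triple-ramification model $x = y^3 + ay + b$ and to $\ideal{c}$ for the node smoothing $x^2 - y^2 - c$. Your transversal-arc computations ($t^3$ and $t$) are just the restrictions of these versal computations to a transversal slice, so the two arguments coincide in substance; the paper also records an alternative topological proof over $\C$ via monodromy, which you do not need.
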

\begin{proof}
  Consider a point $w$ of $D$ given by $(P; \Sigma; \sigma)$. Then $\Sigma = 2p + p_3 + \dots + p_{b}$ for distinct points $p, p_3, \dots, p_{b} \in P$. Consider a point $v$ of $V_{g;1}$ over $(P; \Sigma; \sigma)$ given by a triple cover $\phi \from C \to P$. Then $\phi$ either has a triple ramification point or a node over $p$. Hence, 
  \[\supp \br^* D = \supp T\ \cup\ \supp \delta.\]
  We must verify the multiplicities. One way to compute the multiplicities is to look at the morphism $\Def_\phi \to \Def_{(P; \Sigma)}$. Let $U \to \P^1$ be a neighborhood of $p$ and $\phi_U \from C|_U \to U$ the restriction of $\phi$; it suffices to look at $\Def_{\phi_U} \to \Def_{U, \Sigma|_U}$. In fact, we may even restrict to an \'etale neighborhood.

  Take the case where $v \in T$. Then, \'etale locally around $p$, the cover $\phi$ has the from 
  \[\phi \equiv \spec k[x,y]/(x-y^3) \to \spec k[x].\] 
  The branch divisor is given by $\Sigma = x^2$. A versal deformation of the subscheme $\Sigma \subset \spec k[x]$ can be given over $R = k[u,v]$ as the family 
  \[\spec R[x]/(x^2+ux+v).\]
  The divisor $D \subset \spec R$ corresponding to non-reduced $\Sigma$ is given by $\ideal{u^2-4v}$. A versal deformation of $\phi$ can be given over $S = k[a,b]$ as the family 
  \[\spec S[x,y]/(x-y^3-ay-b) \to \spec S[x].\]
  The divisor $T \subset \spec S$ corresponding to covers with a triple ramification point is given by $\ideal{a}$. On the other hand, the branch divisor in $\spec S[x]$ is $\ideal{27(x-b)^2 + 4a^3}$. Thus, under the induced map $\spec S \to \spec R$, the pullback of $\ideal{u^2-4v}$ is $\ideal{a^3}$. In other words, $T$ appears with multiplicity three in $\br^*D$.

  The case where $v \in \delta$ is similar. \'Etale locally around $p$, the cover $\phi$ has the form 
  \[\phi \equiv \spec k[x] \sqcup \spec k[x,y]/(x^2-y^2) \to \spec k[x].\]
  A versal deformation of $\phi$ can be given over $S' = k[c]$ as the family
  \[ \spec S'[x] \sqcup \spec S'[x,y]/(x^2-y^2-c) \to \spec S'[x].\]
  The divisor $\delta \subset \spec S'$ corresponding to singular covers is given by $\ideal{c}$. On the other hand, the branch divisor in $\spec S'[x]$ is $\ideal{x^2-c}$. Thus, under the induced map $\spec S \to \spec R$, the pullback of $\ideal{u^2-4v}$ is $\ideal{c}$. In other words, $\delta$ appears with multiplicity one in $\br^*D$.
\end{proof}
\begin{proof}[Another proof of \autoref{thm:pullback_disc} over the complex numbers]
  Assume that $k = \C$. In this case, we can compute the multiplicities of $T$ and $\delta$ in $\br^*D$ by a beautiful topological argument due to Joe Harris. Take a point $w \in D$ and let $v \in V_{g;1}$ be a point over $w$. The idea is to compute the multiplicity of the component of $\br^*D$ containing $v$ by analyzing the monodromy of $V_{g;1} \to W_{0;b,1}$ around $w$ near $v$. Concretely, this can be done by lifting a (real) loop around $w$, beginning at a point near $v$.

  Let $w \equiv 2p + p_3 + \dots + p_b$. Take a nearby point $w' \equiv p_1 + p_2 + p_3 + \dots p_b$, where $p_1 \neq p_2$. Take a loop $\gamma$ in $W_{0;b,1}$ based at $w'$ that exchanges $p_1$ and $p_2$ in the standard way:
    \[
    \begin{tikzpicture}[math]
      \draw (-1,0) node (p1) {\bullet}  node [left] {p_1};
      \draw (1,0) node (p2)  {\bullet}  node [right] {p_2};
      \path [->, bend left=45] 
      (p1) edge (p2)
      (p2) edge (p1);
    \end{tikzpicture}
    \]
    Set $\pi_1 = \pi_1(\P^1\setminus\{p_1, \dots, p_b\})$. Take a point $v'$ over $w'$ near $v$. Then $v'$ is determined by some monodromy data $\pi_1 \to \s_3$. Choose a basepoint $O$ in $\P^1$ and represent the generators of $\pi_1$ by the standard circuits around $p_i$:
    \[
    \begin{tikzpicture}[math, map, scale=.5]
      \draw (-2,0) node (p2)  {\bullet}  node [below] {p_1};
      \draw (2,0) node (p2)  {\bullet}  node [below] {p_2};
      \draw (0,-2) node (O) {\bullet} node [below] {O};
      \draw      [smooth, tension=1]
      [postaction={decorate, decoration={markings,
          mark=at position 0.25 with \arrow[thick]{>};,
          mark=at position 0.5 with \arrow[thick]{>};,
          mark=at position 0.75 with \arrow[thick]{>};
        }}]
      plot coordinates {(O) (-2.5,-.5) (-2.5, 1) (-1,0) (O)};
      
      \draw      [smooth, tension=1]
      [postaction={decorate, decoration={markings,
          mark=at position 0.25 with \arrow[thick]{<};,
          mark=at position 0.5 with \arrow[thick]{<};,
          mark=at position 0.75 with \arrow[thick]{<};
        }}]
      plot coordinates {(O) (2.5,-.5) (2.5, 1) (1,0) (O)};
    \end{tikzpicture}
    \]
    Express the monodromy $\pi_1 \to \s_3$ by the $b$-tuple $(\sigma_1, \dots, \sigma_b)$ where $\sigma_i$ is the image of the standard loop around $p_i$ described above. Observe that the lift of $\gamma$ which starts at $v' \equiv (\sigma_1, \sigma_2, \dots)$ ends at $v'' \equiv (\sigma_2, \sigma_2^{-1}\sigma_1\sigma_2, \dots)$.    The order of the operation $v' \mapsto v''$ is the multiplicity of the component of $\br^*D$ containing $v$. For $v \in \delta$, we can take
    \[ \sigma_1 = \sigma_2 = (12).\]
    Then $v' = v''$, and hence $\delta$ appears with multiplicity one in $\br^*D$. For $v \in T$, we can take
    \[ \sigma_1 = (12), \quad \sigma_2 = (23).\]
    Then $v' \mapsto v''$ has order three; the cycle is given by
    \[ (12), (23) \mapsto (23), (13) \mapsto (13) (12) \mapsto (12), (23).\]
    Hence $T$ appears with multiplicity three in $\br^*D$.
  \end{proof}

  \begin{proposition}\label{thm:divisor_relations}
    Let $l$ be such that $0 < l < g$ and $l \equiv g \pmod 2$. Then the following relations hold in $\Pic_\Q(\o{\orb T}_{g;1}^l)$:
    \begin{align*}
    \Br^2 &= 4c_1^2,\\
    \lambda &= \frac{g+1}{2(g+2)}c_1^2 - c_2,\\
    T &= 3c_2,\\
    \delta &= \frac{4g+6}{(g+2)}c_1^2 - 9 c_2,\\
    \sigma^2 &= -\frac{1}{(g+2)^2}c_1^2.\\
  \end{align*}
  \end{proposition}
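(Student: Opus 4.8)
\emph{Overall setup.} The plan is to reduce every class to intersection theory on the $\P^1$-bundle $\pi\from\orb P\to B$, where $B:=\o{\orb T}_{g;1}^l$, working over the open locus $V_{g;1}$ of \autoref{thm:picard} so that loci of codimension at least two may be discarded. By \autoref{thm:structure} the cover $\phi$ is encoded by the rank-two bundle $E$ with $\phi_*O_{\orb C}\cong O_{\orb P}\oplus\dual E$ and a section $p$ of $\Sym^3E\otimes\det\dual E$. Writing $F=\phi_*O_{\orb C}$ we have $c_1(F)=-c_1(E)$ and $c_2(F)=c_2(E)$, so that $c_1^2=\pi_*c_1(E)^2$ and $c_2=\pi_*c_2(E)$. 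Since the branch divisor is the discriminant of the rank-three algebra $F$, its class is $[\Sigma]=-2c_1(F)=2c_1(E)$; squaring and applying $\pi_*$ gives $\Br^2=4\pi_*c_1(E)^2=4c_1^2$ immediately.

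\emph{The structural identity.} Because $\Pic(\orb P)=\pi^*\Pic(B)\oplus\Z[\sigma]$ and $c_1(E)$ has fiber degree $m+n=g+2$, I can write $c_1(E)=(g+2)[\sigma]+\pi^*\eta$ for a unique $\eta\in\Pic_\Q(B)$. The marked fiber is unramified, so $\orb C|_\sigma\to B$ is finite \'etale of degree three and its discriminant line bundle $(\det F|_\sigma)^{\otimes2}$ is canonically trivial; hence $\sigma^*c_1(E)=0$ in $\Pic_\Q(B)$. Pushing $c_1(E)\cdot[\sigma]$ forward gives $(g+2)\sigma^2+\eta=0$, while expanding $c_1^2=\pi_*c_1(E)^2=(g+2)^2\sigma^2+2(g+2)\eta$ then forces $\eta=\tfrac1{g+2}c_1^2$ and $\sigma^2=-\tfrac1{(g+2)^2}c_1^2$, which is the fifth relation. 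The identical bookkeeping for the relative dualizing sheaf (fiber degree $-2$, and $\sigma^*c_1(\omega_\pi)=-\sigma^2$ by adjunction for the section) yields $c_1(\omega_\pi)=-2[\sigma]+\pi^*\sigma^2$, and in particular $\pi_*c_1(\omega_\pi)^2=0$.

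\emph{The Hodge class.} Since $\phi$ is finite, $R\pi_{\orb C*}O_{\orb C}=R\pi_*F$ and $\lambda=\ch_1(R\pi_*F)$. I compute this by Grothendieck--Riemann--Roch for the smooth morphism $\pi$ of relative dimension one, $\ch(R\pi_*F)=\pi_*(\ch F\cdot\td T_\pi)$ with $\td T_\pi=1-\tfrac12c_1(\omega_\pi)+\tfrac1{12}c_1(\omega_\pi)^2$. Extracting the degree-one part and using $c_1(F)=-c_1(E)$ gives $\lambda=\pi_*\big(\ch_2F+\tfrac12c_1(E)c_1(\omega_\pi)+\tfrac14c_1(\omega_\pi)^2\big)$. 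The last term pushes to zero; evaluating $\pi_*\ch_2F=\tfrac12c_1^2-c_2$ and, via the structural identity, $\pi_*(c_1(E)c_1(\omega_\pi))=-\tfrac1{g+2}c_1^2$, the vertical contributions collapse and I obtain $\lambda=\tfrac{g+1}{2(g+2)}c_1^2-c_2$.

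\emph{The ramification and boundary classes.} For $T$ I read the triple-ramification locus as the locus in $\orb P$ where the cubic $p$ is a perfect cube, i.e.\ the rank-$\le1$ degeneracy locus of the polarization map $\dual E\to\Sym^2E\otimes\det\dual E$ induced by $p$, a map from a rank-two to a rank-three bundle of expected codimension two. Thom--Porteous gives its class as $c_2\big((\Sym^2E\otimes\det\dual E)-\dual E\big)$, which a short Chern-root computation identifies with $3c_2(E)$; pushing forward yields $T=3c_2$. Finally I invoke \autoref{thm:pullback_disc}, $\br^*D=3T+\delta$. Computing $\br^*D$ as the relative discriminant of the degree-$b$ cover $\Sigma\to B$, namely $\br^*D=\pi_*\big([\Sigma]([\Sigma]+c_1(\omega_\pi))\big)$, and substituting $[\Sigma]=2c_1(E)$ together with the structural identity gives $\br^*D=\tfrac{4g+6}{g+2}c_1^2$; hence $\delta=\br^*D-3T=\tfrac{4g+6}{g+2}c_1^2-9c_2$. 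The main obstacle is the computation of $T$: one must verify that the catalecticant degeneracy locus is reduced of the expected dimension, so that Thom--Porteous applies with no excess, and that it meets the triple-ramification divisor with multiplicity one. Granting this, the remaining relations follow mechanically from the structural identity and \autoref{thm:pullback_disc}.
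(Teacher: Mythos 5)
Your proposal is correct, and for four of the five relations it follows essentially the paper's own route: $[\Sigma]=2c_1(E)$ gives $\Br^2=4c_1^2$; Grothendieck--Riemann--Roch on the $\P^1$-bundle gives $\lambda$; adjunction for $\Sigma\subset P$ combined with \autoref{thm:pullback_disc} gives $\delta$; and the vanishing of $\sigma^*c_1(E)$ in $\Pic_\Q$ (the paper gets it from $\sigma\cap\Sigma=\emptyset$ and $[\Sigma]=2c_1(E)$, you from triviality of the discriminant of the \'etale cover over $\sigma$ --- the same fact) gives $\sigma^2$. Your bookkeeping differs cosmetically: the paper intersects with one-parameter test families using a square-zero fiber class $\zeta$, while you work universally with the decomposition $c_1(E)=(g+2)[\sigma]+\pi^*\eta$; both are legitimate since $c_1^2,c_2$ span $\Pic_\Q$. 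The one genuinely different ingredient is $T$. The paper realizes the triple-ramification locus inside $\P E$ as the zero scheme of the section $J_3\alpha$ of the rank-three jet bundle $J_3L$, $L=O_{\P E}(3)\otimes\dual{\det E}$, and computes $\deg T=c_3(J_3L)=3c_2$ from the filtration of $J_3L$ by $L\otimes\Omega_{\P E/P}^{\otimes i}$. You instead stay on $\orb P$ and take the rank-$\leq 1$ degeneracy locus of the polarization map $\dual E\to\Sym^2E\otimes\det\dual E$, which is set-theoretically the locus of perfect cubes, and apply Thom--Porteous; your Chern-root computation checks out (with roots $a,b$ of $E$ the target has roots $a-b,\,0,\,b-a$, and the degree-two part of $c(\Sym^2E\otimes\det\dual E)/c(\dual E)$ is $3ab=3c_2(E)$). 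The two methods cost about the same; yours avoids passing to $\P E$ at the price of the catalecticant characterization and Thom--Porteous. Finally, the caveat you flag --- generic reducedness of the degeneracy scheme so that the formula computes the reduced divisor $T$ with multiplicity one --- is not a defect peculiar to your argument: the paper's assertion $\deg T=c_3(J_3L)$ rests on exactly the same implicit transversality, and both versions are discharged by the local versal computation the paper already performs in the proof of \autoref{thm:pullback_disc}, where near a generic point of $T$ the family is $\spec S[x,y]/(x-y^3-ay-b)\to\spec S[x]$ with $S=k[a,b]$ and the triple-ramification locus $\{a=0\}$ is visibly reduced; so this is a one-line verification rather than a gap.
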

  \begin{proof}
    The relations follow from straightforward Chern class calculations. The first few are scattered throughout  \citep{Stankova-Frenkel00:_Modul_Of_Trigon_Curves}. We present the details for completeness.

    Take a one-parameter family of triple covers 
  \[C \stackrel\phi\to P \stackrel\pi\to B,\]
  where $B$ is a smooth projective curve, $P \to B$ a $\P^1$ bundle and $C \to B$ a family of connected curves of genus $g$. Let $\Sigma = \br\phi \subset P$. Set $E = \dual{(\phi_* O_C/O_P)}$. Then $c_1(\phi_*O_C) = -c_1(E)$ and $c_2(\phi_*O_C) = c_2(E)$. Choose a (possibly rational) class $\zeta$ on $P$ which has degree one on the fibers of $\pi$ and satisfies $\zeta^2 = 0$. In the calculations that follow, we omit writing pullbacks or push-forwards where they are clear by context. We use $[\ ]$ to denote the class of a divisor in the Chow ring. Chern classes are understood to be applied to the fundamental class.

  Since $\Sigma = \br\phi$ is the zero locus a section of ${(\det\dual{\phi_*O_C})}^{\otimes 2}$, we have 
  \[[\Sigma] = 2c_1(E).\]
  This gives the first relation.

  Using that $\Sigma$ has relative degree $b = 2(g+2)$ over $B$ and $\zeta^2 = 0$, we get
   \begin{equation}\label{eqn:zs}
     [\Sigma] \cdot \zeta = \frac{c_1^2(E)}{g+2} \text{ and } c_1(E) \cdot \zeta = \frac{c_1^2(E)}{2(g+2)}.
   \end{equation}

 From Grothendieck--Riemann--Roch applied to $\pi \from P \to B$, we get
 \begin{align*}
   \ch(R\pi_*O_C) &= g + \lambda \\
   &= \pi_*\left(\ch(\phi_*O_C)\cdot\td(P/B)\right)\\
   &= \pi_*\left(\left(1-c_1(E)+\frac{c_1^2(E)-2c_2(E)}{2}\right) \cdot (1-\zeta)\right).\\
 \end{align*}
Combining with \eqref{eqn:zs}, we get the second relation
 \begin{equation*}\label{eqn:lambda}
   \begin{split}
     \lambda &= \frac{c_1^2(E)}{2} - c_2(E) + \Sigma \cdot \zeta\\
     &= \frac{g+1}{2(g+2)}c_1^2 - c_2.
 \end{split}
\end{equation*}
 
We have an embedding $C \into \P E$ over $P$ that exhibits $C$ as the zero locus of a section $\alpha$ of the line bundle $L =  O_{\P E}(3)\otimes\dual{\det E}$.  Set $\xi = c_1(O_{\P E}(1))$. Then $\xi$ satisfies the equation 
    \[\xi^2 - c_1(E)\xi + c_2(E) = 0.\]
    Let $J_3L$ be the bundle of order three jets of $L$ along the fibers of $\pi_E \from \P E \to P$. More precisely,
    \[ J_3 L = {\pi_1}_* \left({\pi_2}^*L \otimes O_{3\Delta}\right),\]
    where $\pi_i$ are the two projections $\P E \times_P \P E \to P$ and $\Delta \subset \P E \times_P \P E$ the diagonal divisor. Then the locus $T \subset B$ of points $b \in B$ over which $C|_b \to P|_b$ has a triple ramification point is simply the image in $B$ of the zero locus of the section $J_3\alpha$ of $J_3L$ induced from the section $\alpha$ of $L$. In particular,
    \[ \deg T = c_3(J_3L).\]
    Since $J_3 L = L + L \otimes \Omega_{\P E/ P} + L \otimes \Omega^{\otimes2}_{\P E/P}$ in the Grothendieck group of sheaves on $\P E$ and $c_1(\Omega_{\P E/P}) = -2\xi+c_1(E)$, we get the third relation
    \begin{equation*}
      \begin{split}
        \deg T &= {\pi_E}_*(3\xi-c_1(E))\cdot\xi\cdot (-\xi + c_1(E))\\
        &= 3c_2(E) = 3c_2.
      \end{split}
    \end{equation*}

    Our next goal is to compute the divisor $\br^*D$. This is simply the branch divisor of $\Sigma \to B$. The ramification divisor of $\Sigma \to B$ is given by $\omega_{\Sigma/B}$. By adjunction, we have
    \begin{align*}
      \omega_{\Sigma/B} &= (\omega_{P/B} + c_1(\Sigma))|_\Sigma \\
      &= (-2\zeta + 2c_1(E)) \cdot 2c_1(E)\\
      &= \frac{4g+6}{g+2}c_1^2(E),
    \end{align*}
    and hence $\br^*D = \frac{4g+6}{g+2}c_1^2$.    Using \autoref{thm:pullback_disc}, we get the third relation
    \begin{align*}
      \delta &= \br^*D - 3T \\
      &= \frac{4g+6}{g+2}c_1^2 - 9c_2.
    \end{align*}
    
    For the last relation, assume furthermore that we have a section $\sigma \from B \to P$ disjoint from $\Sigma$. Abusing notation, also denote by $\sigma$ the image $\sigma(B) \subset P$. Now, $[\sigma]$ and $\frac{c_1(E)}{g+2}$ are two divisor classes on $P$ that have degree one on the fibers and their product is zero. The last relation follows.
  \end{proof}

  \begin{proposition}\label{thm:canonical}
    Let $0 < l < g$ and $l \equiv g \pmod 2$. The canonical divisor of 
    $\o{\orb T}_{g;1}^l$ (or $\o{\sp T}_{g;1}^l$) is given by
    \begin{align*}
      K &= -\frac {2(g+3)(2g+3)}{(g+2)^2} c_1^2 + 6c_2 \\
      &= \frac{2}{(g+2)(g-3)}\left(3(2g+3)(g-1) \lambda - (g^2-3)\delta\right).
    \end{align*}
  \end{proposition}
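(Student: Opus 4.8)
The plan is to prove the first equality by a direct computation of the canonical class of the smooth stack $B := \o{\orb T}_{g;1}^l$ as $K = -c_1(T_B)$, and then to obtain the second equality by pure linear algebra, inverting the relations of \autoref{thm:divisor_relations}. As in the proofs of \autoref{thm:picard} and \autoref{thm:divisor_relations}, I would work over the codimension-one open locus (the preimage $V_{g;1}$ of $W_{0;b,1}$), where by \autoref{thm:dim_higher_collisions} every cover is Gorenstein away from codimension two, so that the structure theorem \autoref{thm:structure} and the embedding $\orb C\into\P E$ apply fiberwise. This is harmless, since only divisor classes are at stake.

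For the first equality I would identify the tangent bundle of $B$ through the deformation theory of the universal cover. Writing $E=\dual{(\phi_*O_{\orb C}/O_{\orb P})}$ on the universal conic bundle $\pi\from\orb P\to B$, \autoref{thm:structure} presents a triple cover over a fixed base as the data of the rank-two bundle $E$ together with a section $p$ of $\Sym^3 E\otimes\det\dual E$. Consequently the Kodaira--Spencer complex of $B$ assembles, as a class in $K$-theory, out of the following pieces on each fiber $P\cong\P^1$: the deformations $H^0(\Sym^3 E\otimes\det\dual E)$ of the section $p$; the deformations $H^1(P,\mathcal{E}nd\, E)$ of the bundle less its infinitesimal automorphisms $H^0(P,\mathcal{E}nd\, E)$ (together contributing $-\chi(P,\mathcal{E}nd\, E)$, with the central scalar automorphisms, which act trivially on the cover, accounted for separately); the tangent $T_\sigma P$ to the marked point; and minus the infinitesimal reparametrizations $H^0(P,T_P)=\mathfrak{pgl}_2$ of $P$. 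Globalizing over $B$, the class $-K=c_1(T_B)$ becomes a sum of pushforwards $\pi_*$ of Chern characters, each of which I would evaluate by Grothendieck--Riemann--Roch exactly as in \autoref{thm:divisor_relations}. The terms from $\Sym^3 E\otimes\det\dual E$ and from $-\chi(P,\mathcal{E}nd\, E)$ yield combinations of $c_1^2$ and $c_2$, while the marked-point term $\sigma^*T_{\orb P/B}$ contributes $\sigma^2$, which I would eliminate using the relation $\sigma^2=-\frac{1}{(g+2)^2}c_1^2$ already recorded in \autoref{thm:divisor_relations}. Collecting all contributions and simplifying yields $K$ as a combination of $c_1^2$ and $c_2$.

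The second equality is then routine. Since $0<l<g$ we have $g\geq 4$, so the system expressing $(\lambda,\delta)$ in terms of $(c_1^2,c_2)$ in \autoref{thm:divisor_relations} is invertible; solving gives
\[
c_1^2=\frac{2(g+2)}{g-3}(9\lambda-\delta),\qquad c_2=\frac{1}{g-3}\bigl((8g+12)\lambda-(g+1)\delta\bigr).
\]
Substituting these into the expression for $K$ in the $\ideal{c_1^2,c_2}$ basis and simplifying gives the displayed $\langle\lambda,\delta\rangle$-form. Equivalently, one checks directly that substituting the \autoref{thm:divisor_relations} expressions for $\lambda$ and $\delta$ into the $\langle\lambda,\delta\rangle$-form reproduces the $\langle c_1^2,c_2\rangle$-form; the two governing cancellations are $-3(2g+3)(g-1)+9(g^2-3)=3(g-3)(g+2)$, which collapses the coefficient of $c_2$ to $6$, and $3(g^2-1)-4(g^2-3)=-(g-3)(g+3)$, which produces the coefficient of $c_1^2$.

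The main obstacle is the first step: correctly assembling the Kodaira--Spencer complex of $B$ and bookkeeping its virtual (automorphism and reparametrization) terms. The delicate points are the treatment of $\Aut E$ together with the ubiquitous scalar automorphisms, the connecting maps between deforming $E$ and deforming the section $p$, and the correct weighting of the marked-point contribution relative to the bundle terms; it is precisely the latter that forces the $\sigma^2$ relation of \autoref{thm:divisor_relations} into the computation. Once the complex is set up correctly, the Grothendieck--Riemann--Roch evaluations are of the same type already carried out for $\lambda$, $T$ and $\delta$, and the passage to the $\lambda,\delta$ basis is elementary.
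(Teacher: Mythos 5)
Your proposal is correct in substance, but it takes a genuinely different route from the paper. The paper never opens up the deformation theory of the pair $(E,p)$: it works with the finite branch morphism $\br \from V_{g;1} \to W_{0;b,1}$, writes $K_V = \br^*K_W + 2T$ by Riemann--Hurwitz (the map is \'etale off $D$ and ramified to order three along $T$, with $\br^*D = 3T+\delta$ from \autoref{thm:pullback_disc}), computes $K_W = -\frac{b+2}{2b}D$ by a test-curve calculation on the configuration space $W_{0;b,1}$, and then substitutes the formulas for $T$ and $\delta$ from \autoref{thm:divisor_relations}. Your plan replaces all of this by a direct computation of $c_1(T_B)$ for $B = \o{\orb T}_{g;1}^l$ restricted to $V_{g;1}$, and it does work. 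The clean justification of your tangent-complex bookkeeping is that, over a fixed $(P,\sigma)$, the stack of triple covers of $P$ is the mapping stack into $\st B \cong [\Sym^3(\mathrm{std})\otimes\det\dual{(\mathrm{std})}/\Gl_2]$, so its tangent complex is $R\Gamma$ of the two-term complex $[\sh End(E) \to \Sym^3E\otimes\det\dual E]$; globalizing and adding the pointed-curve directions gives, in the Grothendieck group, $[T_B] = [R\pi_*(\Sym^3 E\otimes\det\dual E)] - [R\pi_*\sh End(E)] - [R\pi_*(T_{\orb P/B}(-\sigma))]$. Applying Grothendieck--Riemann--Roch with $\omega_{\orb P/B}^2=0$, $\pi_*(c_1(E)\,\omega) = -c_1^2/(g+2)$ and $\sigma^2 = -c_1^2/(g+2)^2$, the three terms contribute $3c_1^2-10c_2+\frac{c_1^2}{g+2}$, $c_1^2-4c_2$, and $-\sigma^2$ respectively, so $K = -c_1(T_B) = 6c_2 - \frac{(2g+3)(g+3)}{(g+2)^2}c_1^2$ --- exactly what the paper's own proof yields. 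Your route buys independence from \autoref{thm:pullback_disc} and from the geometry of the branch map, and explains the answer intrinsically; the paper's route buys brevity, recycling the multiplicity computation already done and needing only the elementary geometry of $W_{0;b,1}$.

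Two caveats. First, your parenthetical that the central scalars ``act trivially on the cover'' is wrong: $\lambda \in \G_m \subset \Aut(E)$ acts on $\Sym^3E\otimes\det\dual E$ with weight $3-2=1$, so it rescales the section $p$ (this is precisely why the stabilizer of $(E,p)$ is finite and the stack is Deligne--Mumford). The correct bookkeeping subtracts all of $\chi(\sh End(E))$ with no separate scalar term; your slip happens to be harmless for $c_1$, since the scalars globalize to the trivial subbundle $O_B \subset \pi_*\sh End(E)$, but handled differently it could have skewed the answer. Second, when you carry out the computation you will find the coefficient $-(g+3)(2g+3)/(g+2)^2$ on $c_1^2$, not the printed $-2(g+3)(2g+3)/(g+2)^2$: the first displayed line of the Proposition carries a spurious factor of $2$ and is inconsistent with the second line. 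Indeed your own two cancellation identities (which are correct, as are your inversion formulas for $c_1^2$ and $c_2$ in terms of $\lambda$ and $\delta$) produce $-\frac{(2g+3)(g+3)}{(g+2)^2}c_1^2 + 6c_2$ when substituted into the $\lambda$--$\delta$ form; so your consistency check does not reproduce the printed $\ideal{c_1^2,c_2}$ expression but rather corrects it. The $\lambda$--$\delta$ form, which is the one quoted in \autoref{intro:flips} and used downstream, is the correct one.
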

  \begin{proof}
    Retain the notation introduced just before \autoref{thm:pullback_disc}.
    We restrict to the open subset $V_{g;1}$. Let $K_V$ (resp. $K_W$) be the canonical divisor of $V_{g;1}$ (resp. $W_{0;b,1}$). Since the finite morphism $V_{g;1} \to W_{0;b,1}$ is \'etale except over $D \subset W_{0;b,1}$ and $\br^* D = 3T + \delta$, we have
    \begin{equation}\label{eqn:branch_rh}
      K_V = \br^* K_W + 2T.
    \end{equation}
    We first compute $K_W$ in terms of $D$. Let $w$ be a point of $W_{0;b,1}$ corresponding to the data $(P; \Sigma; \sigma)$. We have the following canonical identification of the tangent space to $W_{0;b,1}$ at $w$:
    \[ T_w W_{0;b,1} = \Hom(I_\Sigma/I_\Sigma^2, O_\Sigma) \oplus \Hom(I_\sigma/I_\sigma^2, O_\sigma).\]
    With this, the relation between $D$ and $K_W$ follows from an easy test-curve calculation, as follows. Take a one parameter family $(\pi \from P \to B; \sigma; \Sigma)$ giving a map $f \from B \to W_{0;b,1}$, where $B$ is a smooth projective curve. Set $\zeta = \frac{[\sigma]}2 + \frac{[\Sigma]}{2b}$. Then $\zeta$ has degree one on the fibers of $\pi$ and satisfies $\zeta^2 = 0$. Hence $\omega_{P/B} = -2\zeta$. By adjunction
    \begin{align*}
      \deg \omega_{\Sigma/B} &= (- 2\zeta + [\Sigma]) \cdot [\Sigma]
      = \frac{b-1}{b}\cdot \Sigma^2.
    \end{align*}
    Therefore, 
    \[ \deg D = \deg \omega_{\Sigma/B} = \frac{b-1}{b}\cdot \Sigma^2.\]
    On the other hand, we have
    \[ f^* T_{W_{0;b,1}} = \pi_*\sh Hom_\Sigma(I_\Sigma/I_\Sigma^2, O_\Sigma)  \oplus \pi_* \sh Hom_\sigma(I_\sigma/I_\sigma^2, O_\sigma).\]
    Observe that
    \begin{align*}
      \deg \pi_* \sh Hom_\Sigma(I_\Sigma/I_\Sigma^2, O_\Sigma) &= \deg \left(\sh Hom_\Sigma(I_\Sigma/I_\Sigma^2, O_\Sigma)\right) - \frac{\deg \omega_{\Sigma/B}}{2} \\
      &= \Sigma^2 - \frac{\deg D}{2} = \frac{b+1}{2(b-1)}\deg D.
    \end{align*}
    Using $\sigma^2 = -\Sigma^2/b^2 = -\deg D/(b(b-1))$, we get
    \begin{align*}
      -\deg K_W &= \deg f^* T_{W_{0;b,1}} \\
      &= \deg \pi_* \sh Hom_\Sigma(I_\Sigma/I_\Sigma^2, O_\Sigma) + \deg \pi_*\sh Hom_\sigma(I_\sigma/I_\sigma^2, O_\sigma) \\
      &= \frac{b+1}{2(b-1)}\deg D + \sigma^2
      = \frac{b+2}{2b} \cdot \deg D.
    \end{align*}
    Using \eqref{eqn:branch_rh} and \autoref{thm:pullback_disc}, we get
    \[ K_V = -\frac{b+2}{2b} \cdot (3T+\delta) + 2T.\]
    Substituting $T$ and $\delta$ from \autoref{thm:divisor_relations} and using $b = 2g+4$ yields the result.
  \end{proof}
  
  \section{The ample cones}\label{sec:flip_ample}
  In this section, we identify the cone of ample divisors on $\o{\sp T}_{g;1}^l$. We retain the notation, especially the list of divisors, introduced at the beginning of \autoref{sec:picard}. 

  Define the divisor $D_l$ by the formula
  \[ D_l = (4c_2 - c_1^2)+ \left(\frac{2l}{b}\right)^2c_1^2,\]
  where $b = 2g+4$, as usual. Recall that a $\Q$-Cartier divisor on a space is \emph{nef} if it intersects non-negatively with all complete curves in that space. The bulk of the section is devoted to proving that certain divisors on $\o{\sp T}_{g;1}^l$ are nef.
  \begin{restatable}{theorem}{thmflipnef}
    \label{thm:flip_nef}
    Let $0 < l < g$ and $l \equiv g \pmod 2$. A divisor is nef on $\o{\sp T}_{g;1}^l$ if and only if it is a non-negative linear combination of $D_l$ and $D_{l+2}$.
  \end{restatable}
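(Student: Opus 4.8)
The plan is to exploit the fact that $\Pic_\Q(\o{\sp T}_{g;1}^l)$ has rank two (\autoref{thm:picard}), so that the nef cone is a two-dimensional cone determined by its two extremal rays. Note first that $D_l$ and $D_{l+2}$ are linearly independent: using $b = 2(g+2)$, one has $D_l = (4c_2-c_1^2)+\tfrac{l^2}{(g+2)^2}c_1^2$, so $D_{l+2}-D_l = \tfrac{(l+2)^2-l^2}{(g+2)^2}c_1^2 \neq 0$. The statement then splits into two halves of very unequal difficulty, and I would treat them separately.

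The easy half --- that every nef class lies in $\langle D_l, D_{l+2}\rangle$ --- I would obtain by producing two \emph{complete} test curves $B_l, B_{l+2} \subset \o{\sp T}_{g;1}^l$ together with the intersection numbers
\[ D_l \cdot B_l = 0, \quad D_{l+2}\cdot B_l > 0, \quad D_l \cdot B_{l+2} > 0, \quad D_{l+2}\cdot B_{l+2} = 0.\]
Since the cone of effective curves $\o{NE}$ contains $[B_l]$ and $[B_{l+2}]$, the nef cone, being its dual, is contained in the dual of $\langle [B_l],[B_{l+2}]\rangle$, which by the displayed pairings is exactly $\langle D_l, D_{l+2}\rangle$. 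For $B_l$ I would take a complete curve inside the $\beta_l$-exceptional Maroni-$l$ stratum (proper over its image under the small contraction defining $\beta_l$, hence containing complete curves by \autoref{thm:dim_maroni}); for $B_{l+2}$ a complete curve inside the locus of covers with concentrated branching and $\mu$-invariant $l+2$, which is the locus produced by the flip $\beta_{l+2}\from \o{\sp T}_{g;1}^{l+2}\dashrightarrow \o{\sp T}_{g;1}^l$. Realizing each $B$ as an explicit family $C \to P \to B$ and applying the relations of \autoref{thm:divisor_relations} then yields the intersection numbers.

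The hard half --- the bulk of the work --- is to prove that $D_l$ and $D_{l+2}$ are themselves nef; granting this, every non-negative combination is nef and the inclusion $\langle D_l, D_{l+2}\rangle \subseteq \mathrm{Nef}$ follows. I would prove it directly. For an arbitrary complete curve $B \subset \o{\sp T}_{g;1}^l$, corresponding to a family of $l$-balanced covers $\phi \from C \to P$ with $\pi \from P \to B$ a ruled surface, set $E = \dual{(\phi_*O_C/O_P)}$, a rank-two bundle on the surface $P$. The identities $c_1^2\cdot B = \int_P c_1(E)^2$ and $c_2\cdot B = \int_P c_2(E)$ reduce the goal to the surface inequality
\[ \left(4 c_2(E) - c_1(E)^2\right) + \frac{l^2}{(g+2)^2}\, c_1(E)^2 \;\geq\; 0.\]
The term $4c_2(E)-c_1(E)^2$ is the discriminant of $E$, non-negative for semistable $E$; but $E$ is in general far from semistable, so the inequality genuinely consumes the $l$-balanced hypothesis.

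To extract that hypothesis I would decompose $E$ along the fibers of $\pi$: letting $N \subset E$ be the sub-line-bundle restricting to the more positive summand on a general fiber, write $0 \to N \to E \to M \to 0$ up to a torsion defect $\tau \geq 0$ supported on special fibers. Then $4c_2(E)-c_1(E)^2 = -(N-M)^2 + 4\tau$ while $c_1(E)^2 = (N+M)^2$, so the inequality becomes a statement about $R = N-M$, whose fiber degree is the fiberwise Maroni invariant and is therefore $\leq l$ on every fiber, by the first clause of the $l$-balanced condition. The second clause --- that at fibers of concentrated branching one has $\mu > l$ (recall $M \leq \mu$ there by \autoref{thm:Maroni_less_than_mu}) --- bounds the local contribution of $\tau$ and of the jumps of $R$. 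Reconciling the global intersection numbers $R^2$, $(N+M)^2$ and $\tau$ with these fiberwise bounds so as to produce exactly the coefficient $l^2/(g+2)^2$, while correctly accounting for the fibers where $C$ fails to be Gorenstein (so the embedding $C\into\P E$ and the section picture of \autoref{thm:structure} degenerate and the local crimp analysis of \autoref{sec:triple_singularities} must be invoked), is the main obstacle. The identical argument with $l+2$ in place of $l$ gives nef-ness of $D_{l+2}$ and completes the proof.
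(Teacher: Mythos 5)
Your overall architecture (test curves to bound the nef cone from outside, plus nefness of $D_l$ and $D_{l+2}$ to bound it from inside) matches the paper, and the easy half is essentially the paper's argument — except that by \autoref{rem:generic_zero} and \autoref{rem:crimp_zero} a curve in the Maroni-$l$ stratum has $D_l \cdot B = 0$ only if the cross-ratio is \emph{constant} along it (resp.\ constant principal part for the crimp stratum), so you must construct these families explicitly, as the paper does, rather than take an arbitrary complete curve in the stratum. The genuine gap is in the hard half. Your plan rests on the first clause of $l$-balancedness (fiberwise Maroni $\leq l$, giving $R\cdot f \leq l$ for $R = N-M$), with the second clause relegated to ``bounding the local contribution of $\tau$.'' This misidentifies the mechanism. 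The paper's proof splits on the sign of $\Br^2 = 4c_1^2$: when $\Br^2 \geq 0$ the Maroni bound does the work (via a balancing trick and a Bogomolov/Moriwaki-type inequality, \autoref{thm:bogomolov}, or via a cross-ratio degeneration argument); but when $\Br^2 < 0$ the branch divisor is forced onto the negative section $\zeta$, every fiber has concentrated branching, and there the inequality is driven by the condition $\mu \geq l+2$ — i.e.\ by the splitting type being sufficiently \emph{unbalanced} — through the explicit crimp structure $I_\zeta^n E \subset F \subset I_\zeta^m E$ (\autoref{thm:crimp_positivity}). In that regime a bound ``splitting difference $\leq$ something'' can never suffice: since $c_1^2 < 0$, the term $\frac{l^2}{(g+2)^2}c_1^2$ is negative, and one wins only because the pullback of $D_{n-m}$ with $n-m = \mu \geq l+2$ is $\geq 0$ and $\left((l+2)^2 - (n-m)^2\right)c_1^2 \geq 0$; making the splitting \emph{smaller} makes the inequality \emph{worse}. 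Relatedly, your closing claim that ``the identical argument with $l+2$ in place of $l$'' handles $D_{l+2}$ cannot be right: the two divisors are not interchangeable, as on crimp families one has $f^*D_l \geq f^*D_{l+2} \geq 0$ while on families with $\Br^2 \geq 0$ the order reverses (\autoref{thm:generic_positivity_l}, \autoref{thm:crimp_positivity_l}), so the binding constraint — and hence the proof — is different in the two cases.

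Concretely, the unresolved step you flag (``reconciling $R^2$, $(N+M)^2$ and $\tau$ \dots is the main obstacle'') is exactly where the dichotomy is needed: the fiber degree of $R$ controls $R\cdot f$ but says nothing about the horizontal part of $R$, hence nothing about $R^2$ relative to $c_1^2$, and no purely numerical manipulation of your exact sequence closes this. The paper closes it by two separate geometric inputs: for $\Br^2 \geq 0$, an elementary modification along sections over $\sigma$ (\autoref{balancing_lemma}) producing a fiberwise balanced bundle to which \autoref{thm:bogomolov} applies; for $\Br^2 < 0$, a degree comparison $\deg \pi_*(I_\zeta^m/I_\zeta^n) \geq \deg \pi_*\overline F$ extracted from the local algebra of the crimps, where $(m,n)$ is the generic splitting type of the singularity and $n - m \geq l+2$. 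Until you supply substitutes for both inputs, the hard half remains a plan rather than a proof.
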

  The proof is the content of \autoref{sec:positivity_generic} and \autoref{sec:positivity_crimp}. Using the Nakai--Moishezon criterion for ampleness, we then deduce projectivity.
    \begin{restatable}{theorem}{thmflipprojectivity}
      \label{thm:flip_projectivity}
      Let $0 < l < g$ and $l \equiv g \pmod 2$. A divisor is ample on $\o{\sp T}_{g;1}^l$ if and only if it is a positive linear combination of $D_l$ and $D_{l+2}$. In particular, the algebraic space $\o{\sp T}_{g;1}^l$ is a projective scheme.
    \end{restatable}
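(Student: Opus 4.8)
The plan is to deduce \autoref{thm:flip_projectivity} from \autoref{thm:flip_nef} together with the Nakai--Moishezon criterion. Recall that the coarse space $\o{\sp T}_{g;1}^l$ is a normal, $\Q$-factorial, proper algebraic space of dimension $2g+2$ with quotient singularities, and that by \autoref{thm:picard} its rational Picard group is two-dimensional. By \autoref{thm:flip_nef} the nef cone in this plane is the closed cone spanned by $D_l$ and $D_{l+2}$, so the positive combinations $aD_l + bD_{l+2}$ with $a,b>0$ are exactly the interior points of the nef cone. I would organize the proof around a single hard step: that some interior class is ample. The Nakai--Moishezon criterion, valid for proper algebraic spaces, then has two payoffs at once---it certifies ampleness, and ampleness of a line bundle on a proper algebraic space forces that space to be a projective scheme, giving the final sentence of the theorem. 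Once projectivity is established, Kleiman's criterion identifies the ample cone of the now projective $\o{\sp T}_{g;1}^l$ with the interior of its nef cone, which by \autoref{thm:flip_nef} is precisely the set of positive combinations of $D_l$ and $D_{l+2}$. This delivers both directions of the ``if and only if'' simultaneously, so there is no need to treat them separately.

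Thus everything rests on showing that a fixed $D = aD_l + bD_{l+2}$ with $a,b>0$ is ample, i.e.\ that $D^{d}\cdot Z > 0$ for every integral closed subspace $Z$ of dimension $d \geq 1$. Expanding,
\[ D^{d}\cdot Z = \sum_{i=0}^{d}\binom{d}{i}a^{i}b^{d-i}\bigl(D_l^{\,i}D_{l+2}^{\,d-i}\cdot Z\bigr),\]
and nefness of $D_l$ and $D_{l+2}$ makes every term on the right non-negative, so $D^{d}\cdot Z \geq 0$ automatically and only the strict inequality is in question. For $d=1$ this is immediate: in the rank-two group $\Pic_\Q$ the cone of curves is dual to the nef cone, so by \autoref{thm:flip_nef} it is spanned by the two extremal classes contracted by $D_l$ and by $D_{l+2}$, and an interior class $D$ pairs strictly positively with both. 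The genuine difficulty is for $d \geq 2$: nefness against curves does not force the restriction $D|_Z$ to be big, so I must rule out the simultaneous vanishing of all the mixed products $D_l^{\,i}D_{l+2}^{\,d-i}\cdot Z$.

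This higher-dimensional strict positivity is the crux, and I would attack it through the role of the two boundary rays as the walls of the Mori chamber decomposition. The ray spanned by $D_l$ is contracted along the flip $\beta_l$ of \autoref{thm:flips}, whose flipping locus is the Maroni stratum $\st T_{g;1}(l)$; the ray spanned by $D_{l+2}$ is contracted along $\beta_{l+2}$, whose flipping locus on this side is the concentrated-branching stratum $\st T_{g;1}^\bullet(l+2)$. By \autoref{thm:dim_maroni} and \autoref{thm:dim_mu} both strata have codimension at least two. The heuristic is that vanishing of $D_l^{\,d}\cdot Z$ forces $Z$ into the first stratum and vanishing of $D_{l+2}^{\,d}\cdot Z$ forces $Z$ into the second, so a $Z$ killing all mixed products would have to lie in the intersection of the two strata. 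To make this rigorous I expect to need two ingredients: first, semiampleness of each boundary class, so that ``$D_l^{\,d}\cdot Z=0$'' genuinely localizes $Z$ inside the exceptional locus of the corresponding contraction; and second, control of the overlap of the two strata. The delicate point---and the main obstacle---is precisely this overlap: \autoref{thm:Maroni_less_than_mu} shows that a concentrated cover can have Maroni invariant as large as its $\mu$-invariant, so the two strata are \emph{not} disjoint, and excluding a positive-dimensional $Z$ that lies in both and is degenerate for both contractions will require the finer structure of the crimp loci from \autoref{sec:triple_singularities} together with the dimension bound of \autoref{thm:dim_higher_collisions}. Once this positivity is secured, Nakai--Moishezon produces an ample divisor, projectivity of $\o{\sp T}_{g;1}^l$ follows at once, and the description of the ample cone falls out of Kleiman's criterion as explained above.
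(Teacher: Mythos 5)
Your scaffolding (Nakai--Moishezon for proper algebraic spaces, then projectivity, then Kleiman to pin down the ample cone) matches the paper's, but the one step you yourself flag as the crux --- strict positivity $D^d\cdot Z>0$ for $d\geq 2$ --- is exactly where your proposal has a genuine gap, and the mechanism you sketch for closing it would not work as stated. Your plan is to show that vanishing of $D_l^{\,d}\cdot Z$ ``localizes'' $Z$ in the Maroni stratum and vanishing of $D_{l+2}^{\,d}\cdot Z$ localizes $Z$ in the concentrated-branching stratum. Nefness alone gives no such localization: a nef class can have zero top self-intersection on $Z$ without $Z$ meeting any prescribed locus, and the semiampleness of $D_l$ and $D_{l+2}$ that you would need (together with an identification of the fibers of the associated contractions) is proved nowhere --- not in your outline and not in the paper. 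Worse, even granting semiampleness, the loci where $D_l$ (resp.\ $D_{l+2}$) degenerates are positive-dimensional (\autoref{rem:generic_zero} and \autoref{rem:crimp_zero} exhibit whole families with constant Maroni invariant and cross-ratio, resp.\ constant $\mu$ invariant and principal part, on which the corresponding class restricts to zero), so a subvariety $Z$ can sit entirely inside one of them; ruling out positive-dimensional $Z$ degenerate for \emph{both} classes is then a new problem of the same kind one dimension down, and your proposal offers no inductive mechanism to run that descent.

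The paper supplies precisely that mechanism, in a different form: the perturbation lemma (\autoref{thm:perturbation}, after Fedorchuk). Instead of trying to localize $Z$ via vanishing loci of nef classes, one shows that for \emph{every} irreducible closed subspace $Y$ there is a finite surjective $Z\to Y$ and a divisor class on the ambient space whose pullback to $Z$ is nonzero and effective; then for $H$ interior to the nef cone, $H-\epsilon D$ is still nef, and $H^n\cdot[Z]\geq \epsilon\sum_i a_i H^{n-1}\cdot[Z_i]$ runs an induction on dimension through the components $Z_i$ of that effective divisor. The real geometric content is the verification of this hypothesis (proof of \autoref{thm:flip_projectivity}): if the branch sections $\sigma_i$ are not all coincident one gets either relative ampleness of $-\lambda$ on fibers of $\br$ or the effective class $\pi_*(\sigma_1\cdot\sigma_2)\sim -\sigma^2$; if they are all coincident (a family of covers with concentrated branching) one gets either an effective representative of $D_{n-m}$ from degeneration of the crimp, or --- in the case of constant $\mu$ invariant and constant principal part, the case your outline cannot touch --- a quasi-finite map from $Z$ to a weighted projective stack, whence $\pi_*(\sigma^2)$ pulls back to an ample, in particular effective, class. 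Your proposal would need to either prove semiampleness of the two boundary rays and analyze the resulting contractions, or be restructured around an induction of this effective-divisor type; as written it reduces the theorem to an unproven and harder statement.
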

    The proof is the content of \autoref{sec:flip_projectivity}. In terms of the more standard generators $\lambda$ and $\delta$, the divisor $D_l$ admits the following expression:
    \[ \left(\frac{g-3}{2}\right)D_l = \left((7g+6)\lambda -g \delta\right) + \frac{l^2}{g+2}\cdot \left(9\lambda - \delta\right).\] 
    
  \subsection{Positivity for families with $\Br^2 \geq 0$}\label{sec:positivity_generic}
  We first prove that $D_l$ and $D_{l+2}$ are non-negative on one-parameter families in $\o{\sp T}_{g;1}^l$ with non-negative $\Br^2$. 
  \begin{proposition}\label{thm:generic_positivity}
    Let $B$ be a smooth projective curve and $\pi \from P \to B$ a $\P^1$ bundle with a section $\sigma \from B \to P$. Let $\phi \from C \to P$ be a triple cover, \'etale over $\sigma$. Assume that on the generic point of $B$, we have
    \[ \phi_*O_C/O_P \cong O_P(-m) \oplus O_P(-n),\]
    where $m \leq n$ are positive integers. Assume, furthermore, that $\br(\phi)^2 \geq 0$. Set $c_i = c_i(\phi_*O_C)$. Then
    \[\left(4c_2-c_1^2 + \left(\frac{n-m}{n+m}\right)^2c_1^2\right)[P] \geq 0.\]
  \end{proposition}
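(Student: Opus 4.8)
The plan is to read the left-hand side as a Bogomolov-type discriminant of the rank-two bundle $E = \dual{(\phi_*O_C/O_P)}$ and to control its failure of positivity through the relative Harder--Narasimhan filtration of $E$ along $\pi$. Since we work in characteristic zero, the trace splits off $O_P$ and gives $\phi_*O_C = O_P\oplus\dual E$, so $c_1 = -c_1(E)$ and $c_2 = c_2(E)$; thus $4c_2 - c_1^2 = 4c_2(E) - c_1(E)^2$ is exactly the Bogomolov discriminant of $E$. Moreover, as in the proof of \autoref{thm:divisor_relations}, $[\br\phi] = 2c_1(E)$, so the hypothesis $\br(\phi)^2\ge 0$ is precisely $c_1(E)^2\ge 0$. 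Write $S = c_1(E)$ and let $f$ be the class of a fibre of $\pi$, so that $f^2 = 0$, $f$ is nef, and $S\cdot f = m+n$.

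First I would produce the relative Harder--Narasimhan sequence $0\to N\to E\to M\to 0$, where $N\subset E$ is the saturated sub-line-bundle restricting to the degree-$n$ summand on the generic fibre and $M = E/N$ is a line bundle restricting to the degree-$m$ summand. Then $S = N+M$ and $c_2(E) = N\cdot M$, whence $4c_2 - c_1^2 = -(N-M)^2$. Setting $D = N - M$ we have $D\cdot f = n-m$, and the target inequality becomes $D^2 \le \bigl(\tfrac{n-m}{m+n}\bigr)^2 S^2$, i.e. $(S\cdot f)^2\,D^2 \le (D\cdot f)^2\,S^2$.

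The engine is the Hodge index theorem on the ruled surface $P$. Put $a = D\cdot f$, $s = S\cdot f$ and consider $W = sD - aS$, which satisfies $W\cdot f = 0$; since $f$ is nef with $f^2=0$, Hodge index gives $W^2\le 0$. From the identity $s^2 D^2 = W^2 + 2a\,(W\cdot S) + a^2 S^2$ and $W^2\le 0$ we see that $s^2 D^2 \le a^2 S^2$ will follow the moment we know $W\cdot S\le 0$, i.e. $s\,(D\cdot S)\le a\,S^2$. Using that $\phi$ is étale over $\sigma$ — which forces $S\cdot\sigma = 0$ and hence $\deg(N|_\sigma) = -\deg(M|_\sigma)$ — together with the relation $\sigma^2 = -c_1^2/(m+n)^2$ of \autoref{thm:divisor_relations}, this sign condition reduces to the single nonpositivity $\deg(N|_\sigma)\le 0$ for the fibrewise-maximal (Maroni) sub-bundle along the étale section.

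The main obstacle is precisely securing this sign: it does not follow formally from $\br^2\ge 0$, since a rank-two bundle with the same numerical data can have $D^2 > \bigl(\tfrac{n-m}{m+n}\bigr)^2 S^2$. What must rescue it is the triple-cover geometry of \autoref{thm:structure}: the branch divisor $\br\phi = 2S$ is \emph{effective} and misses $\sigma$, and $E$ is relatively globally generated (it produces the embedding $C\hookrightarrow\P E$), so its quotient $M$ is relatively globally generated as well. I would exploit the effectivity of $\br\phi$ and the étale structure of $\phi$ over $\sigma$ to bound $\deg(N|_\sigma)$ above by zero, thereby closing the Hodge-index estimate. Finally, the boundary case $S^2 = 0$ (vanishing self-intersection of the branch divisor) should be handled by the same computation, where the inequality degenerates to reading off $D^2\le 0$ directly from $W^2\le 0$ and the sign of $W\cdot S$.
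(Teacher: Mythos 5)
Your skeleton is viable and genuinely different from both proofs in the paper, and your reduction is correct as far as it goes: with the relative Harder--Narasimhan sequence $0 \to N \to E \to M \to 0$, $D = N - c_1(M)$, $a = D\cdot f = n-m$, $s = S\cdot f = n+m$ and $W = sD - aS$, one indeed has $W\cdot f = 0$, hence $W^2 \le 0$, and (using $S\cdot \sigma = 0$) the inequality $s^2D^2 \le a^2S^2$ reduces to $D\cdot\sigma = 2\deg(N|_\sigma) \le 0$. The genuine gap is that you stop exactly there: the nonpositivity of $\deg(N|_\sigma)$, which you yourself flag as the main obstacle, is left unproved, and the ingredients you propose for closing it (effectivity of $\br\phi$, relative global generation of $E$ and of $M$) do not deliver it --- fibrewise global generation says nothing about sub-line bundles of $E|_\sigma$. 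What closes it is the \'etale hypothesis alone. Since $\phi$ is \'etale over $\sigma$, the bundle $E|_\sigma$ is dual to $(\phi_*O_C)|_\sigma/O_B$ for the finite \'etale triple cover $C|_\sigma \to B$; pulling back along a finite \'etale cover $B' \to B$ that splits this triple cover (e.g.\ its Galois closure) makes $E|_\sigma$ trivial, isomorphic to $O_{B'}^{\oplus 2}$. Moreover $N|_\sigma \to E|_\sigma$ is injective: its kernel is a quotient of $\Tor_1(M, O_\sigma)$, hence torsion, and the line bundle $N|_\sigma$ on the smooth curve $\sigma \cong B$ has no torsion subsheaves. So after the base change $N|_\sigma$ embeds in a trivial bundle and therefore has non-positive degree; degrees only rescale by $\deg(B'/B) > 0$, so $\deg(N|_\sigma) \le 0$. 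With this one lemma inserted, your argument is complete.

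Two smaller corrections. First, $M = E/N$ is torsion-free but not necessarily locally free, so the identity is $4c_2 - c_1^2 = -D^2 + 4\length(Z)$, where $Z$ is the finite locus where $M$ fails to be locally free; the error term has the favorable sign, but you should say so rather than assert $M$ is a line bundle. Second, when $m = n$ there is no Harder--Narasimhan filtration; in that case take any saturated sub-line bundle $N \subset E$ restricting to a degree-$n$ summand on the generic fibre, note $a = 0$, so $W = sD$ is orthogonal to $f$ and $D^2 \le 0$ directly, giving $4c_2 - c_1^2 \ge 0$ (or simply quote \autoref{thm:bogomolov}, as the paper does).

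For comparison: the paper's first proof twists $O_C$ by an effective divisor supported in the three sections over $\sigma$ (the balancing lemma) to make the pushforward fibrewise balanced and then applies Moriwaki's inequality (\autoref{thm:bogomolov}); its second proof extracts a degree inequality between two pushforwards and evaluates it by Grothendieck--Riemann--Roch. Your route replaces both by the standard Harder--Narasimhan plus Hodge-index mechanism, which is shorter modulo the lemma above and, notably, never uses the hypothesis $\br(\phi)^2 \ge 0$, so it proves a slightly stronger statement. This is consistent with the paper: for families with concentrated branching (so $\br^2 < 0$), \autoref{thm:crimp_positivity} gives the inequality for the singularity splitting type, which dominates the Maroni splitting type by \autoref{thm:Maroni_less_than_mu}, and hence, since $c_1^2 < 0$ there, implies the bundle-type inequality you prove.
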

  \begin{corollary}\label{thm:generic_positivity_l}
    Assume that $0 < l < g$ and $l \equiv g \pmod 2$. Let $B$ be a projective curve and $f \from B \to \o{\sp T}_{g;1}^l$ a morphism such that $f^*(\Br^2) \geq 0$. Then \[f^*D_{l+2}\geq f^*D_{l} \geq 0.\]
  \end{corollary}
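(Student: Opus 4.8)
The plan is to reduce the statement to \autoref{thm:generic_positivity} and then exploit the fact that $D_t$ depends on its index only through $t^2$, with a non-negative coefficient. First I would reduce to a genuine one-parameter family. After normalizing $B$ and passing to a finite cover, I may assume $B$ is smooth and projective and that $f$ lifts to $\o{\orb T}_{g;1}^l$, yielding a family $(\pi \from P \to B; \sigma; \phi \from C \to P)$ with $P \to B$ a $\P^1$ bundle (recall that over $\o{\orb T}_{g;1}^l$ the orbifold curve is an honest smooth $\P^1$, since its orbi-structure is trivial). Such base changes multiply every intersection number by a positive integer and hence do not affect the signs we care about. With $c_i = c_i(\phi_*O_C)$, the pullbacks become $f^*c_1^2 = c_1^2[P]$, $f^*c_2 = c_2[P]$, and $f^*D_t = (4c_2 - c_1^2 + (2t/b)^2 c_1^2)[P]$. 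The hypothesis $f^*(\Br^2) \geq 0$ says exactly that $\br(\phi)^2 = \Sigma^2[P] \geq 0$, the standing assumption of \autoref{thm:generic_positivity}; equivalently, using $\Br^2 = 4c_1^2$ from \autoref{thm:divisor_relations}, it says $c_1^2[P] \geq 0$.

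The second step records monotonicity in the index. Writing $f^*D_t = \alpha + \gamma\, t^2$ with $\alpha = (4c_2 - c_1^2)[P]$ and $\gamma = (2/b)^2\, c_1^2[P]$, the coefficient $\gamma$ is non-negative by the previous paragraph. Hence $t \mapsto f^*D_t$ is non-decreasing for $t \geq 0$, and in particular $f^*D_{l+2} \geq f^*D_l$, which is the first of the two asserted inequalities.

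For the second inequality, let $(m,n)$ with $m \leq n$ and $m+n = g+2$ be the generic splitting type of $\phi_*O_C/O_P$, so that the generic Maroni invariant is $M = n-m$; after a further finite base change I may assume this splitting is defined over the generic point of $B$. Since the generic member of the family is $l$-balanced, \autoref{def:l-balanced} gives $0 \leq M \leq l$. Noting $(n-m)/(n+m) = 2M/b$, \autoref{thm:generic_positivity} yields $f^*D_M = (4c_2 - c_1^2 + (2M/b)^2c_1^2)[P] \geq 0$. Combining this with the monotonicity of the second step and the bound $0 \leq M \leq l$, I conclude $f^*D_l \geq f^*D_M \geq 0$, completing the proof.

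The only delicate point is the reduction in the first paragraph: one must check that the finite base changes needed to realize $f$ as an actual family and to split $\phi_*O_C/O_P$ over the generic point do not interfere with the hypothesis or the conclusion, which holds because they merely rescale all intersection numbers by a positive integer. Everything else is the elementary observation that $D_t$ is affine and non-decreasing in $t^2$ once $c_1^2[P] \geq 0$, together with the fact that the generic Maroni invariant of an $l$-balanced family is at most $l$.
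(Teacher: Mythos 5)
Your proof is correct and follows essentially the same route as the paper: reduce to \autoref{thm:generic_positivity} after a finite base change and normalization, use $\Br^2 = 4c_1^2 \geq 0$ to make $t \mapsto f^*D_t$ non-decreasing, and use the $l$-balanced bound on the generic Maroni invariant $M = n-m$ to conclude $f^*D_{l+2} \geq f^*D_l \geq f^*D_{M} \geq 0$. One remark: you state the key inequality in the correct direction, $n-m \leq l$, whereas the paper's printed proof writes ``$l \leq n-m$'' --- an evident typo, since $l$-balancedness bounds the Maroni invariant \emph{above} by $l$, and the argument needs exactly the direction you use.
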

  \begin{proof}
    By replacing $B$ by a finite cover and normalizing, we may assume that we have a map $B \to \o{\orb T}_{g;1}^l$ and $B$ is smooth. Then \autoref{thm:generic_positivity} applies. Since $\Br^2 = 4c_1^2$, and $\Br^2 \geq 0$, we have $c_1^2 \geq 0$. Hence $f^*D_{l+2} \geq f^*D_l$. Putting $n+m=b/2$ in the conclusion of \autoref{thm:generic_positivity} and using $l \leq n-m$ (as the curves are $l$-balanced) we conclude that $f^*D_l \geq 0$.
  \end{proof}
  
  We give two proofs of \autoref{thm:generic_positivity}. We freely use the divisor relations from \autoref{thm:divisor_relations}.
  \subsubsection{First proof}
  The underlying tool in the first proof is the following result, coupled with an amusing ``balancing trick'' (\autoref{balancing_lemma}).
  \begin{proposition}\label{thm:bogomolov}
    Let $B$ be a smooth projective curve, $P \to B$ a $\P^1$ bundle, and $E$ a vector bundle of rank $r$ on $P$. If the restriction of $E$ to any fiber of $P \to B$ is balanced, then the class $\pi_*(2rc_2(E) - (r-1)c_1^2(E))$ on $B$ is non-negative.
  \end{proposition}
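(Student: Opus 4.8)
The plan is to exploit two facts: that the discriminant $\Delta(E) := 2rc_2(E) - (r-1)c_1(E)^2$ is unchanged under tensoring $E$ by a line bundle, and that the balancedness hypothesis (each $E|_F \cong O_{\P^1}(a)^{\oplus r}$) forces $E$, after a suitable twist, to be pulled back from the base curve $B$. Once $E$ is a pullback from a curve, both $c_1(E)^2$ and $c_2(E)$ vanish for dimension reasons, so $\Delta(E) = 0$ and in particular $\pi_*\Delta(E) = 0 \geq 0$.

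First I would record that $\Delta$ is a twist invariant: writing $E' = E \otimes L$ and expanding $c_1(E') = c_1(E) + rc_1(L)$ together with $c_2(E') = c_2(E) + (r-1)c_1(E)c_1(L) + \binom{r}{2}c_1(L)^2$, the $c_1(L)$ and $c_1(L)^2$ contributions cancel in $2rc_2(E') - (r-1)c_1(E')^2$. Since $B$ is connected the fibrewise degree $\deg(E|_F)$ is constant, and balancedness makes it divisible by $r$, say equal to $ra$. Because $P \cong \P(\mathcal V)$ for a rank-two bundle $\mathcal V$ on $B$ (Tsen's theorem over the algebraically closed field $k$ kills the Brauer obstruction), there is a line bundle $O_P(1)$ of fibre degree one. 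Replacing $E$ by $E \otimes O_P(-a)$, which does not change $\Delta$, I may assume $E|_F \cong O_{\P^1}^{\oplus r}$ for every fibre $F$.

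Now I would show that a fibrewise trivial bundle is a pullback. The spaces $H^0(F, E|_F)$ have constant dimension $r$ and $H^1(F, E|_F) = 0$, so by cohomology and base change $\pi_*E$ is locally free of rank $r$ and its formation commutes with base change. The evaluation map $\pi^*\pi_*E \to E$ restricts on each fibre to the evaluation map of $O_{\P^1}^{\oplus r}$, which is an isomorphism; being a fibrewise isomorphism between bundles of the same rank, it is an isomorphism. Hence $E \cong \pi^*(\pi_*E)$ with $\pi_*E$ a bundle on the curve $B$. Since $A^2(B) = 0$, we obtain $c_1(E)^2 = \pi^*\big(c_1(\pi_*E)^2\big) = 0$ and $c_2(E) = \pi^*\big(c_2(\pi_*E)\big) = 0$, whence $\Delta(E) = 0$.

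The points requiring care are the verification that the evaluation map is an isomorphism and the existence of a fibre-degree-one line bundle on $P$. I expect the former to be the genuine content, as it is precisely where balancedness enters: for a non-balanced fibre the evaluation map degenerates and $E$ need not be a pullback. Indeed, if ``balanced'' were read in the weaker sense of split type with gaps at most one, the conclusion fails — for instance $O \oplus O_{\mathbf{F}_1}(C_0 + F)$ is fibrewise $O \oplus O_{\P^1}(1)$ yet has $\Delta = -1$ — so the strong (fibrewise semistable) reading is essential, and the residual imbalance in the surrounding setting is absorbed by the correction term $\big(\tfrac{n-m}{n+m}\big)^2 c_1^2$ of \autoref{thm:generic_positivity} via the balancing trick.
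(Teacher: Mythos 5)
Your argument is internally correct, but only under the reading ``$E|_F$ is balanced for \emph{every} fiber $F$,'' and that is not the statement the paper proves or uses. The paper's own proof explicitly works with $E$ \emph{generically} balanced: it only asserts $E_b \cong O_{P_b}^{\oplus r}$ for a \emph{generic} $b \in B$ (after normalizing $\det E$), and the application in the first proof of \autoref{thm:generic_positivity} feeds into \autoref{thm:bogomolov} a bundle $V$ that is balanced only on the general fiber $P_b$ --- the balancing lemma is applied at a generic $b$, and nothing prevents the splitting type of $V$ from jumping on special fibers. Your proof cannot handle jumping fibers: there the evaluation map $\pi^*\pi_*E \to E$ is not an isomorphism, $E$ is not a pullback from $B$, and $c_2(E)$ does not vanish. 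Indeed, the fact that you obtain $\pi_*\left(2rc_2(E)-(r-1)c_1^2(E)\right) = 0$ rather than merely $\geq 0$ is the tell-tale sign that your hypothesis has trivialized the statement: the result is a relative Bogomolov inequality (Moriwaki's theorem, as cited), and it has content precisely because unbalanced special fibers can occur, in which case the class is genuinely positive.

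The missing idea is what replaces your isomorphism claim when the evaluation map is only \emph{generically} an isomorphism. The paper's route: twist-invariance of the discriminant (your first step, which matches the paper) allows one, after a finite base change of $B$ to extract an $r$-th root, to assume $\det E \cong O_P$; then generic balancedness makes the generic fiber trivial, $\pi_*E$ is a rank-$r$ bundle on $B$, and the generically-isomorphic evaluation map $\pi^*\pi_*E \to E$ together with $\det E \cong O_P$ forces $c_1(\pi_*E) \leq 0$. Since $R^1\pi_*E$ is a torsion sheaf, $c_1(R^1\pi_*E) \geq 0$, so $c_1(R\pi_*E) \leq 0$; Grothendieck--Riemann--Roch gives $c_1(R\pi_*E) = -\pi_*c_2(E)$, whence $\pi_*c_2(E) \geq 0$, which is the assertion because $c_1(E)=0$. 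Two smaller points: your counterexample $O \oplus O_{\mathbf{F}_1}(C_0+F)$ does correctly show that a weaker ``gap at most one'' reading of balanced would make the statement false; but your closing claim that the correction term $\left(\frac{n-m}{n+m}\right)^2c_1^2$ in \autoref{thm:generic_positivity} absorbs the residual imbalance is off --- that term accounts for the imbalance $(n-m)$ of the \emph{generic} fiber of $\phi_*O_C$, while the special-fiber jumping of $V$ must be absorbed by \autoref{thm:bogomolov} itself, which is exactly what your argument cannot do.
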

  Recall that $E$ is balanced if $E \cong O(d)^{\oplus r}$ for some $d$. The result is a special case of a result of Moriwaki \citep[Theorem~A]{moriwaki98:_relat_bogom}. Stankova-Frenkel  proves the particular case ($r=2$) that we need \citep[Proof of Proposition~12.2]{Stankova-Frenkel00:_Modul_Of_Trigon_Curves}. Nevertheless, here is another proof.
  \begin{proof}
     Observe that $2rc_2(E) - (r-1)c_1^2(E)$ is unaffected if $E$ is replaced by $E \otimes L$ for any line bundle $L$. Thus, possibly after replacing $B$ by a finite cover, assume that $\det E \cong O_P$. We must conclude that $\pi_*c_2(E) \geq 0$. 

     That $E$ is generically balanced and $\det E = O_P$ forces $E_b \cong O_{P_b}^{\oplus r}$ for a generic $b \in B$. Then $\pi_* E$ is a vector bundle of rank $r$ on $B$. Since $\pi^*\pi_* E \to E$ is generically an isomorphism and $\det E = O_P$, it follows that $c_1(\pi_* E) \leq 0$. Since $R^1\pi_*E$ is supported on finitely many points, we evidently have $c_1(R^1\pi_*E) \geq 0$. Therefore, $c_1(R\pi_* E) \leq 0$. But Grothendieck--Riemann--Roch shows that $c_1(R\pi_* E) = -\pi_*c_2(E)$.
  \end{proof}
  
  The following lemma lets us cook up a balanced vector bundle from a possibly unbalanced cover. We will use the lemma for $L = O_C$. It is formulated more generally because the proof is by induction.
  \begin{lemma}\label{balancing_lemma}
    Let $\phi \from C \to \P^1$ be a triple cover, $s \in \P^1$ a point over which $\phi$ is \'etale, $\{t_1,t_2,t_3\}$ an ordering of $\phi^{-1}(s)$ and $L$ a line bundle on $C$. Assume that
    \[ \phi_*L\cong O_{\P^1} \oplus O_{\P^1}(-m) \oplus O_{\P^1}(-n),\]
    for some positive integers $m \leq n$. Then there is an effective divisor $D$ of degree $n-m$ supported on $\{t_1,t_2,t_3\}$ such that
    \[ \phi_*(L(D)) \cong O_{\P^1} \oplus O_{\P^1}(-m) \oplus O_{\P^1}(-m).\]
  \end{lemma}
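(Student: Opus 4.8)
The plan is to induct on the difference $n-m$. When $n=m$ the assertion is trivial with $D=0$, so suppose $m<n$ and assume the statement for all smaller values of the difference. It then suffices to produce a single point $t_i\in\phi^{-1}(s)$ such that
\[ \phi_*\left(L(t_i)\right)\cong O_{\P^1}\oplus O_{\P^1}(-m)\oplus O_{\P^1}(-(n-1)).\]
Indeed, applying the inductive hypothesis to the line bundle $L(t_i)$ yields an effective divisor $D'$ of degree $(n-1)-m$ supported on $\phi^{-1}(s)$, and $D=t_i+D'$ completes the induction. Note that at every stage the pushforward retains the form $O_{\P^1}\oplus O_{\P^1}(-m)\oplus O_{\P^1}(-n')$ with $m<n'$, and $\phi$ stays \'etale over $s$, so the same step applies repeatedly.

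To find such a $t_i$, I would exploit that $\phi$ is \'etale over $s$. Writing $\phi^{-1}(s)=\{t_1,t_2,t_3\}$, the inclusion $L\into L(t_i)$ has cokernel of length one supported at $t_i$, and since $\phi$ is finite the pushforward sequence reads
\[ 0\to \phi_*L\to \phi_*\left(L(t_i)\right)\to k(s)\to 0,\]
exhibiting $V':=\phi_*(L(t_i))$ as a positive elementary modification of $V:=\phi_*L$ at $s$. Because $\phi$ is \'etale over $s$, near the fiber $V$ splits as a sum over the three sheets and adding $t_i$ modifies only the $i$-th sheet; concretely the modification is taken in the direction of the line $\ell_i:=L|_{t_i}\subset V|_s$. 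The core computation is that this modification raises the most negative summand $O_{\P^1}(-n)$ by one precisely when $\ell_i$ has nonzero component along it. I would prove this by a saturation argument: set $W=O_{\P^1}\oplus O_{\P^1}(-m)\subset V$, so that $V'/W$ sits in $0\to O_{\P^1}(-n)\to V'/W\to k(s)\to 0$. This quotient is torsion free, hence $\cong O_{\P^1}(-(n-1))$, exactly when the image of $\ell_i$ in $O_{\P^1}(-n)|_s$ is nonzero. In that case $W$ is saturated and $V'$ is an extension of $O_{\P^1}(-(n-1))$ by $W$; since $n>m$ we have $\Ext^1(O_{\P^1}(-(n-1)),W)=H^1(O_{\P^1}(n-1))\oplus H^1(O_{\P^1}(n-1-m))=0$, so the extension splits and $V'\cong O_{\P^1}\oplus O_{\P^1}(-m)\oplus O_{\P^1}(-(n-1))$, as needed. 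In particular the two higher summands survive unchanged, keeping the induction in the stated form.

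It remains to guarantee an index $i$ for which $\ell_i$ meets $O_{\P^1}(-n)$ nontrivially, and this is the elegant point. Since $\phi$ is \'etale over $s$, the fiber decomposes canonically as $V|_s=L|_{t_1}\oplus L|_{t_2}\oplus L|_{t_3}$, so the three lines $\ell_1,\ell_2,\ell_3$ span the three-dimensional space $V|_s$. They therefore cannot all lie inside the two-dimensional subspace $(O_{\P^1}\oplus O_{\P^1}(-m))|_s$, and any $\ell_i$ lying outside it automatically has nonzero $O_{\P^1}(-n)$-component; choose that $t_i$. I expect the main obstacle to be the modification analysis of the previous paragraph: pinning down \emph{which} summand moves and checking that a nonzero bottom-component—rather than full genericity of the direction—already suffices, while verifying that the two higher summands are left intact. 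The spanning observation is what ensures a usable point exists at each stage, and it applies verbatim to $L(t_i)$ because $\phi$ remains \'etale over $s$ throughout.
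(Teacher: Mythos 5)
Your proof is correct, and while it shares the paper's overall skeleton (induction on $n-m$, reduced to finding one point $t_i$ with $\phi_*(L(t_i)) \cong O_{\P^1}\oplus O_{\P^1}(-m)\oplus O_{\P^1}(-(n-1))$), the technical heart is executed by a genuinely different mechanism. The paper never analyzes the elementary modification: it considers the chain $L \into L(t_1) \into L(t_1+t_2) \into L(t_1+t_2+t_3) = L\otimes \phi^*O_{\P^1}(1)$, twists by $\phi^*O_{\P^1}(m-1)$, and observes that the total jump in $h^0$ along the chain is exactly $2$ while there are three inclusions; hence some inclusion $\iota_j$ is an isomorphism on global sections, which forces $h^0(L(t_j)\otimes\phi^*O_{\P^1}(m-1)) = h^0(L\otimes\phi^*O_{\P^1}(m-1))$ and, by inspection of the possible splitting types of the degree-one modification, pins down $\phi_*(L(t_j))$. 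You instead work locally at $s$: you identify $\phi_*(L(t_i))$ as the up-modification of $V=\phi_*L$ along the coordinate line $\ell_i = L|_{t_i}$ of the canonical fiber decomposition $V|_s = L|_{t_1}\oplus L|_{t_2}\oplus L|_{t_3}$, show via the torsion-freeness of $V'/W$ and the vanishing of $\Ext^1(O_{\P^1}(-(n-1)), W)$ that the bottom summand rises exactly when $\ell_i \not\subset W|_s$, and get existence because three spanning lines cannot all lie in the two-dimensional subspace $W|_s$. The two pigeonholes (two $h^0$-jumps among three inclusions, versus three lines in a two-plane) are essentially dual, but your route buys more: it exhibits precisely which of the $t_i$ work, and makes transparent which summand moves and why, at the cost of the saturation and Ext-splitting bookkeeping; the paper's count is shorter and purely cohomological, needing no local identification of the modification direction. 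One cosmetic caveat: "near the fiber $V$ splits as a sum over the three sheets" is literally true only after completion (or étale-locally) at $s$, not on a Zariski neighborhood when $C$ is irreducible — but your computation only uses the completed stalk, where the splitting and the identification of the modification line are valid, so this is imprecision rather than a gap.
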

  \begin{proof}
    The proof is by induction on $n-m$. If $n = m$, take $D = 0$.

    Let $n-m \geq 1$. Set $O_C(1) =\phi^*O_{\P^1}(1)$. Consider the chain of inclusions 
    \[L \into L(t_1) \into L(t_1+t_2) \into L(t_1+t_2+t_3) = L \otimes O_C(1),\] 
    and the induced inclusions on global sections
    \[
    \begin{split}
      H^0(L \otimes O_C(m-1)) \stackrel{\iota_1}\into H^0(L(t_1) \otimes O_C(m-1))\stackrel{\iota_2}\into H^0(L (t_1+t_2) \otimes O_C(m-1)) \\
      \quad \stackrel{\iota_3}\into H^0(L(t_1+t_2+t_3) \otimes O_C(m-1)) = H^0(L \otimes O_C(m)).
    \end{split}
    \]
    From $\phi_*L = O_{\P^1} \oplus O_{\P^1}(-m) \oplus O_{\P^1}(-n)$, and $n > m$, we see that
    \[ h^0(L \otimes O_C(m)) - h^0(L \otimes O_C(m-1)) = 2.\]
    Therefore, at least one of the $\iota_j$ is an isomorphism. For such a $j$, the inclusion
    \[ H^0(L \otimes O_C(m-1)) \into H^0(L(t_j) \otimes O_C(m-1))\]
    must be an isomorphism. This isomorphism implies that
    \[ \phi_*L(t_j) \cong O_{\P^1} \oplus O_{\P^1}(-m) \oplus O_{\P^1}(-n+1).\]
    The induction step is thus complete.
  \end{proof}
  \begin{remark}\label{support_of_balancing_divisor}
    In \autoref{balancing_lemma}, the support of $D$ cannot be all of $\{t_1,t_2,t_3\}$. Otherwise, $\phi_*(L(D))$ will contain $\phi_*L \otimes O_{\P^1}(1)$ and hence also $O_{\P^1}(1)$, contradicting the conclusion of the lemma.
  \end{remark}

    \begin{proof}[First proof of \autoref{thm:generic_positivity}]
      After a  base change if necessary, assume that we have three distinct sections $\tau_i \from B \to C$ lying over $\sigma$, for $i=1,2,3$. For a generic $b \in B$, we have
      \[\phi_*O_{C_b} \cong O_{P_b} \oplus O_{P_b}(-m) \oplus O_{P_b}(-n).\]
      By \autoref{balancing_lemma}, there is an effective linear combination $D$ of the sections $\tau_i(B)$ of degree $n-m$ such that
      \[ \phi_*O_{C_b}(D) = O_{P_b} \oplus O_{P_b}(-m) \oplus O_{P_b}(-m).\]
      Consider the map $O_P \to \phi_*O_C(D)$ adjoint to the map $O_C \stackrel{D}\to O_C(D)$. The map of vector bundles $O_P \to \phi_*O_C(D)$ on $P$ is fiberwise nonzero, since $D$ does not contain any fiber of $C \to P$, by \autoref{support_of_balancing_divisor}. Define $V$ as the cokernel 
    \[ 0 \to O_P \to \phi_*O_C(D) \to V \to 0.\]
    Then $V$ is locally free of rank 2. The restriction of $V$ to $P_b$ is balanced. Hence, by \autoref{thm:bogomolov}, we have
    \begin{equation}\label{c2c1V}
      4c_2(V) - c_1^2(V) \geq 0.
    \end{equation}
    We compute the Chern classes of $V$ in terms of those of $\phi_*O_C$. Throughout, we abbreviate $c_i(\phi_*O_C)$ by $c_i$, and omit writing pullback or pushforward symbols where they are clear by context. Note that $c_1 \cdot \sigma = 0$ since $\phi$ is \'etale over $\sigma$.

    Say $D = a \tau_1 + b \tau_2$, where $a$ and $b$ are positive integers with $a+b = n-m$. We have the exact sequences
    \begin{align*}
      0 \to O_C \to&\ O_C(a\tau_1) \to O_{a\tau_1}(a\tau_1) \to 0, \text{ and } \\
      0 \to O_C(a\tau_1) \to&\ O_C(D) \to O_{b\tau_2} (D) \to 0.
    \end{align*}
    Here $a\tau_1$ denotes the subscheme of $C$ defined by the $a$th power of the ideal of $\tau_1$, and likewise for $b\tau_2$. Pushing forward along $\phi$, we get a relation in the Grothendieck group of $P$:
    \[ \phi_*O_C(D) = \phi_*O_C + \phi_*O_{a\tau_1}(a\tau_1) + \phi_*O_{b\tau_2}(D).\]
     Since $\phi$ is \'etale over $\sigma$, and $\tau_1$, $\tau_2$ are disjoint, we have $\phi_*O_{a\tau_1}(a\tau_1) \cong O_{a\sigma}(a\sigma)$ and $\phi_*O_{b\tau_2}(D) \cong O_{b\sigma}(b\sigma)$. Therefore, 
    \begin{align*}
      c(V) &= c(\phi_*O_C(D)) \\
      &= c(\phi_*O_C) \cdot (1+a\sigma) \cdot (1+b\sigma) \\
      &= 1 + (c_1 + (n-m) \sigma) + (ab\cdot\sigma^2 + c_2).
    \end{align*}
    Using \eqref{c2c1V}, we obtain
    \begin{equation}
      4c_2 + ab\cdot\sigma^2 - c_1^2 - (n-m)^2\sigma^2 \geq 0.
    \end{equation}
    Since $ab \geq 0$ and $\sigma^2 = \Br^2/b^2 \geq 0$, we conclude that
    \[4c_2 - c_1^2 + \left(\frac{n-m}{n+m}\right)^2c_1^2 \geq 0.\]
  \end{proof}

  \subsubsection{Second proof}\label{sec:generic_positivity_second_proof}
  The second proof resembles the proof of positivity in the $\Br^2 < 0$ case. Although it is somewhat less elegant, it is more transparent. The underlying idea is to use, in some form, the morphism to $\P^1$ given by the cross-ratio.
  \begin{proof}[Second proof of \autoref{thm:generic_positivity}]
    Assume, possibly after a finite base change, that we have three disjoint sections $\tau_i \from B \to C$ lying over $\sigma$, for $i = 1,2,3$. For a generic $b$, we have
    \[\phi_*O_{C_b} \cong O_{P_b} \oplus O_{P_b}(-m) \oplus O_{P_b}(-n).\]
    If $m = n$, then by \autoref{thm:bogomolov}, we get $4c_2 - c_1^2 \geq 0$. Since $\Br^2 = 4c_1^2\geq 0$, this implies the desired result. Henceforth, assume that $m < n$.
    
    Set $F = \phi_*O_C/O_P$, and consider the map
    \[ \chi \from \pi_*(F \otimes O_P(m\sigma)) \to \pi_*(F|_{\sigma} \otimes O_P(m\sigma)).\]
    Note that $\pi_*(F \otimes O_P(m\sigma))$ is a line bundle on $B$. Since $\phi^{-1}\sigma = \tau_1 \sqcup \tau_2 \sqcup \tau_3$, the bundle $F|_\sigma$ is trivial. Clearly, over the points $b \in B$ where $F \cong O(-m) \oplus O(-n)$, the map $\chi$ is injective. Hence, there is a map $p \from O_\sigma^{\oplus 2} = F|_{\sigma} \to O_\sigma$ such that the induced map
    \[p \circ \chi \from \pi_*(F \otimes O_P(m\sigma)) \to \pi_*(O_\sigma(m\sigma))\]
    is an isomorphism at the generic point of $B$.

    Denote by $(n-m)\sigma$ the scheme defined by the $(n-m)$th power of the ideal of $\sigma$. Consider the diagram of vector bundles of rank $(n-m-1)$ on $B$:
    \begin{equation}\label{eqn:square}
    \begin{tikzpicture}
      \matrix (m) [matrix of math nodes, column sep = 1.5em, row sep = 2em]{
        \pi_*(F \otimes O_P(m)) \otimes \pi_*O_P((n-m-1)\sigma)&
        \pi_*O_{\sigma}(m\sigma) \otimes \pi_*O_P((n-m-1)\sigma)\\
        \pi_*(F \otimes O_P((n-1)\sigma)) & \pi_*O_{(n-m)\sigma}((n-1)\sigma)\\
    };
    \path [->] 
    (m-1-1) edge (m-1-2)
    (m-1-1) edge (m-2-1)
    (m-1-2) edge (m-2-2)
    (m-2-1) edge (m-2-2);
    \end{tikzpicture}.
  \end{equation}
  The top and the left maps are clear. The bottom one is the composition
    \[ \pi_*(F \otimes O_P((n-1)\sigma)) \to \pi_*(F|_{(n-m)\sigma}\otimes O_P((n-1)\sigma) \stackrel{p}\to \pi_*(O_{(n-m)\sigma}((n-1)\sigma)).\]
    The one on the right is induced by the map of rings
    \[ O_\sigma \to O_{(n-m)\sigma}\]
    dual to the projection $(n-m)\sigma \to \sigma$.  The map on the right is an isomorphism; the rest are isomorphisms generically on $B$. In particular, we conclude that
    \begin{equation}\label{eqn:generic_degree_inequality}
      \deg\pi_*O_{(n-m)\sigma}((n-1)\sigma)
      \geq
      \deg \pi_*(F \otimes O_P((n-1)\sigma)) .
    \end{equation}
    We compute both sides in terms of $c_i(\phi_*O_C) = c_i(F)$, henceforth abbreviated by $c_i$. The left side is easy from the exact sequences
    \[ 0 \to O_\sigma((m+i)\sigma) \to O_{(n-m-i)\sigma}((n-1)\sigma) \to O_{(n-m-i-1)\sigma}((n-1)\sigma) \to 0,\]
    for $0 \leq i \leq n-m-1$. So we have
    \begin{align}\label{left_side}
      \begin{split}
        \deg\pi_*O_{(n-m)\sigma}((n-1)\sigma) &= \left((n-1)\sigma + (n-2)\sigma + \dots + m\sigma\right) \cdot \sigma \\
        &= \left(\frac{(n+m-1)(n-m)}{2}\right)\sigma^2 \\
        &= - \left(\frac{(n+m-1)(n-m)}{2(n+m)^2}\right)c_1^2.
      \end{split}
    \end{align}
    For the right side, apply Grothendieck--Riemann--Roch, keeping in mind
    \begin{align*}
      \omega_\pi \cdot \sigma = -\sigma^2, \quad \sigma^2 = - \frac{c_1^2}{2(n+m)},\quad \omega_\pi \cdot c_1 = - \frac{c_1^2}{(n+m)},  \text{ and } c_1\cdot \sigma = \omega_\pi^2 = 0.
    \end{align*}
    The result is
    \begin{align}\label{right_side}
      \begin{split}
      \ch&(R\pi_*(F \otimes O_P((n-1) \sigma))) \\
      &= \pi_*\left(\ch F \cdot \ch O_P((n-1)\sigma) \cdot \td_{P/B}\right)\\
      &= \pi_*\left(\left(2+c_1+\frac{c_1^2}{2}-c2\right)\cdot\left(1+(n-1)\sigma+\frac{(n-1)^2\sigma^2}{2}\right) \cdot \left(1 - \frac{\omega_\pi}{2}\right)\right),\\
      &\text{so that}\\
      c_1&(R\pi_*(F \otimes O_P((n-1)\sigma))) \\
      &= \left(\frac{(n+m)^2 + (n+m) - 2(n-1)^2 - 2(n-1)}{2(n+m)^2}\right)c_1^2 - c_2 \\
      &= -\left(\frac{(n-m)(n+m-1)-2mn}{2(n+m)^2}\right)c_1^2 - c_2.
    \end{split}
  \end{align}
  See that $R^1\pi_*(F \otimes O_P((n-1)\sigma))$ is supported on finitely many points of $B$. Hence
    \[ c_1(\pi_*(F \otimes O_P((n-1)\sigma))) \geq c_1(R\pi_*(F \otimes O_P((n-1)\sigma))).\]
    Combining with \eqref{eqn:generic_degree_inequality}, we arrive at
    \[ c_1 (\pi_*O_P((n-1)\sigma)|_{(n-m)\sigma}) \geq c_1(R\pi_*(F \otimes O_P((n-1)\sigma))). \]
    Substituting the left side from \eqref{left_side} and the right side from \eqref{right_side}, we get
    \begin{align*}
      &- \left(\frac{(n+m-1)(n-m)}{2(n+m)^2}\right)c_1^2 
      \geq -\left(\frac{(n-m)(n+m-1)-2mn}{2(n+m)^2}\right)c_1^2 - c_2 \\
      &\implies 
      4c_2 - \left(\frac{4mn}{(n+m)^2}\right)c_1^2 =
      (4c_2 - c_1^2) + \left(\frac{n-m}{n+m}\right)^2c_1^2 \geq 0. \\
    \end{align*}
    The proof is thus complete.    
  \end{proof}
  \begin{remark}\label{rem:generic_zero}
    Let us examine the proof to determine when equality holds. This is the case if and only if the map $\pi_*(F \otimes O_P((n-1)\sigma)) \to \pi_*O_{(n-m)\sigma}((n-1)\sigma)$ is an isomorphism and $R^1\pi_*(F \otimes O_P((n-1)\sigma)) = 0$. Then the splitting type of $F_b$ is $(m,n)$ for all $b \in B$. Therefore, all the maps in \eqref{eqn:square} are isomorphisms. In particular, we have an isomorphism $\pi_*(F \otimes O_P(m)) \cong O_\sigma(m\sigma)$. Hence the map 
    \[ \pi_*(F \otimes O_P(m)) \to \pi_*(F|_\sigma \otimes O_P(m)),\]
    which defines the cross-ratio, is equivalent to a (nonzero) global section of 
    \[ \sh Hom_B(\pi_*(F \otimes O_P(m)), \pi_*(F|_\sigma \otimes O_P(m))) \cong O_B^{\oplus 2}.\]
    The upshot is that $C \to P$ is a family of triple covers with a constant Maroni invariant $l = n - m$ and a constant cross-ratio.

    Retracing the steps, it is easy to see that for such a family $C \to P$ with a constant Maroni invariant $l = n - m$ and a constant cross-ratio, equality holds; that is, the pullback of $D_l$ is zero.
  \end{remark}

  \subsection{Positivity for families with $\Br^2 < 0$}\label{sec:positivity_crimp}
  Having taken care of families with $\Br^2 \geq 0$, we now consider families with $\Br^2 < 0$. 
  \begin{proposition}\label{thm:crimp_positivity}
    Let $B$ be a smooth projective curve and $\pi \from P \to B$ a $\P^1$ bundle with a section $\sigma \from B \to P$. Assume that $\Br^2 < 0$ and let $\zeta$ be the unique section of $\pi$ of negative self-intersection. Let $\phi \from C \to P$ be a triple cover \'etale away from $\zeta$. Assume that the splitting type of the singularity of $C \to P$ over $\zeta$ is $(m,n)$ over a generic point of $B$, where $m < n$ are positive integers. Then
    \[\left(4c_2-c_1^2 + \left(\frac{n-m}{n+m}\right)^2c_1^2\right)[P] \geq 0.\]
  \end{proposition}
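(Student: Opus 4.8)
The plan is to mirror the second proof of \autoref{thm:generic_positivity}, transporting every ingredient along the ``cross-ratio $\leftrightarrow$ principal part'' duality that already surfaced in \autoref{thm:blowdown_cross_ratio}. In the present situation $\phi$ is singular exactly along $\zeta$, so the role played there by the \'etale section $\sigma$ is now played by the negative section $\zeta$, and the cross-ratio is replaced by the principal part of \autoref{sec:principal_part}. Because $C$ is singular over $\zeta$, the bundle $F=\phi_*O_C/O_P$ is not trivial along $\zeta$, so the first move is to pass to the normalization $\nu\from \tw C\to C$ and set $\tw\phi=\phi\circ\nu$, $\tw F=\tw\phi_*O_{\tw C}/O_P$. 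After a finite base change on $B$ (which only scales intersection numbers by a positive integer, hence is harmless for the asserted inequality) I would reduce to the case where the singularity is of the \'etale type as in \autoref{sec:triple_singularities} and $\tw\phi^{-1}(\zeta)$ splits into three disjoint sections $\tau_1,\tau_2,\tau_3$. Then $\tw\phi$ is \'etale away from finitely many fibers, $\tw F$ is trivial on the general fiber of $\pi$, and $\tw F|_\zeta\cong O_\zeta^{\oplus 2}$.

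Next I would globalize the principal part. By \autoref{thm:crimps_triple_etale} the inclusion $F\hookrightarrow \tw F$ has cokernel $Q=\tw F/F$ supported on $\zeta$ with fiberwise shape $k[t]/t^m\oplus k[t]/t^n$, and the inclusion is divisible by $t^m$ along $\zeta$; dividing by $t^m$ and restricting to $\zeta$ produces a rank-one map into $\tw F|_\zeta$ whose image is the principal-part line. This is the exact analogue of the cross-ratio map $\chi$. With it I would reconstruct the commutative square \eqref{eqn:square}, twisting now by powers of $\zeta$ rather than $\sigma$: the right vertical arrow is an isomorphism and the remaining arrows are isomorphisms over the locus where the splitting type is the generic $(m,n)$, which forces the degree inequality between the two pushforwards that are the bottom corners of the square.

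The final step is the Grothendieck--Riemann--Roch bookkeeping, exactly as in \eqref{left_side} and \eqref{right_side}, but carried out with $\zeta$ in place of $\sigma$. Here one uses $\omega_\pi\cdot\zeta=-\zeta^2$ and, crucially, $\zeta^2=\Br^2/b^2=c_1^2/(g+2)^2<0$, so that the signs run opposite to the $\Br^2\ge 0$ case; feeding the degree inequality through GRR should collapse to
\[
  \Bigl(4c_2-c_1^2+\bigl(\tfrac{n-m}{n+m}\bigr)^2c_1^2\Bigr)[P]=\Bigl(4c_2-\tfrac{4mn}{(n+m)^2}c_1^2\Bigr)[P]\ \ge\ 0,
\]
with equality governed by a constant-$\mu$, constant-principal-part family, dual to the \autoref{rem:generic_zero} picture.

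The main obstacle I anticipate is the correct globalization of the principal part as a morphism of bundles on $B$ together with the orientation of the resulting degree inequality: since $\zeta^2<0$, the positivity has to come out of the \emph{sub}bundle/degeneration behaviour of the principal-part line, not out of an effective divisor as in the balancing trick. A second, more structural subtlety is that, by \autoref{thm:Maroni_less_than_mu}, the generic Maroni invariant of $F$ may be strictly smaller than $\mu=n-m$, so one cannot run the argument with $F$ directly; this is precisely why the normalization $\tw F$ (trivial on fibers) and its cokernel $Q$ must carry the singularity data. As a cross-check and possibly cleaner route, I would also try applying \autoref{thm:bogomolov} directly to $\tw F$, which is balanced on fibers with $c_1(\tw F)=0$ and hence gives $c_2(\tw F)\ge 0$; then expanding $c(\tw F)=c(F)\,c(Q)$ with $c_1(Q)=(g+2)\zeta$ and observing that $Q\otimes O_\zeta$ is a quotient of the trivial bundle $\tw F|_\zeta$ (so the twist degrees of its two cyclic summands are non-negative) reduces the Bogomolov inequality to exactly $c_2\ge \tfrac{mn}{(n+m)^2}c_1^2$, the desired bound. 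Either way, tracking the vertical (fiber-class) corrections coming from the finitely many bad fibers and from the base change is the routine-but-delicate part to get right.
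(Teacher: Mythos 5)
Your main line of attack is essentially the paper's own proof. The paper likewise normalizes (so $\tw C \cong P \sqcup P \sqcup P$ and $E = \tw\phi_*O_{\tw C}/O_P \cong O_P^{\oplus 2}$), sandwiches $I_\zeta^n E \subset F \subset I_\zeta^m E$, globalizes the principal part via a constant projection $p$ so that $\pi_*\bigl(F/I_\zeta^n E\bigr) \to \pi_*\bigl(I_\zeta^m/I_\zeta^n\bigr)$ is a generically injective map of rank-$(n-m)$ bundles on $B$, and turns the resulting degree inequality into the claim by Grothendieck--Riemann--Roch with $\omega_{P/B} = -\sigma-\zeta$ and $c_1 = -(m+n)\zeta$ --- exactly the ``square with $\zeta$ in place of $\sigma$'' plan you describe (indeed the paper states that its second proof of \autoref{thm:generic_positivity} was designed to resemble this one).

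Your Bogomolov-style ``cross-check'' is a genuinely different route and can be made to work, but not for the reason you give. Applying \autoref{thm:bogomolov} to $E$ is vacuous ($E \cong O_P^{\oplus 2}$, so $c_2(E)=0$); all the content sits in a lower bound for $\ch_2(Q)$, where $Q = E/F$. From $[F]=[E]-[Q]$ one gets $c_2 = \ch_2(Q) + c_1^2/2$, so the desired inequality is equivalent to $\ch_2(Q) \geq -\tfrac{m^2+n^2}{2}\zeta^2$, which is a \emph{positive} lower bound because $\zeta^2<0$. Non-negativity of the degrees of the ``two cyclic summands'' of $Q|_\zeta$ does not give this (and $Q$ need not split globally into cyclic pieces). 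What does work is to filter $Q$ by the powers $I_\zeta^jQ$: each graded piece $I_\zeta^jQ/I_\zeta^{j+1}Q$ is a quotient of $(I_\zeta^j/I_\zeta^{j+1})E \cong O_\zeta(-j\zeta^2)^{\oplus 2}$, a semistable bundle of slope $-j\zeta^2>0$, so its locally free part has degree at least $\rk\cdot(-j\zeta^2)$ while torsion contributes non-negatively. The generic ranks are $2$ for $j<m$ and $1$ for $m\le j<n$, and summing the bounds (including the GRR correction $-\rk\cdot\zeta^2/2$ per piece) gives precisely
\[
\ch_2(Q) \;\geq\; -\zeta^2\left(\frac{m(m-1)+n(n-1)}{2}\right) - \zeta^2\cdot\frac{m+n}{2} \;=\; -\frac{m^2+n^2}{2}\,\zeta^2,
\]
as required, with equality in the constant-splitting-type, constant-principal-part case. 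So the alternative is sound, but the positivity must be extracted from all the twists in the $I_\zeta$-adic filtration --- this is where $\zeta^2<0$ actually enters --- not merely from $Q|_\zeta$ being a quotient of a trivial bundle.
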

  \begin{corollary}\label{thm:crimp_positivity_l}
    Assume that $0 < l < g$ and $l \equiv g \pmod 2$. Let $B$ be a projective curve and  $f \from B \to \o{\sp T}_{g;1}^l$ a morphism such that $f^*\Br^2 < 0$. Then
    \[ f^*D_{l} \geq f^*D_{l+2} \geq 0.\]  
  \end{corollary}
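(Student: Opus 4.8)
The plan is to deduce the Corollary from \autoref{thm:crimp_positivity} in exactly the way \autoref{thm:generic_positivity_l} is deduced from \autoref{thm:generic_positivity}; the only genuinely new feature is that the hypothesis $\Br^2 < 0$ reverses the relevant inequalities.

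First I would make the standard reductions: after replacing $B$ by a finite cover and normalizing, I may assume $B$ is smooth and that $f$ lifts to a map $B \to \o{\orb T}_{g;1}^l$, giving a family $(\pi \from P \to B; \sigma; \phi \from C \to P)$ with $P \to B$ a $\P^1$-bundle. By \autoref{thm:divisor_relations} we have $\Br^2 = 4c_1^2$, so $f^*\Br^2 < 0$ is equivalent to $c_1^2 < 0$. The first inequality is then formal: since
\[ D_l - D_{l+2} = \left(\left(\tfrac{2l}{b}\right)^2 - \left(\tfrac{2(l+2)}{b}\right)^2\right)c_1^2, \]
and the bracketed factor is negative while $c_1^2 < 0$, we obtain $f^*D_l \geq f^*D_{l+2}$.

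For the second inequality I would invoke \autoref{thm:crimp_positivity}, whose conclusion concerns a family \'etale away from the unique negative section $\zeta$, with generic singularity of splitting type $(m,n)$, $m < n$, and $n + m = g+2 = b/2$. Because such a cover is $l$-balanced with concentrated branching, the second condition of \autoref{def:l-balanced} forces the $\mu$-invariant $n-m$ to exceed $l$; as $n-m \equiv l \pmod 2$, this sharpens to $n - m \geq l+2$. Hence $\left(\tfrac{n-m}{n+m}\right)^2 \geq \left(\tfrac{2(l+2)}{b}\right)^2$, and since $c_1^2 < 0$ the inequality flips upon multiplication, giving
\[ f^*D_{l+2} = (4c_2 - c_1^2) + \left(\tfrac{2(l+2)}{b}\right)^2 c_1^2 \geq (4c_2 - c_1^2) + \left(\tfrac{n-m}{n+m}\right)^2 c_1^2 \geq 0, \]
the last step being precisely the conclusion of \autoref{thm:crimp_positivity}. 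Combined with the first inequality this yields $f^*D_l \geq f^*D_{l+2} \geq 0$.

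The step I expect to be the main obstacle is justifying that \autoref{thm:crimp_positivity} actually applies, i.e. that a family with $f^*\Br^2 < 0$ may be taken to be \'etale away from $\zeta$. The condition $c_1^2 < 0$ already forces $\Sigma = \br\phi$ to concentrate onto the negative section: intersecting $\Sigma$ with $\zeta$ gives $\Sigma\cdot\zeta = 2\,c_1(E)\cdot\zeta < 0$ (a short computation writing $c_1(E)$ as a combination of $\zeta$ and a fibre, using $c_1(E)^2 = c_1^2 < 0$), so $\zeta$ must appear in $\Sigma$ with large multiplicity; but this does not by itself exclude stray branch points elsewhere. I would remove them using the elementary modifications from the valuative-criterion proof of \autoref{thm:tg3d}: repeated blow-ups at $\sigma$ followed by blow-downs of the proper transforms of the affected fibres (equivalently, the transformation $e(X,Y) \mapsto t^{m-n+1}e(tX,Y)$ of \autoref{eqn:pull_push_vb}), which concentrate all branching onto $\zeta$. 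Verifying that this reduction can be carried out compatibly---without spoiling the displayed inequality for $D_{l+2}$---is the delicate part; the remaining Chern-class bookkeeping is routine and parallels \autoref{thm:generic_positivity_l}.
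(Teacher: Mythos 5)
Your reductions, the parity argument (the $\mu$ invariant exceeds $l$ and is congruent to $l$ mod $2$, so $n-m \geq l+2$), the sign reversal coming from $c_1^2 < 0$, and the appeal to \autoref{thm:crimp_positivity} coincide exactly with the paper's proof; that part is correct. The genuine gap is the step you yourself flag as the main obstacle, and your proposed repair would fail. The elementary modifications from the properness proof of \autoref{thm:tg3d} (equivalently, $e(X,Y) \mapsto t^{m-n+1}e(tX,Y)$ from \autoref{eqn:pull_push_vb}) are operations on a family over a DVR: they fix the generic fibre and alter only the special fibre, so they are not available to remove branch points occurring over arbitrary points of a global base $B$. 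More fundamentally, any such surgery replaces $(P;\sigma;\phi\from C \to P)$ by a \emph{different} family, hence changes $c_1^2$ and $c_2$, and therefore changes $f^*D_l$ and $f^*D_{l+2}$; an inequality established for the modified family says nothing about the original one. You acknowledge this issue (``without spoiling the displayed inequality'') but do not resolve it, so the argument is incomplete precisely where it needs to be complete.

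The good news is that no modification is needed: the hypotheses already force \emph{all} branching onto $\zeta$, which is what the paper asserts when it says the branch divisor ``must be supported on the section of negative self-intersection.'' Indeed, since $f$ factors through $\o{\orb T}_{g;1}^l$, the branch divisor $\Sigma = \br\phi$ is disjoint from the marked section $\sigma$, and by \autoref{thm:divisor_relations} the self-intersection of $\sigma(B)$ in the surface $P$ equals $-f^*c_1^2/(g+2)^2 > 0$. By the Hodge index theorem, the orthogonal complement of $[\sigma]$ in $\mathrm{NS}(P)_\Q$ is negative definite; every irreducible component of $\Sigma$ lies in this complement (being disjoint from $\sigma$), hence has negative self-intersection. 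But on a $\P^1$-bundle over a smooth curve, the only irreducible curve of negative self-intersection is the distinguished section $\zeta$ of minimal self-intersection \citep[V.2, Propositions~2.20 and 2.21]{hartshorne77:_algeb}. Hence $\Sigma$ is a multiple of $\zeta$, there are no stray branch points, \autoref{thm:crimp_positivity} applies directly to the original family, and the rest of your computation goes through verbatim, reproducing the paper's proof.
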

  \begin{proof}
    Since $4c_1^2 = \Br^2$ and $\Br^2 < 0$, we have $c_1^2 < 0$. Therefore $f^*D_l \geq f^*D_{l+2}$. Hence, it suffices to prove that $f^*D_{l+2} \geq 0$.
    
    By replacing $B$ by a finite cover and normalizing if necessary, we may assume that $f$ lifts to a map $f \from B \to \o{\orb T}_{g;1}^l$ and $B$ is smooth. Say $f$ is given by $(P; \sigma; \phi \from C \to P)$. Since $4c_1^2 = \Br^2 < 0$, the branch divisor of $\phi$ must be supported on the section $\zeta$ of $P \to B$ of negative self-intersection. Thus, \autoref{thm:crimp_positivity} applies. Since our family consists of $l$-balanced covers, we have $n-m > l$, and hence $n - m \geq l+2$ because $n-m \equiv l \equiv g \pmod 2$. Using $n+m=b/2$, $c_1^2 < 0$ and $n-m \geq l+2$ in the conclusion of \autoref{thm:crimp_positivity}, we conclude that
    \[ f^*D_{l+2} \geq 0.\]
  \end{proof}
  
  \begin{proof}[Proof of \autoref{thm:crimp_positivity}]
    By making a base change if necessary, assume that we have three sections $\tau_i \from B \to C$ over $\sigma$, for $i=1,2,3$. Let $\tw C \to C$ be the normalization and $\tw\phi \from \tw C \to P$ the corresponding map. By \cite{teissier80:_resol_i}, the fibers are $\tw C \to B$ are normalizations of the fibers of $C \to B$. Therefore, $\tw C$ is the disjoint union of three copies of $P$, each containing one section $\tau_i$. Set $F = \phi_*O_C/O_P$, and $E = \tw\phi_*O_{\tw C}/O_P$. Note that $E \cong O_{P}^{\oplus 2}$. The inclusion $\phi_*O_C \into \tw\phi_*{O_{\tw C}}$ induces an inclusion $F \into E$, which is an isomorphism away from $\zeta$. We think of $F$ as a subsheaf of $E$ via this inclusion.
    
    Since the generic spitting type of the singularity of the fibers of $C \to B$ over $\zeta$ is $(m,n)$, we have the inclusions
    \[ I_\zeta^n \cdot E \subset F \subset I_\zeta^m \cdot E.\]
    Let $\overline F = F/\left(I_\zeta^n \cdot E\right)$ and $\overline E = \left(I_\zeta^m/I_\zeta^n\right) \cdot E$. Both $\overline F$ and $\overline E$ are supported on $\zeta$, are $\pi$-flat, and their $\pi$-fibers have lengths $(n-m)$ and $2(n-m)$ respectively. Pushing forward $\overline F \to \overline E$, we get 
    \[ i \from \pi_*\overline F \to \pi_* \overline E,\]
    a map of locally free sheaves on $B$ of rank $(n-m)$ and $2(n-m)$, respectively. The target $\pi_*\overline E$ is isomorphic to $\pi_*(I_\zeta^m/I_\zeta^n) \otimes (O_B^{\oplus 2})$.

    We examine $i$ explicitly over a point $b \in B$ where the splitting type of the singularity is $(m,n)$. Let $x$ be a local coordinate for $P_b$ near $\zeta(b)$. Since the singularity of $C_b \to P_b$ is of type $(m,n)$, the subalgebra $O_{C_b} \subset O_{\tw C_b} = O_{P}^{\oplus 3}$ is generated as an $O_{\P^1_b}$ module, locally around $x$, by $\langle 1, x^mf, x^nO_{P_b}^{\oplus 3}\rangle$, where the image of $f$ in $E_b$ is nonzero modulo $x$. Therefore,
    \[ \overline F_b = k\langle x^mf, x^{m+1}f, \dots, x^{n-1}f \rangle.\]
    Since the image of $f$ in $E|_{\zeta(b)}$ is nonzero, it is nonzero in one of the projections $p \from O_{\zeta(b)}^{\oplus 2} = E|_{\zeta(b)} \to O_{\zeta(b)}$. It follows that the composite $j_b = p \circ i_b$ gives an isomorphism
    \[ j_b \from \overline F_b \isom I_{\zeta(b)}^m/I_{\zeta(b)}^n.\]
    Consequently, the composition $j = p \circ i$ is an isomorphism on the generic fiber:
    \[ j \from \pi_*\overline F \to \pi_*(I_\zeta^m/I_\zeta^n).\]
    We conclude that
    \begin{equation}\label{eqn:crimp_degree_inequality}
      \deg \pi_*(I_\zeta^m/I_\zeta^n)
      \geq
     \deg \pi_*\overline F.
    \end{equation}
    We compute both sides in terms of $c_i(\phi_*O_C)$, abbreviated henceforth by $c_i$. Since $P\to B$ is a $\P^1$ bundle with two disjoint sections $\sigma$ and $\zeta$ of positive and negative self-intersection, respectively, we have 
    \[\omega_{P/B} = -\sigma - \zeta.\]
    By Grothendieck--Riemann--Roch,
    \begin{align*}
      \ch(\pi_*(I_\zeta^m/I_\zeta^n)) &= \pi_*(\ch (I_\zeta^m/I_\zeta^n) \cdot \td_{P/B})\\
      &= \pi_*\left((\ch I_\zeta^m - \ch I_\zeta^n) \cdot \td_{P/B} \right)\\
      &= \pi_*\left(\left((n-m)\zeta+\frac{(m^2-n^2)\zeta^2}{2} \right)\cdot\left(1+\frac{\sigma+\zeta}{2}\right)\right);\\
      c_1(\pi_*\overline E) &= \left(\frac{m^2-n^2+n-m}{2(n+m)^2}\right)c_1^2.
    \end{align*}
    The last equality uses $\zeta^2 = -\sigma^2 = c_1^2/(n+m)^2$.
    
    Similarly, using $c_1 = -(m+n) \zeta$ and Grothendieck--Riemann--Roch,
    \begin{align*}
      \ch(\pi_*\overline F) &= \pi_*(\ch \overline F \cdot \td_{P/B}) \\
      &= \pi_*((\ch F - 2\ch I_\zeta^n) \cdot \td_{P/B}) \\
      &= \pi_*\left( \left(c_1+2n\zeta+\frac{c_1^2}{2}-c_2-n^2\zeta^2\right) \cdot \left(1+ \frac{\zeta+\sigma}{2}\right) \right);\\
      c_1(\pi_*\overline F) &= \left(\frac{m^2-n^2+2mn+n-m}{2(m+n)^2}\right)c_1^2 - c_2.
    \end{align*}
    Substituting into \eqref{eqn:crimp_degree_inequality}, we get
    \begin{align*}
      4c_2 - \left(\frac{4mn}{(m+n)^2}\right)c_1^2 = 4c_2 -c_1 + \left(\frac{n-m}{m+n}\right)^2c_1^2 \geq 0.
      \end{align*}
    \end{proof}
\begin{remark}\label{rem:crimp_zero}
  Let us examine the proof to determine when equality holds. This is the case if and only if the map $\o F \to I^m_\zeta/I^n_\zeta$ is an isomorphism. Then fiber $C_b \to P_b$ has a singularity of splitting type $(m,n)$ for all $b \in B$. Furthermore, the map $\o F|_\zeta \to \o E|_\zeta$, which defines the principal part, is equivalent to a (nonzero) global section of 
  \[ \sh Hom (\o F|_\zeta, \o E|_\zeta) \cong O_\zeta^{\oplus 2}.\]
  The upshot is that the family $C \to P$ has singularities of a constant $\mu$ invariant $l = n -m $ and a constant principal part.

  Retracing the above steps, it is easy to see that for such a family $C \to P$ of covers with concentrated branching with singularities of a constant $\mu$ invariant $l = n - m$ and a constant principal part, equality holds; that is, the pullback of $D_l$ is zero.
\end{remark}

    We have essentially finished the proof of \autoref{thm:flip_nef}; it is now a matter of collecting the pieces. We recall the statement for the convenience of the reader.
    \thmflipnef*
    \begin{proof}
      By \autoref{thm:generic_positivity_l} and \autoref{thm:crimp_positivity_l} we conclude that $D_l$ and $D_{l+2}$ are non-negative on any complete curve in $\o{\sp T}_{g;1}^l$. Hence, every non-negative linear combination of $D_l$ and $D_{l+2}$ is nef.

      It remains to show that $D_l$ and $D_{l+2}$ are indeed the edges of the nef cone. For that, it suffices exhibit curves on which $D_l$ is zero and $D_{l+2}$ is positive, and vice-versa. \autoref{rem:generic_zero} and \autoref{rem:crimp_zero} tell us how to construct such curves. We explain the constructions briefly. Let $m < n$ be such that $n+m = g+2$ and $n-m = l$.

      For the edge $\langle D_l \rangle$, we construct a family of covers with constant Maroni invariant $l$ and constant cross-ratio. Such a family can be constructed, for example, as follows. Let $C \to \P^2$ be a connected, generically \'etale triple cover with 
      \[O_C/O_{\P^2} \cong O_{\P^2}(-m) \oplus O_{\P^2}(-n).\]
      Let $p \in \P^2$ be a point over which $C \to \P^2$ is \'etale. Let $P = \Bl_p\P^2$ and set $C' = C \times_{\P^2} \Bl_p\P^2$. Then $P \to \P^1$ is a $\P^1$ bundle with a section $\sigma$ given by the exceptional divisor. The family $(P; \sigma; C' \to P)$ gives a curve in $\o{\sp T}_{g;1}^l$. The pullback of $D_l$ to this curve is zero and the pullback of $D_{l+2}$ is positive.
      
      For the edge $\langle D_{l+2}\rangle$, we construct a family of covers with concentrated branching having $\mu$ invariant $l+2$ and constant principal part. To construct such a family, consider the two-parameter family of sub-algebras $S(a,b)$ of $O_{\P^1}^{\oplus 3}$, where $S(a,b)$ is generated locally around $0$ as an $O_{\P^1}$ module by
      \[ 1, (x^{m-1} + ax^{n-1} + bx^n, 0, 0) ,\text{ and } x^{n+1}O_{\P^1}^{\oplus 3}.\]
      Via the scaling $x \mapsto tx$, we have an isomorphism \[S(a,b) \isom S(t^{n-m}a, t^{n-m+1}b).\]
      The resulting family on $(k^{\oplus 2} \setminus 0)/\G_m$ gives a curve in $\o{\sp T}_{g;1}^l$. The pullback of $D_{l+2}$ to this curve is zero and the pullback of $D_l$ is positive.
    \end{proof}
    
    \subsection{Projectivity}\label{sec:flip_projectivity}
    In this section, we prove that divisors in the interior of the nef cone of $\o{\sp T}_{g;1}^l$ are indeed ample. This would follow from Kleiman's criterion if we knew that $\o{\sp T}_{g;1}^l$ is a scheme. However, it is a priori only an algebraic space. Kleiman's criterion can fail for algebraic spaces, as pointed out by Koll\'ar \citep[\S~VI, Exercise~2.19.3]{kollar96:_ration}. However, Nakai--Moishezon's criterion still holds.
\begin{theorem}[(Nakai--Moishezon criterion for ampleness)]\citep[Theorem~3.11]{kollar90:_projec}
  Let $X$ be an algebraic space proper over an algebraically closed field and $H$ a Cartier divisor on $X$. Then $H$ is ample if and only if for every irreducible closed subspace $Y \subset X$ of dimension $n$, the number $H^n \cdot Y$ is positive.
\end{theorem}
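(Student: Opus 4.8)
The statement is the Nakai--Moishezon criterion, and since the subtle point is the passage from schemes to algebraic spaces, the plan is to reduce the algebraic-space case to the classical scheme case by an induction on dimension coupled with Chow's lemma. The easy direction is formal: if $H$ is ample, then $X$ is a projective scheme, each irreducible closed $Y$ sits inside the resulting projective embedding, and $H^n \cdot Y > 0$ is the classical Nakai inequality on the projective variety $Y$. So essentially all the work is in the converse, and I would organize it by a sequence of reductions followed by an inductive construction of sections.

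First I would reduce to the case where $X$ is integral and normal. Ampleness of a line bundle on a proper algebraic space is detected on the reduction $X_{\mathrm{red}}$ and, via the nilpotent filtration, on the irreducible components with their reduced structure, because ampleness is equivalent to the Serre-type vanishing $H^i(X, F \otimes H^{\otimes m}) = 0$ for all $i > 0$ and $m \gg 0$, which can be checked component-by-component and modulo nilpotents. Since the Nakai hypotheses restrict to every closed subspace, they are inherited, so I may assume $X$ integral. Replacing $X$ by its normalization (a finite birational modification preserving both ampleness and all intersection numbers) I may further assume $X$ normal of dimension $n$.

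Now I would induct on $n$. For $n \le 1$ a proper integral algebraic space is automatically a projective scheme, and $H \cdot X > 0$ gives ampleness. For the inductive step, the inductive hypothesis applied to the proper closed integral subspaces $Y \subsetneq X$ (whose own Nakai hypotheses are among those for $X$) shows that $H|_Y$ is ample for every such $Y$. To upgrade this to ampleness on all of $X$, I invoke Chow's lemma for algebraic spaces: there is a projective scheme $X'$ and a proper birational morphism $p \colon X' \to X$, an isomorphism over a dense open $U \subseteq X$; set $Z = X \setminus U$, a proper closed subspace, and $E = p^{-1}(Z)$. On the scheme $X'$ all classical tools apply, so I would use asymptotic Riemann--Roch on $X'$, together with $p_* O_{X'} = O_X$ (normality) and the higher-direct-image vanishing $R^i p_*(p^* H^{\otimes m}) = 0$ for $m \gg 0$, to compare $h^0(X, H^{\otimes m})$ with $\tfrac{(H^n \cdot X)}{n!}\,m^n + O(m^{n-1})$, which is positive by hypothesis, thereby producing an abundance of global sections of $H^{\otimes m}$.

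The crux, and the step I expect to be the main obstacle, is converting these sections into a genuine closed immersion of the \emph{algebraic space} $X$ rather than merely a quasi-projective map over $U$; Kleiman's criterion is unavailable here (as the remark preceding the theorem records), so numerical positivity alone does not suffice. I would argue that, for $m \gg 0$: (i) $H^{\otimes m}|_Z$ is very ample by the inductive hypothesis, so sections separate points and tangents on $Z$; (ii) since $p^* H$ is relatively ample away from $E$, sections separate points and tangents on $U$, and they extend across the codimension-$\ge 1$ locus $Z$ using normality of $X$ and the vanishing above; and (iii) the base locus of $|H^{\otimes m}|$ is empty. Gluing (i)--(iii) shows that $|H^{\otimes m}|$ defines a morphism to projective space that is injective with injective differential, hence a closed immersion of the proper space $X$, so $H$ is ample. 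The delicate part is precisely this gluing of the ``away-from-$Z$'' and ``along-$Z$'' information across the exceptional locus, where the structure is genuinely non-schematic; this is exactly where the normalization reduction and the cohomology-and-base-change vanishing on the scheme $X'$ carry the argument.
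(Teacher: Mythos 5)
A preliminary remark: the paper does not prove this statement at all --- it is quoted verbatim from Koll\'ar (Theorem~3.11 of the cited paper), so there is no internal proof to compare against; your proposal has to be judged on its own merits. On those merits there are two genuine problems. The first is a concrete false step: the claimed vanishing $R^ip_*(p^*H^{\otimes m})=0$ for $m\gg 0$ cannot hold in general, since by the projection formula $R^ip_*(p^*H^{\otimes m})\cong (R^ip_*O_{X'})\otimes H^{\otimes m}$, and twisting by a line bundle pulled back from the base never kills $R^ip_*O_{X'}$, which is nonzero whenever the singularities of $X$ are bad enough. (This step is repairable: the hypothesis in dimension one gives $H\cdot C>0$ for every curve, so $p^*H$ is nef on the projective scheme $X'$, and the standard estimate $h^i(X',mp^*H)=O(m^{n-i})$ for nef bundles, combined with $p_*O_{X'}=O_X$ and asymptotic Riemann--Roch, yields $h^0(X,H^{\otimes m})\sim m^nH^n/n!$ without any vanishing of higher direct images.)

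The second problem is the one you flag yourself, and it is not a detail but the entire content of the theorem: nothing in your steps (ii)--(iii) actually produces separation of points and tangents on $U$, and the ``gluing across $Z$'' is only gestured at. Note that $p^*H$ is not relatively ample over $X$ in any useful sense ($p$ is birational, so relative ampleness of a pullback would force $p$ to be finite), and an abundance of sections by itself separates nothing. The way this crux is handled in the scheme-case proof that Koll\'ar adapts (Kleiman's proof of Nakai for proper schemes) is to aim much lower than a closed immersion: one shows by induction that $|mH|$ is merely \emph{base-point free} for some $m$ --- take an effective $D\in|mH|$ from the section count, use the inductive ampleness of $H|_D$, Serre vanishing on $D$, and the restriction sequence $0\to O_X((k-m)H)\to O_X(kH)\to O_D(kH)\to 0$ to lift sections --- and then observes that the induced morphism $X\to\P^N$ contracts no curve (as $H\cdot C>0$), hence is quasi-finite; quasi-finite, proper and separated implies finite for algebraic spaces, so $X$ is a scheme (being affine over $\P^N$) and $H^{\otimes m}$, the pullback of $O(1)$ under a finite morphism, is ample. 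This route disposes of the non-schematic locus in one stroke and avoids very ampleness and your gluing problem entirely; without some such replacement for your step (iii), the proposal does not close.
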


To deduce that divisors in the interior of the nef cone are ample, we need a mild extension of \citep[Lemma~4.12]{fedorchuk11:_ample}
\begin{lemma}\label{thm:perturbation}
  Let $X$ be an algebraic space proper over an algebraically closed field. Suppose $X$ satisfies the following: for every irreducible subspace $Y \subset X$, there is a finite surjective map $Z \to Y$ and a Cartier divisor $D$ on $X$ whose pullback to $Z$ is numerically equivalent to a nonzero and effective divisor. Then any Cartier divisor in the interior of the nef cone of $X$ is ample.
\end{lemma}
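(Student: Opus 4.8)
The plan is to verify the Nakai--Moishezon criterion for a divisor $H$ in the interior of the nef cone by induction on the dimension of the test subspace. Concretely, I would prove that $H^n \cdot Y > 0$ for every irreducible closed subspace $Y \subset X$ with $n = \dim Y$, inducting on $n$. The base case $n = 0$ is immediate, since $H^0 \cdot Y$ is the degree of a $0$-cycle and hence positive.

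For the inductive step, fix $Y$ of dimension $n \geq 1$ and apply the hypothesis to obtain a finite surjective map $\pi \from Z \to Y$ and a Cartier divisor $D$ on $X$ such that $D_Z := \pi^*(D|_Y)$ is numerically equivalent to a nonzero effective divisor $E$ on $Z$. Write $H_Z = \pi^*(H|_Y)$, which is nef on $Z$. The one place interiority enters is the following: because $H$ lies in the interior of the nef cone, $H - tD$ is nef on $X$ for all sufficiently small $t > 0$; restricting to $Y$ and pulling back along $\pi$ shows that $H_Z - tD_Z$ is nef on $Z$, and since nefness depends only on the numerical class, $N_t := H_Z - tE$ is nef as well. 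As $\deg\pi \cdot (H^n \cdot Y) = H_Z^n \cdot Z$ with $\deg\pi > 0$, it suffices to prove $H_Z^n \cdot Z > 0$. I would then expand
\[ H_Z^n \cdot Z = N_t \cdot H_Z^{n-1} \cdot Z + t\,\bigl(E \cdot H_Z^{n-1} \cdot Z\bigr).\]
The first summand is non-negative, being a top intersection of nef classes on $Z$. For the second, decompose $E = \sum_i a_i E_i$ into prime components with $a_i > 0$; each $E_i$ has dimension $n-1$, and $\pi$ restricts to a finite surjective map $E_i \to W_i := \pi(E_i)$ onto an irreducible closed subspace $W_i \subset X$ of dimension $n-1$. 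By the projection formula, $H_Z^{n-1} \cdot E_i = \deg(\pi|_{E_i}) \cdot (H^{n-1} \cdot W_i)$, which is positive by the inductive hypothesis. Since $E \neq 0$, the second summand is strictly positive, so $H_Z^n \cdot Z > 0$ and the induction closes.

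The technical points to secure are the intersection-theoretic facts for proper algebraic spaces invoked above: non-negativity of the top intersection of nef classes (Kleiman's theorem in the algebraic-space setting of \citep{kollar90:_projec}), the projection formula for the finite maps $\pi$ and $\pi|_{E_i}$, and the stability of nefness under restriction and finite pullback. None of these is difficult, but they must be cited in the correct generality. The genuinely essential ingredient --- and the crux of the argument --- is the perturbation $H - tD$ remaining nef: this is precisely what converts the existence of an effective representative of $\pi^*(D|_Y)$ into a strictly positive contribution on the lower-dimensional components $W_i$, thereby feeding the induction on $\dim Y$. I expect the main subtlety to be bookkeeping the dimensions and degrees so that each $W_i$ genuinely falls under the inductive hypothesis, rather than any deep obstruction.
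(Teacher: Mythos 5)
Your proposal is correct and follows essentially the same argument as the paper: the same Nakai--Moishezon induction on $\dim Y$, the same use of interiority to make $H - tD$ nef, and the same decomposition of the top intersection into a non-negative nef part plus a strictly positive contribution from the prime components of the effective representative, pushed down to $X$ via the projection formula and fed to the inductive hypothesis. The only cosmetic difference is that the paper works with the pullback of $D$ along $Z \to X$ directly rather than restricting to $Y$ first, which amounts to the same thing.
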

\begin{proof}
  The proof follows the proof of \citep[Lemma~4.12]{fedorchuk11:_ample} almost verbatim. In what follows, ``divisor'' means a $\Q$-Cartier divisor. Let $H$ be a divisor in the interior of the nef cone. By the Nakai--Moishezon criterion, it suffices to prove that for every $n$-dimensional closed subspace $Y \subset X$, the number $H^n \cdot Y$ is positive. We induct on $n$; the case $n = 0$ is trivial.

Let $Z \to Y$ be as in the hypothesis and denote by $f$ the finite map $Z \to X$. Say
\[ D \cdot [Z] \equiv \sum_i a_i [Z_i],\]
where $a_i > 0$ and $Z_i \subset Z$ are reduced and irreducible divisors. Let $f_i \from Z_i \to X$ be the restriction of $f$. Then $f_i$ is also a finite map.

  Since $H$ is in the interior of the nef cone, for a sufficiently small $\epsilon > 0$, the divisor $H - \epsilon D$ is nef. Since $H$ and $H-\epsilon D$ are nef, we have
  \[ H^{n-1}(H-\epsilon D) \cdot [Z] \geq 0.\]
  Therefore,
  \begin{align*}
    (\deg f) \cdot H^n \cdot [Y] = H^n \cdot [Z] &\geq \epsilon H^{n-1} D \cdot [Z] \\
    &= \epsilon \sum_i a_i H^{n-1} \cdot [Z_i] \\
    &= \epsilon \sum_i a_i (\deg f_i) \cdot H^{n-1}[f_i(Z_i)] > 0,
  \end{align*}
  where the last inequality is by the induction hypothesis. The induction step is complete.
\end{proof}

We now have the tools to prove \autoref{thm:flip_projectivity}. We recall the statement for the convenience of the reader.
\thmflipprojectivity*
\begin{proof}
  We check that $X = \o{\sp T}^l_{g;1}$ satisfies the hypothesis of \autoref{thm:perturbation}. Let $Y \subset X$ be an irreducible closed subspace. Choose a finite surjective map $Z \to Y$ such that $Z$ is a normal scheme and $Z \to X$ lifts to $Z \to \o{\orb T}_{g;1}^l$, given by a family $(\pi \from P \to Z; \sigma; \phi \from C \to P)$. Set $\Sigma = \br(\phi) \subset P$. Then $\Sigma$ is a $\pi$-flat divisor of degree $b$, disjoint from $\sigma$. By passing to a finite cover of $Z$ if necessary, assume that we have three disjoint sections $\tau_1, \tau_2, \tau_3 \from Z \to C$ over $\sigma$ and sections $\sigma_1, \dots, \sigma_b \from Z \to P$ such that
  \[ \Sigma = \sigma_1(Z) + \dots + \sigma_b(Z)\]
  as divisors on $P$.  Observe that all the $\sigma_i$ are disjoint from $\sigma$, and hence are linearly equivalent to each other. Furthermore, $\pi_*[\sigma_i^2] = -\pi_*[\sigma^2]$.

  We exhibit a divisor class on $X$ whose pullback to $Z$ is nonzero and effective. 
\begin{asparadesc}
\item[Case 1: $\sigma_i$ are not all coincident.] Without loss of generality, $\sigma_1 \neq \sigma_2$ at a generic point of $Z$. If $\sigma_1(z) \neq \sigma_2(z)$ for all $z \in Z$, then we have three disjoint sections $\sigma, \sigma_1$ and $\sigma_2$ of the $\P^1$ bundle $P \to Z$. Hence $P \to Z$ is trivial and $\Sigma \subset P$ is a constant family. In other words, the map $Z \to \o{\orb T}_{g;1}^l$ lies in a geometric fiber of $\br \from \st H^3 \to \st M$. By \citep[\autoref*{p1:thm:lambda_ample_on_fibers}]{deopurkar12:_compac_hurwit}, the pullback of $-\lambda$ is ample on $Z$. In particular, some multiple of $-\lambda$ pulls back to a nonzero and effective divisor.
  
  If $\sigma_1(z) = \sigma_2(z)$ for some $z \in Z$, then $\pi_*(\sigma_1 \cdot \sigma_2)$ is a nonzero and effective divisor on $Z$. Since $\sigma_1 \sim \sigma_2 \sim -\sigma$, the divisor $\pi_*(\sigma_1 \cdot \sigma_2)$ is equivalent to the pullback of the divisor $-\sigma^2$ on $\o{\sp T}_{g;1}^l$.

\item[Case 2: $\sigma_i$ are all coincident.] Say $\sigma_i = \zeta$ for $i=1,\dots, b$. In this case, we have a family of covers with concentrated branching. We begin as in the proof of \autoref{thm:crimp_positivity}.

 Let the splitting type of the singularity over a generic $z \in Z$ be $(m,n)$, with $m < n$. Let $\tw C \to C$ be the normalization. By \citep{teissier80:_resol_i}, the fibers of $\tw C \to Z$ are the normalizations of the corresponding fibers of $C \to Z$. In particular, $\tw C \cong P \sqcup P \sqcup P$. Set 
  \[ F = \phi_*O_C/ O_P \text{ and } E = \tw\phi_*O_{\tw C}/ O_P \cong O_P^{\oplus 2}.\]
  We have inclusions
  \[ I_\zeta^n E \subset F \subset I_\zeta^m E.\]
  Set
  \[ \o F = F/I^n_\zeta E \text{ and } \o E = I^m_\zeta E/I^n_\zeta E.\]
  Then we have an induced map $\o F \to \o E$. Also, see that $\o E \cong (I^m_\zeta/I^n_\zeta)^{\oplus 2}$. Since the generic splitting type of the singularity is $(m, n)$, there is a projection $E \to I^m_\zeta/I^n_\zeta$ such that $\o F \to I^m_\zeta/I^n_\zeta$ is an isomorphism over the generic point of $Z$. Suppose $\o F \to I^m_\zeta/I^n_\zeta$ is not an isomorphism over all of $Z$. Then we get a map of line bundles on $Z$:
  \[ \det\pi_*\o F \to \det \pi_* (I^m_\zeta/I^n_\zeta)\]
  whose vanishing locus is a nonzero effective divisor. The class of this vanishing locus can be expressed as a pullback of $c_1^2$ and $c_2$; in fact, it is precisely $D_{n-m}$, as computed in the proof of \autoref{thm:crimp_positivity}.

  We are thus left with the case where $\o F \to I_\zeta^m/I_\zeta^n$ is an isomorphism. In this case, $\o F$ and $\o E$ are vector bundles on $(n-m)\zeta$ of rank one and two respectively. The map $\o F \to \o E$ is just a global section $f$ of $\o V = \sh Hom_{(n-m)\zeta}(\o F, \o E) \cong O_{(n-m)\zeta}^{\oplus 2}$. The restriction $f|_\zeta$ is a section of $O_\zeta^{\oplus 2}$, and hence it is constant. We are thus dealing with a family of covers with concentrated branching, a constant $\mu$ invariant and a constant principal part.

Set $G = \Isom_Z((P, \zeta, \sigma), (\P^1, 0, \infty))$. Then $G \to Z$ is a $\G_m$ torsor. We have a canonical isomorphism 
  \[ (P_G, \zeta_G, \sigma_G) \isom (\P^1 \times G, 0 \times G, \infty \times G).\]
  Let $x$ be a uniformizer of $\P^1$ around $0$. Then $\o V_G \cong {(k[x]/x^{n-m})}^{\oplus 2} \otimes_k O_G$. We can interpret the global section $f_G$ of $\o V_G$ as a $\G_m$ equivariant map $\phi_G \from G \to (k[x]/x^{n-m})^{\oplus 2}$, where $\G_m$ acts on the $k$ vector space $(k[x]/x^{n-m})^{\oplus 2}$ by $t \from x^i \mapsto t^ix^i$. Since the restriction of $f_G$ to $\zeta$ is constant, $\phi_G$ has the form $\phi_G = (c+\psi_G)$, where $c \in k^{\oplus 2}$ is a constant and $\psi_G \from G \to (xk[x]/x^{n-m})^{\oplus 2}$. Explicitly, over a point $b \in G$, the subalgebra $O_{C_b} \subset O_{\tw C_b}$ is generated as an $O_{P_b}$ module, locally around $0$, by 
  \[ 1, x^n O_{\tw C_b} \text{ and } x^m (c + \psi_G(b)).\]
  Since the Maroni invariant of the resulting cover is less than $n-m$, we conclude that $\psi_G(b) \neq 0$ for any $b \in G$. Furthermore, since the map $Z \to \o{\sp T}_{g;1}^l$ is quasi-finite, so is the map $\psi_G \from G \to (xk[x]/x^{n-m})^{\oplus 2}$. We thus have a finite map
  \[ \psi \from Z \to [(xk[x]/x^{n-m})^{\oplus 2} \setminus 0 / \G_m],\]
  where the right side is a weighted projective stack. We conclude that $\psi^*O(1)$ is ample. But $\psi^*O(-1)$ is the line bundle associated to $G \to Z$, which is $P \setminus \sigma \to Z$. Therefore,
  \begin{align*}
    c_1(\psi^* O(1)) &= c_1(\zeta^* O_P(-\zeta)) \\
    &= \pi_*(-\zeta^2) = \pi_*(\sigma^2).
  \end{align*}
  In particular, the pullback to $Z$ of some multiple of the divisor $\sigma^2$ on $\o{\sp T}_{g;1}^l$ is effective.
\end{asparadesc}
\end{proof}

As a result of the positivity of the interior of the cone spanned by $D_l$ and $D_{l+2}$, we can easily deduce the following.
\begin{proposition}\label{thm:flip_K}
  Let $0 < l < g$ and $l \equiv g \pmod 2$. Consider the map $\beta_l \from \o{\sp T}_{g;1}^l \dashrightarrow \o{\sp T}_{g;1}^{l-2}$. Then $\Exc(\beta_l)$ is covered by $K$-negative curves. If $l > 0$, then $\Exc(\beta_l^{-1})$ is covered by $K$-positive curves.
\end{proposition}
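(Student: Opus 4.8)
The plan is to produce explicit curves sweeping out the two exceptional loci and to compute their intersections with $K$ using the formula of \autoref{thm:canonical} together with the divisor relations of \autoref{thm:divisor_relations}. Recall from \autoref{thm:flips} that $\Exc(\beta_l)$ is an open subset of the Maroni locus $\st T_{g;1}(l)$ (covers of Maroni invariant exactly $l$), while $\Exc(\beta_l^{-1})$ is an open subset of the concentrated-branching locus $\st T^\bullet_{g;1}(l-2)$ (covers with concentrated branching and $\mu$ invariant $l-2$). The key observation is that the extremal curve constructions already appear in the proof of \autoref{thm:flip_nef}: the family with constant Maroni invariant and constant cross-ratio (the edge $\langle D_l\rangle$) sweeps out $\Exc(\beta_l)$, and the family with concentrated branching, constant $\mu$ invariant $l-2$ and constant principal part (the edge $\langle D_{l-2}\rangle = \langle D_{(l-2)+2}\rangle$, i.e.\ the edge $\langle D_l\rangle$ of $\o{\sp T}^{l-2}_{g;1}$) sweeps out $\Exc(\beta_l^{-1})$.

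First I would treat $\Exc(\beta_l)$. By \autoref{rem:generic_zero}, any curve $R$ of the first kind satisfies $R \cdot D_l = 0$ and $R \cdot D_{l+2} > 0$; moreover such curves cover the Maroni locus, since the construction in the proof of \autoref{thm:flip_nef} (blowing up a point of a generically \'etale triple cover of $\P^2$ with the prescribed splitting type) produces, as the base point varies, enough curves to sweep out $\st T_{g;1}(l)$. It then suffices to check the single numerical fact that $R \cdot K < 0$. Writing $D_l = (4c_2 - c_1^2) + (2l/b)^2 c_1^2$ and using $R\cdot D_l = 0$ to solve for $R\cdot c_2$ in terms of $R\cdot c_1^2$, I would substitute into $K = -\tfrac{2(g+3)(2g+3)}{(g+2)^2}c_1^2 + 6c_2$ from \autoref{thm:canonical}. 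Since these curves have $\Br^2 = 4c_1^2 \geq 0$ and in fact $c_1^2 > 0$ on the generic such curve (the branch divisor moves), the sign of $R\cdot K$ reduces to a rational inequality in $g$ and $l$ valid for $0 < l < g$; this is the content to verify.

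Symmetrically, for $\Exc(\beta_l^{-1})$ I would use the family of the second kind from the proof of \autoref{thm:flip_nef}, namely the concentrated-branching family with constant $\mu$ invariant $l-2$ and constant principal part. By \autoref{rem:crimp_zero}, such a curve $R'$ has $R'\cdot D_{l-2} = 0$ and $R'\cdot D_l > 0$, and these curves sweep out the locus $\st T^\bullet_{g;1}(l-2)$ which contains $\Exc(\beta_l^{-1})$ as an open subset. Here $\Br^2 = 4c_1^2 < 0$, so $c_1^2 < 0$, and the same substitution of $R'\cdot D_{l-2}=0$ into the canonical formula yields $R'\cdot K > 0$ after checking the reversed rational inequality.

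The main obstacle I anticipate is purely bookkeeping: verifying that the two rational inequalities in $(g,l)$ come out with the claimed signs across the full range $0 < l < g$, $l \equiv g \pmod 2$, and making sure the degenerate boundary cases ($l$ near $0$ or near $g$, and the excluded value $g=3$) do not spoil the sign or the genericity of $c_1^2 \neq 0$. A secondary point requiring care is confirming that the sweeping families genuinely cover the \emph{entire} exceptional loci (not merely a proper subvariety), so that ``covered by $K$-negative (resp. $K$-positive) curves'' holds as stated; for this I would lean on the irreducibility of $\st T_{g;1}(l)$ and $\st T^\bullet_{g;1}(l-2)$ established in \autoref{thm:dim_maroni} and \autoref{thm:dim_mu}, together with the fact that the cross-ratio (resp. principal part) and the remaining moduli can be varied freely in the constructions.
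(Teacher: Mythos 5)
Your overall strategy is the same as the paper's: sweep the two exceptional loci by the extremal curves already constructed in the proof of \autoref{thm:flip_nef} (constant cross-ratio on the Maroni side, constant principal part on the concentrated-branching side), use \autoref{rem:generic_zero} and \autoref{rem:crimp_zero} to see that these curves are null for the appropriate $D$, and then read off the sign of $K$. The paper does this last step by noting that $K$ and $D_{l+2}$ lie on opposite sides of the line $\langle D_l\rangle$; your substitution of the null condition into \autoref{thm:canonical} is equivalent and works. In fact your worry about "rational inequalities in $(g,l)$" is unfounded: if $R\cdot D_\mu = 0$ then
\[
R\cdot K \;=\; \frac{c_1^2\cdot R}{2(g+2)^2}\,\bigl(-(5g^2+24g+24)-3\mu^2\bigr),
\]
so $R\cdot K$ has the sign opposite to $c_1^2\cdot R$ for \emph{every} $g$ and $\mu$; there are no boundary cases. (Strict positivity of $c_1^2\cdot S$ on the Maroni-side curves is also automatic, since $c_1^2\cdot S$ is a positive multiple of $S\cdot(D_{l+2}-D_l)=S\cdot D_{l+2}>0$.)

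There is, however, a genuine error on the $\Exc(\beta_l^{-1})$ side. You identify $\Exc(\beta_l^{-1})$ with the locus of covers with concentrated branching and $\mu$ invariant $l-2$. This is wrong: a cover with concentrated branching and $\mu = l-2$ is not $(l-2)$-balanced — it violates condition (2) of \autoref{def:l-balanced} — so it does not lie in $\o{\sp T}_{g;1}^{l-2}$ at all, and a family of such covers cannot sweep anything inside that space. The correct description, which is the one used in the proof of \autoref{thm:flips}, is that $\Exc(\beta_l^{-1})\subset \o{\sp T}_{g;1}^{l-2}$ consists of covers with concentrated branching and $\mu$ invariant exactly $l$: these are precisely the points of $\o{\sp T}_{g;1}^{l-2}$ failing the condition $\mu > l$ required for membership in $\o{\sp T}_{g;1}^{l}$. (The paper's own proof of this proposition says "$\mu$ invariant $l+2$" because it views the second locus inside $\o{\sp T}_{g;1}^{l}$, as the exceptional locus of the inverse of the incoming map $\o{\sp T}_{g;1}^{l+2}\dashrightarrow \o{\sp T}_{g;1}^{l}$; under neither convention is the invariant $l-2$.) Your slip is visible in your own intersection numbers: on a concentrated-branching family $c_1^2 = \Br^2/4 < 0$, so $l'\mapsto R'\cdot D_{l'}$ is strictly decreasing, and $R'\cdot D_{l-2}=0$ would force $R'\cdot D_l<0$, contradicting both your claim $R'\cdot D_l>0$ and the nefness of $D_l$ on $\o{\sp T}_{g;1}^{l-2}$ from \autoref{thm:flip_nef}. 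The repair is immediate: take the concentrated-branching families with constant $\mu$ invariant $l$ and constant principal part; by \autoref{rem:crimp_zero} they satisfy $R'\cdot D_l = 0$ and $R'\cdot D_{l-2}>0$, they do lie in and sweep $\Exc(\beta_l^{-1})$, and your substitution (now with $\mu = l$ and $c_1^2\cdot R'<0$) gives $R'\cdot K>0$ as desired.
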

\begin{proof}
  $\Exc(\beta_l)$ is the locus of $(P; \sigma; \phi \from C \to P)$ where $\phi$ has Maroni invariant $l$. This locus is covered by curves $S$ in which the cross-ratio is constant. Similarly $\Exc(\beta_l^{-1})$ is the locus of $(P; \sigma; \phi \from C \to P)$ where $\phi$ has concentrated branching and $\mu$ invariant $l+2$. For $l > 0$, this locus is covered by curves $T$ in which the principal part is constant. By \autoref{rem:generic_zero} (resp. \autoref{rem:crimp_zero}), the divisor $D_l$ (resp. $D_{l+2}$) is zero on such $S$ (resp. $T$). Since $D_{l+2}$ and $K$ are on the opposite sides of the line spanned by $D_{l}$ in $\Pic_\Q(\o T_{g;1}^l)$, the claim follows.
\end{proof}

\section{The final model}\label{sec:flip_final}
In this section, we prove that for even $g$, the final model $\o{\sp T}_{g;1}^0$ is Fano and for odd $g$, the final model $\o{\sp T}_{g;1}^1$ is a Fano fibration over $\P^1$.

\subsection{The case of even $g$}\label{sec:final_even}
Let $g = 2(h-1)$, where $h \geq 1$. Fix an identification $\P^1 = \proj k[X,Y]$. Set $0 = [0:1]$ and $\infty = [1:0]$. Let $G = \Aut(\P^1, \infty)$; this is the group of affine linear transformations 
\[\mu_{\alpha,\beta} \from (X,Y) \mapsto (X+\beta Y,\alpha Y),\]
where $\alpha \in k^*$ and $\beta \in k$. Let $\Lambda$ be the two dimensional $k$ vector space $\Lambda = \ker (\tr \from k^{\oplus 3} \to k)$, where $\tr$ is the sum of the three coordinates. $\Lambda$ should be thought of as the space of traceless functions on $\{1,2,3\} \times \spec k$. Set $\Gamma = \Lambda \otimes_k O_{\P^1}(h)$ and
\begin{align*}
  V &= H^0\left(\Sym^3\Gamma \otimes_{\P^1} \det \dual \Gamma \right)\\
  &= (\Sym^3(\Lambda) \otimes_k \det \dual \Lambda) \otimes_k H^0(O_{\P^1}(h)).
\end{align*}
By the structure theorem of triple covers (\autoref{thm:structure}), balanced triple covers of $\P^1$ of arithmetic genus $g$ correspond precisely to elements of $V$. Note that $V$ admits a natural action of $\Gl(\Lambda) \times G$. Indeed, $\Gl(\Lambda) = \Gl_2$ acts naturally on the first factor in $V$, whereas $G$ acts on the second factor by
\[ \mu_{\alpha,\beta} \from p(X,Y) \mapsto p\circ\mu_{\alpha,\beta}^{-1}(X,Y).\]

Let $v_1, v_2 \in V$ be two points and $C_1 \to \P^1$ and $C_2 \to \P^1$ the corresponding balanced triple covers. Using the point $\infty \in \P^1$ as the additional marked point, treat them as marked triple covers $(C_i \to \P^1; \infty)$. We observe that these two marked covers are isomorphic if and only if $v_1$ and $v_2$ are related by the action of $\Gl_2 \times G$. Thus, we might expect $\o{\orb T}_{g;1}^0$ to be the quotient $[V/\Gl_2 \times G]$. This is not quite true since not all elements of $V$ give an element of $\o{\orb T}_{g;1}^0$. Firstly, the cover must be \'etale over $\infty$ and secondly, it must not have $\mu$ invariant $0$. Thus, we expect
\[ \o{\orb T}_{g;1}^0 = [U / \Gl_2\times G],\]
for a suitable open $U \subset V$. In what follows, we prove that this is indeed the case. Along the way, we also simplify the presentation $[U / \Gl_2\times G]$.

The first step is to exhibit a morphism $\o{\orb T}_{g;1}^0 \to [V/\Gl_2 \times G]$. Let $S$ be a scheme and $S \to \o{\orb T}_{g;1}^0$ a morphism given by $(P; \sigma; \phi \from C \to P)$. Let $E = \dual{(\phi_* O_C/O_P)}$. By the structure theorem for triple covers (\autoref{thm:structure}), the cover $C \to P$ gives a global section
$v$ of $\Sym^3(E) \otimes \det \dual E$. Set 
\[T = \Isom_S(\sigma^* E, \Lambda) \times_S \Isom_S((P, \sigma), (\P^1, \infty)).\]
Then $T \to S$ is a $\Gl_2 \times G$ torsor. Over $T$, we have canonical identifications 
\[ \sigma_T^* E_T \isom \Lambda \otimes_k O_T \text{ and } (P_T, \sigma_T) \isom (\P^1 \times T, \infty \times T).\]
Since $C \to P$ is a family of balanced triple covers, $E$ is fiberwise isomorphic to $\Gamma$. On $T$, the isomorphism $\sigma^*E_T \isom \Lambda \otimes_k O_T$ gives a canonical isomorphism
\[ E_T \isom  \Gamma_T.\]
Thus, we may treat $v_T$ as a global section of $\Sym^3\Gamma_T \otimes \det \dual \Gamma_T$, or equivalently as a map $T \to V$. By construction, this map is $\Gl_2 \times G$ equivariant. We thus have a morphism
\begin{equation}\label{eqn:final_even_map}
  q \from \o{\orb T}_{g;1}^0 \to [ V / \Gl_2 \times G].
\end{equation}
\begin{proposition}\label{thm:final_even_map}
  The morphism $q$ in \eqref{eqn:final_even_map} is representable and an injection on $k$-points.
\end{proposition}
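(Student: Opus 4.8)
The plan is to treat the two claims separately. For representability, I would use the standard criterion that a morphism of algebraic stacks (in the sense of \citep{laumon00:_champ}) is representable precisely when it is faithful, that is, injective on automorphism groups of geometric points; since we work over an algebraically closed field with everything of finite type, it suffices to check this on $k$-points. The automorphism group of the point of $[V/\Gl_2 \times G]$ attached to $v \in V$ is the stabilizer $\operatorname{Stab}_{\Gl_2\times G}(v)$, and by the very construction of $q$ through the torsor $T = \Isom(\sigma^*E,\Lambda)\times \Isom((P,\sigma),(\P^1,\infty))$, an automorphism $(\psi^P,\psi^C)$ of the marked cover $(\phi\from C \to P;\sigma)$ acts on $T$ and thereby maps to an element of this stabilizer. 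All of representability thus reduces to showing that the resulting map $\Aut(\phi\from C\to P;\sigma)\to \operatorname{Stab}_{\Gl_2\times G}(v)$ is injective.

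To prove this injectivity, suppose $(\psi^P,\psi^C)$ lands on the identity. The $G$-component of the image records the automorphism of $(P,\sigma)$ induced by $\psi^P$; since $\Aut(P,\sigma)=G$ acts faithfully on $\Isom((P,\sigma),(\P^1,\infty))$, triviality forces $\psi^P=\id_P$, so that $\psi^C$ is a deck transformation of $C\to P$ over the identity. The $\Gl(\Lambda)$-component records the action of $\psi^C$ on $\sigma^*E\cong\Lambda$, equivalently on $(\phi_*O_C/O_P)|_\sigma \cong k^{\oplus 3}/k$, which is nothing but the permutation that $\psi^C$ induces on the three points of the \'etale fiber $C|_\sigma=\{t_1,t_2,t_3\}$. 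Because a nontrivial permutation of three coordinates acts nontrivially on $k^{\oplus 3}/k$, triviality of this component means $\psi^C$ fixes $t_1,t_2,t_3$. Now $C\to P$ is \'etale over the connected open $U=\P^1\setminus\br\phi\ni\sigma$, and an automorphism of the finite \'etale cover $C|_U\to U$ is determined by its effect on the single fiber $C|_\sigma$ (each connected component of $C|_U$ surjects onto $U$, hence meets $C|_\sigma$, and a pointed deck transformation of a connected \'etale cover is the identity). Therefore $\psi^C=\id$, and the map is injective.

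For injectivity on $k$-points, I would unwind what it means for $(\phi_1\from C_1\to P_1;\sigma_1)$ and $(\phi_2\from C_2\to P_2;\sigma_2)$ to have the same image: their associated sections $v_1,v_2\in V$ lie in a single $\Gl_2\times G$-orbit, say $(A,g)\cdot v_1=v_2$. Here $g\in G$ furnishes an isomorphism of $(\P^1,\infty)$ and $A\in\Gl(\Lambda)$ an automorphism of $\Gamma=\Lambda\otimes O_{\P^1}(h)$, so that $(A,g)$ gives an isomorphism of pairs $(\Gamma,v_1)\isom(\Gamma,v_2)$ covering $g$. Feeding this through the structure theorem \autoref{thm:structure}, that is, the equivalence $\st B\isom \st A_3$, produces an isomorphism $C_1\isom C_2$ over the base isomorphism $g$; as $g$ fixes $\infty=\sigma$, this is an isomorphism of marked covers, so the two $k$-points coincide.

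The main point to get right is the bookkeeping in the second paragraph: one must check that the abstractly defined map on automorphism groups really is the pair consisting of the action of $\psi^P$ on $(P,\sigma)$ and the permutation of $C|_\sigma$ induced by $\psi^C$, which requires chasing the construction of $q$ through the torsor $T$ and the identification $\dual E|_\sigma = (\phi_*O_C/O_P)|_\sigma = k^{\oplus 3}/k$. Once that identification is in hand, injectivity of the map on automorphism groups is immediate from the rigidity of finite covers, and injectivity on $k$-points is a direct application of \autoref{thm:structure}; the remaining verifications are routine.
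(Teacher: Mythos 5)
Your proof is correct, and its skeleton matches the paper's: both reduce representability to injectivity of $\Aut_p \to \Aut_{q(p)}$ checked at $k$-points, identify the image of an automorphism $(\psi^P,\psi^C)$ as the pair consisting of its action on $\sigma^*E \cong \Lambda$ and the induced automorphism of $(\P^1,\infty)$, and get injectivity on $k$-points by running the structure theorem (\autoref{thm:structure}) backwards on an orbit relation $(A,g)\cdot v_1 = v_2$. Where you genuinely diverge is in the heart of the faithfulness step. The paper argues sheaf-theoretically: since $E \cong O_{\P^1}(h)^{\oplus 2}$ is \emph{balanced}, every endomorphism of $E$ is constant, so $\alpha|_\sigma = \id$ forces $\alpha = \id$, and then the sequence $0 \to O_P \to \phi_*O_C \to \dual E \to 0$ forces $\psi_2 = \id$. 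You argue geometrically: triviality of the $\Gl(\Lambda)$-component means the permutation of the \'etale fiber $C|_\sigma$ is trivial (faithfulness of the standard representation of $\s_3$ on $k^{\oplus 3}/k$), and then rigidity of pointed finite \'etale covers gives $\psi^C = \id$ over $U = \P^1 \setminus \br\phi$; to conclude $\psi^C = \id$ on all of $C$ you should say explicitly that $C$ is reduced and every component dominates $\P^1$, so $C|_U$ is dense and the identity extends --- routine, but it is a step your write-up skips. The trade-off: your route never uses balancedness of $E$ beyond the definition of $q$ itself, so it proves faithfulness for any marked triple cover \'etale over the marking, a more robust statement; the paper's route is shorter given the setup, since the balancedness hypothesis built into $\o{\orb T}_{g;1}^0$ kills $\alpha$ in one line and avoids any passage from the \'etale locus to the whole curve.
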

Recall that $k$-points of $[X/H]$ are just orbits of the action of $H(k)$ on $X(k)$.
\begin{proof}
  Let $p \from \spec k \to \o{\orb T}_{g;1}^0$ be a point. For representbility, we must show that the map $\Aut_p \to \Aut_{q(p)}$ is injective. Say $p$ is given by $(P; \sigma; \phi \from C \to P)$. Set $E = \dual{(\phi_*O_C/O_P)}$ and pick identifications $E|_\sigma \cong \Lambda$ and $(P, \sigma) \cong (\P^1, \infty)$. Consider an element $\psi \in \Aut_p$. Then $\psi$ consists of $(\psi_1, \psi_2)$, where $\psi_1 \from (\P^1;\infty) \to (\P^1; \infty)$ is an automorphism and $\psi_2 \from C \to C$ is an automorphism over $\psi_1$. To understand the image of $\psi$ in $\Aut_{q(p)}$, consider the map on algebras
  \[\psi_2^\# \from \psi_1^*\phi_* O_C \to \phi_*O_C\]
  dual to $\psi_2 \from C \to C$. The map $\psi_2^\#$ induces a map $\alpha \from E \to \psi_1^* E$. Then the image of $\psi = (\psi_1,\psi_2)$ is just $(\alpha|_\sigma, \psi_1)$.

  Suppose that $(\alpha|_\sigma, \psi_1) = \id$. Then $\psi_1 = \id$. Furthermore, the fact that $E$ is balanced and $\alpha|_\sigma = \id$ implies that $\alpha = \id$. From the sequence
  \[ 0 \to O_P \to \phi_* O_C \to \dual E \to 0, \]
  it follows that $\psi_2 = \id$. Thus $\Aut_p \to \Aut_{q(p)}$ is injective.
  
  It is clear that $q$ is injective on $k$-points---two sections $v_1, v_2$ in the same orbit of $\Gl_2 \times G$ clearly give isomorphic marked covers.
\end{proof}

\begin{theorem}\label{thm:final_even}
  Let $g = 2(h-1)$, where $h \geq 1$. Then
  \[\o{\orb T}_{g;1}^0 \cong [(\A^{2g+3}\setminus 0) / (\s_3 \times \G_m)],\]
  where $\G_m$ acts by weights 
  \[1,2,\dots,h, 1,2, \dots, h, 1,2, \dots, h, 2,3 \dots, h.\]
  The space $\o{\sp T}_{g;1}^0$ is the quotient of the weighted projective space 
  \[\P\left(1,\dots,h, 1, \dots, h, 1, \dots, h, 2, \dots, h\right)\]
  by an action of $\s_3$. In particular, $\o {\sp T}_{g;1}^0$ is a unirational, Fano variety.
\end{theorem}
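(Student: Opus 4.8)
The plan is to promote the map $q$ of \autoref{thm:final_even_map} to an isomorphism with an explicit quotient stack, and then to cut that quotient down by choosing a slice for all but a one-dimensional piece of the acting group. Let $U\subset V$ be the $\Gl_2\times G$-invariant open locus of sections $v$ whose associated triple cover $(C_v\to\P^1;\infty)$ is a connected curve of genus $g$, \'etale over $\infty$, of Maroni invariant $0$, and not of concentrated branching with $\mu=0$; that is, exactly the $v$ that define an object of $\o{\orb T}_{g;1}^0$. I would show that $q$ restricts to an isomorphism $\o{\orb T}_{g;1}^0\isom[U/\Gl_2\times G]$ by building an inverse by descent: a map $S\to[U/\Gl_2\times G]$ is a $\Gl_2\times G$-torsor $T\to S$ together with an equivariant map $T\to U$, and over $T$ the tautological identifications $(P_T,\sigma_T)\cong(\P^1\times T,\infty)$ and $E_T\cong\Gamma_T$, fed through the structure theorem \autoref{thm:structure}, build a marked triple cover over $T$ which, being equivariant, descends to an object of $\o{\orb T}_{g;1}^0(S)$. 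Combined with the representability and injectivity on points already established in \autoref{thm:final_even_map}, this gives the isomorphism. As a sanity check, $\dim V-\dim(\Gl_2\times G)=4(h+1)-6=2g+2=\dim\o{\orb T}_{g;1}^0$.

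\emph{Slicing the group action.} Write $W=\Sym^3\Lambda\otimes\det\dual\Lambda$ for the four-dimensional space of cubic forms, so $\Sym^3\Gamma\otimes\det\dual\Gamma=W\otimes O_{\P^1}(h)$ and a section is $p=\sum_{i=0}^h p_i\,t^i$ in the local coordinate $t$ at $\infty$, with $p_i\in W$ and $p_0=p|_\infty$. \'Etaleness over $\infty$ says precisely that $p_0$ is a nondegenerate binary cubic. Now $\Gl(\Lambda)$ acts on $W$ with the nondegenerate cubics forming a single open orbit, so I can use all of $\Gl(\Lambda)$ to move $p_0$ to a fixed standard cubic $c_0$ cutting out the symmetric configuration $\{1,2,3\}\subset\P\Lambda$. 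The stabilizer of the vector $c_0$ in $\Gl(\Lambda)$ is the set of permutation matrices of $\s_3$ rescaled so as to fix $c_0$, a group abstractly isomorphic to $\s_3$. As $\s_3$-representations $W\cong\mathrm{triv}\oplus\mathrm{sign}\oplus\mathrm{std}$, with $kc_0$ the sign line and the complement $\mathrm{triv}\oplus\mathrm{std}$ the permutation representation $k^3$; thus this residual $\s_3$ fixes $kc_0$ and permutes a triple of lines $\langle w_1\rangle,\langle w_2\rangle,\langle w_3\rangle$. Finally the subgroup $\{\mu_{1,\beta}\}\subset G$ fixes $p_0$ and acts on the next coefficient by $p_1\mapsto p_1-h\beta\,c_0$; since $h\ge1$ there is a unique $\beta$ killing the $c_0$-component of $p_1$. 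The group surviving both normalizations is $\s_3\times\G_m$, where $\G_m=\{\mu_{\alpha,0}\}$ acts by $p_i\mapsto\alpha^{-i}p_i$, i.e. with weight $i$ on $p_i$, and hence preserves the slice (it fixes $p_0$, of weight $0$).

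\emph{Reading off the answer.} On the slice the free coordinates are the $p_i\in W$ for $1\le i\le h$ with the single $c_0$-component of $p_1$ deleted. In the basis $w_1,w_2,w_3,w_4=c_0$ these are $p_{i,a}$ for $a\in\{1,2,3\}$ and $1\le i\le h$ (three blocks of $\G_m$-weights $1,\dots,h$, permuted by $\s_3$) together with $p_{i,4}$ for $2\le i\le h$ (one block of weights $2,\dots,h$, on which $\s_3$ acts by the sign). This is exactly $\A^{2g+3}=\A^{4h-1}$ with the stated weights and $\s_3$-action. It remains to match $U$ with the complement of the origin: any nonzero slice point still has $p_0=c_0$ nondegenerate and so is a valid $0$-balanced object, while the origin is $p=c_0\,X^h$, whose discriminant is $\Disc(c_0)\,X^{4h}$, i.e. the cover with branching concentrated at $0$ and balanced singularity $(h,h)$, so $\mu=0$ --- precisely the disallowed cover. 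By \autoref{thm:dim_mu} the $\mu=0$ concentrated locus has dimension $-1$, hence is this one point (carrying a $\G_m$ of automorphisms), which confirms $U\cong\A^{2g+3}\setminus 0$ and proves the first assertion.

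\emph{Coarse space, positivity, and the main obstacle.} Since $\s_3$ and $\G_m$ commute, $[(\A^{2g+3}\setminus0)/(\s_3\times\G_m)]=[\,[(\A^{2g+3}\setminus0)/\G_m]/\s_3\,]$; the coarse space of the inner quotient is the weighted projective space $\P(1,\dots,h,1,\dots,h,1,\dots,h,2,\dots,h)$, so passing to coarse spaces presents $\o{\sp T}_{g;1}^0$ as its quotient by $\s_3$. A weighted projective space is a finite quotient of an ordinary projective space, hence rational, so $\o{\sp T}_{g;1}^0$ is unirational; and as weighted projective space is Fano with $-K$ descending to an ample class on the $\s_3$-quotient, $\o{\sp T}_{g;1}^0$ is Fano with quotient singularities. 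I expect the real work to lie in the identification $U\cong\A^{2g+3}\setminus0$: one must rule out that any reducible or otherwise degenerate but nonzero section slips into the excised set, which is exactly what the $(-1)$-dimensional (single-orbit) count of \autoref{thm:dim_mu}, together with the inequality $M\le\mu$ of \autoref{thm:Maroni_less_than_mu}, is there to guarantee. Verifying that the stabilizer of $c_0$ is genuinely $\s_3$ with the correct sign character, and that the $\beta$-translation is compatible with the residual $\s_3\times\G_m$, are the other delicate points; the weight bookkeeping itself is then routine.
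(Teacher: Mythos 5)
Your overall strategy coincides with the paper's: the same map $q$ from \autoref{thm:final_even_map}, the same slice (your normalization ``kill the $c_0$-component of $p_1$'' is literally the paper's condition $2a_1+2d_1-b_1-c_1=0$, since that functional is the unique $\s_3$-invariant functional on $W$ up to scale and it does not vanish on $c_0$), and the same identification of the excised locus with a single point. The genuine difference is the endgame: you build an inverse to $q$ by descent along the $\Gl_2\times G$-torsor and pin down the allowed open set $U$ directly using \autoref{thm:dim_mu}, whereas the paper never proves surjectivity or constructs an inverse --- it factors $q$ through $[(W\setminus 0)/(\s_3\times\G_m)]$ and concludes by Zariski's main theorem, using that $q$ is a representable proper injection on $k$-points between smooth proper stacks of the same dimension, which then yields a posteriori that every nonzero slice point is $0$-balanced. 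Your route is viable but carries burdens you only gesture at: openness and invariance of your $U$ (this needs the semicontinuity of the Maroni and $\mu$ invariants), effectivity of descent for the family $(P;\sigma;\phi\from C\to P)$, and, at the slicing step, the stack-level statement that $[\mathrm{slice}/(\s_3\times\G_m)]\to[U/(\Gl_2\times G)]$ is an isomorphism rather than merely a bijection on orbits; the paper isolates exactly this point as \autoref{thm:simplify_action}.

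There is also one concrete error in your representation-theoretic bookkeeping. In $W=\Sym^3\Lambda\otimes\det\dual{\Lambda}$ the line $kc_0$ is the \emph{trivial} representation, not the sign: a transposition sends the cubic $st(s+t)$ to $-st(s+t)$, but it also acts by $-1$ on $\det\dual{\Lambda}$, and the two signs cancel. (This is consistent with your own stabilizer computation: the stabilizer of the vector $c_0$ is exactly the set of permutation matrices, with no rescaling needed.) Consequently the complement of $kc_0$ in $W$ is $\mathrm{sign}\oplus\mathrm{std}$, which is the \emph{signed} permutation representation, not the permutation representation $k^3$: the three lines $\langle w_1\rangle,\langle w_2\rangle,\langle w_3\rangle$ are permuted, but basis vectors are carried to one another only up to sign, and the residual block of coordinates along $c_0$ (the weights $2,\dots,h$) is $\s_3$-\emph{fixed}, not acted on by the sign character. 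Your final description of the $\s_3$-action is therefore backwards in both places. Since the theorem as stated does not specify the $\s_3$-action, this does not invalidate the conclusion --- the $\G_m$-weights, the weighted projective space, and the unirational/Fano statements are unaffected --- but the description of the action must be corrected before your write-up is accurate.
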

We use the following lemma in the proof to simplify a group action.
\begin{lemma}\label{thm:simplify_action}
  Let $X$ be a normal variety over $k$ with the action of a connected algebraic group $H$. Let $X' \subset X$ be a reduced and irreducible subvariety and $H' \subset H$ a subgroup such that the action of $H'$ restricts to an action on $X'$. If $[X'/H'] \to [X/H]$ is a bijection on $k$-points, then it is an isomorphism.
\end{lemma}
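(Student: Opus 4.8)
The plan is to show that $f \from [X'/H'] \to [X/H]$ becomes an isomorphism after base change along the smooth atlas $X \to [X/H]$; since being an isomorphism is smooth-local on the target, this suffices. First I would record that $f$ is representable: at a $k$-point $x' \in X'$ the induced map on stabilizers is the inclusion $\operatorname{Stab}_{H'}(x') = H' \cap \operatorname{Stab}_H(x') \into \operatorname{Stab}_H(x')$, which is injective, so $f$ is representable by algebraic spaces. Hence $W := X \times_{[X/H]} [X'/H']$ is an algebraic space, and a direct computation of this fiber product identifies it with the associated bundle $H \times_{H'} X'$, that is, the quotient of $H \times X'$ by the free action $h' \cdot (h,x') = (h h'^{-1}, h' x')$, with the projection $\psi \from W \to X$ given by the action map $[h, x'] \mapsto h \cdot x'$. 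It then remains to show that $\psi$ is an isomorphism.

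Next I would extract the geometric content of the hypothesis and prepare for Zariski's Main Theorem. Because $H$ is connected, the quotient $H/H'$ is connected, and $W \to H/H'$ is a fiber bundle with fiber the irreducible, reduced $X'$; hence $W$ is integral. This is precisely where connectedness of $H$ is used. Reading the assumption that $f$ is a bijection on $k$-points through the representable base change, it says exactly that $\psi$ is a bijection on $k$-points: essential surjectivity of the groupoid of points gives that every $H$-orbit in $X$ meets $X'$, so $\psi$ is surjective; while full faithfulness forces two points $[h_1,x_1'], [h_2,x_2']$ with $h_1 x_1' = h_2 x_2'$ to coincide in $W$, so every fiber of $\psi$ is a single reduced $k$-point. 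Thus $\psi$ is quasi-finite, separated, surjective, and generically injective, and since we are in characteristic zero, generic injectivity of a dominant morphism of integral spaces makes $\psi$ birational.

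Finally I would invoke Zariski's Main Theorem. Factor the quasi-finite separated morphism $\psi$ as $W \stackrel{j}{\into} \o{W} \stackrel{\pi}{\to} X$ with $j$ an open immersion and $\pi$ finite. Since $W$ is integral and dense in $\o{W}$, the space $\o{W}$ is integral and $\pi$ is finite and birational; as $X$ is normal, $\pi$ is an isomorphism. Therefore $\psi \cong j$ is an open immersion, and being surjective it is an isomorphism, so $f$ is an isomorphism by smooth descent. The main obstacle, and the only genuinely delicate point, is the translation of the stacky hypothesis ``bijection on $k$-points'' into honest bijectivity of $\psi$: it must be read as an equivalence of the groupoids of $k$-points, so that stabilizers match rather than merely orbit sets, since otherwise the base change could acquire extra sheets, as $\operatorname{Spec} k \to B\mu_2$ shows. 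Everything else is the standard ``quasi-finite $+$ birational $+$ normal target'' packaging of ZMT, with connectedness of $H$ entering only to guarantee that $W$ is irreducible.
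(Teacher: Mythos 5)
Your proof is correct and follows the same strategy as the paper's: base change along the atlas $X \to [X/H]$, identify the resulting space (the paper uses the smooth cover $X' \times H \to Y$ where you use the fibration $H \times_{H'} X' \to H/H'$, which is the same integrality argument), and then conclude via quasi-finiteness, Zariski's main theorem, and normality of $X$.

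The one place where you go beyond the paper is worth highlighting: your insistence that ``bijection on $k$-points'' be read as an equivalence of groupoids of $k$-points (so that stabilizers match, not just orbits) is not pedantry but is actually necessary. The paper's proof simply asserts that $Y \to X$ is a bijection on $k$-points, which is false under the orbit-only convention that the paper itself states just before \autoref{thm:final_even_map}; indeed, taking $X = X' = \spec k$ with $H = \G_m$ acting trivially and $H' = \{1\}$, the map $\spec k \to B\G_m$ is a bijection on orbits but not an isomorphism, and the base change to the atlas is $\G_m \to \spec k$. As you observe, the fiber of $\psi$ over $x = hx'$ is a torsor-like union indexed by $\mathrm{Stab}_H(x')/\mathrm{Stab}_{H'}(x')$, so stabilizer equality is exactly what kills the extra sheets. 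Reassuringly, the paper's application of the lemma (the Claim inside the proof of \autoref{thm:final_even}) does verify this stronger condition: the injectivity argument there shows that \emph{any} element of $\Gl_2 \times G$ carrying a point of $W$ to a point of $W$ lies in $\s_3 \times \G_m$, which with $w_1 = w_2$ gives precisely the required equality of stabilizers. So your proposal both recovers the paper's argument and patches its one imprecision.
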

\begin{proof}
  The map $[X'/H'] \to [X/H]$ is clearly representable. Set $Y =  X \times_{[X/H]}[X'/H']$. It suffices to prove that $Y \to X$ is an isomorphism. We have the diagram
  \[ 
  \begin{tikzpicture}
    \matrix (m) [matrix of math nodes, row sep=2em, column sep=2em]{
      X' \times H & X' \\
      Y  & {[X'/H']} \\
      X  & {[X/H]}\\
    };
    \path [->, map]
    (m-1-1) edge (m-1-2)
    (m-1-1) edge (m-2-1)
    (m-1-2) edge (m-2-2)
    (m-2-1) edge (m-2-2)
    (m-2-1) edge (m-3-1)
    (m-2-2) edge (m-3-2)
    (m-3-1) edge (m-3-2);    
    \cartsquare{(m-1-1)}{(m-2-2)};
    \cartsquare{(m-2-1)}{(m-3-2)};
  \end{tikzpicture}.
  \]
  The smooth morphism $Y \to [X'/H']$ shows that $Y$ is reduced and the surjective morphism $X' \times H \to Y$ shows that it is irreducible. 
  Furthermore, $Y \to X$ induces a bijection on $k$-points. The quasi-finite map $Y \to X$ can be factored as $Y \into \o Y \to X$, where the first is a dense open inclusion and the second a finite map. Since $X$ is normal, Zariski's main theorem implies that $\o Y \to X$ is an isomorphism. But then $Y \into \o Y$ is a bijection on $k$-points. It follows that $Y = \o Y$.
\end{proof}

\begin{proof}[Proof of \autoref{thm:final_even}]
  Retain the notation at the beginning of \autoref{sec:final_even}. Let $U \subset V$ be the open subset consisting of $v \in V$ whose associated triple cover is \'etale over $\infty$. Then $U$ is invariant under the $\Gl_2 \times G$ action and $q \from \o{\orb T}_{g;1}^0 \to [V/\Gl_2 \times G]$ lands in $[U/\Gl_2 \times G]$.

  We now simplify the presentation $[U/\Gl_2 \times G]$. Choose coordinates on $\Lambda$: say $s = \o{(1,-1,0)}$ and $t = \o{(0,1,-1)}$. Write points of $V$ explicitly as
  \[ v_{s,t} = (a s^3 + b s^2t + c st^2 + d t^3) \otimes (s^* \wedge t^*),\]
  where $a, b, c, d \in H^0(O_{\P^1}(h))$. Let $a = \sum a_i X^{h-i}Y^i$, where $a_i \in k$, and similarly for $b, c$ and $d$. Let $W \subset U$ be the closed subvariety defined by
  \[ v_{s,t}(1,0) = st(t+s) \otimes (s^*\wedge t^*) \text{ and } 2a_1 +2d_1-b_1-c_1 = 0.\]
  The first equation specifies $v$ over $\infty$; the second is the result of imposing the condition that the branch divisor of the triple cover given by $v$ be centered around $0$. Explicitly, the points of $W$ have the form
\begin{align*}
  &\left(\sum_{i=1}^h a_{h-i} X^iY^{h-i}\right) s^3 \otimes (s^* \wedge t^*) \\
  +&\left(X^h + \sum_{i=1}^h b_{h-i} X^iY^{h-i}\right) s^2t \otimes (s^* \wedge t^*) \\
   +&\left(X^h + \sum_{i=1}^h c_{h-i} X^iY^{h-i}\right) st^2 \otimes (s^* \wedge t^*) \\
   +&\left(\sum_{i=1}^h d_{h-i} X^iY^{h-i}\right) t^3 \otimes (s^* \wedge t^*). 
\end{align*}
where $2a_{1} + 2d_{1} -b_{1} - c_{1} = 0$. The action of $\s_3 \subset \Gl_2$ by permuting the three coordinates of $\Lambda \subset k^3$ and the action of $\G_m \subset G$ by scaling $(X,Y) \mapsto (X,tY)$ restrict to actions on $W$.

\begin{claim}
  The map $i \from [W / \s_3 \times \G_m] \to [U / \Gl_2 \times G]$ is an isomorphism
\end{claim}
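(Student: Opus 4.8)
The plan is to apply \autoref{thm:simplify_action} with $X = U$, $H = \Gl_2 \times G$, $X' = W$ and $H' = \s_3 \times \G_m$. The hypotheses on $X$ and $H$ are immediate: $U$ is open in the affine space $V$, hence smooth and in particular normal, while $\Gl_2 \times G$ is connected. The variety $W$ is cut out inside $U$ by the affine-linear equations $v_{s,t}(1,0) = st(t+s)\otimes(s^*\wedge t^*)$ (four conditions on the leading coefficients $a_0,b_0,c_0,d_0$) together with $2a_1 + 2d_1 - b_1 - c_1 = 0$; thus $W$ is an open subset of an affine-linear subspace of $V$, so it is reduced and irreducible. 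That $\s_3 \times \G_m$ restricts to an action on $W$ is a direct check: $\G_m$ scales the coefficient $a_i$ (and likewise $b_i,c_i,d_i$) by a fixed weight depending only on $i$, so it fixes the leading coefficients and scales the linear form $2a_1 + 2d_1 - b_1 - c_1$, while the coordinate permutations $\s_3$ fix the cubic $st(t+s)\otimes(s^*\wedge t^*)$ and the branch divisor, hence the centering condition.

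Everything then reduces to showing that $i$ is a bijection on $k$-points, that is, that each $\Gl_2\times G$-orbit in $U$ meets $W$ in exactly one $\s_3\times\G_m$-orbit. I would establish this through two \emph{independent} normalizations. First, $\Gl_2 = \Gl(\Lambda)$ does not move $\P^1$ and hence fixes $\br(\phi)$, the zero locus of the discriminant of the binary cubic in $(s,t)$, which $\Gl_2$ alters only by a scalar; conversely $\mu_{\alpha,\beta} \in G$ fixes $\infty$ and, as a short expansion of $p \mapsto p\circ\mu_{\alpha,\beta}^{-1}$ shows, acts trivially on the value $v_{s,t}(1,0)$ at $\infty$. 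Thus the two normalizations do not interfere. Using $\Gl_2$ one brings $v_{s,t}(1,0)$ — which has distinct roots since $\phi$ is \'etale over $\infty$ — into the standard form $st(t+s)\otimes(s^*\wedge t^*)$: scalars act on $\Sym^3\Lambda\otimes\det\dual\Lambda$ with weight one and $\PGl_2$ acts transitively on unordered triples of distinct points of $\P(\Lambda)$, so $\Gl_2$ acts transitively on nonzero distinct-root cubics, with the stabilizer of $st(t+s)\otimes(s^*\wedge t^*)$ equal to the coordinate-permutation subgroup $\s_3$. Using $G$ one centers the branch divisor at $0$: writing the action of $\mu_{\alpha,\beta}$ on $\A^1 = \P^1\setminus\{\infty\}$ as $x\mapsto (x+\beta)/\alpha$, the condition $\sum_i x_i = 0$ pins down $\beta$ uniquely and leaves the scaling $\G_m$ free.

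Combining the two steps gives surjectivity, as every $v\in U$ can be carried into $W$. For injectivity, suppose $v_1,v_2\in W$ with $v_2 = (g,\mu)\cdot v_1$. Evaluating at $\infty$ and using that $\mu$ fixes the value there gives $g\cdot\bigl(st(t+s)\otimes(s^*\wedge t^*)\bigr) = st(t+s)\otimes(s^*\wedge t^*)$, so $g\in\s_3$; comparing branch divisors and using that both are centered forces the translation part of $\mu$ to vanish, so $\mu\in\G_m$. Hence $v_1$ and $v_2$ lie in a single $\s_3\times\G_m$-orbit, and $i$ is a bijection on $k$-points. \autoref{thm:simplify_action} then yields that $i$ is an isomorphism.

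I expect the main obstacle to be the precise stabilizer bookkeeping: verifying that the residual group is exactly $\s_3\times\G_m$ rather than something larger. The two delicate points are the transitivity of $\Gl_2$ on distinct-root cubics together with the identification of the stabilizer of $st(t+s)\otimes(s^*\wedge t^*)$ with the coordinate $\s_3$ (which hinges on the scalars of $\Gl_2$ acting with weight one on $\Sym^3\Lambda\otimes\det\dual\Lambda$, so that the scalar stabilizer is trivial), and the check that $\s_3$ genuinely preserves the centering form $2a_1+2d_1-b_1-c_1$, i.e.\ that this linear condition is precisely the coordinate-free statement that the branch divisor is centered at $0$.
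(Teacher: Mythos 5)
Your proposal is correct and takes essentially the same route as the paper: both reduce to a bijection on $k$-points via \autoref{thm:simplify_action}, then use $\Gl_2$ to normalize the cubic at $\infty$ (with stabilizer exactly the coordinate $\s_3$) and the translation part of $G$ to achieve the centering condition, which simultaneously gives surjectivity and injectivity. The only difference is presentational: where you interpret $2a_1+2d_1-b_1-c_1=0$ geometrically as the vanishing of the sum of the branch points (which also handles the $\s_3$-invariance of $W$ and the uniqueness of $\beta$), the paper instead verifies the explicit transformation law \eqref{eqn:centered_branching} in coordinates; both routes are sound, yours modulo the routine discriminant computation you correctly flag.
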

\begin{proof}
  By \autoref{thm:simplify_action}, it suffices to check that it is a bijection on $k$-points.

  We first check that $i$ is injective on $k$-points. Said differently, we want to check that two points of $W$ that are related by the action of $\Gl_2 \times G$ are in fact related by the action of $\s_3 \times \G_m$.
Let $w_1, w_2 \in W$  and $\psi = (\psi_1, \psi_2) \in \Gl_2 \times G$ be such that $w_1 = \psi w_2$. Then the linear isomorphism $\psi_1 \from k\langle s, t\rangle \to k\langle s, t \rangle$ takes $st(s+t) \otimes s^*\wedge t^*$ to itself. It is easy to check that it must lie in the $\s_3 \subset \Gl_2$. Secondly, observe that for $\psi_2:(X,Y) \mapsto (X+\beta Y,\alpha Y)$, we have
\begin{equation}\label{eqn:centered_branching}
\psi_2^{-1}(2a_{1}+2d_{1}-b_{1}-c_{1}) = \alpha(2a_{1}+2d_{1}-c_{1}-d_{1}) - h\beta.
\end{equation}
By the second defining condition for $W$, we must have $\beta = 0$. In other words, $\psi \in \s_3 \times \G_m$.

We now check that $i$ is surjective on $k$-points. Said differently, we want to check that every $\Gl_2\times G$-orbit of $U$ has a representative in $W$. Take a point $v \in U$.  Since $v$ describes a cover \'etale over $\infty$, the homogeneous cubic in $v_{s,t}(1,0)$ has distinct roots. By a suitable linear automorphism of $k\langle s,t \rangle$, the element $v_{s,t}(1,0)$ can thus be brought into the form $st(s+t)\otimes s^*\wedge t^*$. Secondly, \eqref{eqn:centered_branching} shows that we can make $2a_{1}+2d_{1}-b_{1}-c_{1} = 0$ after a suitable translation $(X,Y) \mapsto (X+\beta Y, Y)$. 
\end{proof}

Returning to the main proof, we have a morphism 
\[q \from \o{\orb T}_{g;1}^0 \to [U/\Gl_2 \times G] = [W / \s_3 \times \G_m],\]
which is representable and injective on $k$-points. Denote by $0 \in W$ the point corresponding to $a_i = b_i = c_i = d_i = 0$ for all $i = 1, \dots, h$. Its corresponding cover has concentrated branching over $0 \in \P^1$ and $\mu$ invariant zero. Hence $q$ factors through
\begin{equation}\label{eqn:final_iso}
  q \from \o{\orb T}_{g;1}^0 \to [(W \setminus 0) / (\s_3 \times \G_m)].
\end{equation}
The right side $[(W \setminus 0) / (\s_3 \times \G_m)]$ is smooth and proper over $k$. Indeed, it is a weighted projective stack modulo an action of $\s_3$. Thus, the morphism $q$ in \eqref{eqn:final_iso} is a representable, proper morphism between two smooth stacks of the same dimension which is an injection on $k$-points. By Zariski's main theorem, it must be an isomorphism.

Finally, note that 
\[W \cong \A^{2g+3} = \A(a_1, \dots, a_h, b_1, \dots, b_h, c_1, \dots, c_h, d_2, \dots, d_h);\]
the $d_1$ can be dropped owing to the condition $2a_1+2d_1-b_1-c_1 = 0$. The $\G_m$ acts by weight $i$ on $a_i, b_i, c_i, d_i$, as desired.
\end{proof}

\subsection{The case of odd $g$}\label{sec:final_odd}
Let $g = 2h -1$. In this case, we do not have quite as explicit a description of the final model as in the case of even $g$. Nevertheless, we prove that it is a Fano fibration over $\P^1$.

The morphism to $\P^1$ is defined by the cross-ratio as in \autoref{sec:cross_ratio}. We quickly recall the construction. Set $V = k^{\oplus 3}/k$, to be thought of as the space of functions on $\{1,2,3\} \times \spec k$ modulo the constant functions. Then there is an action of $\s_3$ on $V$ and an induced action of $\s_3$ on $\P_{\sub} V$. The cross-ratio map 
\[\chi \from \o {\orb T}_{g;1}^1 \to [\P_\sub V/\s_3]\]
is defined as follows. Let $S \to \o{\orb T}_{g;1}^1$ be a morphism given by  $(\pi \from P \to S; \sigma; \phi \from C \to P)$. Set $F = \phi_*O_C/O_P$. Since $F$ is fiberwise isomorphic to $O(-h) \oplus O(-h-1)$, we see that $\pi_* (F \otimes O_P(h\sigma))$ is a line bundle on $S$. Consider the map
\begin{equation}\label{eqn:cross_ratio_map}
  \pi_*(F \otimes O_P(h\sigma)) \otimes \sigma^* O_P(-h\sigma) \to \sigma^*F.
\end{equation}
It is easy to see that this remains injective at every point of $S$. Moreover, passing to $Z = \Isom_S(C|_\sigma, \{1,2,3\})$, we have an identification $\sigma_Z^*F_Z \isom V \otimes_k O_Z$. Hence \eqref{eqn:cross_ratio_map} yields a map $Z \to \P_{\sub}V$, which is by construction $\s_3$ equivariant. We thus get a map $S \to [\P_\sub V/\s_3]$.

The map $\chi \from \o{\orb T}_{g;1}^1 \to [\P_\sub V/\s_3]$ induces a map on the coarse spaces:
\begin{equation}\label{eqn:cross_ratio_coarse}
  \chi \from \o T_{g;1}^1 \to \P_\sub V/\s_3 \cong \P^1.
\end{equation}
\begin{theorem}\label{thm:final_odd}
  Consider the cross-ratio map $\chi \from \o{\sp T}_{g;1}^1 \to \P^1$ as in \eqref{eqn:cross_ratio_coarse}. Then,
  \begin{compactenum}
  \item $\chi^*O_{\P^1}(1) = \frac{3D_1}{2}$;
  \item fibers of $\chi$ are Fano varieties of Picard rank one.
  \end{compactenum}
\end{theorem}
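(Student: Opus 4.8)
The plan is to treat the two assertions separately: for (1) I would reduce to a test family and compute $\chi^{*}O_{\P^{1}}(1)$ by Grothendieck--Riemann--Roch, and for (2) I would read off the fiber geometry from the nef cone (\autoref{thm:flip_nef}), the projectivity (\autoref{thm:flip_projectivity}), the Picard group (\autoref{thm:picard}) and the canonical class (\autoref{thm:canonical}).

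\emph{Part (1).} Let $B \to \o{\orb T}_{g;1}^{1}$ be a test family $(\pi \from P \to B; \sigma; \phi \from C \to P)$ with $B$ a smooth projective curve, and set $F = \phi_{*}O_{C}/O_{P}$, which has fiberwise splitting type $(-h,-h-1)$. The cross-ratio is defined by the saturated line subbundle $L = \pi_{*}(F \otimes O_{P}(h\sigma)) \otimes \sigma^{*}O_{P}(-h\sigma) \hookrightarrow \sigma^{*}F$; after passing to the $\s_3$-torsor $Z = \Isom(C|_{\sigma}, \{1,2,3\})$ this is the classifying map $Z \to \P_{\sub}V$, so that $\chi_{Z}^{*}O_{\P_{\sub}V}(-1) = L_{Z}$ and $\chi^{*}O_{\P_{\sub}V}(1) = -c_{1}(L)$. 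Since $F \otimes O_{P}(h\sigma)$ restricts to $O \oplus O(-1)$ on each fiber, the higher pushforward vanishes and $\pi_{*}(F\otimes O_{P}(h\sigma))$ is a line bundle; the Grothendieck--Riemann--Roch computation of \autoref{sec:generic_positivity_second_proof}, specialized to $m = h$, $n = h+1$ so that $(n-1)\sigma = h\sigma$, gives $c_{1}(\pi_{*}(F\otimes O_{P}(h\sigma))) = \tfrac{h^{2}}{(2h+1)^{2}}c_{1}^{2} - c_{2}$. Combining this with $c_{1}(\sigma^{*}O_{P}(-h\sigma)) = -h\,\sigma^{2} = \tfrac{h}{(2h+1)^{2}}c_{1}^{2}$ (\autoref{thm:divisor_relations}) yields $-c_{1}(L) = c_{2} - \tfrac{h(h+1)}{(2h+1)^{2}}c_{1}^{2} = \tfrac14 D_{1}$, where I use $2h+1 = g+2$ and the $l=1$ case of the formula for $D_{l}$. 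Finally, the quotient $\P_{\sub}V \to \P_{\sub}V/\s_{3} = \P^{1}$ has degree $|\s_{3}| = 6$, so $O_{\P^{1}}(1)$ pulls back to $O_{\P_{\sub}V}(6)$ and hence $\chi^{*}O_{\P^{1}}(1) = 6\cdot\tfrac14 D_{1} = \tfrac{3}{2}D_{1}$. As classes in $\Pic_{\Q}(\o{\sp T}_{g;1}^{1}) = \langle c_{1}^{2}, c_{2}\rangle$ are detected on test families, this proves (1).

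\emph{Part (2), Picard rank.} The space $X = \o{\sp T}_{g;1}^{1}$ is projective (\autoref{thm:flip_projectivity}), $\Q$-factorial with at worst quotient singularities, and has $\rho_{\Q}(X) = 2$ (\autoref{thm:picard}) with $\mathrm{Nef}(X) = \langle D_{1}, D_{3}\rangle$ (\autoref{thm:flip_nef}). By (1), $D_{1} = \tfrac23\chi^{*}O_{\P^{1}}(1)$ is semiample with $D_{1}^{2} = 0$, and the contraction attached to the edge $\R_{\geq 0}D_{1}$ of the nef cone is exactly $\chi \from X \to \P^{1}$; its contracted curves are precisely the curves lying in fibers $X_{t}$, whose classes fill a single extremal ray $R$ dual to $D_{1}$. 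As $D_{3}$ is the other edge, $D_{3}\cdot R > 0$, so $D_{3}$ is $\chi$-ample and $D_{3}|_{X_{t}}$ is ample. Writing $-K = \alpha D_{1} + \gamma D_{3}$, a direct check from \autoref{thm:canonical} and the formula for $D_{l}$ shows $\gamma > 0$ (equivalently, $K$ and $D_{3}$ lie on opposite sides of $\langle D_{1}\rangle$, cf. \autoref{fig:nef_cones} and \autoref{thm:flip_K}); hence $-K\cdot R = \gamma\,(D_{3}\cdot R) > 0$, i.e. $R$ is $K$-negative. Thus $\chi$ is a $K$-negative fiber-type contraction of a single extremal ray from a $\Q$-factorial projective variety --- a Mori fiber space --- and its general fiber has Picard number one.

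\emph{Part (2), Fano.} A fiber is a $\Q$-Cartier divisor with $[X_{t}] = \chi^{*}O_{\P^{1}}(1) = \tfrac32 D_{1}$ and trivial normal bundle $X_{t}|_{X_{t}} = \tfrac32 D_{1}|_{X_{t}} = 0$, so adjunction gives $K_{X_{t}} = K|_{X_{t}}$. Since $D_{1}|_{X_{t}} = 0$, we get $-K_{X_{t}} = \gamma\,D_{3}|_{X_{t}}$ with $\gamma > 0$ and $D_{3}|_{X_{t}}$ ample, whence $-K_{X_{t}}$ is ample and $X_{t}$ is Fano of Picard number one. I expect the main obstacle to be the Picard-number-one claim: the image of $\Pic_{\Q}(X) \to \Pic_{\Q}(X_{t})$ is visibly spanned by $D_{3}|_{X_{t}}$ alone (as $D_{1}$ dies on $X_{t}$), so the real content is the standard fact that a general fiber of a fiber-type extremal contraction of relative Picard number $\rho(X/\P^{1}) = \rho(X)-1 = 1$ again has Picard number one. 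Promoting this from the general fiber to \emph{all} fibers would require either explicit equations, which are unavailable here, or a separate argument ruling out Picard jumping.
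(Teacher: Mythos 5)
Your part (1) is correct and is, in every detail, the paper's own proof: the same reduction to a smooth test curve, the same Grothendieck--Riemann--Roch input from \autoref{sec:generic_positivity_second_proof} specialized to $(m,n)=(h,h+1)$, the same identity $-c_1\bigl(\pi_*(F\otimes O_P(h\sigma))\bigr)-c_1\bigl(\sigma^*O_P(-h\sigma)\bigr)=c_2-\tfrac{h(h+1)}{(2h+1)^2}c_1^2=\tfrac14 D_1$, and the same factor of $6$ from the degree of $\P_\sub V\to \P_\sub V/\s_3$. The Fano half of part (2) also matches the paper, and your version of it is valid for \emph{every} fiber $Y$: $D_1|_Y=0$, ample classes are positive combinations of $D_1$ and $D_3$ (\autoref{thm:flip_projectivity}), so $D_3|_Y$ is ample, and since $K$ and $D_3$ lie on opposite sides of the ray $\ideal{D_1}$ (\autoref{thm:canonical}, \autoref{thm:flip_K}), the class $K_Y=K|_Y$ is anti-ample.

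The genuine gap is the Picard-rank-one claim. The theorem is about all fibers; you prove it only for the general fiber, as you admit, and moreover the ``standard fact'' you invoke is false as stated: a quadric surface bundle $X\to\P^1$ whose monodromy interchanges the two rulings of the fiber $\P^1\times\P^1$ is a $K$-negative fiber-type extremal contraction with $\rho(X)=2$ and $\rho(X/\P^1)=1$, yet every fiber has Picard number two. What is true in general is only that the image of the restriction $N^1(X)\to N^1(X_t)$ is one-dimensional---which is all your argument actually establishes. The paper instead argues directly and uniformly in the fiber: for \emph{any} fiber $Y$ of $\chi$, every complete curve $C\subset Y$ is contracted by $\chi$, hence $D_1\cdot C=0$ by part (1); since $N_1(\o{\sp T}_{g;1}^1)$ has rank two (\autoref{thm:picard}), all curves in $Y$ are numerically proportional to one another, and the paper concludes $\rho(Y)=1$. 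This involves no genericity, so the ``Picard jumping'' worry never arises; if you rewrite part (2), organize it around this fiberwise argument rather than the general-fiber fact. Be aware, though, that the final inference there---from proportionality of curve classes in the ambient space to $\rho(Y)=1$---is exactly the point your quadric-bundle mechanism probes (divisor classes on $Y$ not restricted from the ambient space), and the paper asserts it without further comment; a fully scrupulous write-up would add a justification that numerical equivalence on $Y$ is detected by divisors coming from $\o{\sp T}_{g;1}^1$.
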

\begin{proof}
  We use the setup introduced above, assuming furthermore that $S$ is a smooth curve. On $Z$, we have
  \[
  \pi_*(F\otimes O_P(h\sigma)) \otimes \sigma^* O_P(-h\sigma) \to \sigma^*F = V \otimes_k O_Z.
  \]
  For the first relation, note that $\P_\sub V \to \P_\sub V/\s_3 = \P^1$ is a degree six cover. Hence, it suffices to prove that the pullback of $O_{\P_\sub V}(1)$ to $Z$ has class $D_1/4$. But the class of this pullback is just
  \begin{equation}\label{eqn:two_summands}
    -c_1(\pi_*F \otimes O_P(h\sigma)) - c_1(\sigma^* O_P(-h\sigma)).
  \end{equation}
  Since $R^1\pi_*(F \otimes O_P(h\sigma)) = 0$, the first summand in \eqref{eqn:two_summands} is a simple calculation using Grothendieck--Riemann--Roch (see \eqref{right_side} in \autoref{sec:generic_positivity_second_proof} for this calculation in a more general case). The result is
  \[c_1(\pi_*F \otimes O_P(h\sigma)) = \frac{h^2}{(2h+1)^2}c_1^2 - c_2.\]
  The second summand in \eqref{eqn:two_summands} is simply $-h\pi_*[\sigma^2]$. Using \autoref{thm:divisor_relations}, we get
  \begin{align*}
    -c_1(\pi_*F \otimes O_P(h\sigma)) - c_1(\sigma^* O_P(-h\sigma)) &= 
    c_2-\frac{h^2+h}{(2h+1)^2}c_1^2 \\
    &= \frac{D_1}{4}.
  \end{align*}
  The first relation is thus proved.

  Finally, set $K = K_{\o{\sp T}_{g;1}^1}$ and let $Y \subset \o {\sp T}_{g;1}^1$ be a fiber of $\chi$. Then all curves in $Y$ have intersection number zero with $D_1$; it follows that they are (numerically) rational multiples of each other. Hence the Picard rank of $Y$ is one.

  Since $\ideal{D_1}$ and $\ideal{D_3}$ bound the ample cone of $\o {\sp T}_{g;1}$ and $D_1|_Y \cong 0$, the ray $\ideal{D_3}$ must be positive on $Y$. But $D_3$ and $K$ are on the opposite sides of the line spanned by $D_1$; hence $K|_Y = K_Y$ is anti-ample.
\end{proof}

Finally, we collect the pieces together for the chamber decomposition.   Recall that the Mori chamber $\Mor(\beta)$ of a birational contraction $\beta \from X \dashrightarrow Y$ is the cone spanned by the pullback of the  ample cone of $Y$ and the exceptional divisors of $\beta$. If the birational map $X \dashrightarrow Y$ is clear, we abuse notation and call $\Mor(\beta)$ the Mori chamber of $Y$. 

\begin{theorem}\label{thm:mori_cones}
  Let $0 \leq l < g$ be such that $l \equiv g \pmod 2$.
  \begin{compactenum}
  \item\label{eqn:generic} For $l > 0$, the interior of the cone $\ideal{D_l, D_{l+2}}$ is the Mori chamber of the model $\o T_{g;1}^l$.
  \item\label{eqn:last_even} For even $g$, the cone $\ideal{D_0, D_2}$ is the Mori chamber of the model $\o T_{g;1}^0$.

  \item\label{eqn:edges} For even (resp. odd) $g$, the ray $\ideal{D_0}$ (resp. $\ideal{D_1}$) is an edge of the effective cone.
  
  \end{compactenum}
\end{theorem}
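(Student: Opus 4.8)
The plan is to read off the statement from three facts already established: the nef and ample cones of each model (\autoref{thm:flip_nef} and \autoref{thm:flip_projectivity}), the precise nature of the maps $\beta_l$ (flips with codimension-$\geq 2$ exceptional loci for $2<l<g$ by \autoref{thm:flips}, the divisorial contractions $\beta_g,\beta_2$, and the Fano resp. Fano-fibration endpoints of \autoref{thm:final_even} and \autoref{thm:final_odd}), and the vanishing of $D_l$ on the curves sweeping out $\Exc(\beta_l)$ recorded in \autoref{rem:generic_zero}. I work throughout in the common plane $\Pic_\Q=\langle c_1^2,c_2\rangle=\langle\lambda,\delta\rangle$, which is canonically identified across all models $\o{\sp T}_{g;1}^{l'}$ with $0<l'<g$ by \autoref{thm:flips}. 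For the \emph{first part}, fix $0<l<g$. Every birational map among models of index strictly between $0$ and $g$ is an isomorphism in codimension one (\autoref{thm:flips}), so the birational contraction to $\o{\sp T}_{g;1}^l$ has no exceptional divisors; its Mori chamber is therefore just the pullback of the ample cone, which is the interior of $\langle D_l,D_{l+2}\rangle$ by \autoref{thm:flip_projectivity}. That adjacent chambers are genuinely distinct—so the interior, not some larger cone, is the chamber—follows because each flip $\beta_l$ ($2<l<g$) is nontrivial: it exchanges the $K$-negative curves covering $\Exc(\beta_l)$ for the $K$-positive ones covering $\Exc(\beta_l^{-1})$ (\autoref{thm:flip_K}), and on the former $D_l$ vanishes (\autoref{rem:generic_zero}), fixing the shared wall as $\langle D_l\rangle$.

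For the \emph{second part}, let $g$ be even and write $\Exc(\beta_2)\subset\o{\sp T}_{g;1}^2$ for the Maroni divisor, contracted by $\beta_2$ to a $\P^1$ (\autoref{thm:maroni}). By definition the Mori chamber of $\o{\sp T}_{g;1}^0$ is $\beta_2^{*}\mathrm{Nef}(\o{\sp T}_{g;1}^0)+\R_{\geq0}[\Exc(\beta_2)]$. Since $\o{\sp T}_{g;1}^0$ has Picard rank one (\autoref{thm:final_even}), its nef cone is a single ray; and the curves $\gamma$ sweeping out $\Exc(\beta_2)$ (constant Maroni invariant $2$, constant cross-ratio) satisfy $D_2\cdot\gamma=0$ by \autoref{rem:generic_zero} while $D_4\cdot\gamma>0$, so $\langle D_2\rangle$ is exactly the contracted face and $\beta_2^{*}\mathrm{Nef}(\o{\sp T}_{g;1}^0)=\langle D_2\rangle$. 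It then remains only to prove the key identity $[\Exc(\beta_2)]\in\langle D_0\rangle$; granting it, the chamber is $\R_{\geq0}D_2+\R_{\geq0}D_0=\langle D_0,D_2\rangle$. I would obtain this identity by computing $[\Exc(\beta_2)]$ as a degeneracy class: on the universal $\P^1$-bundle the fiberwise-balanced bundle $E=\dual{(\phi_{*}O_C/O_P)}$ restricts to $O(h)^{\oplus2}$ on a general fiber and to $O(h-1)\oplus O(h+1)$ over $\Exc(\beta_2)$ (with $g=2(h-1)$), so $\Exc(\beta_2)$ is the jumping locus of $R^{1}\pi_{*}\bigl(E\otimes O_P(-(h+1)\sigma)\bigr)$; a Grothendieck--Riemann--Roch computation of its first Chern class, of exactly the type carried out in \autoref{thm:divisor_relations}, expresses $[\Exc(\beta_2)]$ in $\langle c_1^2,c_2\rangle$, and one checks that the coefficients are in ratio $-1/4$, i.e. $[\Exc(\beta_2)]$ is a positive multiple of $4c_2-c_1^2=D_0$.

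For the \emph{third part} the two parities are independent. When $g$ is even, $\Exc(\beta_2)$ is an irreducible divisor contracted by the birational morphism $\beta_2$, hence rigid, so its class spans an extremal ray of the pseudoeffective cone $\overline{\mathrm{Eff}}$; since $[\Exc(\beta_2)]\in\langle D_0\rangle$ by the computation above, $\langle D_0\rangle$ is an edge of $\overline{\mathrm{Eff}}$. When $g$ is odd, \autoref{thm:final_odd} gives $D_1=\tfrac{2}{3}\chi^{*}O_{\P^1}(1)$, the pullback of an ample class along the fibration $\chi\from\o{\sp T}_{g;1}^1\to\P^1$; thus $D_1$ is nef of Iitaka dimension one, in particular not big, so it lies on the boundary of the two-dimensional cone $\overline{\mathrm{Eff}}$, and the (one-dimensional) boundary ray through it is precisely $\langle D_1\rangle$.

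The one genuinely computational point, and the main obstacle, is the identity $[\Exc(\beta_2)]\in\langle D_0\rangle$ on which both the second part and the even case of the third part rest; everything else is formal once the cones and the nature of the $\beta_l$ are in place. The easy half—that $[\Exc(\beta_2)]$ lies strictly on the $D_0$-side of the wall $\langle D_2\rangle$—is immediate from $[\Exc(\beta_2)]\cdot\gamma<0$ together with $D_2\cdot\gamma=0$. The substance is to show it lands on the ray $\langle D_0\rangle$ \emph{exactly}, rather than in the interior of $\langle D_0,D_2\rangle$ or strictly beyond $D_0$, since only the exact position produces the chamber $\langle D_0,D_2\rangle$ and the effective edge $\langle D_0\rangle$; this is what forces the full Grothendieck--Riemann--Roch bookkeeping rather than a purely numerical positioning argument.
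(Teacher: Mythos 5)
Your proposal is correct, and its skeleton coincides with the paper's own proof: part (1) is read off from \autoref{thm:flips} together with \autoref{thm:flip_projectivity}, exactly as in the paper; part (2) computes the chamber of $\o{\sp T}_{g;1}^0$ as the pullback of the ample ray plus the ray of the Maroni divisor; part (3) uses the fibration of \autoref{thm:final_odd} for odd $g$ and extremality of the contracted divisor for even $g$, again as in the paper. The one genuine divergence is how you establish the two identities on which part (2) rests. The paper asserts both --- that the pullback of the ample ray of $\o{\sp T}_{g;1}^0$ is $\ideal{D_2}$ and that the Maroni divisor has class $D_0/4$ --- via ``a simple test curve calculation, which we omit.'' You instead get the first from \autoref{rem:generic_zero}: the curves $\gamma$ sweeping the Maroni divisor satisfy $D_2\cdot\gamma=0$ and $D_4\cdot\gamma>0$ (the latter since some positive combination of $D_2,D_4$ is ample), so the nef pullback ray, which must kill $\gamma$, is $\ideal{D_2}$; and you get the second by a Grothendieck--Riemann--Roch degeneracy-locus computation. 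That computation does work and in fact needs nothing new: it is the paper's formula \eqref{right_side} specialized to $m=n=h$ with twist $(h-1)\sigma$, giving $c_1\bigl(R\pi_*(F\otimes O_P((h-1)\sigma))\bigr)=\tfrac14 c_1^2-c_2$; since $\pi_*$ of this sheaf is torsion-free and generically zero, hence zero, the degeneracy class is $-c_1(R\pi_*)=c_2-\tfrac14 c_1^2=D_0/4$, recovering the paper's parenthetical value (and irreducibility of the Maroni divisor upgrades ``effective class supported on it'' to ``positive multiple of $D_0$,'' which is all the theorem needs). So your route makes part (2) and the even case of part (3) self-contained where the paper defers to an omitted test-curve calculation, at the cost of the GRR bookkeeping you flagged; the paper's route buys only brevity.
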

\begin{proof}
  Since the $\o T_{g;1}^l$ are isomorphic to each other away from codimension two for $0 < l < g$, the Mori chamber of $\o T_{g;1}^l$ is simply its ample cone. Hence, \eqref{eqn:generic} follows from \autoref{thm:flip_projectivity}.

  In the case of $l = 0$, consider the Maroni contraction $\o T_{g;1}^2 \to \o T_{g;1}^0$. It is easy to check that the pullback of the ample ray of $\o T_{g;1}^0$ is the ray $\ideal{D_2}$ and the class of the Maroni divisor is a positive multiple of $D_0$ (in fact $D_0/4$); both statements follow from a simple test curve calculation, which we omit. Hence $\ideal{D_0, D_2}$ is the Mori chamber associated to $\o T_{g;1}^2 \to \o T_{g;1}^0$.

  For the last statement, first consider the case of odd $g$. By \autoref{thm:final_odd}, some positive multiple of $D_1$ is the pullback of $O_{\P^1}(1)$ along the cross-ratio map. Hence the ray $\ideal{D_1}$ must be an edge of the effective cone.

  For even $g$, some positive multiple of $D_0$ is the class of the Maroni divisor. Since the Maroni divisor is the exceptional locus of the birational morphism $\o T_{g;1}^2 \to \o T_{g;1}^0$, it follows that the ray $\ideal{D_0}$ must be the edge of the effective cone.
\end{proof}

%%% Local Variables: 
%%% mode: latex
%%% TeX-master: "part2"
%%% End: 

\section{The case of a marked ramified fiber}\label{sec:ramified_fiber}
The spaces of weighted admissible covers and the spaces of $l$-balanced covers together provide a beautiful picture of the birational geometry of the space of trigonal curves with a marked unramified fiber, that is, a fiber of type $(1,1,1)$. It is natural to ask if a similar picture holds for the spaces with marked fibers of type $(1,2)$ or $(3)$. Recall that the picture consists of two parts: the first is a sequence of divisorial contractions given by spaces of weighted admissible covers, and the second is a sequence of flips given by the spaces of $l$-balanced covers. 

The recipe for constructing the first sequence of divisorial contractions is virtually the same. Let $r = 1, 2,$ or $3$ and let $\st T_{g;1/r}$ be the open and closed substack of $\st H^3 \times_{\st M}\st M_{0;b,1}$ parametrizing $(\phi \from \orb C \to \orb P; \orb P \to P; \sigma)$, where $\orb C$ is connected and $\Aut_\sigma(\orb P) = \mu_r$. The $r$ determines the ramification type of the fiber over $\sigma$ of the induced cover of the coarse spaces $C \to P$. Setting 
\[\o{\orb T}_{g;1/r}(\epsilon) = \st T_{g;1/r} \times_{\st M_{0;b,1}} \o {\orb M}_{0;b,1}(\epsilon),\]
we generalize \autoref{intro:first_seq} for all ramification types.

In this section, we indicate how to generalize \autoref{intro:second_seq}, namely how to construct the analogues of the spaces of $l$-balanced covers. As before, there is an interplay between the global geometry of marked triple covers measured by a refined Maroni invariant and the local geometry of trigonal singularities measured by a generalized $\mu$ invariant. The section is further divided as follows. In \autoref{sec:teardrop}, we recall some facts about the orbi-curve $(\orb P; \sigma)$ with $\Aut_\sigma\orb P = \mu_r$, which is the base in our families of triple covers. In \autoref{sec:ramified_invariants}, we define the Maroni invariant for a cover of $(\orb P; \sigma)$ and relate it to the classical geometry of the induced cover on the coarse spaces. In this section, we also recall the generalized $\mu$ invariant. In \autoref{sec:ramified_l-balanced}, we define $l$-balanced covers and prove the main theorem. The proof is by a formal reduction to the case $r=1$; there is little extra work.

\subsection{The teardrop curve $\orb P$}\label{sec:teardrop}
Consider the orbi-curve $\orb P$ with coarse space $\rho \from \orb P \to \P^1$, where the local picture of $\rho$ over $\infty \in \P^1$ is given by
\begin{equation}\label{eqn:quotient}
 [\spec k[v]/\mu_r] \to \spec k[x],
\end{equation}
where $\mu_r$ acts by $v \mapsto \zeta v$ and $x = v^r$. The curve $\orb P$ can also be described as the root stack
\[ \orb P = \P^1(\sqrt[r]{\infty})\]
or as a weighted projective stack
\[ \orb P = [\A^2\setminus 0/\G_m],\]
where $\G_m$ acts by weights $1$ and $r$. The name ``teardrop curve'' is inspired by the picture for $k = \C$, where $\orb P$ is imagined to be a `pinching' of the Riemann sphere to make it have conformal angle $2\pi/r$ at $\infty$ (see \autoref{fig:teardrop}).

\begin{figure}[t]
\centering
\begin{tikzpicture}[math]
  \draw [smooth, tension=1, yscale=.8]
  plot coordinates {(0,0) (-1,-2) (0,-3) (1,-2) (0,0)};
  \draw (0,0) [above] node {\sphericalangle\ 2\pi/r};
\end{tikzpicture}
\caption{The ``teardrop curve'' $\P^1(\sqrt[r]\infty)$ over $\C$}
\label{fig:teardrop}
\end{figure}
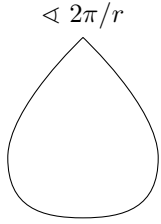

Let $\xi \subset \orb P$ be the reduced preimage of $\infty \in \P^1$. In the explicit description of $\orb P$ in \eqref{eqn:quotient}, this is the closed substack defined by $v = 0$. Let $L$ be the dual of the ideal sheaf of $\xi$ in $\orb P$ and set
\[ O_{\orb P}(d) = L^{\otimes dr},\]
for $d \in \frac{1}{r}\Z$. Thus, $L = O_{\orb P}(1/r)$.
\begin{proposition}\label{thm:teardrop_basics}
  With the notation above,
  \begin{compactenum}
  \item $\Pic(\orb P)$ is generated by $O_{\orb P}(1/r)$. 
  \item The degree of $O_{\orb P}(d)$ is $d$, for every $d \in \frac{1}{r}\Z$.
  \item The canonical sheaf (which is also the dualizing sheaf) of $\orb P$ is $O_{\orb P}(-1-1/r)$.
  \item We have 
    \[\rho^* O_{\P^1}(n) = O_{\orb P}(n), \text{ for $n \in \Z$},\]
    and
    \[ \rho_* O_{\orb P}(d) = O_{\P^1}(\lfloor d\rfloor), \text{ for $d \in \frac{1}{r}\Z$}.\]
  \item \label{part:vb_split}
Every vector bundle on $\orb P$ is isomorphic to a direct sum of line bundles.    
  \end{compactenum}
  \begin{proof}
    All of these statements are easy to see, except possibly the last one. The proof for the case of $\P^1$ sketched by Hartshorne \citep[V.2, Exercise~2.6]{hartshorne77:_algeb} works verbatim. We present the details for lack of a reference.
    
    Let $E$ be a vector bundle on $\orb P$. Since the degree of subsheaves of $\rho_*E$ is bounded above, the degree of subsheaves of $E$ is also bounded above. Let $L \subset E$ be a line bundle of maximum degree. Then the quotient $E' = E/L$ is locally free. We claim that $\Ext^1(E', L) = 0$. Then $E = E' \oplus L$, and the statement follows by induction on the rank. 

    By duality, $\Ext^1(E', L) = \dual{\Hom(L(1+1/r), E')}$. Since we have an inclusion $\Hom(L(1+1/r), E') \subset \Hom(L(1/r), E')$, proving that the latter vanishes implies that the former vanishes. On one hand, we have the exact sequence
    \[ \Hom(L(1/r), E) \to \Hom(L(1/r), E') \to \Ext^1(L(1/r), L) = \dual{\Hom(L, L(-1))} = 0.\]
    On the other hand, we know that $\Hom(L(1/r), E) = 0$ by the maximality of $\deg L$. We conclude that $\Hom(L(1/r), E') = 0$.
  \end{proof}
\end{proposition}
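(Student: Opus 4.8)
The plan is to dispatch parts (1)--(4) quickly using the two presentations of $\orb P$, namely as the root stack $\P^1(\sqrt[r]{\infty})$ and as the weighted projective stack $[\A^2\setminus 0/\G_m]$ with weights $(1,r)$, and then to concentrate the real effort on part (5). For the Picard group I would use that $\rho \from \orb P \to \P^1$ is an isomorphism away from $\infty$, so a line bundle is determined by its restriction to $\P^1\setminus\infty$ together with the $\mu_r$-representation on its fibre at the stacky point $\xi$; this exhibits $\Pic(\orb P)$ as generated by $L = O_{\orb P}(1/r)$, with $L^{\otimes r} = \rho^* O_{\P^1}(1)$. The degree is the additive homomorphism $\Pic(\orb P) \to \frac1r\Z$ sending $L$ to $1/r$ (the point $\xi$, with automorphism group $\mu_r$, has ``mass'' $1/r$), which gives part (2) and the pullback formula of part (4); the pushforward formula $\rho_* O_{\orb P}(d) = O_{\P^1}(\lfloor d\rfloor)$ is a local $\mu_r$-invariants computation at $\infty$, where global sections of $O_{\orb P}(d)$ are exactly the eigenfunctions of the correct weight. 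For part (3) I would compute $\omega_{\orb P}$ directly from the local form $v \mapsto v^r = x$ of $\rho$ at the stacky point (equivalently, the weighted-projective formula $\omega = O(-1-r)$ in the $O(1)$-grading), obtaining $\omega_{\orb P} = O_{\orb P}(-1-1/r)$.

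The heart of the proposition is part (5), for which I would adapt Grothendieck's splitting argument on $\P^1$ to the stack. First I would check that the degrees of line subbundles of a vector bundle $E$ are bounded above, which follows because pushing forward to $\P^1$ bounds the degrees of subsheaves of $\rho_* E$, hence of $E$. I would then pick a saturated line subbundle $L_0 \subset E$ of maximal degree, so that $E' = E/L_0$ is again locally free, and try to split $0 \to L_0 \to E \to E' \to 0$. The crux is the vanishing $\Ext^1(E', L_0) = 0$, which by Serre duality (using $\omega_{\orb P} = O_{\orb P}(-1-1/r)$) is dual to $\Hom(L_0 \otimes \omega_{\orb P}^{-1}, E') = \Hom(L_0(1+1/r), E')$. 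Maximality of $\deg L_0$ forces $\Hom(L_0(1/r), E) = 0$, and applying $\Hom(L_0(1/r), -)$ to the short exact sequence, together with $\Ext^1(L_0(1/r), L_0) = 0$ (which is dual to $\Hom(L_0, L_0(-1)) = H^0(O_{\orb P}(-1)) = 0$), yields $\Hom(L_0(1/r), E') = 0$. Since the inclusion $L_0(1/r) \into L_0(1+1/r)$ given by a section of $O_{\orb P}(1)$ has torsion cokernel and $E'$ is locally free, precomposition embeds $\Hom(L_0(1+1/r), E')$ into this vanishing group, so $\Ext^1(E', L_0) = 0$. The sequence splits, $E \cong L_0 \oplus E'$, and I would finish by induction on the rank.

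The main obstacle I anticipate is essentially bookkeeping: keeping the fractional twists $O_{\orb P}(d)$ for $d \in \frac1r\Z$ straight and applying Serre duality and the vanishing statements with exactly the correct shifts. I would be careful that the maximal-degree argument uses only the boundedness coming from $\rho_*$, and that saturating a subsheaf in a locally free sheaf on this smooth orbi-curve really produces a subbundle with locally free quotient. None of these steps is deep, but the fractional-degree arithmetic is where an error would most easily slip in, so I would reduce every $\Hom$ and $\Ext^1$ vanishing to an explicit $H^0$ or $H^1$ of a single line bundle $O_{\orb P}(e)$ and invoke the pushforward formula of part (4), by which $H^0(O_{\orb P}(e)) = 0$ precisely when $e < 0$.
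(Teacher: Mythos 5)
Your proposal is correct and follows essentially the same route as the paper: parts (1)--(4) by direct inspection, and part (5) by the Hartshorne-style splitting argument, taking a maximal-degree line subbundle $L \subset E$ and proving $\Ext^1(E/L, L) = 0$ via Serre duality with $\omega_{\orb P} = O_{\orb P}(-1-1/r)$, the inclusion $\Hom(L(1+1/r), E') \subset \Hom(L(1/r), E')$, and the vanishing of $\Hom(L(1/r), E)$ forced by maximality. The only difference is that you spell out a few points the paper leaves implicit (saturation of $L$, injectivity of precomposition by a section of $O_{\orb P}(1)$), which is harmless.
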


\subsection{The refined Maroni  and $\mu$ invariants}\label{sec:ramified_invariants}
\subsubsection{The refined Maroni invariant}\label{sec:refined_maroni}
Denote by $\orb P$ the teardrop curve $\P^1(\sqrt[r]\infty)$ as in \autoref{sec:teardrop}. \autoref{thm:teardrop_basics}~\eqref{part:vb_split} gives us a way to define the Maroni invariant for triple covers of $\orb P$.
\begin{definition}
 Let $\phi \from \orb C \to \orb P$ be a triple cover. Set $F = \phi_*O_{\orb C}/O_{\orb P}$. Then we have
\[ F \cong O_{\orb P}(-m) \oplus O_{\orb P}(-n),\]
for some $m, n \in \frac{1}{r}\Z$. Define the Maroni invariant of $\phi$ to be the difference
\[ M(\phi) = |m-n|.\]
Note that the Maroni invariant lies in $\frac{1}{r}\Z$.
\end{definition}

The refined Maroni invariant can be read off from the usual Maroni invariant of a new cover associated to $\phi \from \orb C \to \orb P$. We now explain this procedure. Choose a point $p \in \orb P$, away from $\xi$ and define $\psi \from \tw{P} \to \orb P$ to be the cyclic cover of degree $r$ branched over $p$. Explicitly, $\tw{P}$ is given by
\begin{equation}\label{eqn:cyclic_cover}
\tw{P} = \spec_{\orb P}\left( \bigoplus_{i = 0}^{r-1} O_{\orb P}(-i/r) \right),
\end{equation}
where the ring structure is given by a section of $O_{\orb P}(1)$ vanishing at $p$. It is easy to see that $\tw P \cong \P^1$. Set
\[ \tw C = \orb C \times_{\orb P} \tw P \]
with the induced map $\tw \phi \from \tw C \to \tw P$.
\begin{proposition}\label{thm:refined_maroni_cyclic_cover}
  With the above notation, we have $M(\phi) = M(\tw\phi)/r$.
\end{proposition}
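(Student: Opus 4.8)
The plan is to pull back the splitting of $F = \phi_*O_{\orb C}/O_{\orb P}$ along the cyclic cover $\psi$ and then read off the ordinary Maroni invariant of $\tw\phi$ from the resulting bundle on $\tw P \cong \P^1$. The whole argument is short once the two routine ingredients—flat base change and the behaviour of $\psi^*$ on fractional line bundles—are in place.

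First I would record that $\psi \from \tw P \to \orb P$ is finite and flat of degree $r$: it is a finite morphism between smooth one–dimensional objects, so flatness is automatic by miracle flatness. Since the square defining $\tw C = \orb C \times_{\orb P} \tw P$ is Cartesian and $\phi$ is affine (indeed finite), flat base change gives $\tw\phi_* O_{\tw C} \cong \psi^* \phi_* O_{\orb C}$; in particular $\tw\phi$ is again a triple cover, so its Maroni invariant is defined. Applying the exact functor $\psi^*$ to the defining sequence
\[ 0 \to O_{\orb P} \to \phi_* O_{\orb C} \to F \to 0 \]
and using $\psi^* O_{\orb P} = O_{\tw P}$ then identifies $\tw F := \tw\phi_* O_{\tw C}/O_{\tw P}$ with $\psi^* F$. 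Writing $F \cong O_{\orb P}(-m) \oplus O_{\orb P}(-n)$ with $m,n \in \tfrac1r\Z$, we obtain $\tw F \cong \psi^* O_{\orb P}(-m) \oplus \psi^* O_{\orb P}(-n)$.

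The crux is the computation of $\psi^* O_{\orb P}(d)$ for $d \in \tfrac1r\Z$. Here I would use that $\psi$ is totally ramified over the single point $p$ chosen away from $\xi$: the section of $O_{\orb P}(1)$ cutting out $p$ pulls back to a section of $\psi^* O_{\orb P}(1)$ vanishing to order $r$ at the one point of $\psi^{-1}(p)$, whence $\psi^* O_{\orb P}(1) \cong O_{\P^1}(r)$ on $\tw P \cong \P^1$. Taking $r$-th roots, which is unambiguous since $\Pic \P^1 = \Z$, gives $\psi^* O_{\orb P}(1/r) = \psi^* L \cong O_{\P^1}(1)$, and therefore $\psi^* O_{\orb P}(d) \cong O_{\P^1}(rd)$ for all $d \in \tfrac1r\Z$. (Equivalently one may argue purely numerically, using $\deg_{\tw P}\psi^*\mathcal L = (\deg\psi)(\deg_{\orb P}\mathcal L) = rd$ together with \autoref{thm:teardrop_basics} and $\Pic\P^1 = \Z$.)

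Substituting, $\tw F \cong O_{\P^1}(-rm) \oplus O_{\P^1}(-rn)$, so the ordinary Maroni invariant of $\tw\phi$ is $M(\tw\phi) = |{-rm}-({-rn})| = r\,|m-n| = r\,M(\phi)$, which is exactly the asserted equality $M(\phi) = M(\tw\phi)/r$. The only steps requiring any care are the justification that $\psi$ is flat (so that base change applies) and the evaluation of $\psi^*$ on the fractional bundles $O_{\orb P}(\pm m), O_{\orb P}(\pm n)$; both follow directly from \autoref{thm:teardrop_basics}, and I expect no genuine obstacle.
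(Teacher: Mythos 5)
Your proof is correct and follows essentially the same route as the paper's: both rest on the base-change identity $\tw\phi_* O_{\tw C} = \psi^*\phi_* O_{\orb C}$ together with the pullback formula $\psi^* O_{\orb P}(d) \cong O_{\tw P}(rd)$, from which the splitting type of $\tw F$ and hence the equality $M(\tw\phi) = r\,M(\phi)$ are immediate. The only difference is presentational: the paper gets the pullback formula ``by construction'' of the cyclic cover, while you rederive it by pulling back the section cutting out $p$ and taking $r$-th roots in $\Pic \P^1 \cong \Z$, which is a fine substitute.
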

\begin{proof}
  By construction, we have $\psi^*O_{\orb P}(d) = O_{\tw P}(dr)$. Since $\tw\phi_* O_{\tw C} = \psi^* \phi_* O_{\orb C}$, the statement follows.
\end{proof}

At first sight, the refined Maroni invariant seems to be an artifact of the stacky way of keeping track of ramification. However, it can be described purely in terms of the geometry of the cover of the coarse spaces. We now describe this connection.

As before, let $\rho \from \orb P = \P^1(\sqrt[r]\infty) \to \P^1$ be the teardrop curve. Consider a $k$-point of $\st T_{g;1/r}$ given by $(\orb P \to \P^1; \infty; \orb C \to \orb P)$ and let $C \to \P^1$ be the induced cover on the coarse spaces. To lighten notation, we treat $O_{\orb C}$ and $O_C$ as sheaves on $\orb P$ and $P$ respectively, omitting the pushforward symbols. Then $O_{C} = \rho_* O_{\orb C}$. Therefore, if we have the splitting
\[ O_{\orb C} \cong O_{\orb P} \oplus O_{\orb P}(-m) \oplus O_{\orb P}(-n),\]
then we deduce the splitting
\[ O_C \cong O_{\P^1} \oplus O_{\P^1}(\lfloor-m\rfloor) \oplus O_{\P^1}(\lfloor -n \rfloor).\]
Thus, the splitting type of $\orb C \to \orb P$ determines the splitting type of $C \to \P^1$. For $r > 1$, however, it carries a bit more information. Let us understand what sort of extra information is contained in this refinement. To that end, observe that the data of the splitting type of $C \to \P^1$ is equivalent to the data of the sequence $\ideal{h^0(O_C(l)) \mid l \in \Z}$. 

First consider the case $r = 3$. In this case, the map $C \to \P^1$ is totally ramified over $\infty$. Denoting by $x \in C$ the unique point over $\infty \in \P^1$, we have $O_C(1) \cong O_C(3x)$.  Therefore, the data of the splitting type of $C \to \P^1$ is the data of the sequence $\ideal{h^0(O_C(3lx)) \mid l \in \Z}$. On the other hand, the data of the splitting type of $\orb C \to \orb P$ is precisely the data of the sequence $\ideal{h^0(O_C(lx)) \mid l \in \Z}$. Thus the refined Maroni invariant encodes the so-called \emph{Weierstrass gap sequence} of the point $x$ on $C$.

Now consider the case $r = 2$. In this case, the map $C \to \P^1$ has ramification type $(2,1)$ over $\infty$. Let the preimage of $\infty$ be $2x+y$, where $x,y \in \P^1$. As before, the data of the splitting type of $C \to \P^1$ is the data of the sequence $\ideal{h^0(O_C(l(2x+y)) \mid l \in \Z}$. On the other hand, the splitting type of $\orb C \to \orb P$ encodes, in addition, the data of $h^0(O_C(l(2x+y) - x))$ and $h^0(O_C(l(2x+y) + x))$, for $l \in \Z$.

\subsubsection{The $\mu$ invariant}
  Let $\phi \from \orb C \to \orb P$ be a triple cover, \'etale except possibly over a point $p \in \orb P$ different from $\xi$. In this case, we say that $\phi$ has \emph{concentrated branching}. Define the $\mu$ invariant of $\phi$ to be the $\mu$ invariant of the singularity of $\orb C \to \orb P$ over $p$ as per the generalization in \autoref{sec:triple_singularities}. We recall this generalization in the current context. Let $\Delta = \spec \compl{O_{\orb P, p}}$ be the formal disk around $p$ and set $C = \orb C \times_{\orb P} \Delta$. Let $\tw C \to C$ be the normalization. Then $\tw C \to \Delta$ is not necessarily \'etale. We choose a cover $\Delta' \to \Delta$ of degree $d$ such that the normalization of $C' = C \times_{\Delta'} \Delta$ is \'etale over $\Delta'$. Then, by definition, we have
\begin{equation}\label{eqn:refined_mu}
 \mu(C \to \Delta) = \frac{1}{r} \mu(C' \to \Delta').
\end{equation}
Note that the $\mu$ invariant lies in $\frac{1}{r}\Z$.

As in the case of the refined Maroni invariant, the $\mu$ invariant can be read off from that of the modified cover $\tw \phi \from \tw C \to \tw P$. We recall the procedure. Define the cyclic cover $\tw P \to \orb P$ ramified only over $p$, as in \eqref{eqn:cyclic_cover}. Set 
\[\tw C = \orb C \times_{\orb P} \tw P\]
with the induced map $\tw \phi \from \tw C \to \tw P$. Let $q \in \tw P$ be the unique point over $p \in \orb P$. See that $\tw \phi \from \tw C \to \tw P$ has concentrated branching over $q$.
\begin{proposition}\label{thm:refined_mu_cyclic_cover}
  With the above notation, we have $\mu(\phi) = \frac{1}{r}\mu(\tw\phi)$.
\end{proposition}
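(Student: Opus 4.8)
The plan is to reduce the statement to the definition of the $\mu$ invariant in the ramified setting, exactly as \autoref{thm:refined_maroni_cyclic_cover} was reduced to the splitting behavior of $\tw\phi_*O_{\tw C} = \psi^*\phi_*O_{\orb C}$. Concretely, I would work in the formal disk around the branch point. Let $\Delta = \spec\compl{O_{\orb P, p}}$ and let $q \in \tw P$ be the unique point over $p$; write $\Delta_q = \spec\compl{O_{\tw P, q}}$. The cyclic cover $\tw P \to \orb P$ constructed in \eqref{eqn:cyclic_cover} is, locally around $p$, totally ramified of degree $r$ (it is branched over $p$), so $\Delta_q \to \Delta$ is a connected degree-$r$ cover of formal disks. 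Thus $\tw C \times_{\tw P}\Delta_q = \orb C\times_{\orb P}\Delta_q$ is precisely the base change of $C = \orb C\times_{\orb P}\Delta$ along $\Delta_q \to \Delta$.

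The key step is then to invoke the base-change property of the $\mu$ invariant built into its definition in \autoref{sec:triple_singularities}. By \eqref{eqn:refined_mu}, $\mu(\phi) = \mu(C \to \Delta)$ is computed by choosing any cover $\Delta' \to \Delta$ (of some degree $d$) along which the normalization of the pullback becomes \'etale, and setting $\mu(\phi) = \mu(C'\to \Delta')/d$; crucially, this value is independent of the chosen $\Delta'$. I would apply this with $\Delta'$ chosen to factor through $\Delta_q$: take a further cover $\Delta'' \to \Delta_q$ (of degree $d'$) making the normalization \'etale over $\Delta''$, so that $\Delta'' \to \Delta$ has degree $rd'$ and $\Delta'' \to \Delta_q$ has degree $d'$. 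Then $\Delta''$ simultaneously computes both $\mu(\phi)$ and $\mu(\tw\phi)$ via the same \'etale pullback, because $\tw C \times_{\tw P}\Delta_q = \orb C\times_{\orb P}\Delta_q$ means the two singularities have literally the same normalization over $\Delta''$. Applying \eqref{eqn:refined_mu} on both sides gives
\[
\mu(\phi) = \frac{1}{rd'}\,\mu(\text{normalization over }\Delta'') = \frac{1}{r}\cdot\frac{1}{d'}\,\mu(\text{same over }\Delta'') = \frac{1}{r}\,\mu(\tw\phi),
\]
since $\tw\phi$ is a cover of the \emph{schematic} curve $\tw P \cong \P^1$, so its $\mu$ invariant is computed by a disk at $q$ with the same \'etale-normalizing cover $\Delta'' \to \Delta_q$ of degree $d'$.

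The only point requiring care—and the likely main obstacle—is checking that the two normalizations genuinely agree after base change, i.e.\ that $\tw\phi\from\tw C\to\tw P$ does have concentrated branching over $q$ with the \emph{same} local singularity type as the fiber product dictates, and that normalization commutes with the flat base change $\Delta_q\to\Delta$. This is exactly the statement already used in the body of the paper (normalization commutes with the relevant base changes, as invoked via \citep{teissier80:_resol_i} in \autoref{thm:crimp_positivity} and \autoref{thm:flip_projectivity}), so I would cite that and verify that $\Delta_q\to\Delta$ lies in the class of covers over which the definition \eqref{eqn:refined_mu} is unambiguous. Once this compatibility is in place, the degree bookkeeping above is routine and the factor $1/r$ falls out immediately.
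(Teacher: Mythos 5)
Your proof is correct and takes essentially the same route as the paper, whose entire proof is that the claim "follows immediately from \eqref{eqn:refined_mu}": you have simply unwound that definition, identifying $\tw C\times_{\tw P}\Delta_q$ with $C\times_\Delta\Delta_q$ via transitivity of fiber products and then doing the degree bookkeeping $\frac{1}{rd'} = \frac{1}{r}\cdot\frac{1}{d'}$, using the independence-of-choice statement already recorded in the paper's definition of the $\mu$ invariant. Your final cautionary paragraph is unnecessary: the two covers pulled back to $\Delta''$ are literally the same object (no normalization is being base-changed, only the singular cover is), so no appeal to Teissier-type results on normalization commuting with base change is needed.
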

\begin{proof}
  This follows immediately from \eqref{eqn:refined_mu}.
\end{proof}

\subsection{The stack of $l$-balanced covers}\label{sec:ramified_l-balanced}
Having defined the Maroni invariant and the $\mu$ invariant for covers of the teardrop curve, we are ready to formulate and prove the analogue of \autoref{thm:tg3d}. We begin by defining $l$-balanced covers. The definition follows \autoref{def:l-balanced} almost verbatim.

\begin{definition}\label{def:refined_l-balanced}
  Let $l \in \frac{1}{r}\Z$ be non-negative and $\orb P \cong \P^1(\sqrt[r]\infty)$ the teardrop curve with the stacky point $\xi$ as in \autoref{sec:teardrop}. Let $\phi \from \orb C \to \orb P$ be a triple cover, \'etale over $\xi$.  We say that $\phi$ is \emph{$l$-balanced} if the following two conditions are satisfied.
  \begin{compactenum}
  \item The Maroni invariant of $\phi$ is at most $l$.
  \item If $\phi$ has concentrated branching, then its $\mu$ invariant is greater than $l$.
  \end{compactenum}
\end{definition}

We can reduce \autoref{def:refined_l-balanced} to the case of an unramified fiber, namely \autoref{def:l-balanced}, by looking at a modified cover $\tw C \to \tw P$. Let $p \in \orb P$ be a point contained in $\br(\phi)$ and $\tw P \to \orb P$ the cyclic cover of degree $r$ branched over $p$ as in \eqref{eqn:cyclic_cover}. Set 
\[ \tw C = C \times_{\orb P}\tw P,\]
with the induced map $\tw \phi \from \tw C \to \tw P$.
\begin{proposition}\label{thm:l-balanced_cyclic_cover}
  With the above notation, the cover $\phi$ is $l$-balanced if and only if the cover $\tw\phi$ is $rl$-balanced in the sense of \autoref{def:l-balanced}.
\end{proposition}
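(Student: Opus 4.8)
The plan is to prove Proposition~\ref{thm:l-balanced_cyclic_cover} by verifying each of the two defining conditions of $l$-balancedness separately, translating each through the cyclic cover $\tw P \to \orb P$ using the two reduction results already established for the individual invariants. The entire content is that the Maroni and $\mu$ invariants both scale by exactly $r$ under the pullback along $\psi \from \tw P \to \orb P$, so the inequalities defining $l$-balancedness for $\phi$ and $rl$-balancedness for $\tw\phi$ match up term-by-term. There is essentially no new geometry here; the work was done in \autoref{thm:refined_maroni_cyclic_cover} and \autoref{thm:refined_mu_cyclic_cover}.

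First I would record the structural facts about the cyclic cover that make the reduction legitimate. The cover $\psi \from \tw P \to \orb P$ is finite of degree $r$, totally ramified over $p \in \br(\phi)$, and crucially \emph{\'etale over the stacky point $\xi$} (since $p \neq \xi$), so $\tw P \cong \P^1$ is an honest smooth curve with an unramified marked fiber; thus \autoref{def:l-balanced} genuinely applies to $\tw\phi$. Moreover $\tw C = \orb C \times_{\orb P} \tw P$ is again a triple cover, \'etale over the preimage of $\xi$, and $\tw\phi \from \tw C \to \tw P$ has concentrated branching over $q \in \tw P$ if and only if $\phi$ has concentrated branching over $p$. This last equivalence is the key compatibility: because $\psi$ is branched precisely over a point of $\br(\phi)$, the support of $\br(\tw\phi)$ is concentrated at one point exactly when $\br(\phi)$ is.

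Next I would run the two conditions through the scaling relations. For the Maroni condition, \autoref{thm:refined_maroni_cyclic_cover} gives $M(\tw\phi) = r\,M(\phi)$, so $M(\phi) \leq l$ holds if and only if $M(\tw\phi) \leq rl$. For the $\mu$ condition, \autoref{thm:refined_mu_cyclic_cover} gives $\mu(\tw\phi) = r\,\mu(\phi)$, so (invoking the previous paragraph's equivalence of concentrated-branching) $\mu(\phi) > l$ holds if and only if $\mu(\tw\phi) > rl$. Combining the two equivalences, $\phi$ satisfies conditions (1) and (2) of \autoref{def:refined_l-balanced} for the value $l$ precisely when $\tw\phi$ satisfies conditions (1) and (2) of \autoref{def:l-balanced} for the value $rl$, which is the claim.

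The only genuine subtlety — and hence the step I would present most carefully — is the clean passage between the stacky and schematic notions of concentrated branching, ensuring that condition (2) is not vacuously mismatched. One must check that $\br(\tw\phi) = \psi^*\br(\phi)$ (up to the ramification of $\psi$ at $q$) so that ``$\br(\phi)$ supported at one point'' and ``$\br(\tw\phi)$ supported at one point'' are genuinely equivalent statements, rather than one implying the other only in one direction; this is where the choice $p \in \br(\phi)$ (rather than an arbitrary $p$) is used. Everything else is a direct substitution into the already-proven scaling identities, so no further computation is needed.
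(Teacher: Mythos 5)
Your proof is correct and follows essentially the same route as the paper, whose entire proof is to combine \autoref{thm:refined_maroni_cyclic_cover} and \autoref{thm:refined_mu_cyclic_cover} so that both defining inequalities scale by $r$. The one detail you spell out that the paper leaves implicit---that choosing $p \in \br(\phi)$ makes ``concentrated branching'' for $\phi$ and for $\tw\phi$ genuinely equivalent, since $\supp\br(\tw\phi) = \psi^{-1}(\supp\br(\phi))$ and $\psi$ is totally ramified over $p$---is a worthwhile clarification but not a departure from the paper's argument.
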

\begin{proof}
  Combine \autoref{thm:refined_maroni_cyclic_cover} and \autoref{thm:refined_mu_cyclic_cover}.
\end{proof}

Recall that $\st T_{g;1/r} \subset \st H^3$ is the open and closed substack whose geometric points parametrize covers $(\orb P \to P; \sigma_1; \phi \from \orb C \to \orb P)$, where $\Aut_{\sigma_1}(\orb P) = \mu_r$ and $\orb C$ is a connected curve of genus $g$.
\begin{definition}
  Define $\o{\orb T}_{g;1/r}^l$ to be the category whose objects over a scheme $S$ are
  \[ \o{\orb T}_{g;1/r}^l(S) = \{(P \to S; \orb P \to P; \sigma; \phi \from C \to P)\},\]
  such that
  \begin{compactenum}
  \item $(P \to S; \orb P \to P; \sigma; \phi \from \orb C \to \orb P)$ is an object of $\st T_{g;1/r}(S)$;
  \item $P \to S$ is smooth, that is, a $\P^1$ bundle;
  \item For all geometric points $s \to S$, the cover $\phi_s \from \orb C_s \to \orb P_s$ is $l$-balanced.
  \end{compactenum}
\end{definition}
See that $\o{\orb T}_{g;1/r}^l \subset \st T_{g;1/r}$ is an open substack.

\begin{theorem}\label{thm:ram_tg3d}
   $\o{\orb T}_{g;1/r}^l$ is an irreducible Deligne--Mumford stack, smooth and proper over $k$.
\end{theorem}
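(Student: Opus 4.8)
The plan is to separate the local properties from the global ones. Smoothness, finite type, irreducibility, and the Deligne--Mumford property can be established exactly as in the unramified case. Indeed, $\o{\orb T}_{g;1/r}^l$ is an open substack of $\st T_{g;1/r}$, and the deformation-theoretic argument behind \autoref{thm:irred_and_dim} applies verbatim: the cover $\orb C \to \orb P$ is \'etale over the stacky point $\xi$, so the singularities of $\orb C$ are planar (embedding dimension at most three), hence smoothable with smooth deformation spaces, making $\st T_{g;1/r}$ smooth; its dense open locus of covers with smooth base and non-concentrated branching is irreducible by the connectedness of the relevant Hurwitz data. For the Deligne--Mumford property I would run the same argument as in \autoref{thm:tg3d}: the homomorphism $\Aut(\phi \from \orb C \to \orb P, \sigma) \to \Aut(\orb P, \sigma)$ has finite kernel (an automorphism over the identity is determined by its action on a generic fiber) and finite image (the source is proper by separatedness while $\Aut(\orb P, \sigma)$ is affine). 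Thus the only real content is separatedness and properness.

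For these, I would reduce to the unramified case $r=1$ already settled in \autoref{thm:tg3d}, using the cyclic cover of \autoref{sec:ramified_invariants}. Given a one-parameter family $(\orb P \to \Delta; \sigma; \phi \from \orb C \to \orb P)$ over a DVR $\Delta$, choose (after a finite base change) a section $p \from \Delta \to \orb P$ disjoint from $\xi$ and lying in $\br(\phi)$, taking $p(0)$ to be the concentration point on any limiting fiber with concentrated branching. Forming the family of degree-$r$ cyclic covers $\psi \from \tw P \to \orb P$ branched along $p$ as in \eqref{eqn:cyclic_cover} produces a $\P^1$-bundle $\tw P \to \Delta$ carrying a $\mu_r$-action whose quotient recovers $\orb P$ away from the coarsened point over $p$, together with the pulled-back triple cover $\tw\phi \from \tw C = \orb C \times_{\orb P}\tw P \to \tw P$. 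The preimage $\tw\xi$ of $\xi$ is an honest unramified marked fiber, and by \autoref{thm:l-balanced_cyclic_cover} the cover $\phi$ is $l$-balanced if and only if $\tw\phi$ is $rl$-balanced; hence $\tw\phi$ is a $\mu_r$-equivariant one-parameter family in $\o{\orb T}_{g;1}^{rl}$.

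Now I would invoke \autoref{thm:tg3d}: after a further finite base change, $\tw\phi$ extends to an $rl$-balanced family $\tw C' \to \tw P'$ over $\Delta$, where the modifications (blowing up the marked section and blowing down the proper transform of the central fibre) take place at the $\mu_r$-fixed locus---the section $\tw\xi$ and the concentration point lying over $p$---and can therefore be carried out $\mu_r$-equivariantly. By separatedness of $\o{\orb T}_{g;1}^{rl}$ the $\mu_r$-action on the generic fibre extends uniquely to the special fibre, so the limit is canonically $\mu_r$-equivariant; descending along $\tw P' \to \orb P'$ yields the desired extension $\orb C' \to \orb P'$ of teardrop covers, which is $l$-balanced by the ``only if'' direction of \autoref{thm:l-balanced_cyclic_cover}. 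The same equivariance-and-descent argument, applied to two families agreeing over the generic point, proves separatedness, since \autoref{thm:tg3d} guarantees the (equivariant) uniqueness of the extension upstairs.

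The main obstacle I anticipate is making the cyclic-cover reduction work honestly in families rather than fibrewise: one must choose the branch section $p$ so that \autoref{thm:l-balanced_cyclic_cover} applies to every fibre (in particular through the concentration point of the limit, where condition (2) of \autoref{def:refined_l-balanced} is the binding one), check that the rebalancing modifications of \autoref{thm:tg3d} are genuinely $\mu_r$-equivariant and hence descend to modifications of the teardrop base that leave the stacky structure at $\xi$ intact, and verify that descent returns a family of the required type. These are bookkeeping points---the characteristic-zero identification $\Z/r \cong \mu_r$ and the smoothness of the relevant deformation spaces are used throughout---but they are precisely where the ``little extra work'' beyond the $r=1$ case actually lives.
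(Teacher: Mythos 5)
Your overall strategy---reduce to \autoref{thm:tg3d} via the degree-$r$ cyclic cover, exactly as the paper does---is right, and your separatedness argument is sound: there the pulled-back families $\tw C_i = \orb C_i \times_{\orb P_i}\tw P_i$ are pulled back \emph{by construction} over all of $\Delta$, so the extended isomorphism descends by taking $\mu_r$-invariants, and your mechanism for extending the $\mu_r$-action to a limit (translate the family by $\zeta$ and apply separatedness of the space of $rl$-balanced covers) is also correct. The gap is in the last step of your properness argument. First, a repairable slip: for properness the data exists only over the generic point $\eta$, so there is no section $p \from \Delta \to \orb P$ and no ``limiting fiber'' to aim it at; one chooses $p_\eta \from \eta \to \br\phi_\eta$ after a base change. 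The serious problem is the assertion that ``descending along $\tw P' \to \orb P'$ yields the desired extension $\orb C' \to \orb P'$ \dots which is $l$-balanced by \autoref{thm:l-balanced_cyclic_cover}.'' Descent is not automatic at the ramification point: $\tw P \to [\tw P/\mu_r]$ is a $\mu_r$-torsor, but the teardrop base is the \emph{partial coarsening} of $[\tw P/\mu_r]$ along the section through the ramification point, and coarsening maps are not flat. Taking $\mu_r$-invariants of the pushed-forward algebra does produce a flat limiting triple cover (we are in characteristic zero), but its pullback back up to $\tw P$ need not agree with $\tw C$ on the central fiber: a $\mu_r$-equivariant finite flat algebra over $k[\![u]\!]$, with $\zeta \from u \mapsto \zeta u$, need not admit a basis of invariants, hence need not be pulled back from $k[\![u^r]\!]$. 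And \autoref{thm:l-balanced_cyclic_cover} certifies that the limit is $l$-balanced only if its cyclic-cover pullback \emph{is} the $rl$-balanced limit $\tw C|_0$; that identification is exactly what your sketch asserts without proof. Closing the gap along your route requires showing that the $\mu_r$-action on $\tw C$ is trivial on the scheme-theoretic fiber over the closure of the ramification section (true by flatness, since it is trivial over $\eta$), and then an averaging/$H^1$-vanishing argument in characteristic zero showing that an equivariant algebra which is trivial on that fiber has an invariant basis. None of this is in the proposal, and it is precisely where the ``little extra work'' you allude to actually lives.

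The paper sidesteps descent of the cover entirely, and you could graft its mechanism onto your setup. It first extends the \emph{base}: after a further degree-$r$ base change it identifies $\tw P \cong \P^1_\Delta$ using the two disjoint sections $\tw\sigma$ and $\tw p$, so that $\tw P$ becomes the standard cover $[X:Y]\mapsto[X^r:Y^r]$ of $P = \P^1_\Delta$, and sets $\orb P = P(\sqrt[r]\sigma)$. It then extends $\orb C_\eta \to \orb P_\eta$ over this base using the properness of the branch morphism $\st T_{g;1/r} \to \st M_{0;b,1}$ (\autoref{thm:big_hurwitz})---which requires no balancedness---and finally identifies $\orb C \times_{\orb P}\tw P$ with $\tw C$ by the separatedness of the same branch morphism, as both are covers of $\tw P$ with the same branch divisor that are isomorphic over $\eta$. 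Only then does \autoref{thm:l-balanced_cyclic_cover} apply to give $l$-balancedness of the central fiber. Note that once the downstairs extension is produced this way, the equivariant structure on the limit upstairs is never needed, so the cleanest repair of your proof is to keep your first two steps and replace the descent step by this extend-downstairs-and-compare argument.
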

By the jugglery of modifying a cover $\orb C \to \orb P$ to get a cover $\tw C \to \tw P$ used many times in \autoref{sec:ramified_invariants}, the major steps in the proof can be reduced to the analogous steps in the case of $r = 1$. As a result, little hard work goes into the proof of \autoref{thm:ram_tg3d}.
\begin{proof}
  We divide the proof into steps.
  \begin{asparaenum}
  \item[\textbf{That $\o{\orb T}_{g;1/r}^l$ is irreducible, smooth, and of finite type.}] This part is identical to that in the proof of \autoref{thm:tg3d}. We skip the details.      
    \item[\textbf{That $\o{\orb T}_{g;1/r}^l$ is separated.}]
      We use the valuative criterion. Let $\Delta = \spec R$ be the spectrum of a DVR, with special point $0$, generic point $\eta$ and residue field $k$. Consider two morphisms $\Delta \to \o{\orb T}_{g;1/r}^l$ given by $(\orb P_i \to P_i \to \Delta; \sigma_i; \phi_i \from \orb C_i \to \orb P_i)$ for $i=1,2$. Let $\psi_\eta$ be an isomorphism of this data over $\eta$. We must show that $\psi_\eta$ extends to an isomorphism over all of $\Delta$. We may replace $\Delta$ by a finite cover, if we so desire. 
      
      Let $\Sigma_i \subset P_i$ be the branch divisor of $\phi_i$. By passing to a cover of $\Delta$ if necessary, assume that we have sections $p_i \from \Delta \to \Sigma_i$ which agree over $\eta$, that is $\psi_\eta^P \circ p_1|_\eta = p_2|_\eta$. Denote by $\xi_i \subset \orb P_i$ the reduced preimage of $\sigma_i$ and consider the cyclic triple cover $\tw P_i \to \orb P_i$ defined by
      \[ \tw P_i = \spec_{\orb P_i}\left( \bigoplus_{j=0}^{r-1} O_{\orb P_i}(-\xi_i) \right),\]
      where the ring structure is given by a section of $O_{\orb P_i}(r\xi_i) \cong O_{\orb P_i}(p_i)$ vanishing along $p_i$. Set
      \[ \tw C_i = \orb C_i \times_{\orb P_i} \tw P_i,\]
      with the induced map $\tw\phi_i \from \tw C_i \to \tw P_i$. The reduced preimage $\tw \sigma_i$ of $\sigma_i$ gives a section $\tw \sigma_i \from \Delta \to \tw P_i$. By \autoref{thm:l-balanced_cyclic_cover}, $(\tw P_i; \tw\sigma_i; \tw\phi_i \from \tw C_i \to \tw P_i)$ is a family of $rl$-balanced covers, for $i = 1,2$. We have an isomorphism $\tw \psi_\eta$ of this data over $\eta$. By the separatedness in \autoref{thm:tg3d}, $\tw \psi_\eta$ extends to an isomorphism $\tw\psi$ over all of $\Delta$. By descent, we conclude that $\psi_\eta$ extends to an isomorphism over all of $\Delta$.

    \item[\textbf{That $\o{\orb T}_{g;1/r}^l$ is Deligne--Mumford.}]
      Since we are in characteristic zero, it suffices to prove that a $k$-point $(\orb P \to \P^1; \infty; \phi \from \orb C \to \orb P)$ of $\o{\orb T}_{g;1/r}^l$ has finitely many automorphisms. We have a morphism of algebraic groups
      \[ \tau \from \Aut(\orb P \to \P^1; \infty; \phi \from \orb C \to \orb P) \to \Aut(\P^1),\]
      where the group on the left is proper because $\o{\orb T}_{g;1/r}^l$ is separated and the group on the right is affine. It is clear that $\ker \tau$ is finite. Hence the group on the left is finite.

    \item[\textbf{That $\o{\orb T}_{g;1/r}^l$ is proper.}]
      Let $\Delta = \spec R$ be as before. Let $(\orb P_\eta \to P_\eta; \sigma_\eta; \phi_\eta \from \orb C_\eta \to \orb P_\eta)$ be an object of $\o{\orb T}_{g;1/r}^l$ over $\eta$. We need to show that, possibly after a finite base change, it extends to an object of $\o{\orb T}_{g;1/r}^l$ over $\Delta$.

       By replacing $\Delta$ by a finite cover if necessary, assume that we have a section $p \from \eta \to \br\phi_\eta$. Define the cyclic cover $\tw P_\eta \to \orb P_\eta$ of degree $r$ branched over $p$, as before. Set
      \[\tw C_\eta = \orb C_\eta \times_{\orb P_\eta} \tw P_\eta,\]
      with the induced map $\tw \phi \from \tw C_\eta \to \tw P_\eta$ and the sections $\tw \sigma_\eta \from \eta \to \tw P_\eta$ and $\tw p_\eta \from \eta \to \tw P_\eta$ given by the reduced preimages of $\sigma_\eta$ and $p_\eta$, respectively. Then $(\tw P_\eta; \tw \sigma_\eta; \tw \phi_\eta \from \tw C_\eta \to \tw P_\eta)$ is a family of $rl$-balanced covers. By the properness in \autoref{thm:tg3d}, it extends to a family of $rl$-balanced covers $(\tw P; \tw \sigma; \tw \phi \from \tw C \to \tw P)$ over $\Delta$. The idea is to descend $\tw C \to \tw P$ down to $\orb C \to \orb P$, extending $\orb C_\eta \to \orb P_\eta$.

      We first extend $(\orb P_\eta, \sigma_\eta, p_\eta)$ over $\eta$ to $(\orb P, \sigma, p)$ over $\Delta$ so that $\tw P$ is the cyclic cover of degree $r$ of $\orb P$ branched over $p$. For this, note that $P_\eta \to \eta$ is a $\P^1$-bundle. Possibly after replacing $\Delta$ by a ramified cover of degree $r$, identify $(P_\eta, \sigma_\eta, p_\eta)$ with $(\P^1_\eta, \infty, 0)$ via an isomorphism that induces an isomorphism $(\tw P, \tw\sigma, p) \isom (\tw \P^1_\Delta, \infty, 0)$, where the latter $\tw \P^1 \cong \P^1$ covers the former $\P^1$ by $[X:Y] \mapsto [X^r:Y^r]$. Set $P = \P^1_\Delta$ with two sections $\sigma$ and $p$ given by $\infty$ and $0$, respectively. Then $(P, \sigma, p)$ is an extension of $(P_\eta, \sigma_\eta, p_\eta)$. Setting $\orb P = P(\sqrt[r]\sigma)$, we get an extension of $\orb P_\eta$. Note that the covering $\tw P \to P$ factors through $\tw P \to \orb P$, extending $\tw P_\eta \to \orb P_\eta$, and exhibiting $\tw P \to \orb P$ as a cyclic triple cover of degree $r$ branched over $p$.

      Let $\Sigma \subset P$ be the unique flat extension of the divisor $\br(\phi_\eta) \subset P_\eta$. Then the preimage of $\Sigma$ in $\tw P$ is $\br\tw\phi$. Since the divisor $\br\tw\phi$ is disjoint from $\tw \sigma$, the divisor $\Sigma$ is disjoint from $\sigma$. Thus $(P; \Sigma; \sigma)$ is an object of $\st M_{0;b,1}(\Delta)$. By the properness of $\st T_{g;1/r} \to \st M_{0;b,1}$, we have a unique extension $\phi \from \orb C \to \orb P$. It remains to prove that the central fiber is $l$-balanced.

      We claim that we have an isomorphism
      \begin{equation}\label{eqn:descent}
        \tw C \isom \orb C \times_{\orb P} \tw P \text{ over $\tw P$}.
      \end{equation}
      Indeed, by construction, we have such an isomorphism over $\eta$. Note that $\tw C \to \tw P$ and $\orb C \times_{\orb P}\tw P$ are covers of $\tw P$, isomorphic over $\eta$, and they have the same branch divisor. By the separatedness of $\st T_{g;1/r} \to \st M_{0;b,1}$, we conclude that the isomorphism $\tw C_\eta \to \orb C_\eta \times_{\orb P_\eta} \tw P_\eta$ extends over $\Delta$, yielding \eqref{eqn:descent}.

      Finally, since the central fiber of $\tw \phi \from \tw C \to \tw P$ is $rl$-balanced, we conclude that the central fiber of $\phi \from \orb C \to \orb P$ is $l$-balanced using \autoref{thm:l-balanced_cyclic_cover}.
  \end{asparaenum}
\end{proof}
%%% Local Variables: 
%%% mode: latex
%%% TeX-master: "part2"
%%% End: 

\section*{Acknowledgements}
  This work is a part of my PhD thesis. I am deeply grateful to my adviser Joe Harris for his invaluable insight and immense generosity. My heartfelt thanks to Maksym Fedorchuk for inspiring this project and providing guidance at all stages. I thank Dan Abramovich, Brendan Hassett, Anand Patel, David Smyth and Ravi Vakil for valuable suggestions and conversations.

\bibliographystyle{abbrvnat}
\bibliography{CommonMath}

\end{document}